\pgfplotsset{compat=1.15}
\numberwithin{equation}{section}
\DeclareMathOperator{\im}{Im}
\DeclareMathOperator{\re}{Re}
\DeclareMathOperator{\Id}{Id}
\DeclareMathOperator{\I}{I}
\DeclareMathOperator{\tr}{tr}
\newcommand{\R}{\mathbb{R}}
\newcommand{\N}{\mathbb{N}}
\newcommand{\Z}{\mathbb{Z}}
\newcommand{\C}{\mathbb{C}}
\DeclareMathOperator{\cinf}{\emph{C}^\infty}
\DeclareMathOperator{\cinfc}{\emph{C}_c^\infty}
\DeclareMathOperator{\supp}{supp}
\DeclareMathOperator{\gr}{Gr}
\DeclareMathOperator{\op}{Op_{\textit{h}}}
\DeclareMathOperator{\Gr}{Gr}
\DeclareMathOperator{\WF}{WF_{\textit{h}}}
\newcommand{\hinf}{O(h^\infty)}
\newtheorem{thm}{Theorem}
\newtheorem*{thm*}{Theorem}
\theoremstyle{definition}
\newtheorem*{ex}{Example}
\newtheorem{defi}{Definition}[section]
\newtheorem{prop}{Proposition}[section]
\newtheorem*{nota}{Notations}
\newtheorem{lem}{Lemma}[section]
\newtheorem{cor}{Corollary}[section]
\newtheorem*{rem}{Remark}
\def\paragraph{\vspace{0.4cm} \@startsection{paragraph}{4}%
  \z@\z@{-\fontdimen2\font}%
  {\normalfont\bfseries}}
\def\subparagraph{\vspace{0.3cm} \@startsection{subparagraph}{4}%
  \z@\z@{-\fontdimen2\font}%
  {\normalfont\bfseries}}
\title{Improved fractal Weyl upper bound in obstacle scattering}
\author{Lucas Vacossin}
\address{Universit\'e Paris-Saclay, Laboratoire de mathématiques d'Orsay, UMR 8628 du CNRS, B\^atiment 307, 91405 Orsay Cedex,}
\email{lucas.vacossin@universite-paris-saclay.fr}
\begin{document}
\maketitle

\begin{abstract}
In this paper, we are interested in the problem of scattering by strictly convex obstacles in the plane.  We provide an upper bound for the number  $N(r,\gamma)$ of resonances in the box $\{ r \leq \re(\lambda)   \leq r +1 ; \im(\lambda) \geq - \gamma \}$. It was proved in the work of \cite{NSZ14} that $N(r,\gamma) = O_{\gamma} (r^{d_H})$ where $2d_H +1$ is the Hausdorff dimension of the trapped set of the billiard flow. In this article, we provide an improved upper bound in the band $0 \leq \gamma < \gamma_{cl}/2$, where $\gamma_{cl}$ is the classical decay rate of the flow. This improved Weyl upper bound is in the spirit of the ones of \cite{Naud12} and \cite{DyBW} in the case of convex co-compact surfaces, and of \cite{DJ17} in the case of open quantum baker's maps. 
\end{abstract}

\section{Introduction}
\subsection{An improved fractal upper bound. }
\paragraph{Scattering by convex obstacles. }

In this paper, we are interested in the problem of scattering by strictly convex obstacles in the plane. We assume that $$\mathcal{O} = \bigcup_{j=1}^J \mathcal{O}_j$$ where $\mathcal{O}_j$ are open, strictly convex obstacles in $\R^2$ having smooth boundary and satisfying the \emph{Ikawa condition} : for $i \neq j \neq k$, $\overline{\mathcal{O}_i}$ does not intersect the convex hull of $ \overline{ \mathcal{O}_j }\cup \overline{\mathcal{O}_k}$. Let $$
\Omega = \R^2  \setminus \overline{\mathcal{O}}.$$
\begin{figure}[h]
\begin{center}
\includegraphics[scale=0.3]{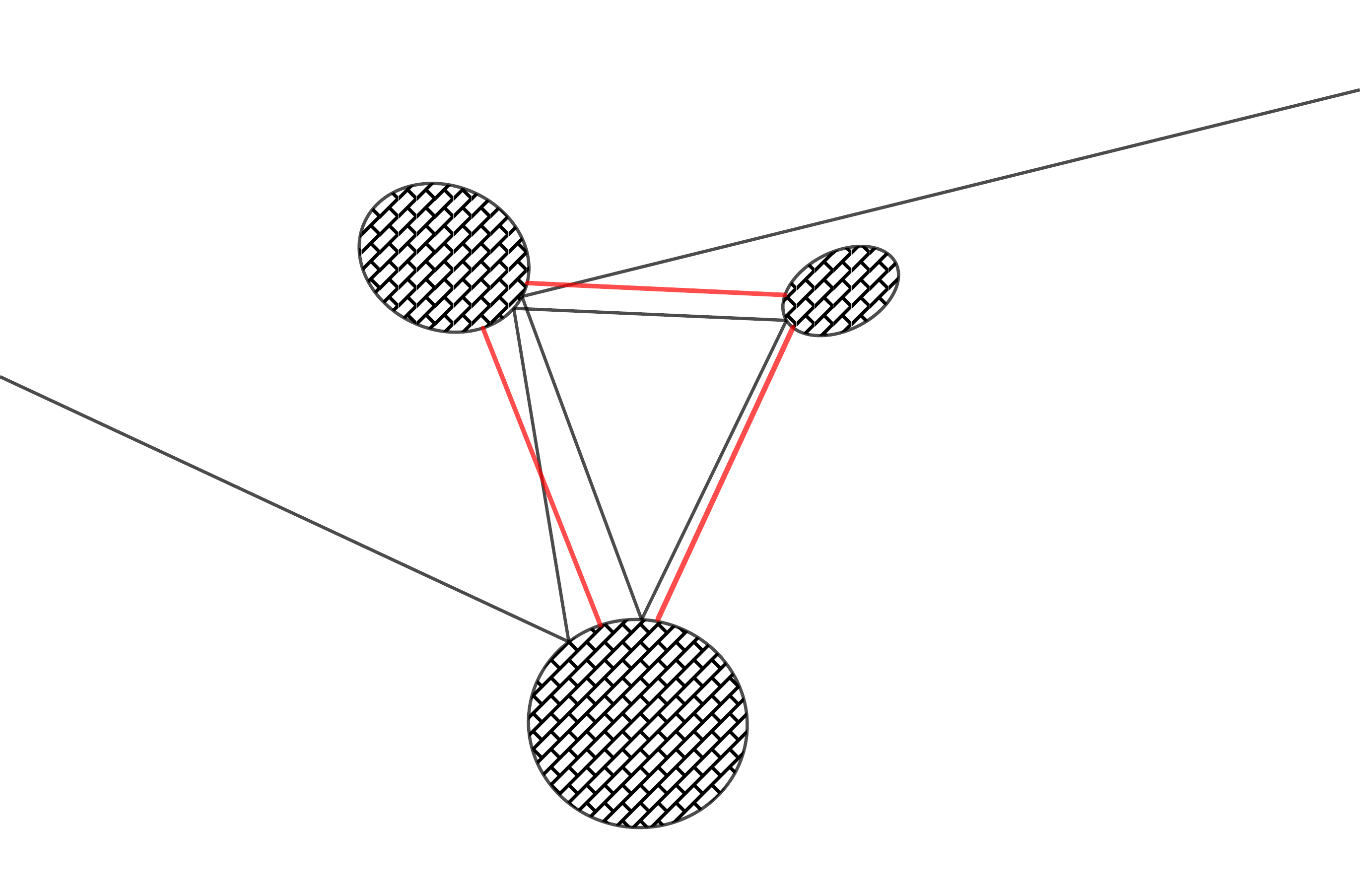}
\caption{Scattering by three obstacles in the plane}
\end{center}
\end{figure}
It is known that the resolvent of the Dirichlet Laplacian in $\Omega$ continues meromorphically to the logarithmic cover of $\C$ (see for instance \cite{DyZw}, Theorem 4.4 in Chapter 4).  More precisely, suppose that
$\chi \in \cinfc(\R^2)$ is equal to one in a neighborhood of $\overline{\mathcal{O}}$. 
$$R_\chi(\lambda) \coloneqq \chi (-\Delta - \lambda^2 )^{-1} \chi  : L^2 (\Omega) \to L^2(\Omega)$$ 
is holomorphic in the region $\{ \im \lambda >0 \}$ and it continues meromorphically to the logarithmic cover of $\C$. Its poles are the \emph{scattering resonances}. We are interested in the distribution of these scattering resonances in the first sheet of the logarithmic cover (i.e. $\C \setminus  i \R^-$), or more precisely, in a conic neighborhood of $\R$. 

The multiplicity of a (non-zero) resonance $\lambda_0$ is given by 
$$m(\lambda_0) = \text{ rank } \frac{1}{2i \pi} \int_{\gamma} R_\chi(\lambda) d\lambda \; , \; \gamma(t) = \lambda_0 + \epsilon^{2 i \pi t}  \; , \; 0 \leq t \leq 1  \; , \; 0 < \epsilon \ll 1 $$
We are interested in counting resonances in strips and in this paper we focus on an upper bound for the quantities 
$$ N(r, \gamma) = \sum_{ \substack{ r \leq \re \lambda \leq r+1 \\  \im \lambda \geq -\gamma}} m(\lambda) $$ 
The depth $\gamma$ of the strip being fixed, we are interested by upper bounds as $r \to + \infty$.

\paragraph{Fractal Weyl bounds. }
In this regime, it becomes a high-frequency problem and justifies the introduction of a small parameter $h = r^{-1}$. Under this rescaling, it becomes a semiclassical problem. 
In the semiclassical limit, that is $h \to 0$, the classical dynamics associated with this quantum problem is the billiard flow $\varphi^t$ in $\Omega \times \mathbb{S}^1$, that is to say, the free motion outside the obstacles with normal reflection on their boundaries. A relevant dynamical object is the trapped set $K$ corresponding to the points $(x, \xi) \in \Omega \times \mathbb{S}^1$ that do not escape to infinity in the backward and forward direction of the flow. In the case of two obstacles, it is a single closed geodesic. As soon as more obstacles are involved, the structure of the trapped set becomes complex and exhibits a fractal structure. This is a consequence of the hyperbolicity of the billiard flow. The structure of the trapped set plays a crucial role in the spectral properties of $-\Delta$. In particular, its Hausdorff dimension appears when estimating $N(r, \gamma)$. In \cite{NSZ14}, the authors proved a Fractal Weyl upper bound involving this fractal dimension.   

\begin{thm}\label{THM_previous} \textbf{Fractal Weyl upper bound \cite{NSZ14}}\\
Assume that the obstacles $\mathcal{O}_j \subset \R^2$ satisfy the conditions above. 
Assume that the trapped set of the billiard flow has Hausdorff dimension $2d_H +1$. Then, for every $\gamma >0$, there exists $C_\gamma>0$ such that for all $r \geq 1$, 
$$ N(r,\gamma) \leq C_\gamma r^{d_H}$$
\end{thm}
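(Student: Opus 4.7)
The plan is to semiclassically rescale, reduce the resonance problem to a boundary operator quantizing the billiard ball map, and then bound the resonance count by the number of phase-space cells needed to cover the trapped set. Setting $h = 1/r$, I would rewrite the problem as counting semiclassical resonances of $-h^2\Delta$ in the window $\{z : |z - 1| \lesssim h,\ \im z \ge -\gamma h\}$. Following the Gérard--Sjöstrand boundary reduction for obstacle scattering, I would construct a holomorphic family $M(z,h) : L^2(\partial\Omega) \to L^2(\partial\Omega)$, built from layer potentials on $\partial\Omega$, which microlocally acts as a semiclassical Fourier integral operator quantizing the billiard ball map $B$ on $T^*\partial\Omega$. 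Resonances in the window then correspond, with multiplicities, to the zeros of a Fredholm determinant $D(z,h) = \det(I - M(z,h)^N)$ for a suitably large $N$.

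Next I would concentrate the problem microlocally near the trapped set $\widetilde K \subset T^*\partial\Omega$ of $B$, which has dimension $2d_H$. The Ikawa condition ensures that $B$ is uniformly hyperbolic on $\widetilde K$ with well-defined local stable/unstable foliations. Using these, I would build an escape function $G$ on $T^*\partial\Omega$ that is non-increasing along $B$-orbits and strictly decreasing outside a neighborhood of $\widetilde K$. Conjugating $M$ by a weight $e^{tG^w/h}$ would yield an operator $M_G$ whose $N$-th iterate is $\hinf$ away from a tubular neighborhood of $\widetilde K$ of size $h^{1/2}$, while remaining of norm at most $e^{C N \gamma h}$ on this neighborhood.

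The central estimate would be a Jensen-type bound: the number of zeros of $D$ in the semiclassical window is controlled by the logarithmic integral of $|D|$ on a slightly larger disk, hence by the number of singular values of $M_G^N$ exceeding a fixed threshold. Choosing $N \asymp \log(1/h)$, the unstable directions of $B^N$ are stretched by a factor $\gtrsim h^{-1/2}$, so only phase-space boxes of side $h^{1/2}$ meeting a tubular neighborhood of $\widetilde K$ of width $h^{1/2}$ contribute non-negligibly. Since the trapped set has Hausdorff dimension $2d_H + 1$, equal to its upper box-counting dimension thanks to the self-similar Cantor structure provided by the Ikawa condition, $\widetilde K$ can be covered by $O(h^{-d_H})$ such boxes, each producing $O(1)$ significant singular values. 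This gives $N(r,\gamma) = O(h^{-d_H}) = O(r^{d_H})$.

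The hard part will be the boundary reduction itself, specifically near glancing rays: the billiard ball map is only smooth on the strictly hyperbolic region of $T^*\partial\Omega$, and the naive Fourier integral calculus for $M$ degenerates at the glancing set. One has to check that the Ikawa no-eclipse condition keeps $\widetilde K$ uniformly away from glancing, and patch the analysis near grazing directions using Melrose--Taylor parametrices for the Poisson operator. A companion technical difficulty is extending the escape function $G$ smoothly across the glancing region while preserving the exponential decay along unstable orbits, and ensuring that the associated weighted quantization remains bounded on $L^2$ despite the exponential growth of $G$ in the transverse directions.
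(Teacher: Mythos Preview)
This theorem is not proved in the paper: it is quoted from \cite{NSZ14} as background, and the paper's own contribution is the \emph{improved} bound of Theorem~\ref{THM_new}. So there is no ``paper's own proof'' to compare against here.

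That said, the paper does summarize the strategy of \cite{NSZ14} (see the sketch in Section~1.2.3), and your outline is broadly in that spirit: reduce to a hyperbolic quantum monodromy operator $M(z;h)$ on $L^2(\partial\mathcal O)$ via boundary layer potentials, conjugate by $e^{tG}$ with $G=\op(g)$ built from an escape function, and count zeros of the Fredholm determinant by Jensen's formula together with a Hilbert--Schmidt/rank estimate. The point in \cite{NSZ14} is that the conjugated operator $M_{tG}$ is, modulo a small remainder, of rank $O(h^{-d_H})$, because the escape function localizes it to an $h^{1/2}$-neighborhood of $\mathcal T$, whose volume is $\sim h^{1-d_H}$.

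Where your sketch departs from \cite{NSZ14} is the iterate $M^N$ with $N\asymp\log(1/h)$ and the appeal to hyperbolic stretching of $B^N$ to produce the $h^{1/2}$-localization. That logarithmic-time propagation is exactly the \emph{new} mechanism of the present paper, introduced to get the sharper exponent in Theorem~\ref{THM_new}; it is not how \cite{NSZ14} proceeds. There the escape function alone, in the critical class $\tilde S_{1/2}$, already forces the localization and the rank bound without iterating to Ehrenfest-type times. So your proposal is not wrong as a heuristic, but it conflates two distinct arguments: the escape-function rank bound of \cite{NSZ14}, and the long-time coherent-state propagation that this paper adds on top of it.

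Your technical worries about glancing are on target: the trapped set stays away from the glancing set thanks to Ikawa's condition, and the boundary reduction near glancing is handled in \cite{NSZ14} (Section~6), building on G\'erard's parametrix \cite{Ge88}.
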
 
\begin{rem}
Their result holds in any dimension, but in dimension $d >2$, one has to add an extra loss of $\varepsilon$ : for every $\varepsilon >0$, for every $\gamma >0$, there exists $C_{\varepsilon,\gamma}>0$ such that for all $r \geq 1$, 
$$ N(r,\gamma) \leq C_{\varepsilon,\gamma} r^{d_H+\varepsilon}$$
\end{rem}
\vspace*{0.5cm}
This bound is conjectured to be optimal for large values of $\gamma$ (see \cite{Zw17}, Conjecture 5). However, as soon as a spectral gap exists, the exponent $d_H$ cannot be optimal for any $\gamma$. It always exists in dimension 2, as proved in \cite{Vacossin} and it holds also in higher dimensions under some pressure condition (see \cite{Ik88}) on the billiard flow. Our Theorem \ref{THM_new} below gives a better bound in dimension 2 for 
$$ \gamma < \gamma_{cl}/2$$
where $\gamma_{cl}$ is the classical decay rate of the flow. $\gamma_{cl}$ is equal to $-P(-\varphi_u)$ where $\varphi_u$ is defined in (\ref{unstable_jacobian}) with the unstable Jacobian and $P$ is the topological pressure for the billiard map on the trapped set (see Definition \ref{def_pressure}). It is also given by the following formula (see \cite{BoRu}, Proposition 4.4) : 
\begin{equation}
- \gamma_{cl} = \lim_{\varepsilon \to 0}  \limsup_{T \to + \infty} \frac{1}{T} \log  \text{Leb} \left( \bigcup_{\rho \in K } B_\rho(\varepsilon, T ) \right) 
\end{equation} 
where 
$$  B_\rho (\varepsilon, T )  = \{ (x, \xi) \in \Omega \times \mathbb{S}^1,  \forall t \in [0,T],  d( \varphi^t (x,\xi) , \varphi^t(\rho) ) \leq \varepsilon\}$$
are Bowen balls. 

The theorem we prove in this article is 
 
\begin{thm}\label{THM_new}
Assume that the obstacles $\mathcal{O}_j \subset \R^2$ satisfy the conditions above. 
Then, there exists a non increasing function $\sigma : \R_+ \to \R_+$ satisfying 
\begin{itemize}
\item $\sigma(\gamma) > 0$ for $0 \leq \gamma < \gamma_{cl}/2$ ; 
\item $\sigma(\gamma) = 0$ for $\gamma \geq \gamma_{cl}/2 $
\end{itemize}
and such that 
for all $\gamma >0$ and for all $\varepsilon>0$ there exists $C_{\gamma, \varepsilon}>0$ such that 
$$\forall r \geq 1, N(r,\gamma) \leq C_{\gamma, \varepsilon} r^{d_H - (\sigma(\gamma) - \varepsilon)_+}$$ 
\end{thm}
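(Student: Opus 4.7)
The plan is to reduce the resonance count to a singular-value count for iterates of an open quantum map associated to the billiard Poincar\'e section, and then to improve on Theorem~\ref{THM_previous} via a two-dimensional fractal uncertainty principle (FUP), in the spirit of \cite{DJ17} for open baker's maps and of the author's spectral-gap argument in \cite{Vacossin}.

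\textbf{Step 1 (reduction).} Following \cite{NSZ14}, I would attach to the billiard flow an open quantum map $M(h)$ on a microlocal neighborhood of $\partial\mathcal{O}$, whose nontrivial eigenvalues capture resonances in the strip $\im\lambda\ge -\gamma$. Fixing an Ehrenfest-type integer $N(h)\sim c|\log h|$, the quantity $N(r,\gamma)$ is then bounded, up to a constant, by the number of eigenvalues of $M(h)^{N(h)}$ in an annulus $\{|\mu|\ge e^{-\gamma\bar t N(h)}\}$, where $\bar t$ is an average return time. A Weyl inequality lets me replace eigenvalues by singular values, reducing matters to controlling
$$ \#\bigl\{\, j \ :\ s_j\bigl(M(h)^{N(h)}\bigr)\ \ge\ e^{-\gamma \bar t N(h)} \,\bigr\}. $$

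\textbf{Step 2 (symbolic decomposition and FUP input).} The Ikawa no-eclipse condition provides a Markov coding on a finite alphabet; each admissible word $\boldsymbol{\alpha}=(\alpha_0,\dots,\alpha_N)$ labels a cylinder $T(\boldsymbol{\alpha})$ in the trapped set. Using a phase-space partition of unity $(\Pi_\alpha)$ subordinate to this coding,
$$ M(h)^N \;=\; \sum_{\boldsymbol{\alpha}}\, \Pi_{\alpha_N}\, M(h)\, \Pi_{\alpha_{N-1}} \cdots M(h)\, \Pi_{\alpha_0} \;+\; O(h^\infty), $$
each summand being essentially a rank-one operator whose norm is governed by the unstable Jacobian $\varphi_u$ along the corresponding orbit. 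A crude count of admissible words recovers the $O(h^{-d_H})$ bound of Theorem~\ref{THM_previous}. The improvement must come from an FUP estimate of the type used in \cite{Vacossin}: operators of the form $\mathbf{1}_{U_s}\, B\, M(h)^N\, A\, \mathbf{1}_{U_u}$, with $U_s, U_u$ being $h^{1/2}$-neighborhoods of unions of stable and unstable pieces of the refined trapped set, obey a bound $O(h^{\sigma_0})$ for some $\sigma_0=\sigma_0(d_H)>0$. Combining this gain with the classical escape rate $e^{-\gamma_{cl}t}$ (equivalently $-P(-\varphi_u)=\gamma_{cl}$) beats the threshold $e^{-\gamma\bar t N}$ exactly when $\gamma<\gamma_{cl}/2$. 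Optimizing between the FUP exponent and the escape factor should produce a non-increasing function $\sigma(\gamma)$ that is strictly positive below $\gamma_{cl}/2$ and vanishes above.

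\textbf{Main obstacle.} The delicate point will be converting the single operator-norm bound that delivers the spectral gap in \cite{Vacossin} into a genuine singular-value count. Two issues arise. First, one must establish \emph{uniform} porosity and dimension estimates for the refined pieces of the trapped set, indexed by exponentially many words $\boldsymbol{\alpha}$; this should be accessible via the Bowen property of the Markov coding and standard distortion estimates for uniformly hyperbolic billiard maps. Second, one needs an effective near-orthogonality statement between wave packets associated to distinct cylinders, playing the role that the exact product structure of baker's maps plays in \cite{DJ17}; this is the step that has no direct analogue in the billiard setting and will likely demand the most work. Managing the $O(h^\infty)$ remainders cleanly up to the Ehrenfest time, and matching the analytic FUP exponent with the topological-pressure expression for $\gamma_{cl}$, will also dictate the explicit shape of $\sigma(\gamma)$.
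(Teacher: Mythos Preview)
Your route is genuinely different from the paper's, and the obstacle you flag is real. The paper does \emph{not} use a fractal uncertainty principle. Instead it conjugates the monodromy operator $M(z;h)$ by $e^{tG}$, $G=\op(g)$, with $g$ an escape function in $S_{1/2-\varepsilon}$, and then bounds $\|M_t(z)^{N(h)}\|_{HS}^2$ for $N(h)\sim\tfrac{1}{6\lambda_{\max}}\log(1/h)$ directly, via the coherent-state resolution $\|A\|_{HS}^2=(2\pi h)^{-1}\int\|A\varphi_\rho\|^2\,d\rho$. The core of the argument is a pointwise control of the evolved states $M_t^N\varphi_\rho$: after logarithmic time these are squeezed along the local unstable leaf, and the escape function damps the part of this Lagrangian state lying more than $h^\delta$ away from $\mathcal T_-$. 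Because $\mathcal T_-\cap W_u(\rho)$ has box dimension $d_H$, the surviving mass is bounded by $\Pi_{\alpha,n}(\rho)^2\,J^u_n(\rho)^{d_H-1}$ (Proposition~\ref{Prop_Key}); integrating this over an $h^\delta$-neighborhood of $\mathcal T$ and recognising Birkhoff sums produces the pressure $P(-\varphi_u+2\gamma t_{ret})$ and the explicit $\sigma(\gamma)$ in (\ref{value_of_sigma}). Jensen's formula on $\det(1-M_t^{2N})$ then converts the HS bound into a zero count.

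Your FUP-based strategy, by contrast, would need to promote a single operator-norm gain into control of \emph{many} singular values, and the near-orthogonality substitute you mention has no established version for variable-Jacobian billiard maps. The paper sidesteps both issues: working with $\|\cdot\|_{HS}$ rather than eigenvalue counting in annuli removes the need for a singular-value dichotomy, and the coherent-state propagation plus escape-function damping replaces any orthogonality argument with a purely geometric measure estimate on unstable leaves (Lemma~\ref{Lemma_number_of_intervals}). So while your outline is plausible in spirit, the mechanism that actually delivers the theorem here is different and avoids precisely the step you identify as the main difficulty.
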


\begin{rem}
A rather explicit value of $\sigma$ in term of topological pressure is given by the formula (\ref{value_of_sigma}).\\
Here, we can take $(\sigma(\gamma)- \varepsilon)_+= \max(\sigma(\gamma) - \varepsilon, 0)$ due to the result of \cite{NSZ14}. When $\gamma \geq \gamma_{cl}/2$, we always have $(\sigma(\gamma)- \varepsilon)_+ = 0$. When $\gamma < \gamma_{cl}/2$, we can find $\varepsilon>0$ such that the bound given by Theorem \ref{THM_new} improves the one of Theorem \ref{THM_previous}. 
\end{rem}

\paragraph{More on obstacle scattering. }
The problem of wave scattering by obstacles has a long history in the physics and mathematics literature. The case of two obstacles is particularly well-understood (see \cite{Ge88}, \cite{Ik82}), so is the diffraction by one convex obstacle (see for instance \cite{BLR}, \cite{HaLe}). As soon as 3 or more obstacles are involved, the underlying classical flow - in this case, the billiard flow - becomes highly chaotic. A particularly interesting model is the $n$-disk system, which has been intensively studied both numerically and experimentally (see for instance \cite{Gr},\cite{Phys3}) and the fractal upper bound has been successfully tested in \cite{PhysRevE1} or \cite{PhysRev2}. A recent result concerning a spectral gap has been proved in \cite{Vacossin}, improving the previous result of \cite{Ik88} (see also \cite{NZ09}).

\paragraph{Related results in open hyperbolic systems.}
The problem of scattering by obstacles falls into the wider class of spectral problems for open hyperbolic systems, that is scattering systems where most trajectories escape to infinity, so that the trapped set has Liouville measure zero, and supports a hyperbolic flow. We refer the reader to the article of review \cite{Nonnen11} for a survey on these open chaotic systems. Among the problems which widely interest mathematicians and physicists, resonance counting and spectral gaps are on the top of the list (see for instance \cite{Zw17} for results and open problems concerning resonances). An important example is given by the semiclassical scattering by a potential (see \ref{sub_section_application_potential}), with particular dynamical assumptions on the Hamiltonian flow. 

\vspace*{0.4cm}
\emph{Convex co-compact hyperbolic surfaces. } Another class of open hyperbolic systems exhibiting a fractal trapped set consists of the convex co-compact hyperbolic surfaces, which can be obtained as the quotient of the hyperbolic plane $\mathbb{H}^2$ by Schottky groups $\Gamma$. The spectral problem concerns the Laplacian on these surfaces and its classical counterpart is the geodesic flow on the cosphere bundle, which is known to be hyperbolic due to the negative curvature of these surfaces. In this context, it is common to write the energy variable $\lambda^2 = s(1-s)$ and study the meromorphic continuation of
$$s \in \C \mapsto \left(- \Delta - s(1-s) \right)^{-1} $$
The trapped set, and more particularly its dimension, influences the spectrum (see for instance \cite{Bo} for an introduction to this theory). 

\vspace*{0.4cm}
\emph{Weyl upper bounds. }
The first Fractal Weyl upper bound for the counting function in strips appeared in the work of Sjöstrand \cite{Sjo90} (see Section 5, Theorem 5.7) for Schrödinger operators $-h^2 \Delta +V$ in the analytic case. The author estimated the number of resonances in larger boxes $\{| \re z| \leq \delta,  - \gamma h \leq \im z \leq 0 \}$ in the limit $h \to 0$. More precise upper bounds $O(h^{-d_H})$ for smaller boxes $\{ |\re z | \leq Ch, - \gamma h \leq  \im z \leq 0 \}$, which correspond, under the rescaling $r=h^{-1}$ to the boxes we consider, were obtained in different smooth situations : for convex co-compact hyperbolic surfaces (\cite{Zwo99}), in scattering by a potential (\cite{SjZwFractal}), in obstacle scattering (\cite{NSZ14}), for asymptotically hyperbolic manifold (\cite{DaDy12}). 
It has been conjectured (see \cite{Zw17}, Conjecture 5) that the bound
$N(r,\gamma )= O(r^{d_H})$ is optimal when the strip is sufficiently large. However, numerical experiments (see for instance the appendix of \cite{DyBW} for the case of convex co-compact surfaces) show that it should be possible to improve this bound for strips of width smaller than some threshold. These numerical results lead \cite{Zw17} to conjecture that 
$$ \lim_{ r \to + \infty} N(r, \gamma) r^{-d_H}  = 0 \text{ when } 0 \leq \gamma < \frac{\gamma_{cl}}{2}$$
First results in this direction were obtained in the case of convex co-compact hyperbolic surfaces : 
\begin{itemize}
\item In \cite{Naud12}, the author showed a bound similar to the one in Theorem \ref{THM_new} (without the loss of $\varepsilon$), with a function $\sigma$ having the same properties ; 
\item In \cite{DyBW}, the author obtained the same result  with an explicit function $\sigma$ given by $\sigma(\gamma) = 1 - d_H - 2 \gamma$, which satisfies the same properties as the one in Theorem \ref{THM_new} (since in this context $\gamma_{cl} = 1- d_H$). His result can be generalized to higher dimensional convex co-compact hyperbolic manifold. 
\end{itemize} 
Theorem \ref{THM_new} gives a positive answer to this conjecture in obstacle scattering in dimension 2. There is also a stronger conjecture, due to Jakobson-Naud (\cite{JakobsonNaud}) in the case of convex co-compact surfaces, which states that for every $\gamma < \gamma_{cl}/2$, $N(r,\gamma) = 0$ for $r \gg 1$. Our work is still far from proving this conjecture. 

\vspace*{0.4cm}
\emph{Toy models and open quantum maps. } To test these conjectures, it is useful to work on toy models where numerical and theoretical computations are sometimes easier. A very appreciated toy model in the study of open hyperbolic systems is the open baker's map (see for instance \cite{Nonnen11}, section 6.1.1). The classical map is a piecewise affine open map $F_{a, \mathcal{A}}$ on the torus $\mathbb{T}^2$, associated with an alphabet $\mathcal{A} \subset \{0, \dots, a-1\}$ ($a$ is called the base) (see Figure \ref{baker}). 

\begin{figure}
\includegraphics[scale=0.6]{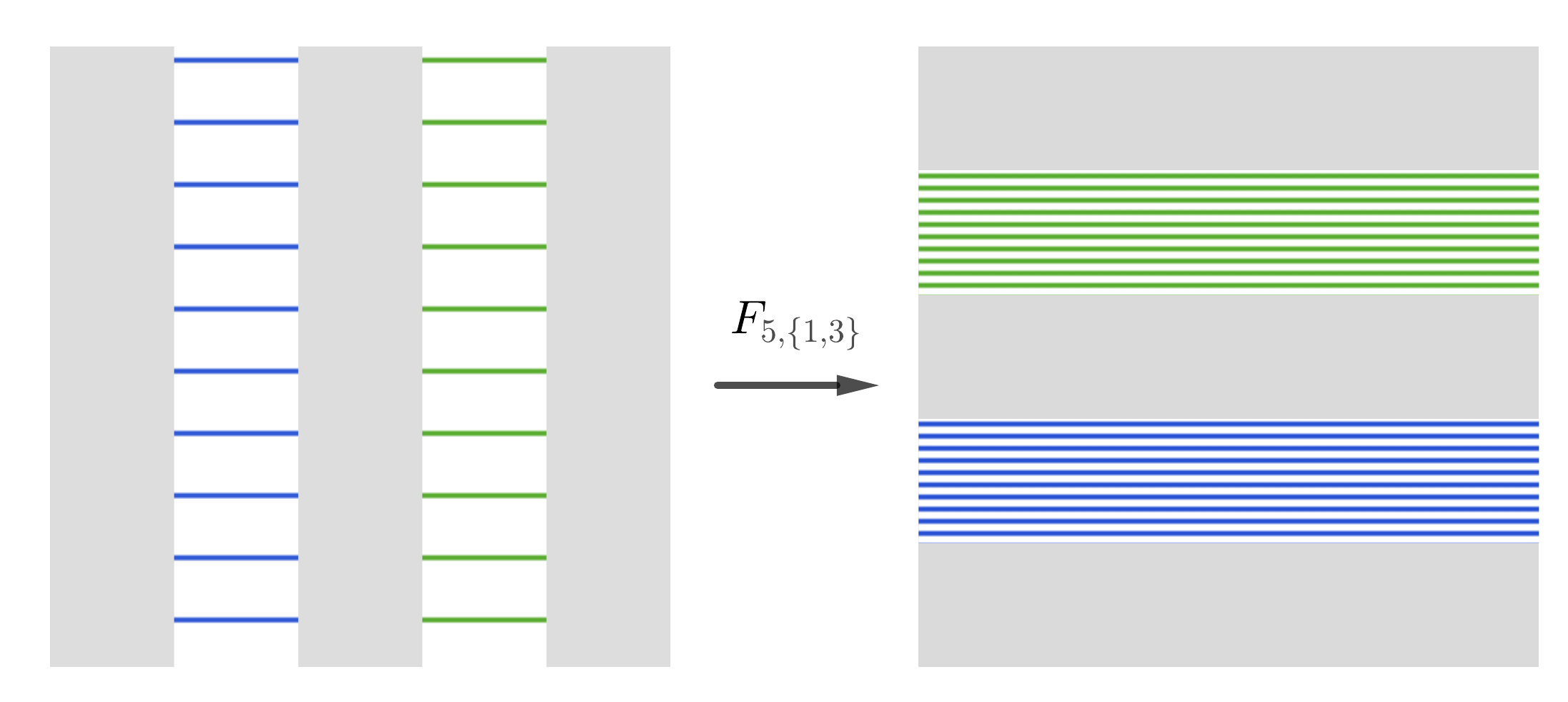}
\caption{Example of an open baker's map. }
\label{baker}
\end{figure}

It quantization is given by a matrix $M_N \in \mathcal{M}_N(\R)$ where $N$ plays the role of $(2 \pi h)^{-1}$.  In this context, one wants to count the number of eigenvalues of the matrix $M_N \in \mathcal{M}_N(\R)$ in the annulus $\{ |z| \geq \nu \}$ in the limit $N \to + \infty$.

These toy models are what we call \emph{open quantum maps}. 
There is a heuristic correspondence between open quantum maps and open quantum systems. These quantized open maps have to be interpreted as propagators at time $t = \log a$ of an open quantum system with constant unstable Jacobian $J^u=a$, so that, to a resonance $\lambda$ of the open quantum system, it corresponds an eigenvalue $e^{-i t \lambda }$ of the open quantum map, with $t = \log a$. 
In fact, \cite{NSZ11} and \cite{NSZ14} have shown that the study of the resonances in obstacle scattering can be reduced to the study of a family of more general open quantum maps. This reduction is the starting point of the proof of Theorem \ref{THM_new}. 

 Concerning the quantized open baker's map, there are convincing numerical and theoretical results. In \cite{Nonnenmacher_2007}, the authors gave numerical evidence of Weyl upper bounds of the type 
$$ \# \left( \text{Spec }(M_N) \cap \{ |z| \geq \nu \}\right) \leq C_\nu N^{d_H} $$ 
In \cite{DJ17}, the author proved an even more precise upper bound, when $N=a^k$ : 
$$ \# \left( \text{Spec }(M_{a^k}) \cap \{ |z| \geq \nu \}\right)  \leq C_\nu (a^k)^{d_H + \varepsilon - \Sigma(\nu) }  \, , \,  \forall k \in \N \quad;  \quad \Sigma(\nu) = \sigma\left( - \frac{\log \nu}{ \log J^u }\right)$$
where $J^u=a$ is the unstable Jacobian of the system and $\sigma( \gamma) = \max(1- d_H - 2 \gamma, 0)$. In particular, $\sigma$ shares the same properties as the one in Theorem \ref{THM_new}, since the classical decay rate of the baker's map is $1-d_H$. The link between $\Sigma(\nu)$ and $\sigma(\gamma)$ comes from the heuristic interpretation above.

\subsection{Statement of the main theorem}

Our proof of Theorem \ref{THM_new} relies on previous results of \cite{NSZ14}. Their Theorem 5 reduces the study of the scattering resonances $\lambda \in ]1/h -R, 1/h +R[ + i ]-R, R[$ to the study of the cancellation of
$$ z \in ]-R,R[ + i ]-R,R[ \mapsto \det( \I - M(z;h) ) $$ 
where 
\begin{equation}\label{M(z)}
M(z) = M(z;h) : L^2 (\partial \mathcal{O}) \to L^2 (\partial \mathcal{O})
\end{equation}  is a family of \emph{open quantum hyperbolic maps} (see below Section \ref{hyperbolic_open_quantum_map}). The family $z \mapsto M(z)$ depends holomorphically on $z \in ]-R,R[ + i ]-R,R[ $ for some arbitrary $R>0$ and is sometimes called a \emph{hyperbolic quantum monodromy operator}. The zeros $z$ and the resonances are related by the relation $ h \lambda = 1+z$. The notion of \emph{monodromy} comes from the fact that the outgoing solutions of the equation 
$- \Delta u = \lambda^2 u$ must satisfy the equation $M(z)u=u$, which dictates the behavior of $u$ on the boundary of the obstacles. We now introduce some definitions required to state the main theorem of this paper. We show how Theorem \ref{Theorem_main} implies Theorem \ref{THM_new} using the results of \cite{NSZ14} in \ref{subsection_billiard_map}.

\subsubsection{Open quantum hyperbolic maps and statement on the main theorem }
\label{hyperbolic_open_quantum_map}
The following long definition is based on the definitions in the works of Nonnenmacher, Sjöstrand and Zworski in \cite{NSZ11} and \cite{NSZ14} specialized to the 2-dimensional phase space. 
Consider open intervals $Y_1, \dots, Y_J$ of $J$ copies of $\R$ and set  : 
$$Y = \bigsqcup_{j=1}^J Y_j \subset \bigsqcup_{j=1}^J \R$$
and consider 
$$ U = \bigsqcup_{j=1}^J U_j \subset \bigsqcup_{j=1}^J T^*\R \quad ; \quad \text{where } U_j \Subset T^*Y_j \text{ are open sets }$$
The Hilbert space $L^2(Y)$ is the orthogonal sum $\bigoplus_{i=1}^J L^2(Y_i)$. 

For $j=1, \dots, J$, consider open disjoint subsets $\widetilde{D}_{i j } \Subset U_j$, $1 \leq i \leq J$, \emph{the departure sets}, and similarly, for $i= 1, \dots, J$ consider open disjoint subsets $\widetilde{A}_{i j } \Subset U_i$, $1 \leq j \leq J$, \emph{the arrival sets} (see Figure \ref{figure_example_1}). We assume that there exists smooth symplectomorphisms, with smooth inverse, 

\begin{figure}
\centering
\includegraphics[scale=0.5]{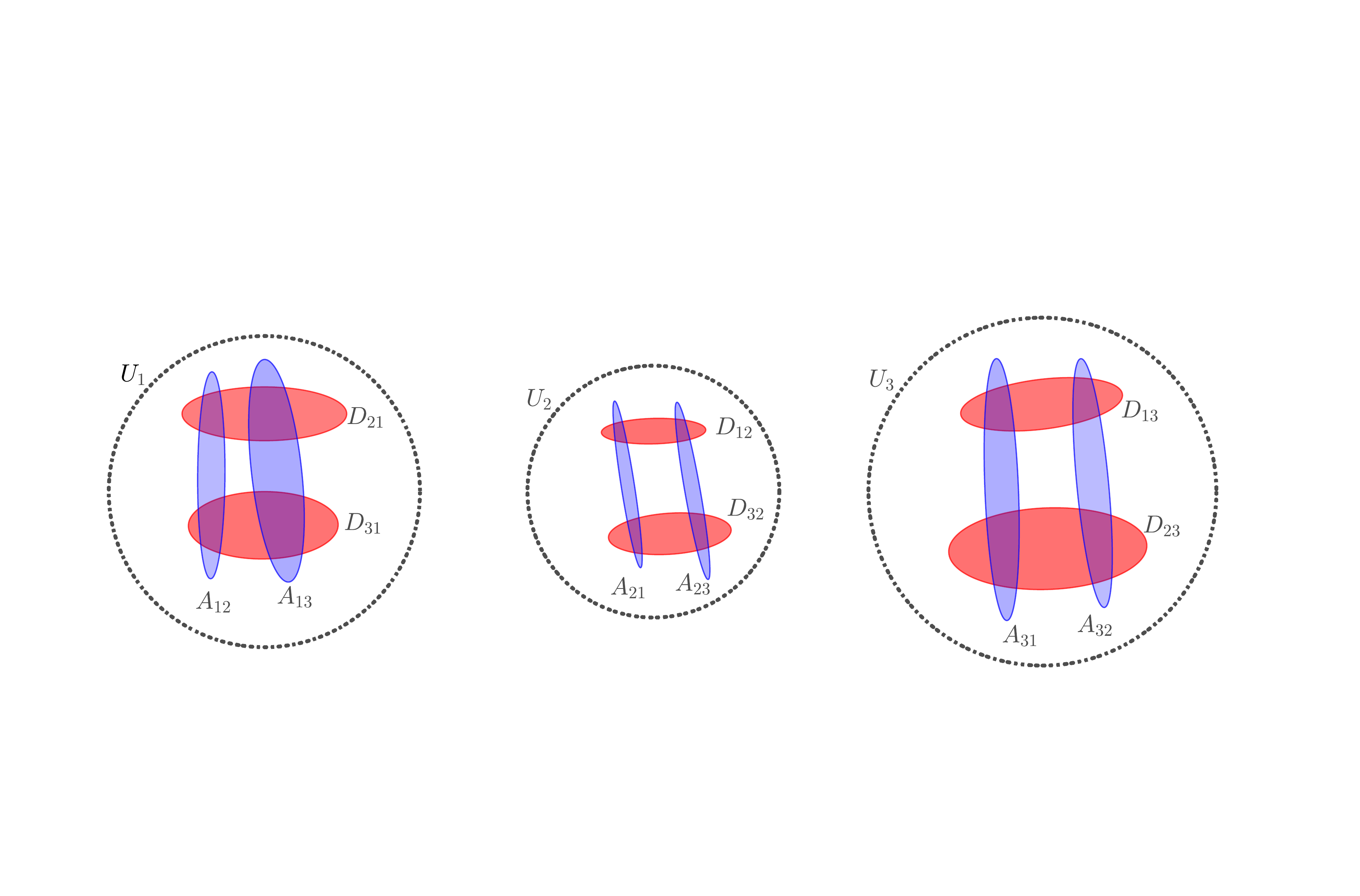}
\caption{A schematic example of open hyperbolic map with $J=3$  in a case where $D_{ii} = \emptyset$ for $i=1,2,3$.}
\label{figure_example_1}
\end{figure}

\begin{equation}
F_{i j } : \widetilde{D}_{i j } \to F_{ij } \left(\widetilde{D}_{i j } \right) = \widetilde{A}_{i j }
\end{equation}
We note $F$ for the global smooth map $F : \widetilde{D} \to \widetilde{A}$ where $\widetilde{A}$ and $\widetilde{D}$ are the full arrival and departure sets, defined as 
$$\widetilde{A} = \bigsqcup_{i=1}^J \bigsqcup_{j=1}^J \widetilde{A}_{i j } \subset \bigsqcup_{i=1}^J U_i $$ 
$$  \widetilde{D} = \bigsqcup_{j=1}^J \bigsqcup_{i=1}^J \widetilde{D}_{i j } \subset \bigsqcup_{j=1}^J U_j $$ 
We define the outgoing (resp. incoming) tail by $\mathcal{T}_+ \coloneqq \{ \rho \in U ; F^{-n}(\rho) \in U , \forall n \in \N \} $ (resp. $\mathcal{T}_- \coloneqq \{ \rho \in U ; F^{n}(\rho) \in U , \forall n \in \N \} $). We assume that they are closed subsets of $U$ and that the \emph{trapped set} 
\begin{equation}\label{trapped_set} 
\mathcal{T} = \mathcal{T}_+ \cap \mathcal{T}_-
\end{equation}
is compact. We also assume that 
\begin{center}
$\mathcal{T}$ is \emph{totally disconnected.}   
\end{center}

\begin{rem}
It is possible that for some values of $i$ and $j$, $\widetilde{D}_{ij} = \emptyset$. For instance, when dealing with the billiard map (see subsection \ref{subsection_billiard_map}), the sets $\widetilde{D}_{ii}$ are all empty. 
\end{rem}

We then make the following dynamical assumption. 

\begin{equation}\label{Hyperbolicity_assumption}
\mathcal{T} \text{ is a hyperbolic set for }F  
\end{equation}
Namely, for every $\rho \in \mathcal{T}$, we assume that there exist stable/unstable tangent spaces $E^{s}(\rho)$ and $E^{u}(\rho)$ such that : 
\begin{itemize}
\item $\dim E^{s}(\rho) = \dim E^{u}(\rho) = 1$
\item $T_\rho U = E^{s}(\rho) \oplus E^{u}(\rho)$
\item there exists $\lambda >0$, $C >0$  such that for every $v \in T_\rho U $ and any $n \in \N$, 
\begin{align}
v \in E^{s}(\rho)  \implies ||d_\rho F^n (v) || \leq C e^{-n \lambda} || v||  \label{hyp1}\\
v \in E^{u}(\rho) \implies ||d_\rho F^{- n} (v) || \leq C e^{-n \lambda} \label{hyp2} || v||
\end{align}where  $|| \cdot ||$ is a fixed Riemannian metric on $U$. 
\end{itemize}
The decomposition of $T_\rho U$ into stable and unstable spaces is assumed to be continuous. It allows to define stable and unstable Jacobians $J^u_n(\rho)$ and $J^s_n(\rho)$ (see Definition \ref{Def_jacobian} for the precise definition). We define the maximal Lyapounov exponent $\lambda_{\max}$ as 

\begin{equation}\label{def_lambda_max}
\lambda_{\max} = \sup_{\rho \in \mathcal{T}} \limsup_{ n \to + \infty} \frac{1}{n} \log J^u_n(\rho)
\end{equation}
We also note 
\begin{equation} \label{unstable_jacobian}
\varphi_u(\rho) = \log J^u_1(\rho)
\end{equation}

\begin{rem}\text{} \\
\begin{itemize}[label=-, nosep]
\item The definition is valid for any Riemannian metric on $U$ and we can of course suppose that is it the standard Euclidean metric. 
\item It is a standard fact (See \cite{Mather}) that there exists a smooth Riemannian metric on $U$, which is said to be adapted to the dynamic, such that (\ref{hyp1}) and (\ref{hyp2}) hold with $C=1$. 
\end{itemize}
\end{rem}

Here ends the description of the classical map. It encompasses the case of the billiard map, useful when dealing with obstacle scattering (see subsection \ref{subsection_billiard_map}). 
We then associate to $F$ \textit{open quantum hyperbolic maps}, which are its quantum counterpart. The definition of such operators is presented in detail in \ref{subsubsection_open_quantum_map}. An open quantum hyperbolic map $T : L^2(Y) \to L^2(Y)$ is an operator-valued matrices $(T_{ij})_{ij}$ where $T_{ij} : L^2(Y_j) \to L^2(Y_i)$ is a Fourier integral operator associated with $F_{ij}$ (see Definition \ref{def_FIO} for a precise definition).

We now come to the statement of the main theorem of this paper. 
\paragraph{Assumptions of Theorem \ref{Theorem_main}. } We consider a family $(M_h(z))_z = (M(z;h))_z$ of open hyperbolic quantum maps, associated with $F$, as defined in Definition \ref{def_FIO}, and depending holomorphically on a parameter $z \in \Omega=\Omega_R=]-R,R[ + i ]-R,R[ $ with $R$ fixed (but in practice, for applications, it can be chosen arbitrarily large). We suppose that there exists $L >0$ and $a \in \cinfc(T^*Y)$ such that $\supp (a)$ is contained in a compact neighborhood $\mathcal{W}$ of $\mathcal{T}$, $\mathcal{W} \subset \widetilde{D}$, $a = 1$ in a neighborhood of $\mathcal{T}$ and uniformly in $\Omega$, 
$$ M_h(z)(1 - \op(a) ) = O(h^{L})_{L^2 \to L^2}$$
Let's note $\alpha_h(z)$ the amplitude of $M_h(z)$ (as defined after definition \ref{def_FIO}). We make the following assumption on $\alpha_h$ : 
there exists a neighborhood $V \subset U$ of $\mathcal{T}$ and a smooth function $t_{ret} : V \to \R_+$\footnote{In the applications, $t_{ret}$ is a return time function.} such that $\inf_V t_{ret} >0$, $\sup_V t_{ret} < + \infty$ and for all $z \in \Omega$ and $\rho \in V$, 
$$ \alpha_h(z)(\rho) = \exp ( - i \im z t_{ret}(\rho)) + O(h^{1-})_{S_{0^+}}$$  
that is, there exists $\chi \in \cinfc(U)$ supported in a larger neighborhood of $\mathcal{T}$ with $\chi \equiv 1$ on $V$,  such that for every $\eta >0$, $\chi \left( \alpha_h(z) - \exp ( - i z t_{ret}) \right)$ is in $h^{1-\eta} S_\eta$ uniformly for $z \in \Omega$. The definition of the symbol class $S_\eta$ and $S_{0^+}$ are recalled in Section \ref{Section_preliminaries_semiclassical}.

\begin{rem}
In particular, the principal part $\alpha_z(\rho) = \exp ( - i z t_{ret}(\rho))$ of $\alpha_h$ is independent of $h$ in $V$. This assumption, which may look strong at first glance, is in fact satisfied in the two applications we consider (see \ref{subsection_billiard_map} and \ref{subsection_potential_scattering}). In fact, the works of \cite{NSZ11} and \cite{NSZ14} allow to work up to $\im z =C \log h$. For such $z$, $\alpha$ is clearly $h$ dependent and lives in the symbol class $S_{0^+}$. 
\end{rem}

We also assume that $M_h(z)$ is uniformly bounded for $z \in \Omega$ and for all $h$ small enough 
$$ ||M_h(z)||_{L^2 \to L^2} \leq C.$$
Let us now define the following quantity : 
\begin{equation}\label{definition_p_beta}
p(\beta) =  -\frac{1}{6 \lambda_{\max}} P(-\varphi_u + 2 \beta t_{ret} )
\end{equation}
where $P$ denotes the topological pressure of $ \varphi : \rho \in \mathcal{T} \mapsto -\varphi_u + 2 \beta t_{ret} $ with respect to the dynamics of $F$ on $\mathcal{T}$. 
It is defined as (see also \ref{def_pressure})
$$ P(\varphi) = \lim_{\epsilon \to 0 } \limsup_{ n \to + \infty} \frac{1}{n}\log  P_0(\varphi, n,\epsilon)$$
where 
$$ P_0(\varphi, n,\epsilon) = \sup \left\{ \sum_{x \in E} \exp \left( \sum_{i=0}^{n-1} \varphi (F^i(x) ) \right)  \; ; \; E \text{ is } (n,\epsilon) \text{ separated} \right\}$$
(a subset $E \subset \mathcal{T}$ is said to be $(n,\epsilon)$ separated if for every $x,y \in E, x\ \neq y$, there exits $0 \leq i \leq n-1$, $d( F^i(x), F^i(y) ) > \epsilon$). The quantity $\sum_{i=0}^{n-1} \varphi (F^i(x) )$ is called a Birkhoff sum. The map $\beta \mapsto p(\beta)$ is a non increasing function of $\beta$ and at $\beta=0$, we have 
$$ p(0) = -\frac{1}{6 \lambda_{max}} P(-\varphi_u) >0. $$

For $\Omega^\prime \subset \Omega$, we note 
$m_M(\Omega^\prime) = \sum_{z \in \Omega^\prime, f_h(z) = 0 } m(z)$ where $m(z)$ stands for the multiplicity of $z$ as a zero of $f_h(z)=\det(1- M_h(z) )$. Note that this determinant is well-defined since the operators $M_h(z)$ are constructed trace-class (see the \ref{subsubsection_open_quantum_map}). In this paper, we prove 

\begin{thm}\label{Theorem_main}

For every $\varepsilon >0$, $\gamma > 0$  and $0<R^\prime <R$, there exist $C = C_{\varepsilon, \gamma,R^\prime}>0$ and $h_0 >0$ such that  
$$ m_M \big( \{ | \re z | < R^\prime ,  \im z \in [-\gamma,0]  \} \big) \leq  C h^{-d_H + \max( p(\gamma+ \varepsilon)- \varepsilon,0) } \; , \; \forall 0 < h \leq h_0$$
where 
$2d_H$ is the Hausdorff dimension of $\mathcal{T}$. 
\end{thm}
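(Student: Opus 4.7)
The strategy is a Jensen-type zero-counting argument applied to the holomorphic function $f_h(z):=\det(I-M_h(z))$, where the improvement over the standard $h^{-d_H}$ bound comes from a refined semiclassical trace-norm estimate on iterates $M_h(z)^N$. For any holomorphic function $f$, Jensen's inequality gives $\#\{\text{zeros in }D(z_0,r)\}\le\frac{1}{\log(R/r)}\bigl(\log\max_{D(z_0,R)}|f|-\log|f(z_0)|\bigr)$; covering the rectangle $\{|\re z|<R',\,\im z\in[-\gamma,0]\}$ by $O(1)$ disks reduces the theorem to an upper bound on $\log|f_h|$ inside a slightly larger complex neighborhood, together with a lower bound on $\log|f_h(z_0)|$ at one carefully chosen reference point. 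The lower bound $\log|f_h(z_0)|\ge -Ch^{-d_H}$ follows from the NSZ14 trace-norm estimate combined with the elementary bound $|\det(I-M)|\ge\exp(-\|M\|_{\tr}/(1-\|M\|))$ applied at a point $z_0$ where a spectral gap gives $\|M_h(z_0)\|<1/2$.

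For the upper bound, the plan is to pass to iterates $M_h(z)^N$ with $N=N(h)$ comparable to a fraction of the Ehrenfest time $n_e=\log(h^{-1})/\lambda_{\max}$. Viewed as an FIO associated with $F^N$, the operator $M_h(z)^N$ has principal amplitude $\exp(-izS_N t_{ret})$ with Birkhoff sum $S_N t_{ret}:=\sum_{k=0}^{N-1}t_{ret}\circ F^k$. Using a microlocal partition of unity indexed by an $(N,\varepsilon)$-separated set $\mathcal{E}_N\subset\mathcal{T}$, with cover pieces adapted to the hyperbolic splitting and kept above the semiclassical scale $\sqrt h$, decompose $M_h(z)^N$ into operators localized near orbit segments of length $N$. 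Combining FIO trace-norm estimates on each piece (the unstable Jacobian contributing $e^{-S_N\varphi_u(\rho)}$ and the $\gamma$-damping contributing the amplitude weight) and summing over $\mathcal{E}_N$ via the definition of the topological pressure, one aims for a bound of the form
\begin{equation*}
\|M_h(z)^N\|_{\tr}\;\lesssim\;h^{-d_H}\,\exp\!\bigl(N\,P(-\varphi_u+2\gamma\,t_{ret})+o(N)\bigr),
\end{equation*}
uniformly in the strip, the factor $2$ in front of $\gamma\,t_{ret}$ arising from a Cauchy--Schwarz/$MM^*$ step used to control trace norms through Hilbert--Schmidt quantities. The polynomial identity $\det(I-M_h(z)^N)=\prod_{j=0}^{N-1}\det(I-\omega_j M_h(z))$ with $\omega_j=e^{2\pi ij/N}$ then transfers the estimate to $f_h$: Jensen applied to $\det(I-M_h(z)^N)$ bounds $\#\{\text{zeros of }f_h\text{ in the strip}\}$ by $\frac{C}{N}\log\|M_h(z)^N\|_{\tr}$ plus harmless errors. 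Optimizing with $N=\lfloor n_e/3\rfloor$, the prefactor $1/(N\lambda_{\max})\sim 3/\log(h^{-1})$ together with the factor $2$ above produces the coefficient $\frac{1}{6\lambda_{\max}}$ in $p$, while the $\varepsilon$ shifts $\gamma\mapsto\gamma+\varepsilon$ and $-\varepsilon$ in the final exponent absorb the $o(N)$ error in the pressure limit.

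The main obstacle is the refined trace-norm estimate on $M_h(z)^N$: one must decompose an $N$-fold composition of FIOs associated with a hyperbolic map while simultaneously tracking the unstable Jacobian (which sets the geometric scale of each cover piece, forcing $N\lesssim n_e$) and the amplitude damping (the source of the spectral gain), uniformly in $z\in\Omega$ and $h$. A further delicate point is handling the hypothesis $\alpha_h(z)=\exp(-iz\,t_{ret})+O(h^{1-})_{S_{0^+}}$ across $N\sim\log(h^{-1})$ symbolic compositions: the accumulation of $h^{-\eta}$ remainders from iterated microlocal calculus is precisely what forces the $\varepsilon$ loss appearing in $h^{-d_H+(p(\gamma+\varepsilon)-\varepsilon)_+}$, and verifying that the $\chi$-cutoffs from the support assumption on $a$ do not deteriorate the pressure bound requires a careful propagation argument along the hyperbolic orbits.
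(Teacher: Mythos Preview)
Your global architecture (Jensen's formula, passing to iterates $M^N$ with $N\sim\log(1/h)$, and controlling a Schatten norm via a Birkhoff-sum/pressure estimate) matches the paper's, and your lower bound at $z_0$ via a spectral gap is exactly what the paper does. But two points are genuinely off.

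First, the step ``Jensen applied to $\det(I-M^N)$ bounds the zeros of $f_h$ by $\tfrac{C}{N}\log\|M^N\|_{\tr}$'' is not correct: the factorization $\det(I-M^N)=\prod_j\det(I-\omega_jM)$ only tells you that the zeros of $f_h$ are \emph{among} those of $\det(I-M^N)$, so Jensen gives $m_M(\Omega_0)\le C\bigl(\sup\log|\det(I-M^N)|-\log|\det(I-M(z_0)^N)|\bigr)$ with no division by $N$. The paper uses precisely this (with $2N$ in place of $N$), together with $\log|\det(I-A^2)|\le\|A\|_{HS}^2$. In particular your accounting for the coefficient $\tfrac{1}{6\lambda_{\max}}$ via a $1/N$ prefactor cannot be right; in the paper the $1/6$ arises because the coherent-state propagation estimate (Proposition~\ref{Prop_propagation_coherent_states}) is only valid while $\|d_\rho F^N\|^3\ll h^{-1/2}$, forcing $N\lesssim\tfrac{1}{6\lambda_{\max}}\log(1/h)$, and it is this $N$ that multiplies the pressure in $e^{NP}=h^{-\vartheta P}$.

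Second, and more seriously, you are missing the mechanism that produces the prefactor $h^{-d_H}$ rather than $h^{-1}$. A direct FIO trace-norm bound on $M^N$, using only the built-in microlocalization $M(1-\op(a))=O(h^L)$, localizes the input to a tube around $\mathcal T_-$ and the output to a tube around $\mathcal T_+$, but never simultaneously to an $h^\delta$-neighborhood of $\mathcal T=\mathcal T_+\cap\mathcal T_-$; summing your pieces would give $\|M^N\|_{HS}^2\sim h^{-1}e^{NP}$, not $h^{-d_H}e^{NP}$. The paper achieves this extra localization by conjugating $M$ by $e^{tG}$ with $G=\op(g)$ built from an escape function $g$ adapted to $\mathcal T_\pm$ at scale $h^\delta$. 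The heart of the argument is then a pointwise estimate on evolved coherent states (Proposition~\ref{Prop_Key}): for $\rho$ in the $h^\delta$-tube around $\mathcal T$, the squeezed state $M_t^N\varphi_\rho$ is a Lagrangian state along the local unstable leaf, the escape-function damping kills the portion of its amplitude lying outside an $h^\delta$-neighborhood of $\mathcal T_-$, and a box-counting argument on $W_u(\rho_n)\cap\mathcal T$ (Lemma~\ref{Lemma_number_of_intervals}) converts that into the factor $(J^u_{\mathbf q})^{d_H-1}$. Integrating this over $\rho$ and combining with a volume estimate for $\mathcal T_{\delta,\delta_1}\cap\mathcal W_{\mathbf q}^-$ (Proposition~\ref{Prop_volume_small_neigh}) is what yields $\|M_t^N\|_{HS}^2\le Ch^{-d_H-\vartheta P(-\varphi_u+2\gamma t_{ret})+O(\varepsilon)}$. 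Your sketch has no analogue of this escape-function/fractal-counting interaction, and without it the improvement over the NSZ14 bound does not follow.
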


\subsubsection{Application in semiclassical scattering by a potential}\label{sub_section_application_potential}
The reduction from an open quantum system to an open quantum hyperbolic map, proved in \cite{NSZ14} for the case of obstacle scattering, is also proved in the case of potential scattering in \cite{NSZ11}. As a consequence, we can prove a bound similar to the one given by Theorem \ref{THM_new} in potential scattering. The following theorem is proved in \ref{subsection_potential_scattering} using Theorem \ref{Theorem_main}.

\begin{thm}\label{Theorem_potential_scattering}
Let $V \in \cinfc(\R^2)$, $E_0>0$ and consider the semiclassical pseudodifferential operator $P_h = -h^2 \Delta + V - E_0$.
Let's note $p(x,\xi) = \xi^2 + V - E_0$ and assume that
$$ dp \neq 0 \text{ on } p^{-1}(0)$$ 
Let's note $H_p$ the Hamiltonian vector field associated with $p$ and $\Phi_t = \exp(tH_p)$ the corresponding Hamiltonian flow. Let's note $K_0$ the trapped set of $p$ at energy $0$ and let's assume that 
\begin{enumerate}
[label= (\roman*),nosep]
\item $\Phi_t$ is hyperbolic on $K_0$ ; 
\item $K_0$ is topologically one dimensional. 
\end{enumerate}
Let $\gamma_{cl}$ be the classical escape rate of the system at energy $0$ and $2d_H +1$ be the Hausdorff dimension of $K_0$. Let $N(R,\gamma ; h)$ be the number of resonances of $ P_h/h$ in $\{ |\re z | <R,   \im z \in [-\gamma ,0 ]  \}$, counted with multiplicity. Then, there exists a non increasing function $\sigma : \R_+ \to \R_+$ satisfying 
\begin{itemize}[nosep]
\item $\sigma(\gamma) > 0$ for $0 \leq \gamma < \gamma_{cl}/2$ ; 
\item $\sigma(\gamma) = 0$ for $\gamma \geq \gamma_{cl}/2 $
\end{itemize}
and such that 
for all $R,\gamma >0$ and for all $\varepsilon>0$ there exists $C_{R,\gamma, \varepsilon}>0$ and $h_0>0$ such that 
$$\forall 0 <h \leq h_0 \, ,\,N(R,\gamma ; h) \leq C_{R,\gamma, \varepsilon} h^{-d_H + \sigma(\gamma) - \varepsilon}.$$ 

\end{thm}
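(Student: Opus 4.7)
The approach is to bring the problem into the scope of Theorem~\ref{Theorem_main} via the monodromy construction of \cite{NSZ11} for semiclassical potential scattering, and then translate the resulting map-pressure bound $p(\gamma)$ into the flow quantity $\sigma(\gamma)$ via Bowen--Ruelle formalism. First, I choose a Poincaré section $\Sigma$ transverse to the Hamiltonian vector field $H_p$ inside the energy shell $p^{-1}(0)$ and meeting every orbit of $K_0$; this is possible because $dp \neq 0$ on $p^{-1}(0)$. Assumptions (i) and (ii) guarantee that the return map $F$, together with its smooth return time $t_{ret}$, is hyperbolic on $\mathcal{T} = K_0 \cap \Sigma$, and that $\mathcal{T}$ is totally disconnected: indeed, $K_0$ being topologically one-dimensional and transverse to the flow direction forces $\mathcal{T}$ to be zero-dimensional. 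The construction of \cite{NSZ11} then produces a holomorphic family $z \mapsto M(z;h)$ of open quantum hyperbolic maps associated to $F$ on a box $\Omega_R$, whose $\det(I - M(z;h))$ vanishes, with multiplicities, exactly at the resonances of $P_h/h$ lying in a similar box.

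The remaining assumptions of Theorem~\ref{Theorem_main} are to be checked. The uniform operator bound and the microlocal cutoff estimate $M(z;h)(1 - \op(a)) = O(h^L)_{L^2 \to L^2}$ are standard outputs of \cite{NSZ11}. The key point is the amplitude ansatz: propagation across the non-trapping interstitial region, of duration $t_{ret}(\rho)$ at shifted energy, contributes to the principal symbol of $M(z;h)$ a factor $\exp(-i z\, t_{ret}(\rho))$. After absorbing the part depending on $\re z$ into the symplectic phase of the Fourier integral operator, what remains fits the form required by Theorem~\ref{Theorem_main}, with subprincipal contributions producing an $O(h^{1-})_{S_{0^+}}$ remainder controlled uniformly on compact subsets of $\Omega$ by stationary phase. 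Applying Theorem~\ref{Theorem_main} then yields
$$ N(R,\gamma;h) \leq C\, h^{-d_H + \max(p(\gamma+\varepsilon) - \varepsilon,\,0)}, $$
with $p(\beta) = -\tfrac{1}{6 \lambda_{\max}} P(-\varphi_u + 2\beta\, t_{ret})$ for the Poincaré map pressure.

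To extract the function $\sigma$, I set $\sigma(\gamma) := \max(p(\gamma), 0)$ and use the continuity of $p$ to absorb the inner shift by $\varepsilon$ into the outer one. Monotonicity of $\sigma$ follows from the strict monotonicity of $\beta \mapsto P(-\varphi_u + 2\beta\, t_{ret})$, itself guaranteed by the positivity of $t_{ret}$ on $\mathcal{T}$ and the variational principle for topological pressure. The location of the zero of $p$ is then fixed by the Abramov--Bowen--Ruelle relation between flow and map pressures: if $G(\rho) = \int_0^{t_{ret}(\rho)} g(\Phi_s \rho)\, ds$ discretizes a flow observable $g$, then $P^{\text{map}}(G) = 0$ iff $P^{\text{flow}}(g) = 0$. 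Applied to $g = -J^u_\Phi + 2\beta$, where $J^u_\Phi$ is the infinitesimal unstable expansion rate of the flow, this yields $P(-\varphi_u + 2\beta\, t_{ret}) = 0$ exactly when $2\beta = -P^{\text{flow}}(-J^u_\Phi) = \gamma_{cl}$. Hence $\sigma(\gamma) > 0$ for $\gamma < \gamma_{cl}/2$ and $\sigma(\gamma) = 0$ for $\gamma \geq \gamma_{cl}/2$, as required.

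The principal obstacle is the verification, in the second paragraph, that the monodromy operators constructed by \cite{NSZ11} in the potential-scattering setting really do fit the precise amplitude ansatz of Theorem~\ref{Theorem_main}, with $S_{0^+}$-uniform control of the remainder as $\im z$ ranges over $[-\gamma, 0]$. Once this matching is nailed down, the remainder of the argument, and in particular the Bowen--Ruelle identification that produces the explicit value of $\sigma$, is essentially a routine application of thermodynamic formalism.
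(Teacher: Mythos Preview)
Your approach is essentially the same as the paper's: reduce via the Poincar\'e section construction of \cite{NSZ11}, verify the hypotheses of Theorem~\ref{Theorem_main} (in particular the amplitude ansatz), and then identify the root of $\beta \mapsto P(-\varphi_u + 2\beta\, t_{ret})$ with $\gamma_{cl}/2$ via Bowen--Ruelle thermodynamic formalism. Your explicit invocation of the Abramov-type flow/map pressure correspondence is a nice touch that the paper leaves implicit.

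There is one technical wrinkle you have glossed over, which the paper does address. In \cite{NSZ11} the object whose Fredholm determinant detects the resonances is not the open quantum hyperbolic map $M(z;h)$ itself but a \emph{finite-rank} matrix $\mathbf{M}(z;h) = \Pi_h M(z;h) \Pi_h + O(h^L)$, where $\Pi_h$ is a finite-rank projector satisfying $\Pi_h M(z;h)\Pi_h = M(z;h) + O(h^L)$. Theorem~\ref{Theorem_main} as stated applies to $M(z;h)$, not to $\mathbf{M}(z;h)$, so one cannot quite apply it as a black box. The paper handles this by going into the proof: in the key Hilbert--Schmidt estimate (\ref{key_equation}) of Proposition~\ref{Prop_Key_2}, one may replace $M(z;h)^{N(h)}$ by $\mathbf{M}(z;h)^{N(h)}$ because the two differ by $O(|\log h|\, h^L)$ when $N(h) = O(\log 1/h)$. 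This is a minor point, but your sentence ``whose $\det(I - M(z;h))$ vanishes, with multiplicities, exactly at the resonances'' is not literally what \cite{NSZ11} provides in the potential case, and the extra step deserves a line.
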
 

\begin{rem}
We are interested in resonances of a Schrödinger operator $P_h = -h^2 \Delta + V - E_0$ in a neighborhood of $0$ of size $h$. To keep notations consistent with the spectral parameter $z$ appearing in Theorem \ref{Theorem_main}, we renormalize to study the resonances of $P_h/h$ in a fixed neighborhood of 0. 
\end{rem}

\begin{rem}
The theorem could be extended to a wider class a perturbations of the Laplacian in manifolds with Euclidean ends. We refer the reader to \cite{NSZ11} (Section 2.1) for more general assumptions
\end{rem}

 \subsubsection{Skecth of proof of Theorem \ref{Theorem_main}. }
In \cite{NSZ14}, to prove the Fractal Weyl upper bound, the author modify the monodromy operator $M(z;h)$ and replace it by 
$$M_{tG} (z;h)= e^{-t G} M(z;h) e^{tG}$$
where $G = \op(g)$ with $g$ an escape function in the critical symbol class $\tilde{S}_{1/2}$, constructed such that the Fourier integral operator $M_{tG}$ has a small amplitude outside a neighborhood $\mathcal{T}(h^{1/2- })$ of $\mathcal{T}$, and $t$ is a fixed parameter. Here, to avoid the critical symbol class $\tilde{S}_{1/2}$, we will work in the symbol class $S_\delta$ for some $\delta = 1/2 - \varepsilon$, so that the interesting neighborhood of $\mathcal{T}$ becomes $\mathcal{T}(h^\delta)$, which has a volume comparable to $h^{ \delta(2 - 2 d_H)}$. 

Since the zeros of $z \mapsto \det(1-M_{tG}(z;h))$ coincide (with multiplicity) with the zeros of $\det(1-M(z;h))$, we wish to count the zeros of $\det(1-M_{tG}(z;h))$. 
Jensen's formula and standard spectral inequalities on spectral determinants reduce the estimates on the zeros of $\det(1-M_{tG}(z;h))$ to a control on the Hilbert-Schmidt norm of $M_{tG}(z;h)$. In \cite{NSZ14}, the author show that $M_{tG}$ is close to an operator having a rank comparable to $h^{-d_H}$, which lead them to a bound of the form 
$$||M_{tG}(z;h)||_{HS}^2 \leq ||M_{tG}(z;h)||^2_{L^2 \to L^2} \times \text{ rank }  \leq Ch^{-d_H}$$

To improve the fractal upper bound of \cite{NSZ14} and prove Theorem \ref{Theorem_main}, we start with the simple observation that the zeros of $\det(1 - M_{tG}(z;h) ) $ are among the zeros of $\det(1- M_{tG}^n(z;h) ) $, for any $n \in \N^*$. We use this fact with an exponent $n=n(h)$ depending on $h$ : $n(h) \sim \nu \log 1/h$ for some $\nu >0$. \textit{A priori}, when $n(h)$ grows logarithmically, $M_{tG}^n$ becomes "nasty" (i.e. no more a Fourier Integral Operator in a suitable class; recall that essentially $g \in S_\delta$), and in particular, it becomes impossible to use Egorov's theorem as soon as $n \geq \varepsilon \log 1/h$, for some small $\varepsilon$ (essentially $\frac{1/2 - \delta}{\lambda_{\max}}$).  However, the action of the operator $M_{tG}(z)$ on coherent states $\varphi_{\rho}$ will remain under control for a sufficiently long logarithmic time. We will be able to obtain good estimates up to 
$$ n(h) \sim \frac{1}{6 \lambda_{\max}} \log 1/h$$ 
To use these estimates, we use the representation of the trace in terms of coherent states 
: 
\begin{equation}\label{integral}
|| M_{tG}(z;h)^n||_{HS}^2 = \frac{1}{2\pi h} \int_{U} ||M_{tG}(z;h)^n \varphi_\rho ||^2 d\rho 
\end{equation}
The main new ingredient in the present paper will consist in controlling precisely the evolved states $M_{tG}^n \varphi_{\rho}$ for such logarithmic times. The behavior of this state will depend on the initial point $\rho$ (see Proposition \ref{Prop_Key} for a precise and rigorous statement)
\begin{itemize}
\item If $\rho$ is not in an $h^\delta$ neighborhood of $\mathcal{T}$, we will show that for any $L>0$, we can find $t=t(L)$ such that the norm of $M_{tG}^n \varphi_{\rho}$ is $O(h^L)$. As a consequence, the mass in the integral in (\ref{integral}) is essentially contained in an $h^\delta$ neighborhood of $\mathcal{T}$. In particular, by simply estimating $||M_{tG}(z;h)^n||^2 \leq C$ in a $h^\delta$ neighborhood of $\mathcal{T}$, we find that 
$$  || M_{tG}(z;h)^n||_{HS}^2 \leq C h^{-1} h^{\delta(2- 2d_H)} \leq C h^{-d_H + O(\varepsilon)}$$
This gives the previous upper bound of \cite{NSZ14}
\item For states sufficiently close to $\mathcal{T}$, $\varphi_\rho$ will evolve into a squeezed coherent state, aligned along the unstable leaves of $\mathcal{T}_+$. This phenomenon can be understood as a delocalization of the coherent state. In the unstable direction, the components of this squeezed state far from $\mathcal{T}_-$ (that it at distance bigger that $h^{\delta}$) will experience a strong damping due to the escape function. For such state, we are able to control the squared $L^2$-norm $$w_z(\rho) \coloneqq || M_{tG}(z;h)^n \varphi_{\rho}||^2$$ by (again, see Proposition \ref{Prop_Key} for the rigorous statement)
\begin{equation}\label{crucial_bound}
 w_z(\rho) \leq C \left( \prod_{i=0}^{n-1} \alpha_{z}(F^i(\rho)) \right)^2 J^u_n(\rho)^{d_H -1}
\end{equation} 
where $\alpha_z(\rho) = \exp( - (\im z) t_{ret}(\rho) )$. 
This is the crucial estimate of this paper. 
\end{itemize}
Plugging this bound into the integral in (\ref{integral}), we are able to prove the following upper bound  $$ ||M_{tG}(z;h)^n||_{HS}^2 \leq C_\varepsilon h^{-d_H + \tilde{\sigma}(z) -O( \varepsilon)} \; ; \; \tilde{\sigma}(z) = -\frac{1}{6 \lambda_{\max}} P(- \varphi_u + 2 \log \alpha_z )$$ (see Proposition \ref{Prop_Key_2}). The link between the pressure and (\ref{crucial_bound}) appears when one writes 
$$w_z(\rho) \leq CJ^u_n(\rho)^{d_H} \exp \left( \sum_{i=0}^{n-1} ( -2 \im z t_{ret} - \varphi_u) \circ F^i (\rho) \right)$$
The factor $J^u_n(\rho)^{d_H}$ disappears after integrating (see the proof of Proposition \ref{Prop_Key_2}). 
It finally gives Theorem \ref{Theorem_main} (see Section \ref{Section_proof_main_theorem}). 

The crucial estimate (\ref{crucial_bound}) is the main novelty of this paper. It relies on propagation of coherent states and a subtle interaction of the evolved state with the escape function (see Figure \ref{figure_evolution_state}). The proof of (\ref{crucial_bound}) relies on the following ideas : 
\begin{itemize}
\item  The term \begin{equation}\label{definition_of_pi_alpha}
\pi_{\alpha,n}(\rho) \coloneqq  \prod_{i=0}^{n-1} \alpha_{z}(F^i(\rho)) 
\end{equation} comes from the repeated action of $M(z)$ on $\varphi_\rho$. 
\item The initial state $\varphi_\rho$ is a wavepacket of size $h^{1/2}$. $M(z)^n \varphi_\rho$ is a squeezed coherent state, microlocalized near $F^n(\rho)$. This is due to the fact that we will work with $n=n(h) \leq  \frac{1-\eta}{6 \lambda_{\max}} \log(1/h)$ for some $\eta>0$. Nevertheless, it is no more microlocalized in a $h^{1/2}$ neighborhood of this point. It will be more convenient to write it as a Lagrangian state, associated with a local unstable leaf $W_u(\rho_n)$, for some $\rho_n \in \mathcal{T}$ close to $F^n(\rho)$ : if $\psi_u(x)$ is a generating function for $W_u(\rho_n)$, that is, if we can write $W_u(\rho_n)= \{ (x, \psi_u^\prime(x)  \}$,  the state will be written
$$ a_h(x) e^{\frac{i}{h} \psi_u(x)}$$ 
The size of this Lagrangian state along the unstable manifold is controlled by the local Jacobian near $\rho$ and is $O(h^{1/2} J^u_n(\rho))$ : we will see that 
$$ |x| \gg h^{1/2} J^u_n(\rho) \implies a_h(x) = \hinf$$ 
\item Finally, we need to understand the interaction of the escape function with this evolved state. The action of the escape function damps the part of the state at distance larger that $h^{\delta}$ of $\mathcal{T}$. Since such a state is very close to an unstable manifold, the only relevant damping on this state comes from the components at distance larger that $h^{\delta}$ from $\mathcal{T}_{-}$. Roughly speaking, to obtain the bound we want, we prove that if  $d( (x, \psi_u^\prime(x)) , \mathcal{T}_-) \leq h^\delta$, then 
\begin{equation}\label{crucial_1}
 a_h(x) \leq C \pi_{\alpha,n}(\rho) \left( J_n^u(\rho)\right)^{-1/2} h^{-1/4}
\end{equation}
and we prove that we can neglect the remaining points $x$ such that $d( (x, \psi_u^\prime(x)) , \mathcal{T}_-) \geq h^\delta$ (see Proposition \ref{Prop_crucial_estimates}). It gives 
$$ ||M_{tG}(z)^n\varphi_\rho ||^2_{L^2} \leq C\pi_{\alpha,n}(\rho)^2  J_n^u(\rho)^{-1} h^{-1/2} \text{Len}(X_-(\rho,\rho_n) )$$
where $$ X_-(\rho,\rho_n) = \{ x \in \R, |x| \leq C J^u_n(\rho)h^{1/2} , d( (x, \psi_u^\prime(x)) , \mathcal{T}_-) \leq h^\delta \} $$ 
\item It remains to control the length of $X_-(\rho,\rho_n)$. We use the fact that $\mathcal{T}_- \cap W_u(\rho_n)$ has box dimension $d_H$. In fact, we are interested by a piece of $W_u(\rho_n)$ of size $h^{1/2}J^u_n(\rho)$ and we show that such a piece can be covered by $N_h$ balls of radius $h^{\delta}$ with (see Lemma \ref{Lemma_number_of_intervals}) $$N_h \leq C \left( \frac{h^\delta}{h^{1/2} J^u_n(\rho)}\right)^{-d_H}$$
so that 
$$ \text{Len}(X_-(\rho, \rho_n)) \leq C N_h h^\delta \leq C h^{1/2} J^u_n(\rho)^{d_H} h^{-O(\varepsilon)}$$
\item Putting the pieces together, we obtain (\ref{crucial_bound}). 
\end{itemize}

\begin{figure}
  \centering
     \begin{subfigure}[b]{0.4\textwidth}
         \centering
         \includegraphics[width=8cm]{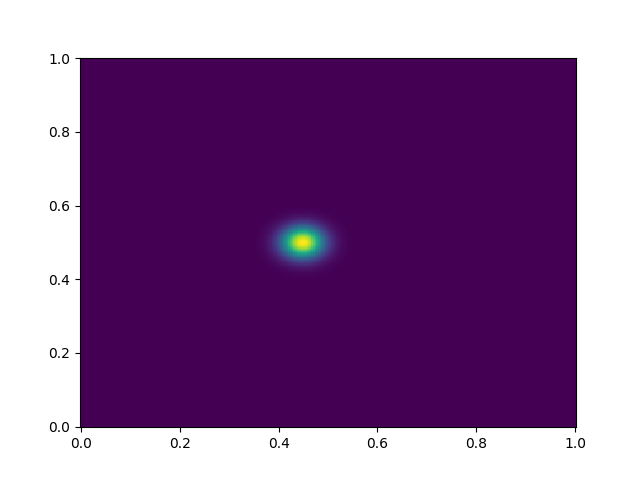}
         \caption{A coherent state of size $h^{1/2}$ ... \newline}
         \label{sub_1}
     \end{subfigure}
     \hfill
     \begin{subfigure}[b]{0.4\textwidth}
         \centering
         \includegraphics[width=8cm]{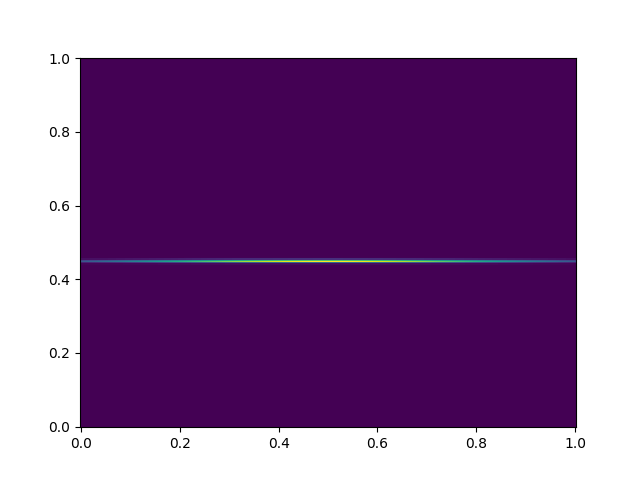}
         \caption{...evolved into a squeezed coherent state. }
         \label{sub_2}
     \end{subfigure}
      \begin{subfigure}[b]{0.4\textwidth}
         \centering
         \includegraphics[width=8cm]{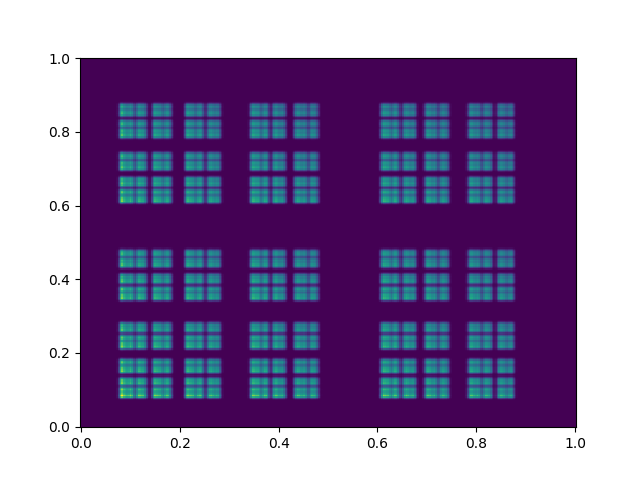}
         \caption{The escape function damps the region far from the trapped set...}
         \label{sub_3}
     \end{subfigure}
     \hfill
     \begin{subfigure}[b]{0.4\textwidth}
         \centering
         \includegraphics[width=8cm]{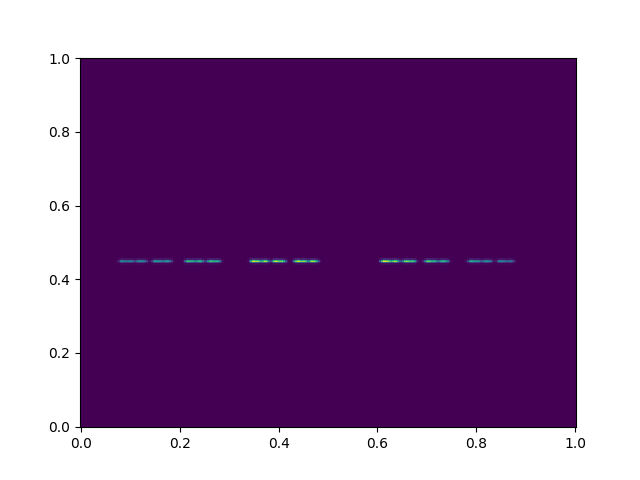}
         \caption{ ... and is responsible of a damping for the evolved coherent state.  }
         \label{sub_4}
     \end{subfigure}    
     \caption{We show the evolution in phase space of a coherent state in an open hyperbolic system, associated with an open baker's map. The color is related to the intensity of the Wigner distribution of the state. The damping due to the escape function is shown in (\ref{sub_3}). The initial coherent state is shown in (\ref{sub_1}), and the evolved state, without damping in (\ref{sub_2}). When we apply the damping, the evolved state loses part of its mass (\ref{sub_4})}. 
     \label{figure_evolution_state}
\end{figure}

\paragraph{Plan of the paper. } The paper is organized as follows : 
\begin{itemize}
\item We start with preliminaries in Section \ref{Section_preliminaries_semiclassical} and Section \ref{Section_preliminaires_dyna}. Section \ref{Section_preliminaries_semiclassical} is devoted to semiclassical results concerning pseudodifferential operators, Fourier integral operators, metaplectic operators and coherent states. Section \ref{Section_preliminaires_dyna} focuses on properties of hyperbolic dynamical systems. 
\item Section \ref{Section_proof_main_theorem} reduces the proof of Theorem \ref{Theorem_main} to the key proposition \ref{Prop_Key}, concerning the behavior of the propagated coherent states. 
\item Section \ref{Section_proof_key_prop} is devoted to the proof of Proposition \ref{Prop_Key}. 
\end{itemize}

\subparagraph{Notations.} 
Throughout the paper, we will use the same constant $C$ at different places, with different meaning. However, it will always have the same dependence on the dynamical system and the family of operators $M_h(z)$ we work with. That is, we write $f \leq Cg$ instead of : there exists $C>0$ depending on $F$ and $M_h(z)$ such that $f \leq C g$. At some point, we will fix a partition of unity of $U$, associated with local charts, depending on parameters $\varepsilon$ and $\varepsilon_0$. The constants $C$ will also depend on these objects. If the constant $C$ has other dependencies, we will make it precise it or write it in subscript if necessary.

  Finally, we write 
$ f \sim g$ to mean $C^{-1} f \leq g \leq C f$. 

\paragraph{Acknowledgment. } The author would like to thank Frédéric Naud and Stéphane Nonnenmacher to let him resume this project on an improved fractal upper bound, and Stéphane Nonnenmacher for useful discussions, a fruitful help and a careful reading of a first version of this article. 

\section{Semiclassical preliminaries} \label{Section_preliminaries_semiclassical}

\subsection{Pseudodifferential operators and Weyl quantization}
We recall some basic notions and properties of the Weyl quantization on $\R^n$. We refer the reader to \cite{ZW} for the proofs of the statements and further considerations on semiclassical analysis and quantizations.

\paragraph{Definitions.} We start by defining classes of $h$-dependent symbols. 
In the following definitions, $m$ is a positive functions defined on $T^*\R^n$ of the form $\langle \rho \rangle^N$, for some $\N \in \Z$, where $\langle \rho \rangle = \sqrt{1+ |\rho|^2}$ and $\rho = (x,\xi)$ is a point in phase space $T^* \R^n = \R^{2n}$. $m$ is called an order function (in the sense of \cite{ZW}, 4.4.1)

\begin{defi}
Let $0 \leq \delta < \frac{1}{2}$. We say that an $h$-dependent family $a \coloneqq \left( a (\cdot ; h) \right)_{0 < h \leqslant 1} \in \cinf(\R^{2n})$ is in the class $S_\delta(m)$ (and simply $S_\delta$ if $m=1$) if for every $\alpha \in \N^{2n}$, there exists $C_\alpha >0$ such that : 
$$\forall 0< h \leq 1 \, , \sup_{\rho \in \R^{2n}} | \partial^\alpha a (\rho ; h) | \leq C_\alpha h^{-\delta |\alpha|} m(\rho)$$
\end{defi}
We will use the notation $S_{0^+}(m) = \bigcap_{\delta >0} S_\delta(m)$. \\
We write $a =  O(h^N)_{S_\delta(m)}$ to mean that for every $\alpha \in \N^{2n}$, there exists $C_{\alpha,N}$ such that 
$$\forall 0< h \leq 1 \, , \sup_{\rho \in \R^{2n}}  h^{\delta |\alpha|}\partial^\alpha a (\rho ; h) | \leq  C_{\alpha,N} h^N m(\rho) $$ 
 If $a=O(h^N)_{S_\delta(m)}$ for all $N \in N$, we'll write $a= O(h^\infty)_{S_\delta(m)}$. 
For a given symbol $a \in S_\delta$, we say that $a$ has a compact essential support if there exists a compact set $K$ such that\footnote{Here $\mathcal{S}$ denotes the Schwartz space and the notation $\hinf_{\mathcal{S}(\R^{2n}) }$ means that every semi-norm is $\hinf$.} : 
$$ \forall \chi \in \cinfc(\R^{2n}), \supp \chi \cap K = \emptyset \implies \chi a =\hinf_{\mathcal{S}(\R^{2n}) }$$
We say that $a$ belongs to the class $S_\delta^{comp}$ and its essential support is then the intersection of all such compact $K$'s. We denote it $ \text{ess} \supp a \subset K$. In particular, the class $S_{\delta}^{comp}$ contains all the symbols supported in a $h$-independent compact set and these symbols correspond, modulo $\hinf_{\mathcal{S}(T^*\R) }$, to all symbols of $S_\delta^{comp}$. For this reason, we will adopt the following notation: if $V \Subset \R^{2n}$ is an open set, we say that $a \in S_\delta^{comp}(V)$ if $a \in S_{\delta}^{comp}(\R^{2n})$ and $\text{ess} \supp a \Subset V$.

For a symbol $a=a(\cdot ; h) \in S_\delta(m)$, we'll quantize it using Weyl's quantization procedure. It is informally written as : 
$$(\op(a)u)(x) = \frac{1}{(2\pi h)^n }\int_{\R^{2n}} a\left( \frac{x+y}{2}, \xi  ;h  \right)u(y) e^{i \frac{(x-y)\cdot \xi}{h}}dy d \xi$$

We will note $\Psi_\delta(m)$ the corresponding classes of pseudodifferential operators. By definition, the wavefront set of $A = \op(a)$ is $\WF(A) = \text{ess} \supp a $. 
\vspace*{0.5cm}

We say that a family $u=u(h) \in \mathcal{D}^\prime(\R^n)$ is $h$-tempered if for every $\chi \in \cinfc(\R^n)$, there exist $C >0$ and $N \in \N$ such that $|| \chi u ||_{H_n^{-N}} \leq Ch^{-N}$. For a $h$-tempered family $u$, we say that a point $\rho \in T^*\R^n$ does \emph{not} belong to the wavefront set of $u$ if there exists $a \in S^{comp}$ such that $a(\rho) \neq 0$ and $\op(a)u = \hinf_{\cinf}$. We note $\WF(u)$ the wavefront set of $u$. 

We say that a family of operators $B=B(h) : \cinfc(\R^{n_2}) \to \mathcal{D}^\prime(\R^{n_1})$ is $h$-tempered  if its Schwartz kernel $\mathcal{K}_B \in \mathcal{D}^\prime(\R^{n_1} \times \R^{n_2})$ is $h$-tempered. We define the twisted wavefront set of $B$ as 
$$ \WF^\prime(B) = \{ ( x,\xi, y , -\eta) \in T^*\R^{n_1}  \times T^* \R^{n_2} ,  (x,\xi, y , \eta)  \in \WF(\mathcal{K}_B) \}$$

\paragraph{Standard properties. }
Let us now recall standard results in semiclassical analysis concerning the $L^2$-boundedness of pseudodifferential operator and their composition. 
We'll use the following version of Calderon-Vaillancourt Theorem (\cite{ZW}, Theorem 4.23). 

\begin{prop}
There exists $ C_n>0$ such that the following holds. For every $0 \leq \delta < \frac{1}{2}$, and $a \in S_\delta$, $\op(a)$ is a bounded operator on $L^2$ and 
$$ || \op(a) ||_{L^2 (\R^n) \to L^2 (\R^n) } \leq C_n \sum_{ |\alpha| \leq 8n } h^{|\alpha|/2} || \partial^\alpha a ||_{L^\infty} $$ 
\end{prop}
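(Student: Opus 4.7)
The strategy is to reduce the semiclassical statement, via a unitary rescaling, to the classical ($h=1$) Calderon-Vaillancourt theorem, using that $\delta < 1/2$ keeps the rescaled symbol well-behaved.

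First, I would introduce the unitary dilation $V_h : L^2(\R^n) \to L^2(\R^n)$ defined by $(V_h u)(x) = h^{-n/4} u(h^{-1/2} x)$. A direct change of variables in the oscillatory integral defining the Weyl quantization shows that $V_h^{-1} \op(a) V_h$ coincides with the standard ($h=1$) Weyl quantization of $\tilde a(x,\xi) := a(h^{1/2} x, h^{1/2} \xi)$. Differentiating yields
$$ \| \partial^\alpha \tilde a \|_{L^\infty} \;=\; h^{|\alpha|/2}\, \| \partial^\alpha a \|_{L^\infty},$$
so that $\tilde a$ lies, uniformly in $h$, in the unweighted symbol class $S^0_{0,0}(\R^{2n})$; for $a \in S_\delta$ the right-hand side is in fact bounded by $C_\alpha h^{(1/2-\delta)|\alpha|}$, which even tends to $0$ with $h$ whenever $|\alpha| \geq 1$. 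This is precisely where the assumption $\delta < 1/2$ is used.

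Second, I would invoke the classical Calderon-Vaillancourt theorem: for any $b \in \cinf(\R^{2n})$ with bounded derivatives,
$$ \| \mathrm{Op}_1(b) \|_{L^2 \to L^2} \;\leq\; C_n \sum_{|\alpha| \leq 8n} \| \partial^\alpha b \|_{L^\infty}.$$
Applied to $b = \tilde a$ and combined with the unitarity of $V_h$, this immediately gives the stated inequality. The only substantive ingredient is therefore the $h = 1$ bound, whose standard proof proceeds via the Cotlar-Stein almost-orthogonality lemma: choose a smooth partition of unity $1 = \sum_k \chi_k$ subordinate to unit cubes covering $T^*\R^n$, decompose $b = \sum_k b_k$ with $b_k = \chi_k b$, bound each $\|\mathrm{Op}_1(b_k)\|_{L^2 \to L^2}$ using Schur's test on its smooth Schwartz kernel, and establish the almost-orthogonality estimates
$$ \|\mathrm{Op}_1(b_j)^* \mathrm{Op}_1(b_k)\| + \|\mathrm{Op}_1(b_j) \mathrm{Op}_1(b_k)^*\| \;\leq\; C_N (1+|j-k|)^{-N}$$
by repeated integration by parts in the oscillatory integral representing the composition. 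Cotlar-Stein then assembles these estimates into the global $L^2$-bound.

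The main obstacle is precisely this almost-orthogonality step: one must produce decay $(1+|j-k|)^{-N}$ for arbitrarily large $N$ while controlling only finitely many derivatives of the symbol, and the exponent $8n$ is the number of derivatives needed to generate enough integrable decay in the $2n$ phase-space variables. Once the classical theorem is accepted, the semiclassical reduction is essentially formal, and the threshold $\delta < 1/2$ emerges naturally as the exact condition ensuring that $\tilde a$ remains in a bounded family of $S^0_{0,0}$-symbols as $h \to 0$.
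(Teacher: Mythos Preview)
Your proposal is correct. The paper does not supply its own proof of this proposition; it simply records it as a version of the Calder\'on--Vaillancourt theorem and cites \cite{ZW}, Theorem~4.23. The argument you sketch---unitary rescaling $V_h$ to reduce to $h=1$, followed by the classical Calder\'on--Vaillancourt bound via a Cotlar--Stein almost-orthogonality argument---is exactly the standard proof given in that reference, so there is nothing to compare.
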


As a consequence of the sharp Gärding inequality (see \cite{ZW}, Theorem 4.32), we also have the precise estimate of $L^2$ norms of pseudodifferential operator, 

\begin{prop}\label{Garding}
Assume that $a \in S_\delta(\R^{2n})$. Then, there exists $C_a$ depending on a finite number of semi-norms of $a$ such that : 
$$ || \op(a) ||_{L^2 \to L^2} \leq ||a||_\infty + C_ah^{\frac{1}{2} - \delta}$$
\end{prop}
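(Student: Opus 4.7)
My plan is to reduce the operator norm bound to the sharp Gårding inequality (\cite{ZW}, Theorem 4.32) applied in the $S_\delta$ calculus to an appropriately chosen nonnegative symbol, and then take a square root.

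First, I would rewrite $||\op(a)||_{L^2 \to L^2}^2 = ||\op(a)^* \op(a)||_{L^2 \to L^2}$. Using $\op(a)^* = \op(\bar a)$ and the Moyal composition, I get $\op(a)^* \op(a) = \op(\bar a \# a)$. The composition expansion in the $S_\delta$ calculus yields
\[\bar a \# a = |a|^2 + r_h, \qquad r_h = O(h^{1-2\delta})_{S_\delta},\]
since the term of order $h^k$ in the Moyal expansion involves $2k$ derivatives of $a$ and $\bar a$, each of which costs a factor $h^{-\delta}$. In particular $||r_h||_{L^\infty} \leq C_a h^{1-2\delta}$, where $C_a$ depends on finitely many semi-norms of $a$, and by Calderon-Vaillancourt $||\op(r_h)||_{L^2 \to L^2} \leq C_a' h^{1-2\delta}$.

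Next, I would consider the symbol $b = ||a||_\infty^2 - |a|^2 \in S_\delta$, which is nonnegative and whose semi-norms are controlled by those of $a$. Applying the sharp Gårding inequality to $b$ in the $S_\delta$ calculus produces
\[\langle \op(b) u, u \rangle \geq -C_a'' h^{1-2\delta} ||u||_{L^2}^2, \quad \forall u \in L^2(\R^n),\]
equivalently, $\op(|a|^2) \leq \left(||a||_\infty^2 + C_a'' h^{1-2\delta}\right) \Id$ as self-adjoint operators. Combined with the Moyal expansion above, this gives
\[\op(a)^* \op(a) \leq \left(||a||_\infty^2 + C h^{1-2\delta}\right) \Id\]
for some $C$ controlled by the semi-norms of $a$. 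Taking square roots (using $\sqrt{x^2 + y} \leq x + \sqrt{y}$ for $x, y \geq 0$) finishes the proof:
\[||\op(a)||_{L^2 \to L^2} \leq \sqrt{||a||_\infty^2 + Ch^{1-2\delta}} \leq ||a||_\infty + \sqrt{C}\, h^{1/2 - \delta}.\]

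The main point I would have to be careful about is the form of the sharp Gårding inequality in the $S_\delta$ calculus: the standard statement with remainder $O(h)$ needs to be replaced by the $S_\delta$ version with remainder $O(h^{1-2\delta})$, which is precisely what gives the exponent $1/2 - \delta$ in the final estimate (and requires $\delta < 1/2$ to give any non-trivial information). A minor check is that $|a|^2 = a \bar a$ genuinely belongs to $S_\delta$ with semi-norms bounded polynomially in those of $a$, which follows from the Leibniz rule since each derivative of $a$ or $\bar a$ contributes a factor $h^{-\delta}$.
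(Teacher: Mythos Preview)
Your proof is correct and follows exactly the route the paper indicates: the paper does not give a proof but simply records the proposition as ``a consequence of the sharp G\aa rding inequality (see \cite{ZW}, Theorem 4.32)'', and your argument is the standard way to unpack that implication --- pass to $\op(a)^*\op(a)$, use the Moyal expansion to reduce to $\op(|a|^2)$ up to an $O(h^{1-2\delta})$ error, apply sharp G\aa rding to $\|a\|_\infty^2 - |a|^2 \geq 0$, and take the square root.
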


We recall that the Weyl quantizations of real symbols are self-adjoint in $L^2$. If $m_1$ and $m_2$ are two order functions of the form $\langle \rho \rangle^{N_i}$, $i=1,2$, the composition of two pseudodifferential operators in $\Psi_\delta(m_1)$ and $\Psi_\delta(m_2)$ is a pseudodifferential operator in the class $\Psi_\delta(m_1 m_2)$. More precisely (see \cite{ZW}, Theorem 4.11 and 4.18), if $(a , b) \in S_\delta(m_1) \times S_\delta(m_2)$, $\op(a) \circ \op(b)$ is given by $\op(a \# b)$, where $a \# b$ is the Moyal product of $a$ and $b$. It is given by 
$$ a \# b (\rho) = e^{i h A(D) } (a \otimes b)|_{ \rho= \rho_1 = \rho_2} $$ 
where $a \otimes b (\rho_1, \rho_2) = a(\rho_1) b(\rho_2)$, $e^{i h A(D) }$ is a Fourier multiplier acting on functions on $\R^{4n}$ and, writing $\rho_i = (x_i, \xi_i)$,  $$A(D) = \frac{1}{2} \left( D_{\xi_1} \circ D_{x_2} - D_{x_1} \circ D_{\xi_2} \right) $$
We can estimate the Moyal product by a quadratic stationary phase and get the following expansion which holds in $S_\delta(m_1 m_2)$ for all $N \in \N$,  

\begin{equation*}
 a \# b (\rho) = \sum_{k=0}^{N-1} \frac{i^k h^k}{k!} A(D)^{k} (a \otimes b)|_{\rho= \rho_1= \rho_2} + r_N
\end{equation*}
where for all $\alpha \in \N^{2n}$, there exists $C_\alpha$, independent of $a$ and $b$, such that 
\begin{equation*}
||\partial^\alpha r_N ||_\infty \leq C_\alpha h^N ||a \otimes b ||_{ C^{2N+ 4n+1+ |\alpha| } }
\end{equation*}

\paragraph{Weighted Sobolev spaces. }
We can also define the weighted Sobolev spaces $H_h(m)$. In the case $m = \langle \rho \rangle^N$, we have 
$$ H_h\left( \langle \rho \rangle^N\right) = \op\left(  \langle \rho \rangle^{-N}\right) \big( L^2(\R^n)\big)  \subset \mathcal{S}^\prime (\R^n)$$ 
When $N \geq 0$, $H_h\left( \langle \rho \rangle^{N}\right)$ coincides with the space of functions $u \in \mathcal{S}^\prime(\R^n)$ such that 
$$\forall \alpha , \beta \in \N^n \text{ with } \alpha + \beta \leq N\; , \; x^\alpha (h\partial^\beta) u \in L^2(\R^n) $$ and we have the following equivalence of norms : 
$$ ||u||_{ H_h\left( \langle \rho \rangle^{N}\right)}^2 \sim \sup_{ |\alpha| + |\beta| \leq N }|| x^\alpha (h\partial^\beta) u||_{L^2}^2$$
 As a consequence of Calderon-Vaillancourt theorem,  we have for symbols $ a \in S_\delta(m)$ : 

\begin{prop}
Let $N \in \Z$. There exists $M \in \N$ and $C >0$ such that the following holds : 
For all $a \in S_\delta \left( \langle \rho \rangle^N\right)$,  $\op(a) : H_h\left( \langle \rho \rangle^{N}\right) \to L^2$ is uniformly bounded and  
$$ || \op(a) ||_{H_h\left( \langle \rho \rangle^{N}\right) \to L^2} \leq C \sup_{ |\alpha| \leq M} h^{ |\alpha|/2} || \langle\rho \rangle^{-N} \partial^\alpha a ||_\infty$$

\end{prop}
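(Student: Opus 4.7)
The plan is to reduce to the Calder\'on--Vaillancourt estimate stated just above by exploiting the definition $H_h\!\left(\langle\rho\rangle^{N}\right) = \op(\langle\rho\rangle^{-N})(L^2(\R^n))$. Concretely, for $u \in H_h\!\left(\langle\rho\rangle^{N}\right)$, write $u = \op(\langle\rho\rangle^{-N})v$ with $v \in L^2$ and $\|v\|_{L^2}$ equivalent to $\|u\|_{H_h(\langle\rho\rangle^{N})}$. Then
$$ \op(a) u \;=\; \op(a)\,\op(\langle\rho\rangle^{-N}) v \;=\; \op(c)\,v, \qquad c \;:=\; a \,\#\, \langle\rho\rangle^{-N}. $$
Since $\langle\rho\rangle^{-N} \in S_{0}(\langle\rho\rangle^{-N})$ and $a \in S_{\delta}(\langle\rho\rangle^{N})$, the composition rule (stated just before the proposition) gives $c \in S_{\delta}(1) = S_{\delta}$, so the goal becomes to bound $\|\op(c)\|_{L^2 \to L^2}$ by the quantity on the right-hand side.

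Next, apply the Calder\'on--Vaillancourt proposition to $\op(c)$: there exists $C_n$ such that
$$ \|\op(c)\|_{L^2 \to L^2} \;\le\; C_n \sum_{|\alpha| \le 8n} h^{|\alpha|/2}\,\|\partial^{\alpha} c\|_{L^{\infty}}. $$
It remains to control each $\|\partial^{\alpha} c\|_{L^{\infty}}$ by the seminorms $\sup_{|\beta| \le M}\, h^{|\beta|/2}\,\|\langle\rho\rangle^{-N}\partial^{\beta} a\|_{L^{\infty}}$. Use the Moyal expansion recalled in the excerpt,
$$ c \;=\; \sum_{k=0}^{K-1} \frac{i^{k} h^{k}}{k!}\, A(D)^{k}\!\left(a \otimes \langle\cdot\rangle^{-N}\right)\!\big|_{\rho_1 = \rho_2 = \rho} \;+\; r_{K}, $$
truncated at order $K$ large enough so the remainder is negligible (use $\delta < 1/2$: each $h^{k}$ in the expansion beats the worst $h^{-2k\delta}$ coming from derivatives of $a$). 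Each term in the sum is a linear combination of products $(\partial^{\beta} a)(\partial^{\gamma}\langle\rho\rangle^{-N})$ with $|\beta|+|\gamma| = 2k$; since $|\partial^{\gamma}\langle\rho\rangle^{-N}| \le C_{\gamma}\,\langle\rho\rangle^{-N}$, one obtains
$$ h^{|\alpha|/2}\,|\partial^{\alpha} c(\rho)| \;\le\; C \sum_{|\beta| \le |\alpha| + 2K} h^{|\beta|/2}\,\langle\rho\rangle^{-N}\,|\partial^{\beta} a(\rho)|, $$
where the regrouping of $h$-powers uses $h^{k}\cdot h^{|\alpha|/2} = h^{(2k+|\alpha|)/2}$ and $|\beta| \le 2k + |\alpha|$. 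Taking $M := 8n + 2K$ and supremum over $\rho$ yields the claimed estimate.

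The main technical obstacle is a careful book-keeping in the last step: one must verify that (i) the remainder $r_{K}$ in the Moyal expansion, whose $C^{0}$-norm is controlled by $h^{K}$ times a finite $C^{\nu}$-norm of $a \otimes \langle\cdot\rangle^{-N}$, is compatible with the $\langle\rho\rangle^{-N}$-weighted seminorms (this uses the order-function structure of $m = \langle\rho\rangle^{N}$, so that composition rules for weighted symbol classes truly produce $S_{\delta}(1)$); and (ii) the equivalence $\|u\|_{H_h(\langle\rho\rangle^{N})} \sim \|v\|_{L^{2}}$ for $u = \op(\langle\rho\rangle^{-N})v$ is handled so that inverting $\op(\langle\rho\rangle^{-N})$ modulo $O(h^{\infty})$ (elliptic parametrix in $\Psi_{\delta}(\langle\rho\rangle^{N})$) does not cost more than a universal constant.
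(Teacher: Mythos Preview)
Your proposal is correct and follows exactly the approach the paper has in mind: the paper does not give a detailed proof of this proposition but simply prefaces it with ``As a consequence of Calderon-Vaillancourt theorem,'' and your argument spells out precisely this reduction---compose with $\op(\langle\rho\rangle^{-N})$, land in $S_\delta(1)$ via the Moyal product, and apply the Calder\'on--Vaillancourt bound stated just above. The book-keeping you flag in your final paragraph (weighted remainder estimates for the Moyal expansion, and the norm equivalence $\|u\|_{H_h(\langle\rho\rangle^N)}\sim\|v\|_{L^2}$) is standard and handled by the continuity of the Moyal product $S_\delta(m_1)\times S_\delta(m_2)\to S_\delta(m_1 m_2)$ together with the elliptic parametrix for $\op(\langle\rho\rangle^{-N})$; these are exactly the facts implicit in the paper's one-line justification.
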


\subsection{Fourier Integral Operators}
We now review some aspects of the theory of Fourier integral operators. We follow \cite{ZW}, Chapter 11 and \cite{NSZ14}. We refer the reader to \cite{GuSt} for further details on Lagrangian distributions and Fourier integral operators. We also introduce the material needed to understand the definition \ref{def_FIO} of open hyperbolic quantum maps.  We also provide a quantitative version of Egorov's theorem. 

\subsubsection{Local symplectomorphisms and their quantization}
We momentarily work in dimension $n$. 
Let us note $\mathcal{K}$ the set of symplectomorphisms $\kappa : T^*\R^n \to T^* \R^n$ such that the following holds : there exist continuous and piecewise smooth (in $t$) families of smooth functions $(\kappa_t)_{t \in [0,1]} $, $(q_t)_{t \in [0,1]}$ such that : \begin{itemize}[nosep]
\item $\forall t \in [0,1]$, $\kappa_t : T^*\R^n \to T^*\R^n$ is a symplectomorphism ; 
\item $\kappa_0 = \Id_{T^* \R^n} , \kappa_1 = \kappa$ ; 
\item $\forall t \in [0,1], \kappa_t(0)=0 $ ; 
\item there exists $K \Subset T^*\R^n$ compact such that $\forall t \in [0,1], q_t : T^*\R^n \to \R$ and $ \supp q_t \subset K$ ; 
\item $\frac{d}{dt} \kappa_t = \left( \kappa_t \right)^* H_{q_t}$
\end{itemize}
 If $\kappa \in \mathcal{K}$, we note $G_\kappa = Gr^\prime(\kappa) = \{ (x, \xi, y , - \eta), (x,\xi) = \kappa(y,\eta) \}$ the twisted graph of $\kappa$, which is Lagrangian in $T^* \R^n$. 
We recall  \cite{ZW}, Lemma 11.4, which asserts that local symplectomorphisms fixing the origin can be seen as elements of $\mathcal{K}$, as soon as we have some geometric freedom. 

\begin{lem}\label{lemma_local_symp}
Let $U_0, U_1$ be open and precompact subsets of $T^*\R^n$. Assume that $\kappa : U_0 \to U_1$ is a local symplectomorphism fixing 0 and which extends to $V_0 \Supset U_0$ an open star-shaped neighborhood of 0. Then, there exists $\tilde{\kappa} \in \mathcal{K}$ such that $\tilde{\kappa}|_{U_0} = \kappa$. 
\end{lem}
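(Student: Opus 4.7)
The plan is to build $\tilde\kappa$ by concatenating two Hamiltonian isotopies: a path inside the linear symplectic group $\mathrm{Sp}(2n,\R)$ joining $\Id$ to $A := d\kappa(0)$, then a scaling-based path joining $A$ to $\kappa$ itself, and finally to turn the generating time-dependent vector field into a compactly supported Hamiltonian $q_t$ via a cutoff argument.

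For the first half, I would use the connectedness of $\mathrm{Sp}(2n,\R)$ (for instance through its polar/Cartan decomposition) to choose a smooth path $A_t \in \mathrm{Sp}(2n,\R)$ with $A_0 = \Id$, $A_1 = A$. Each $A_t$ is a linear symplectomorphism of $T^*\R^n$ fixing $0$, and it is generated by the quadratic Hamiltonian $q_t^{(1)}(\rho) = \tfrac{1}{2}\langle S_t\rho,\rho\rangle$, where $\dot A_t A_t^{-1} = J S_t$ with $S_t$ symmetric. For the second half, I would exploit the star-shapedness of $V_0$ about $0$: for $s \in (0,1]$ set $\psi_s(\rho) := s^{-1}\kappa(s\rho)$, which is well-defined on $V_0$ since $s\rho \in V_0$ whenever $\rho \in V_0$ and $s \in [0,1]$. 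A direct check of the pullback gives $\psi_s^*\omega = \omega$ (the two rescalings contribute opposite factors $s^{\pm 2}$), and Taylor expanding $\kappa$ at $0$ shows $\psi_s$ extends smoothly at $s = 0$ to $A$. After the reparametrization $s = 2t-1$ and concatenation at $t = 1/2$, this yields a continuous, piecewise smooth family $(\kappa_t)_{t \in [0,1]}$ of symplectomorphisms, all fixing $0$, with $\kappa_0 = \Id$ and $\kappa_1|_{V_0} = \kappa$.

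Next I would extract the generating Hamiltonian. Differentiating along $t$ produces a symplectic vector field $X_t(\rho) = \partial_t \kappa_t(\kappa_t^{-1}(\rho))$ on $\kappa_t(V_0)$; by the Poincar\'e lemma on this contractible open set, $X_t = H_{q_t^{(0)}}$ for some smooth function $q_t^{(0)}$, which may be normalized so that $q_t^{(0)}(0) = 0$. To enforce compact support without altering the flow on $U_0$, let $K$ be a compact neighborhood of the compact set $\bigcup_{t \in [0,1]} \kappa_t(\overline{U_0})$ contained in $V_0$ (possible because $\overline{U_0} \subset V_0$ and $V_0$ is open), pick $\chi \in \cinfc(T^*\R^n)$ equal to $1$ on a neighborhood of $K$ with $\supp \chi \Subset V_0$, and set $q_t := \chi\, q_t^{(0)}$. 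Then $q_t$ is supported in a fixed compact subset of $T^*\R^n$ independent of $t$, and $q_t(0) = 0$, $dq_t(0) = 0$ (so the origin remains a fixed point); since $d\chi \equiv 0$ on a neighborhood of $K$, the Hamiltonian vector fields of $q_t$ and $q_t^{(0)}$ coincide there. By uniqueness of ODE solutions, the flow $\tilde\kappa_t$ generated by $q_t$ coincides with $\kappa_t$ on $\overline{U_0}$ for all $t \in [0,1]$, since trajectories starting in $\overline{U_0}$ remain in $K$ by construction. Thus $\tilde\kappa := \tilde\kappa_1$ belongs to $\mathcal{K}$ and restricts to $\kappa$ on $U_0$, as required.

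The main delicate step will be this final cutoff argument: the set $K$ has to sit inside the open star-shaped set $V_0$ (where the scaling path is defined) while containing the full family of trajectories $\bigcup_t \kappa_t(\overline{U_0})$, and $d\chi$ must vanish on a neighborhood of $K$ so that replacing $q_t^{(0)}$ by $\chi q_t^{(0)}$ leaves the Hamiltonian vector field unchanged along the trajectories of interest. The other ingredients --- smoothness of $\psi_s$ at $s=0$, symplecticity of $\psi_s$, and connectedness of $\mathrm{Sp}(2n,\R)$ --- are classical.
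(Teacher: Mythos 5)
The paper itself gives no proof of this lemma --- it is quoted from \cite{ZW} (Lemma 11.4) --- and your two-stage isotopy (a path in $\mathrm{Sp}(2n,\R)$ from $\Id$ to $d\kappa(0)$, followed by the rescaling $\psi_s(\rho)=s^{-1}\kappa(s\rho)$ on the star-shaped set $V_0$) is exactly the standard argument behind that reference. The construction of the isotopy is correct: $\psi_s$ is defined on $V_0$ by star-shapedness, $d\psi_s(\rho)=d\kappa(s\rho)$ is symplectic, $\psi_s$ extends smoothly to $s=0$ with value $d\kappa(0)$, and the generating field $X_t$ is Hamiltonian on the contractible set $\kappa_t(V_0)$ with $dq_t^{(0)}(0)=0$, so the origin stays fixed.

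The step that fails as written is the cutoff. You require a compact $K\subset V_0$ containing $\bigcup_{t}\kappa_t(\overline{U_0})$, and $\supp\chi\Subset V_0$. But $V_0$ is the \emph{domain} of the isotopy, whereas the trajectory tube lives in the \emph{images}: already at $t=1$ it contains $\kappa(\overline{U_0})\subset\overline{U_1}$, which has no reason to be contained in $V_0$, so such a $K$ need not exist (your parenthetical justification only shows $\overline{U_0}$ itself has a compact neighborhood in $V_0$). Moreover $q_t^{(0)}$ is defined only on $\kappa_t(V_0)$, since $X_t$ lives on the range of $\kappa_t$, so the product $\chi q_t^{(0)}$ is not defined on all of $\supp\chi$ even where your containments hold. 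The repair is routine but must be stated: take a \emph{time-dependent} cutoff $\chi_t$, depending continuously and piecewise smoothly on $t$, with $\chi_t\equiv 1$ on a neighborhood of $\kappa_t(\overline{U_0})$ and $\supp\chi_t\Subset\kappa_t(V_0)$; by compactness of $[0,1]$ all the supports can be placed inside one fixed compact set $K$, as the definition of $\mathcal{K}$ demands. Then $q_t:=\chi_t q_t^{(0)}$, extended by zero, is smooth and compactly supported, generates a complete global flow $\tilde\kappa_t$, and by uniqueness for ODEs the trajectories issued from $\overline{U_0}$ coincide with $t\mapsto\kappa_t(\rho)$, so $\tilde\kappa_1|_{U_0}=\kappa$. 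With that correction your proof is complete.
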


 If $\kappa \in \mathcal{K}$ and  if $(q_t)$ denotes the family of smooth functions associated with $\kappa$ in its definition, we note $Q(t) = \op(q_t)$. It is a continuous and piecewise smooth family of operators. Then the Cauchy problem
 
 \begin{equation}\label{Cauchy_pb_Egorov}
\left\{ \begin{array}{c}
hD_t U(t) + U(t)Q(t) = 0 \\
U(0)=\Id
\end{array}
\right. 
\end{equation}
is globally well-posed.

Following \cite{NSZ14}, Definition 3.9, we adopt the definition (with$ G_\kappa= Gr^\prime(\kappa)$): 

\begin{defi}
Fix $\delta \in [0,1/2)$. We say that $T \in I_\delta(\R^n \times \R^n; G_\kappa )$ if there exist $a \in S_\delta(T^*\R^n)$ and a path $(\kappa_t)$ from $\Id$ to $\kappa$ satisfying the above assumptions such that $T = \op(a)U(1)$, where $t \mapsto U(t)$ is the solution of the Cauchy problem (\ref{Cauchy_pb_Egorov}). 

The class $I_{0^+}(\R^n \times \R^n, G_\kappa)$ is by definition $\bigcap_{\delta >0} I_\delta(\R^n \times \R^n, G_\kappa)$. 
\end{defi}

It is a standard result, known as Egorov's theorem (see \cite{ZW}, Theorem 11.1) that if $U(t)$ solves the Cauchy problem (\ref{Cauchy_pb_Egorov}) and if $b_0 \in S_\delta$, then $U(1)^{-1} \op(b_0) U(1)$ is a pseudodifferential operator in $\Psi_\delta$ and if $b_1= b_0 \circ \kappa$, then 
$U(t)^{-1} \op(b_0) U(t) - \op(b_1) \in h^{1- 2 \delta} \Psi_\delta$.

\begin{rem}\text{}
Applying Egorov's theorem and Beal's theorem, it is possible to show that if $(\kappa_t)$ is a closed path from $\Id$ to $\Id$, and $U(t)$ solves (\ref{Cauchy_pb_Egorov}), then $U(1) \in \Psi_0(\R^n)$. In other words, $I_\delta(\R \times \R , \Gr^\prime(\Id) ) \subset \Psi_\delta(\R^n)$. But the other inclusion is trivial. Hence, this in an equality : 
$$I_\delta(\R^n \times \R^n , \Gr^\prime(\Id) ) = \Psi_\delta(\R^n)$$
The notation $I(\R^n\times \R^n, G_\kappa)$ comes from the fact that the Schwartz kernels of such operators are Lagrangian distributions associated with $G_\kappa$, and in particular have wavefront sets included in $C$. As a consequence, if $T \in I_\delta(\R^n \times \R^n, G_\kappa)$, $\WF^\prime(T) \subset \Gr(\kappa)$. 
\end{rem}

We also recall that the composition of two Fourier integral operators is still a Fourier integral operator : if $\kappa_1, \kappa_2 \in \mathcal{K}$ and $U_1 \in I_\delta(\R^n \times \R^n, \Gr^\prime(\kappa_1) ) , U_2 \in I_\delta(\R^n \times \R^n, \Gr^\prime(\kappa_1) )$, then, 
$$ U_1 \circ U_2 \in  I_\delta(\R^n \times \R^n, \Gr^\prime(\kappa_1 \circ \kappa_2) )$$

\subsubsection{An important example}
Let us focus on a particular case of canonical transformations. 
Suppose that $\kappa: T^*\R^n \to T^*\R^n$ is a canonical transformation such that 
$$ (x,\xi, y, \eta) \in \Gr(\kappa) \mapsto (x, \eta) $$
is a local diffeomorphism near $(x_0, \xi_0 , y_0, \eta_0 )$. Then, there exists a phase function $\psi \in \cinf(\R^n \times \R^n)$, $\Omega_x, \Omega_\eta$ open sets of $\R^n$ and $\Omega$ a neighborhood of $(x_0, \xi_0 , y_0, -\eta_0 ) $,  such that 
$$ \Gr^\prime(\kappa)  \cap \Omega= \{ (x, \partial_x \psi(x, \eta) , \partial_\eta\psi(x,\eta), -\eta) , x \in \Omega_x, \eta \in \Omega_\eta \} $$ 
One says that $\psi$ generates $\Gr^\prime(\kappa)$. 
Suppose that that $a \in S_\delta^{comp} (\Omega_x \times \Omega_\eta)$. Then, the following operator $T$ is an element of $I_\delta ( \R^n \times \R^n , \Gr^\prime(\kappa) ) $ : 

\begin{equation}  \label{special_form_FIO} Tu(x) = \frac{1}{(2\pi h)^n} \int_{\R^{2n}} e^{ \frac{i}{h} (\psi(x,\eta) - y \cdot \eta) } a(x,\eta) u(y) dy d\eta 
\end{equation} 
and if $T^*T = \Id$ microlocally near $(y_0,\eta_0)$ - that is if $(y_0, \eta_0) \not \in \WF(T^* T- \Id)$-  then $|a(x, \eta)|^2 = | \det D^2_{ x \eta}  \psi (x, \eta) | + O(h^{1-2 \delta})_{S_\delta}$ near $(x_0, \xi_0, y_0, \eta_0)$. The converse statement holds : microlocally near $(x_0,\xi_0, y_0, \eta_0)$ and modulo a smoothing operator which is $\hinf$, the elements of $I_\delta(\R^n \times \R^n, \Gr^\prime(\kappa) ) $ can be written under this form.

\subsubsection{Open quantum hyperbolic maps}
\label{subsubsection_open_quantum_map}
The aim of this part is to provide the precise definition of open quantum hyperbolic maps in \ref{def_FIO}. 
Let us consider an open hyperbolic map $F$, as described by the formalism in \ref{hyperbolic_open_quantum_map}. We recall that this formalism relies on :
\begin{itemize}
\item open intervals $Y_1, \dots, Y_J$ of $\R$ and $Y = \bigsqcup_{j=1}^J Y_j \subset \bigsqcup_{j=1}^J \R$ ; 
\item $ U = \bigsqcup_{j=1}^J U_j \subset \bigsqcup_{j=1}^J T^*\R^d $ where $ U_j \Subset T^*Y_j $  are open sets; 
\item For $j=1, \dots, J$, open disjoint subsets $\widetilde{D}_{i j } \Subset U_j$, $1 \leq i \leq J$, \emph{the departure sets}, and for $i= 1, \dots, J$ open disjoint subsets $\widetilde{A}_{i j } \Subset U_i$, $1 \leq j \leq J$, \emph{the arrival sets} ; 
\item Smooth symplectomorphisms
$F_{i j } : \widetilde{D}_{i j } \to F_{ij } \left(\widetilde{D}_{i j } \right) = \widetilde{A}_{i j }$
\end{itemize}
Then, $F$ is the global smooth map $F : \widetilde{D} \to \widetilde{A}$ where $\widetilde{A}$ and $\widetilde{D}$ are the full arrival and departure sets, defined as 
$$\widetilde{A} = \bigsqcup_{i=1}^J \bigcup_{j=1}^J \widetilde{A}_{i j } \subset \bigsqcup_{i=1}^J U_i $$ 
$$  \widetilde{D} = \bigsqcup_{j=1}^J \bigcup_{i=1}^J \widetilde{D}_{i j } \subset \bigsqcup_{j=1}^J U_j $$ 
Finally, we recall that we note $\mathcal{T} \subset U$ the trapped set of $F$. 

Our aim is to define open quantum maps associated with $F$. We fix a compact set $W \subset \widetilde{A}$ containing some neighborhood of $\mathcal{T}$. Our definition will depend on $W$. 
Following \cite{NSZ14} (Section 3.4.2), we now focus on the definition of the elements of $I_\delta(Y \times Y; \Gr(F)^\prime)$. An element $T \in I_\delta(Y \times Y; \Gr(F)^\prime)$ is a matrix of operators  
$$T = (T_{ij})_{1 \leq i,j\leq J } : \bigoplus_{j=1}^J L^2(Y_j) \to \bigoplus_{i=1}^J L^2(Y_i)$$
Each $T_{ij}$ is an element of $I_\delta(Y_i \times Y_j , \Gr(F_{ij})^\prime )$. Let's now describe the recipe to construct elements of $I_\delta(Y_i \times Y_j, \Gr(F_{ij})^\prime )$.

We fix $i,j \in \{1, \dots , J \}$. 
\begin{itemize}
\item Fix some small $\varepsilon >0$ and two open covers of $U_j$, $U_j\subset \bigcup_{l=1}^L \Omega_l$, $\Omega_l \Subset \widetilde{\Omega}_l$, with $\widetilde{\Omega}_l$ star-shaped and having diameter smaller than $\varepsilon$. We note $\mathcal{L}$ the sets of indices $l$ such that $\widetilde{\Omega}_l \subset \widetilde{D}_{ij} \subset U_j$ and we require (this is possible if $\varepsilon$ is small enough)  
$$ F^{-1}(W) \cap U_j \subset \bigcup_{l \in \mathcal{L}} \Omega_l $$

\item Introduce a smooth partition of unity associated with the cover $(\Omega_l)$, $(\chi_l)_{1 \leq l \leq L} \in \cinfc(\Omega_l, [0,1])$, $\supp \chi_l \subset \Omega_l$, $\sum_l \chi_l = 1$ in a neighborhood of $\overline{U_j}$.
\item  For each $l \in \mathcal{L}$, we denote $F_l$ the restriction to $\widetilde{\Omega}_l$ of $F_{ij}$. By Lemma \ref{lemma_local_symp}, there exists $\kappa_l \in \mathcal{K}$ which coincides with $F_l$ on $\Omega_l$. 
\item  We consider $T_l=\op(\alpha_i) U_l(1)$ where $U_l(t)$ is the solution of the Cauchy problem (\ref{Cauchy_pb_Egorov}) associated with $\kappa_l$ and $\alpha_i \in S_\delta^{comp}(T^*\R)$. 
\item We set 

\begin{equation}\label{def_FIO_global}T^\R = \sum_{l \in \mathcal{L}} T_l \op(\chi_l) : L^2(\R) \to L^2(\R)
\end{equation} 
$T^\R$ is a globally defined Fourier integral operator. We will note $T^\R \in I_\delta(\R \times \R, \Gr(F_{ij})^\prime)$. Its wavefront set is included in $\widetilde{A}_{ij} \times \widetilde{D}_{ij}$. 
\item Finally, we fix cut-off functions $(\Psi_i, \Psi_j) \in \cinfc(Y_i, [0,1]) \times \cinfc(Y_j, [0,1])$ such that $\Psi_i \equiv 1$ on $\pi(U_i)$ and $\Psi_j \equiv 1$ on $\pi(U_j)$(here, $\pi : (x,\xi) \in T^*Y_{\cdot} \mapsto  x \in Y_{\cdot}$ is the natural projection) and we adopt the following definitions : 
\end{itemize}

\begin{defi}\label{Def_FIO_local}
We say that $T : \mathcal{D}^\prime(Y_j) \to \cinf(\overline{Y_i})$ is a Fourier integral operator in the class $I_\delta(Y_i \times Y_j, \Gr(F_{ij})^\prime)$  if there exists $T^\R \in I_\delta(\R \times \R, \Gr(F)^\prime)$ as constructed above such that
\begin{itemize}
\item $T - \Psi_i T \Psi_j = \hinf_{\mathcal{D}^\prime(Y_j) \to \cinf(\overline{Y_i})}$; 
\item $\Psi_i T \Psi_j =  \Psi_i T^\R \Psi_j $
\end{itemize}
\end{defi}

For $U^\prime_j \subset U_j$ and $U_i^\prime = F(U_j^\prime) \subset U_i$, we say that $T$ (or $T^\R$)  is microlocally unitary in $U_i^\prime \times U_j^\prime$ if $TT^* = \Id$ microlocally in $U_i^\prime$ and $T^*T = \Id$ microlocally in $U_j^\prime$. 

\begin{rem}
The definition of this class is not canonical since it depends in fact on the compact set $W$ through the partition of unity. 
\end{rem}

We can now state our definition for open quantum hyperbolic maps associated with $F$ :

\begin{defi}\label{def_FIO}
Fix $\delta \in [0,1/2[$. 
We say that $T=T(h)$ is an \emph{open quantum hyperbolic map} associated with $F$, and we note $T=T(h) \in I_{\delta} ( Y \times Y ,\Gr(F)^\prime )$ if : 
for each couple $(i,j) \in \{ 1, \dots , J \}^2$, there exists a semi-classical Fourier integral operator  $ T_{ i j }=T_{ i j } (h) \in I_{\delta} ( Y_j \times Y_i , \Gr(F_{ij})^\prime )$ associated with $F_{ i j}$ in the sense of definition \ref{Def_FIO_local}, such that
$$ T = ( T_{i j } )_{ 1 \leq i,j \leq J }  : \bigoplus_{i=1}^J L^2 (Y_i) \to \bigoplus_{i=1}^J L^2 (Y_i)$$ 
In particular $\WF^\prime (T) \subset \widetilde{A} \times \widetilde{D}$. 
We note $I_{0^+}(Y \times Y , \Gr(F)^\prime)= \bigcap_{ \delta >0}  I_{\delta} ( Y \times Y , \Gr(F)^\prime )$. 
\end{defi}

We will say that $T \in I_{0^+}(Y \times Y, \Gr(F)^\prime)$ is microlocally invertible near $\mathcal{T}$ if there exists a neighborhood $U^\prime \subset U$ of $\mathcal{T}$ and an operator $T^\prime \in I_{0^+}(Y \times Y , \Gr(F^{-1})^\prime)$  such that, for every $u =(u_1, \dots, u_J) \in L^2(Y) $  
$$ \forall j \in \{1, \dots, J \}, \WF(u_j) \subset U^\prime \cap U_j \implies TT^\prime u = u + \hinf ||u||_{L^2} , T^\prime T u = u +  \hinf ||u||_{L^2} $$

 Suppose that $T$ is microlocally invertible near $\mathcal{T}$ and recall that $T^* T \in \Psi_{0^+}(Y)$. Then, we can write
 $$T^* T = \op( a_h)$$ where $a_h$ is a smooth symbol in the class $S_{0^+}(U)$. We note $\alpha_h= \sqrt{|a_h|}$ and call it the \emph{amplitude} of $T$. Since $T$ is microlocally invertible near $\mathcal{T}$, $|a_h| >c^2$ near $\mathcal{T}$, for some $h$-independent constant $c>0$, showing that $\alpha_h$ is smooth and larger than $c$ in a neighborhood of $\mathcal{T}$. 
 
 \begin{rem}
 If $T$ has amplitude $\alpha$, at first approximation, $T$ transforms a wave packet $u_{\rho_0}$ of norm 1 centered at a point $\rho_0 $ lying in a small neighborhood of $\mathcal{T}$ into a wave packet of norm $\alpha(\rho_0)$ centered at the point $F(\rho_0)$. 
 \end{rem}

\subsubsection{A precise version of Egorov's theorem}

We will need a more quantitative version of Egorov's theorem, similar to the one in \cite{NDJ19} (Lemma A.7). The result does not show that $U(1)^{-1} \op(a) U(1)$ is a pseudodifferential operator (one would need Beal's theorem to say that) but it gives a precise estimate on the remainder, depending on the semi-norms of $a$. We specialize to the case of dimension 2. The statement is proved in \cite{Vacossin} (Proposition 3.3). 

\begin{prop}\label{prop_Egorov}
Consider $\kappa \in \mathcal{K}$ and note $U(t)$ the solution of (\ref{Cauchy_pb_Egorov}). There exists a family of differential operators $(D_{j})_{j \in \N}$ of order $j$ such that for all $a \in S_\delta$ and all $N \in \N$, 
\begin{equation}
U(1)^{-1} \op(a) U(1) = \op \left(a \circ \kappa +  \sum_{j=1}^{N-1} h^j (D_{j+1} a) \circ \kappa \right) + O_{\kappa} \left( h^N ||a||_{C^{2N +15}} \right)
\end{equation}
\end{prop}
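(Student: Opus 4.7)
The statement is a quantitative refinement of Egorov's theorem, so my plan is to run the standard construction of an asymptotic expansion of the conjugated symbol and then track derivatives explicitly. Set $A(t) := U(t)^{-1}\op(a)\,U(t)$, which satisfies $A(0)=\op(a)$. Since $Q(t)=\op(q_t)$ is formally self-adjoint and compactly supported, $U(t)$ is unitary on $L^2$; differentiating and using $hD_t U = -UQ$ gives the Heisenberg equation
$$hD_t A(t) = [Q(t),A(t)].$$
I would then look for an approximate symbol of the form $b(t;h) = \sum_{j=0}^{N-1} h^j b_j(t)$ so that $hD_t\op(b)=[Q(t),\op(b)]$ up to $O(h^N)$. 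At the symbol level this amounts to matching the Moyal expansion $\frac{i}{h}[q_t,b]_{\#}$ order by order in $h$ against $\partial_t b$.

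Writing $\frac{i}{h}[q_t,b]_{\#} = \{q_t,b\} + \sum_{k\ge 1} h^{2k} L_{2k}(q_t,b)$ (only even powers of $h$ survive in the commutator expansion, so the odd-index operators in the statement will simply vanish), order-by-order matching produces the transport hierarchy
\begin{align*}
\partial_t b_0 &= \{q_t,b_0\}, & b_0(0)&=a,\\
\partial_t b_j &= \{q_t,b_j\} + F_j(q_t;b_0,\dots,b_{j-1}), & b_j(0)&=0,\quad j\ge 1,
\end{align*}
where the $F_j$ are universal bidifferential source terms coming from the higher-order coefficients in the Moyal expansion. Each transport equation is solved by the method of characteristics along the Hamiltonian flow of $q_t$, which by assumption realises $\kappa_t$; an induction on $j$ then yields
$$b_0(t) = a\circ\kappa_t, \qquad b_j(t) = (\widetilde D_{j+1}(t)\,a)\circ\kappa_t,$$
for differential operators $\widetilde D_{j+1}(t)$ of order $j+1$ whose coefficients are smooth functions of $(t,\rho)$ built from $\kappa_t$ and finitely many derivatives of $q_t$. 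Setting $D_{j+1}:=\widetilde D_{j+1}(1)$ gives the claimed expansion; $b_j\equiv 0$ (hence $D_{j+1}=0$) for $j$ odd.

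To close the argument I would control the remainder $R_N(t):=A(t)-\op(b(t))$ by Duhamel. By construction $R_N(0)=0$ and
$$hD_tR_N(t) = [Q(t),R_N(t)] + \op(r_N(t)),$$
where $r_N(t)$ is the tail of the Moyal expansion at order $h^N$. The quantitative Moyal stationary-phase remainder recalled after Proposition 4.18 of \cite{ZW} bounds the $C^{|\alpha|}$ norm of $r_N(t)$ by $C h^N \|a\|_{C^{2N+c(\alpha)}}$. Combining this with Calderón--Vaillancourt (which costs $8n=8$ derivatives in dimension one) and integrating along the unitary propagator $U(t)$ yields
$$\|R_N(1)\|_{L^2\to L^2} \le \int_0^1 \|\op(r_N(t))\|_{L^2\to L^2}\,dt \le C_\kappa\, h^N\,\|a\|_{C^{2N+15}}.$$

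\textbf{Main obstacle.} The existence of the expansion and the Duhamel argument are routine; the genuinely delicate point is the explicit bookkeeping of derivatives. Each order in the iterative construction increases by two the number of derivatives of $a$ that appear in $b_j$, the Moyal tail at order $N$ consumes $2N$ further derivatives, and converting the resulting $C^k$ bound on the amplitude into an operator norm via Calderón--Vaillancourt consumes a further handful; the concrete integer $2N+15$ is nothing but the outcome of this count. The rest of the proof is a careful but standard adaptation of the proof of Egorov's theorem, the only novelty being to track the precise $C^k$-seminorm of $a$ on which the constant depends, instead of absorbing it into the symbol class $S_\delta$.
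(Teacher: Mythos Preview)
The paper does not give its own proof of this proposition; immediately after the statement it says ``The statement is proved in \cite{Vacossin} (Proposition 3.3).'' So there is nothing in the present paper to compare against. Your outline---Heisenberg equation for $A(t)=U(t)^{-1}\op(a)U(t)$, Moyal expansion of the commutator, transport hierarchy for the $b_j$ solved along the characteristics $\kappa_t$, then Duhamel plus Calder\'on--Vaillancourt for the remainder---is exactly the standard argument and is what one expects to find in the cited reference. The structure and the derivative bookkeeping you describe are correct, including the Duhamel step $\|R_N(1)\|\le\int_0^1\|\op(r_N(s))\|\,ds$ and the observation that the Moyal tail together with Calder\'on--Vaillancourt accounts for the $2N+O(1)$ derivatives of $a$.

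One small caveat on the order count: your inductive claim that $\widetilde D_{j+1}(t)$ has order exactly $j+1$ is slightly optimistic. The source at level $h^j$ contains terms like $L_{2m}(q_t,b_{j-2m})$ with $L_{2m}$ of order $2m+1$ in its second argument; iterating, one sees that $b_j$ can involve up to roughly $3j/2$ derivatives of $a$ (for instance, at $j=4$ the contribution $L_2(q_t,b_2)$ already produces six derivatives of $a$, not five). This does not affect the remainder bound, which only needs $O_j(1)$ derivatives at each fixed level and $2N+O(1)$ in total; indeed the companion Proposition~\ref{Prop_def_FIO_loc} quotes the more conservative ``order $2j$'', which is the safe version to carry through the induction.
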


By using local charts and composition results (see for instance \cite{DyZw}, Proposition E.10), it is possible to build local Fourier integral operators, which, combined with the last proposition, gives

\begin{prop}\label{Prop_def_FIO_loc}
Let $V \subset \R^2 = T^*\R$ an open set and $\kappa : V \to U \subset \R^2$ a symplectic map. Fix $\rho_V \in V$. There exists $W \subset V^\prime \subset V$,  neighborhoods of $\rho_V$ and a pair $(B,B^\prime)$ of Fourier integral operators in $I_0( \kappa(V^\prime) \times V^\prime , \Gr^\prime(\kappa) ) \times I_0(V^\prime \times \kappa(V^\prime) , \Gr^\prime(\kappa^{-1} ) ) $ which satisfy : there exists differential operator $(L_j)_{j \geq 1}$ of order $2j$ and supported in $V^\prime$ such that for all $a \in S_\delta(\R^2)$ with $\supp a \subset W$ and for all $N \in \N$, 
$$ B \op(a)B^\prime = \op( a \circ \kappa^{-1} ) + \sum_{ j=1}^{N-1} \op\left( L_j a) \circ \kappa^{-1} \right) + O\left( h^N ||a||_{C^{2N + M}} \right)$$
for some universal integer $M$. 
\end{prop}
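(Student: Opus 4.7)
The plan is to derive Proposition \ref{Prop_def_FIO_loc} from the quantitative Egorov estimate of Proposition \ref{prop_Egorov}, applied to a global element of $\mathcal{K}$ that extends $\kappa^{-1}$ near $\rho_V$. Since Lemma \ref{lemma_local_symp} requires the symplectomorphism to fix $0$, I first pre- and post-compose with phase-space translations bringing $\rho_V$ and $\kappa(\rho_V)$ to the origin; these translations are themselves symplectic and admit explicit unitary quantizations whose conjugation of $\op(a)$ simply translates the Weyl symbol, so absorbing them into the final $B$ and $B'$ is harmless. From now on I assume $\rho_V = 0$ and $\kappa(0) = 0$.

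Applying Lemma \ref{lemma_local_symp} to $\kappa^{-1}$ produces $\tilde\sigma \in \mathcal{K}$ that coincides with $\kappa^{-1}$ on a star-shaped neighborhood of $0$. Let $U(t)$ denote the solution of the Cauchy problem (\ref{Cauchy_pb_Egorov}) associated with $\tilde\sigma$; it is a globally defined Fourier integral operator with $U(1) \in I_0(\R\times\R, \Gr'(\tilde\sigma))$. I set $B := \op(\chi_1)\, U(1)^{-1}\, \op(\chi_2')$ and $B' := \op(\chi_2)\, U(1)\, \op(\chi_1')$, where $\chi_1, \chi_1', \chi_2, \chi_2'$ are compactly supported cut-offs identically $1$ on neighborhoods of $\overline{\kappa(V')}$ and $\overline{V'}$ respectively, for $V' \Subset V$ chosen small enough that the cut-off regions and the locus $\{\tilde\sigma = \kappa^{-1}\}$ are simultaneously compatible. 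The composition formulas for Fourier integral operators then place $B \in I_0(\kappa(V')\times V',\Gr'(\kappa))$ and $B' \in I_0(V'\times\kappa(V'),\Gr'(\kappa^{-1}))$, as required.

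Now choose $W \Subset V'$ so that all four cut-offs equal $1$ and $\tilde\sigma = \kappa^{-1}$ on a neighborhood of $\overline W$. For any $a \in S_\delta$ with $\supp a \subset W$, the wavefront set of $\op(a)$ lies in $W \times W$, so the outer cut-offs act as the identity up to an $O(h^\infty \|a\|_{C^\infty})$ smoothing error, reducing $B \op(a) B'$ to $U(1)^{-1} \op(a) U(1)$. Proposition \ref{prop_Egorov} then gives, for every $N \in \N$,
\begin{equation*}
B \op(a) B' \;=\; \op\!\left( a\circ\tilde\sigma + \sum_{j=1}^{N-1} h^j (D_{j+1} a)\circ\tilde\sigma \right) + O\!\left( h^N \|a\|_{C^{2N+15}} \right).
\end{equation*}
Since $a$ is supported in $W$ and $\tilde\sigma = \kappa^{-1}$ there, I may replace $\tilde\sigma$ by $\kappa^{-1}$ in each term. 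Multiplying $D_{j+1}$ by a smooth cut-off supported in $V'$ and identically $1$ on $W$ yields a differential operator $L_j$ supported in $V'$, of order $j+1 \leq 2j$ for $j \geq 1$, and does not alter the identity because $a$ vanishes outside $W$. This produces the stated expansion with universal constant $M = 15$.

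The main obstacle is really the local-versus-global bookkeeping: one must organize the pseudodifferential truncations so that they are transparent to $\op(a)$ with an error depending only on a bounded number of $C^k$-seminorms of $a$, and no $h^{-1}$ loss from the cut-offs creeps into the remainder. This is a routine wavefront-set argument, but it must be threaded carefully through the composition calculus. Once this is done, the whole substance of the statement is supplied by Proposition \ref{prop_Egorov}.
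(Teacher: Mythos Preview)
Your proof is correct and follows essentially the same route as the paper's: reduce to $\rho_V=0=\kappa(\rho_V)$, extend the local symplectomorphism to an element of $\mathcal{K}$ via Lemma \ref{lemma_local_symp}, and then invoke Proposition \ref{prop_Egorov}. The paper's proof is a two-line sketch of precisely this strategy; you have simply filled in the bookkeeping (cut-offs, the choice to extend $\kappa^{-1}$ rather than $\kappa$ so that $U(1)^{-1}\op(a)U(1)$ directly yields $a\circ\kappa^{-1}$, and the observation that $D_{j+1}$ has order $j+1\le 2j$).
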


\begin{proof}
It is enough to treat the case $\rho_V=0=\kappa(\rho_V)$. 
It suffices to consider sufficiently small neighborhoods of $0$ so that the restriction of $\kappa$ can be seen as the restriction of an element of $\mathcal{K}$. Then, one uses Proposition \ref{prop_Egorov}. 
\end{proof}

\subsection{Metaplectic operator}
Among the class of Fourier integral operators acting on $L^2(\R)$, metaplectic operators are the one quantizing the linear symplectic transformations of $T^*\R=\R^2$. The main advantage of metaplectic operators compared with general Fourier Integral operators is that the Egorov property is exact (see definition \ref{def_meta_egorov} below). We recall here a few standard facts on metaplectic operators. We refer the reader to \cite{ZW} (Section 11.3) and \cite{CoRo} (Chapter 3) for a more precise presentation and other references. 

\begin{defi}
For $\rho = (x_0, \xi_0) \in \R^{2}   = T^* \R$, the phase space translation operator $T_h(\rho)$ is defined as : 
$$T_h(\rho) u (x) = e^{-i\frac{x_0 \xi_0}{2h}} e^{ i \frac{x \xi_0}{h}} u(x-x_0)$$
It is a unitary on $L^2(\R)$ and $T_h(\rho)^* = T_h(-\rho)$. Moreover,
$T_h(\rho)^* \op(a) T_h(\rho) = \op(a ( \cdot - \rho) ) $ for any classical observable $a \in \mathcal{S}^\prime(\R)$. 
\end{defi}

\begin{prop}(\textbf{and Definition})\label{def_meta_egorov}
Let $\kappa : T^*\R \to T^*\R$ be a symplectic \emph{linear} map. There exists a unitary operator $\mathcal{M}_h(\kappa) : L^2(\R) \to L^2(\R)$ such that one of the two following equivalent conditions hold : 
\begin{enumerate}[label=(\roman*),nosep]
\item For every  $\rho \in T^*\R$, $ \mathcal{M}_h(\kappa) T_h(\rho) \mathcal{M}_h(\kappa)^* = T_h(\kappa(\rho) ) $ ; 
\item For all $a \in \mathcal{S}(\R)$, $ \mathcal{M}_h(\kappa) \op(a) \mathcal{M}_h(\kappa)^*  = \op( a \circ \kappa^{-1} ) $. 
\end{enumerate}
The operator $\mathcal{M}_h(\kappa)$ is unique up to multiplication by an element of $\mathbb{U}= \{ z \in \C, |z| =1 \}$. 
\end{prop}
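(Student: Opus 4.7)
\emph{Strategy.} The plan is to first show the equivalence of (i) and (ii), then construct $\mathcal{M}_h(\kappa)$ by decomposing $\kappa$ along a generating set of $\mathrm{Sp}(2,\R)=\mathrm{SL}(2,\R)$, and finally derive uniqueness from the fact that the only bounded operator on $L^2(\R)$ commuting with all phase-space translations is a scalar.

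\emph{Equivalence of (i) and (ii).} Every Schwartz Weyl symbol admits the integral representation
\begin{equation*}
\op(a) = \frac{1}{2\pi h} \int_{T^*\R} (\mathcal{F}_\sigma a)(\rho)\, T_h(\rho)\, d\rho,
\end{equation*}
where $\mathcal{F}_\sigma$ denotes the symplectic Fourier transform. Assuming (i), I would conjugate this identity by $\mathcal{M}_h(\kappa)$, apply the change of variables $\rho \mapsto \kappa^{-1}\rho$ (using $|\det\kappa|=1$ since $\kappa$ is symplectic), and exploit the covariance $\mathcal{F}_\sigma(a\circ\kappa^{-1}) = (\mathcal{F}_\sigma a)\circ\kappa^{-1}$ to obtain (ii). Conversely, (ii) applied to the family of phase-space plane-wave symbols — whose Weyl quantizations are proportional to the translation operators $T_h(\rho)$ — yields (i), the extension from Schwartz symbols being handled by a standard density argument.

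\emph{Existence.} I use the fact that $\mathrm{Sp}(2,\R) = \mathrm{SL}(2,\R)$ is generated by three elementary families of linear symplectomorphisms: the Fourier rotation $J:(x,\xi)\mapsto(\xi,-x)$, the dilations $D_\lambda:(x,\xi)\mapsto(\lambda x,\lambda^{-1}\xi)$ for $\lambda>0$, and the horizontal shears $S_a:(x,\xi)\mapsto(x,\xi+ax)$ for $a\in\R$. To each generator I associate an explicit unitary on $L^2(\R)$: the semiclassical Fourier transform $\mathcal{F}_h u(\xi) = (2\pi h)^{-1/2}\int e^{-ix\xi/h}u(x)\,dx$ for $J$; the isometric rescaling $u(x)\mapsto \lambda^{-1/2}u(x/\lambda)$ for $D_\lambda$; and the quadratic-phase multiplier $u(x)\mapsto e^{iax^2/(2h)}u(x)$ for $S_a$. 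A short direct calculation based only on the definition of $T_h(\rho)$ verifies (i) for each generator. For general $\kappa$, a factorization $\kappa = g_1\cdots g_N$ into generators then produces $\mathcal{M}_h(\kappa) := M_1\cdots M_N$, and condition (i) is manifestly stable under composition.

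\emph{Uniqueness, and the main subtle point.} If $\mathcal{M}_1$ and $\mathcal{M}_2$ both satisfy (i), then $V:=\mathcal{M}_2^{-1}\mathcal{M}_1$ is a unitary commuting with every $T_h(\rho)$. Commutation with the spatial translations $T_h(x_0,0)$ forces $V$ to be a Fourier multiplier, while commutation with the modulations $T_h(0,\xi_0)$ forces its multiplier symbol to be constant; hence $V = c\,\Id$ with $c\in\mathbb{U}$. The main subtlety is precisely in the existence step: two different factorizations of $\kappa$ into elementary generators could a priori yield different composite unitaries. It is the uniqueness statement that saves the construction, by showing that any two such composite choices agree modulo a phase in $\mathbb{U}$, so that $\mathcal{M}_h(\kappa)$ is well-defined up to this unavoidable ambiguity.
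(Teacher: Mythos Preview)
Your proof is correct. The paper itself does not prove this proposition; it simply states it and refers to \cite{ZW} (Section~11.3) and \cite{CoRo} (Chapter~3). However, the paper does indicate its preferred construction immediately afterwards, in Proposition~\ref{Proposition_metaplectic_flow} and the remark that follows: one connects $\kappa$ to the identity by a $C^1$ path $t\mapsto\kappa(t)$ in $\mathrm{Sp}_2(\R)$, and then $\mathcal{M}_h(\kappa)$ is obtained as the time-one propagator $U(1)$ of the Schr\"odinger equation $\frac{h}{i}\partial_t u + \op(H(t))u = 0$ associated with the quadratic Hamiltonian $H(t,\rho)=\tfrac12(\rho,S(t)\rho)$ generating this path.

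Your route via the generator decomposition of $\mathrm{SL}(2,\R)$ is genuinely different. It has the advantage of being entirely elementary: each generator is quantized by an explicit unitary (Fourier transform, rescaling, chirp multiplier), and condition~(i) is checked by hand without any ODE theory. The paper's Schr\"odinger-flow approach, by contrast, is less explicit but fits more naturally with the Fourier integral operator framework used throughout the paper (compare the Cauchy problem~(\ref{Cauchy_pb_Egorov}) defining $I_\delta(\R^n\times\R^n;G_\kappa)$), and it provides a canonical way to pin down the phase in $\mathbb{U}$ once a path is chosen --- a feature the paper actually relies on later when tracking square roots such as $(a+ib)^{1/2}$ along the path. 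Both approaches leave the same residual $\mathbb{U}$-ambiguity, coming in your case from the choice of factorization and in the paper's case from the choice of path.
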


Most of the time we won't precise that $T_h(\rho)$ and $\mathcal{M}_h(\kappa)$ depend on $h$ and we will simply write $T(\rho)$ and $\mathcal{M}(\kappa)$. We will write the index $h$ (or $h=1$) when needed. In fact, we can relate $\mathcal{M}_h(\kappa)$ and $\mathcal{M}_1(\kappa)$ by the relation : 

\begin{equation}
\mathcal{M}_h(\kappa) \Lambda_h = \Lambda_h \mathcal{M}_1(\kappa)
\end{equation}
where $\Lambda_h$ is the unitary scaling operator : 
\begin{equation}\label{Lambda_h}
\Lambda_h u(x) = h^{-1/4} u(h^{-1/2} x) 
\end{equation}

A way to obtain metaplectic operators is by solving the Schrödinger equation associated with quadratic Hamiltonians. 

\begin{prop}\label{Proposition_metaplectic_flow}
Let $S_2(\R)$ be the spaces of symmetric matrices of $\mathcal{M}_2(\R)$. 
Let $t \in [0,1] \mapsto S(t) \in S_2(\R)$ be $C^1$. 
We note  \begin{itemize}
\item  the quadratic time dependent Hamiltonian $H(t,\rho) = \frac{1}{2} (\rho, S(t) \rho )$ ; 
\item $t \in [0,1]\mapsto \kappa(t)$ the classical flow for the Hamiltonian $H(t)$, which solves the equation $$\dot{\kappa}(t)= JS(t) \kappa(t)$$ where $J= \left( \begin{matrix}
0 & 1 \\
-1 &  0 
\end{matrix} \right)$.  $\kappa(t)$ is a symplectic linear map for all $t \in [0,1]$
\item $U(t)$ the propagator of the Schrödinger equation $$\frac{h}{i} \frac{d}{dt} u(t) + \op(H(t) ) u = 0$$
 $U(t)$ is a unitary operator on $L^2(\R)$ for all $t \in [0,1]$. 
\end{itemize}
Then, for all $t \in [0,1]$, $U(t)$ is a metaplectic operator associated with the linear symplectic map $\kappa(t)$. 
\end{prop}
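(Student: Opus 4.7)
The plan is to verify directly that $U(t)$ satisfies one of the two equivalent characterizations of $\mathcal{M}_h(\kappa(t))$ in Proposition~\ref{def_meta_egorov}, and then to invoke the uniqueness-up-to-phase part of that statement. I would proceed in three steps.

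\textbf{Step 1 (set-up).} Since $S(t)$ is real symmetric and $C^{1}$ in $t$, the symbol $H(t,\rho)$ is a real quadratic polynomial, so $\op(H(t))$ is essentially self-adjoint on $\mathcal{S}(\R)$ and a standard time-dependent argument yields a strongly continuous unitary propagator $U(t)$ on $L^{2}(\R)$. On the classical side, $JS(t)\in\mathfrak{sp}(2,\R)$ since $S(t)$ is symmetric, so the fundamental matrix $\kappa(t)$ of $\dot\kappa=JS(t)\kappa$ is linear symplectic for every $t$; in dimension $2$ this forces $\det\kappa(t)=1$.

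\textbf{Step 2 (exact Heisenberg evolution of $Q$ and $P$).} The algebraic miracle is that for Weyl symbols of combined degree at most $3$ the Moyal expansion of the commutator truncates at first order: $[\op(H(t)),\op(\ell)]$ equals $\pm ih\,\op(\{H(t),\ell\})$ \emph{exactly}, with no $O(h^{3})$ tail, for every linear $\ell$. Applying the Heisenberg equation $\dot A(t)=\tfrac{i}{h}U(t)^{*}[\op(H(t)),A]U(t)$ to $A\in\{Q,P\}$ with $Q=\op(x)$, $P=\op(\xi)$, one obtains a \emph{closed} linear ODE on the pair $(Q(t),P(t)):=(U(t)^{*}QU(t),U(t)^{*}PU(t))$. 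Writing $(Q(t),P(t))^{T}=M(t)(Q,P)^{T}$ for a scalar $2\times 2$ matrix $M(t)$, a direct computation shows that $M(t)$ satisfies the matrix ODE $\dot M=JS(t)M$ with $M(0)=I$; uniqueness of linear ODEs then gives $M(t)=\kappa(t)$.

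\textbf{Step 3 (transfer to translations).} From $[Q,P]=ih$ and Baker--Campbell--Hausdorff one obtains $T_{h}(\rho)=\exp\bigl(\tfrac{i}{h}(\xi_{0}Q-x_{0}P)\bigr)$. Conjugation by $U(t)$ exponentiates the linear transformation established in Step~2, and a short algebraic manipulation that uses $\det\kappa(t)=1$ shows $U(t)T_{h}(\rho)U(t)^{*}=T_{h}(\kappa(t)\rho)$ for every $\rho\in T^{*}\R$. This is precisely condition (i) of Proposition~\ref{def_meta_egorov}, so by the uniqueness part of that proposition $U(t)$ represents $\mathcal{M}_{h}(\kappa(t))$ up to a scalar in $\mathbb{U}$.

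The only substantive ingredient is the exact truncation of the Moyal expansion for quadratic-times-linear symbols: this is what makes the Egorov theorem \emph{exact} for metaplectic operators, in contrast with the approximate-modulo-$O(h^{1-2\delta})$ version available for general Fourier integral operators. Everything else amounts to matching linear ODEs on a $2\times 2$ matrix and manipulating exponentials of elements of the Heisenberg algebra.
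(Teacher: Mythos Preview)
Your proof is correct. Note that the paper does not actually prove Proposition~\ref{Proposition_metaplectic_flow}; it is stated as a standard fact, with the reader referred to \cite{CoRo} (Chapter~3) and \cite{ZW} (Section~11.3). Your argument---exact Egorov for linear observables via the truncation of the Moyal commutator at first order, followed by exponentiation through the Weyl form $T_h(\rho)=\exp\bigl(\tfrac{i}{h}(\xi_0 Q-x_0 P)\bigr)$---is precisely the classical route and is essentially what one finds in those references.

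Two places where a full write-up would need a bit more care: first, the sign bookkeeping in Step~2. Your ``$\pm ih$'' hides the identity $[\op(H),\op(\ell)]=\tfrac{h}{i}\op(\{H,\ell\})$, which combined with $\dot A=\tfrac{i}{h}U^*[\op(H),A]U$ gives $\dot A=U^*\op(\{H,\ell\})U$; from $\{H,\ell_v\}=\ell_{(JS)^T v}$ one then reads off $\dot M=JS(t)M$ directly. Second, in Step~3 you compute $U T_h(\rho) U^*$, which involves $UQU^*$ and $UPU^*$, whereas Step~2 gives you $U^*QU$ and $U^*PU$ (governed by $M(t)=\kappa(t)$). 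Passing from one to the other requires inverting $\kappa(t)$, and it is in this inversion---not merely in the symplecticity of $\kappa(t)$---that the relation $\kappa^{-1}=\begin{pmatrix}d&-b\\-c&a\end{pmatrix}$ for $\det\kappa=1$ delivers exactly $\xi_0'Q-x_0'P$ with $(x_0',\xi_0')=\kappa(t)(x_0,\xi_0)$. Both points are routine once noticed.
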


Note that for every $\kappa_1 \in \text{Sp}_2(\R)$, there always exists a (non unique) $C^1$ curve $\kappa : t \in [0,1] \to \text{Sp}_2(\R)$ such that $\kappa(0)=I_2$ and $\kappa(1)=\kappa_1$. (see for instance \cite{CoRo}, Proposition 31 in Chapter 3). So that we can construct $\mathcal{M}(\kappa_1)$ by use of the previous proposition. 

\begin{ex}
The unitary $h$-Fourier transform $\mathcal{F}_h$, where 
$$ \mathcal{F}_h u (\xi)= \frac{1}{(2\pi h)^{1/2}} \int_{\R} u(y) e^{- \frac{i y \xi}{h}}$$ is a metaplectic operator associated with $J$. 
\end{ex}

\begin{ex}
Suppose that $$\kappa = \left( \begin{matrix}
a & b \\
c & d 
\end{matrix}\right)$$
with $a \neq 0$. Then, the following operator is a metaplectic operator associated with $\kappa$ : 
\begin{equation} \label{meta_example_2}
 \mathcal{M}(\kappa) u (x) = \left( \frac{1}{2 \pi h |a| } \right)^{1/2} \int_\R e^{\frac{i}{2h}   (ca^{-1} x^2 + 2 a^{-1}x \xi - a^{-1}b\xi^2 )  } \mathcal{F}_h u(\xi) d \xi
\end{equation}
\end{ex}

\subsection{Coherent states}
\subsubsection{Definitions and notations}

In this subsection, we introduce the notations and definitions we will use for studying coherent states. We refer the reader to \cite{CoRo}. 
The semiclassical coherent state (or Gaussian state) centered at zero will be denoted by 
\begin{equation}
\varphi_0(x) = \frac{1}{(\pi h)^{1/4}} e^{- \frac{x^2}{2h}}
\end{equation}
and the coherent state centered at $\rho$ is simply 
\begin{equation}
\varphi_\rho \coloneqq T(\rho) \varphi_0
\end{equation}
We also write
$$ \varphi_0 = \Lambda_h \Psi_0$$
where $\Lambda_h$ is defined in (\ref{Lambda_h}) and $\Psi_0$ is the renormalized coherent state
\begin{equation}
\Psi_0(x) = \frac{1}{\pi^{1/4}} e^{- \frac{x^2}{2}}
\end{equation} 
We recall that $\varphi_0$ (resp. $\Psi_0$) is the ground sate of the harmonic oscillator $-h^2 \partial^2_x + x^2$ (resp. $- \partial^2_x + x^2$). The other eigenfunctions of this harmonic oscillator, called excited states, are obtained from $\varphi_0$ (resp. $\Psi_0$) by applying the creation operator $\mathbf{a} = \frac{1}{\sqrt{2h}}(- h\partial_x +x)$ (resp. $\Lambda_h^* \mathbf{a} \Lambda_h = \frac{1}{\sqrt{2}}(- \partial_x +x)$). For $n \in \N$,  we can note for instance 
$$ \varphi_{0,n} = \mathbf{a}^n \varphi_0 ; \Psi_n = \Lambda_h^* \mathbf{a}^n \Lambda_h \Psi_0$$
We recall that $\Psi_n  =h_n(x) \Psi_0$ where $h_n$ is a hermite polynomial of degree $n$. In particular, if $P \in \C[X]$, it is possible to decompose $P(x)\Psi_0(x)$ into a linear combination of excited states up to order $\deg (P)$. 

We can also define \emph{squeezed coherent states} : 

\begin{defi}
Let $\gamma \in \C$ with $\im \gamma >0$. The squeezed coherent state, deformed by $\gamma$ and centered at zero is 
$$ \varphi_0^{(\gamma)} (x) = (a_\gamma  \pi h)^{-1/4} e^{ i \gamma\frac{x^2}{2h}}$$
where $a_\gamma = \im(\gamma)^{-1}$ makes the norm of this state equal to one. 
We also define the squeezed coherent state centered at $\rho \in T^*\R$ by 
$$ \varphi_\rho^{(\gamma)}  = T(\rho)  \varphi_0^{(\gamma)} $$
and the squeezed renormalized coherent state at 0 
$$\Psi_0^{(\gamma)}(x) =  (a_\gamma  \pi )^{-1/4} e^{ i \gamma\frac{x^2}{2}}$$
\end{defi}
We conclude this section by recalling a useful formula - a resolution of the identity - which is the starting point of our analysis (see \cite{CoRo}, Proposition 6 in Section 1.2). I

\begin{lem}\label{trace_coherent_state}
Let $A : L^2(\R) \to L^2(\R)$ be a trace class operator. Then, 
$$ \tr(A) = \frac{1}{2\pi h} \int_{T^*\R} <A \varphi_\rho, \varphi_\rho> d\rho$$
where $d\rho$ denotes the Lebesgue measure of $\R^2$. 
\end{lem}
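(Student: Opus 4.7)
}
The plan is to first establish the weak resolution of the identity
$$\frac{1}{2\pi h}\int_{T^*\R} |\varphi_\rho\rangle\langle \varphi_\rho|\, d\rho = \Id_{L^2(\R)}$$
and then deduce the trace formula by expansion in an orthonormal basis together with a Fubini argument whose absolute convergence is guaranteed by the trace class hypothesis on $A$.

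\emph{Step 1: Bargmann--type Plancherel identity.} For $u\in\mathcal{S}(\R)$ and $\rho=(x_0,\xi_0)$, I would compute explicitly, using $T(\rho)\varphi_0(x) = e^{-ix_0\xi_0/(2h)}e^{ix\xi_0/h}\varphi_0(x-x_0)$ and the fact that $\varphi_0$ is real,
$$\langle u,\varphi_\rho\rangle = e^{ix_0\xi_0/(2h)}\int_{\R} u(x)\,\varphi_0(x-x_0)\,e^{-ix\xi_0/h}\,dx.$$
For fixed $x_0$, the $x$-integral is (up to a unimodular factor) the semiclassical Fourier transform at $\xi_0$ of $x\mapsto u(x)\varphi_0(x-x_0)$. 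Applying the Plancherel identity for $\mathcal{F}_h$ in the $\xi_0$ variable and then integrating over $x_0$ using $\|\varphi_0\|_{L^2}=1$ and Fubini, I obtain
$$\int_{T^*\R} |\langle u,\varphi_\rho\rangle|^2\,d\rho = 2\pi h\int_\R |u(x)|^2\Big(\int_\R |\varphi_0(x-x_0)|^2\,dx_0\Big) dx = 2\pi h\,\|u\|_{L^2}^2.$$
Polarization then gives
$$\frac{1}{2\pi h}\int_{T^*\R}\langle u,\varphi_\rho\rangle\langle\varphi_\rho,v\rangle\,d\rho=\langle u,v\rangle,\qquad u,v\in L^2(\R),$$
which is the desired weak resolution of the identity.

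\emph{Step 2: Trace formula via an orthonormal basis.} Let $(e_n)_{n\ge 0}$ be any orthonormal basis of $L^2(\R)$. Writing $\varphi_\rho=\sum_n\langle\varphi_\rho,e_n\rangle e_n$ in $L^2$ gives
$$\langle A\varphi_\rho,\varphi_\rho\rangle = \sum_n \langle\varphi_\rho,e_n\rangle\,\langle Ae_n,\varphi_\rho\rangle.$$
Integrating over $\rho$ and, assuming we may exchange sum and integral,
$$\int_{T^*\R}\langle A\varphi_\rho,\varphi_\rho\rangle\,d\rho = \sum_n\int_{T^*\R}\langle\varphi_\rho,e_n\rangle\langle Ae_n,\varphi_\rho\rangle\,d\rho = 2\pi h\sum_n \langle Ae_n,e_n\rangle = 2\pi h\,\tr(A),$$
where the middle equality uses Step 1 with $u=e_n$ and $v=Ae_n$.

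\emph{Step 3: Justification of Fubini.} The step requiring care is the swap of sum and integral. I would handle it by writing $A=\sum_n s_n|f_n\rangle\langle g_n|$ a singular value / Schmidt type decomposition (or, for $A$ self-adjoint trace class, the spectral decomposition $A=\sum_n \lambda_n|f_n\rangle\langle f_n|$ with $\sum_n|\lambda_n|<\infty$), so that
$$\langle A\varphi_\rho,\varphi_\rho\rangle = \sum_n s_n\,\langle\varphi_\rho,g_n\rangle\langle f_n,\varphi_\rho\rangle.$$
By Cauchy--Schwarz and Step 1, $\int |\langle\varphi_\rho,g_n\rangle\langle f_n,\varphi_\rho\rangle|\,d\rho\le 2\pi h\,\|f_n\|\,\|g_n\|=2\pi h$, and $\sum_n|s_n|=\|A\|_{\mathrm{tr}}<\infty$, so Fubini applies and yields exactly $2\pi h\,\tr(A)$. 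The main obstacle is thus not conceptual but this summability check; it is resolved by the trace class assumption together with the Plancherel identity of Step~1, which controls each coherent-state matrix element uniformly.
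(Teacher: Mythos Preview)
Your proof is correct and follows the standard route (resolution of the identity via the Plancherel/isometry property of the coherent-state transform, followed by a Fubini argument justified by the singular-value decomposition of the trace-class operator). The paper does not actually give its own proof of this lemma; it simply quotes the result from \cite{CoRo}, Proposition~6 in Section~1.2, so there is nothing further to compare.
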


\subsubsection{Action of metaplectic operators on coherent states}

We recall here how metaplectic operators act on coherent states. We refer the reader to \cite{CoRo} (Section 3.2) for a complete proof and a general version in any dimension : 

\begin{prop}
Let $ \kappa = \left( \begin{matrix}
a & b \\
c& d 
\end{matrix} \right)
 $ be a symplectic linear map. Let $\mathcal{M}(\kappa)$ be a metaplectic operator associated with $\kappa$, constructed by use of Proposition \ref{Proposition_metaplectic_flow}, following a path $\kappa_t$ from $I_2$ to $\kappa$.  Then, we have : 
$$ \mathcal{M}(\kappa)\varphi_0(x) = (\pi h)^{-1/4} (a+ib)^{-1/2} e^{i \gamma_\kappa \frac{x^2}{2h}}$$
where $\gamma_\kappa = (c+id)(a+ib)^{-1}$. 
\end{prop}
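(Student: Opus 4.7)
The plan is to exploit the construction of $\mathcal{M}(\kappa)$ given by Proposition \ref{Proposition_metaplectic_flow}: since $\kappa$ is reached from $I_2$ by a path $t \mapsto \kappa(t) = \kappa_t$ generated by a smooth family of symmetric matrices $S(t)$, the state $u(t) = \mathcal{M}(\kappa_t)\varphi_0$ solves the Schrödinger equation
\[
\frac{h}{i}\partial_t u(t) + \op(H(t)) u(t) = 0, \qquad u(0) = \varphi_0,
\]
where $H(t,\rho) = \tfrac{1}{2}(\rho,S(t)\rho)$. The key observation is that Gaussian states are preserved under such quadratic flows. Thus I would make the ansatz
\[
u(t,x) = C(t)\, e^{i\gamma(t) x^2/(2h)}, \qquad \gamma(0) = i,\ C(0) = (\pi h)^{-1/4},
\]
(the initial values encoding $\varphi_0$), substitute into the evolution equation, and match powers of $x$. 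Writing $S(t) = \bigl(\begin{smallmatrix} \alpha & \beta \\ \beta & \delta \end{smallmatrix}\bigr)$ and using that $\op(H(t))$ acts on such Gaussians as a second-order differential operator, the $x^2$ coefficient yields a Riccati equation for $\gamma$, while the $x^0$ coefficient gives a first-order linear ODE for $C$ depending on $\gamma$.

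Next I would compare these ODEs with the equation $\dot\kappa(t) = JS(t)\kappa(t)$ satisfied by the classical flow. Writing $\kappa(t) = \bigl(\begin{smallmatrix} a(t) & b(t) \\ c(t) & d(t) \end{smallmatrix}\bigr)$, a direct calculation shows that $\tilde\gamma(t) \coloneqq (c(t)+id(t))(a(t)+ib(t))^{-1}$ satisfies exactly the same Riccati equation as $\gamma(t)$, and agrees at $t=0$ (since $\kappa(0) = I_2$ gives $\tilde\gamma(0) = i$). By uniqueness, $\gamma(t) = \tilde\gamma(t) = \gamma_{\kappa_t}$. Similarly, the ODE for $C(t)$ integrates to $C(t) = (\pi h)^{-1/4}(a(t)+ib(t))^{-1/2}$, where the branch of the square root is the one obtained by continuous tracking along the path, starting from $(a(0)+ib(0))^{-1/2} = 1$. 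Setting $t=1$ gives the claimed formula.

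An alternative verification, useful as a sanity check, is to compute directly with the explicit integral formula (\ref{meta_example_2}) in the case $a\neq 0$: one has $\mathcal{F}_h\varphi_0 = \varphi_0$, so the problem reduces to a Gaussian integral in $\xi$ whose quadratic form is $-(1+ib/a)/(2h)$; after completing the square and using the symplectic relation $ad-bc = 1$ to simplify, the coefficient of $x^2/(2h)$ collapses to $(ic-d)/(a+ib) = i\gamma_\kappa$, matching the Schrödinger-equation computation (up to the $\mathbb{U}$ ambiguity mentioned in Proposition \ref{def_meta_egorov}).

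The main obstacle is pinning down the correct branch of $(a+ib)^{-1/2}$. The formula (\ref{meta_example_2}) determines the metaplectic operator only up to a unit-modulus factor, and $a+ib$ can wind around the origin along a long path $\kappa_t$ in $\mathrm{Sp}_2(\mathbb{R})$, which is the source of the well-known Maslov/metaplectic double cover phenomenon. The ODE approach is the clean way to handle this: because $\gamma(t)$ stays in the upper half-plane and one lifts $(a+ib)^{-1/2}$ continuously from the initial value $1$ along the flow, the branch is unambiguously determined by the path used in Proposition \ref{Proposition_metaplectic_flow}, which is precisely the one baked into the statement.
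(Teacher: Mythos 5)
Your proof is correct, and it follows the standard ODE/Riccati method that is the content of the reference [CoRo] (Section 3.2) to which the paper delegates this proof; the paper itself gives no argument. Your computation is complete: the ansatz $u = C(t)\,e^{i\gamma(t)x^2/(2h)}$ in the Schrödinger equation $\frac{h}{i}\partial_t u + \op(H(t))u=0$ yields $\dot\gamma = -(\alpha + 2\beta\gamma + \delta\gamma^2)$ and $\dot C/C = -\tfrac12(\beta+\delta\gamma)$, and writing $z=a+ib$, $w=c+id$ for the columns of $\kappa(t)$, the system $\dot z = \beta z + \delta w$, $\dot w = -\alpha z -\beta w$ gives exactly the same Riccati equation for $w/z$ (with matching initial data $i$) and $\dot z / z = \beta + \delta\gamma$, so $C = (\pi h)^{-1/4}z^{-1/2}$ with the branch tracked continuously from $z(0)=1$ — which is precisely how the path $\kappa_t$ resolves the $\mathbb{U}$ ambiguity.
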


\begin{rem}
The square root $(a+ib)^{1/2}$ is determined by the path $\kappa_t$ ($(a_t + i b_t)^{1/2}$ has to be continuous). \\
Since $\im \gamma_\kappa = |a+ib|^{-2}$, this proposition shows that for some $\lambda \in \mathbb{U}$, 
$$ \mathcal{M}(\kappa) \varphi_0 = \lambda \varphi^{(\gamma)}_0$$ 
Since the metaplectic operators are defined modulo $\mathbb{U}$,  in the rest of this article, we will sometimes omit to write the factor $\lambda$ and and by abuse, we could write $ \mathcal{M}(\kappa) \varphi_0 =  \varphi^{(\gamma)}_0$. 
It won't be specified anymore. Anyway, we are concerned by the norm of such states. 
\end{rem}

We also give the following formula concerning the action of metaplectic operators on excited coherent states (see \cite{Ha98}, Section 2) : 
\begin{prop}\label{Prop_meta_on_excited}
Let $ \kappa = \left( \begin{matrix}
a & b \\
c& d 
\end{matrix} \right)
 $ be a symplectic linear map. Let $\mathcal{M}(\kappa)$ be a metaplectic operator associated with $\kappa$, constructed by use of Proposition \ref{Proposition_metaplectic_flow}, following a path $\kappa_t$ from $I_2$ to $\kappa$. Then,
$$ \mathcal{M}(\kappa) \varphi_{0,n} =(|a+ib|^2 \pi h)^{-1/4} \left( \frac{(a-ib)}{(a+ib)}\right)^{n/2} h_n \left( \frac{x}{h^{1/2} |a+ib|} \right) e^{i\gamma_\kappa \frac{x^2}{2h}} $$
where $\gamma_\kappa = (c+id)(a+ib)^{-1}$. 
\end{prop}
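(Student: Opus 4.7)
The plan is to reduce to the case $h=1$ via the scaling intertwiner $\mathcal{M}_h(\kappa)\Lambda_h = \Lambda_h \mathcal{M}_1(\kappa)$, then exploit the \emph{exact} Egorov property of metaplectic operators (item $(ii)$ of Proposition \ref{def_meta_egorov}) to conjugate the creation operator explicitly, and finally recognize the resulting polynomial recurrence as the Hermite recurrence.

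Since $\mathbf{a}\Lambda_h = \Lambda_h(\Lambda_h^{*}\mathbf{a}\Lambda_h)$ with $\Lambda_h^{*}\mathbf{a}\Lambda_h = \mathbf{a}_1 := \tfrac{1}{\sqrt{2}}(x-\partial_x)$, we have $\varphi_{0,n} = \mathbf{a}^n \Lambda_h\Psi_0 = \Lambda_h \Psi_n$, whence
\[
\mathcal{M}_h(\kappa)\varphi_{0,n} = \Lambda_h\,\mathcal{M}_1(\kappa)\Psi_n.
\]
It suffices to compute $\mathcal{M}_1(\kappa)\Psi_n$ and apply $\Lambda_h$ at the end (this substitutes $x\mapsto h^{-1/2}x$ and introduces the factor $h^{-1/4}$). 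By the preceding proposition, $\mathcal{M}_1(\kappa)\Psi_0 = c_0\,e^{i\gamma_\kappa x^2/2}$ with $c_0 = \pi^{-1/4}(a+ib)^{-1/2}$ up to a unimodular factor. Writing $\Psi_n = \mathbf{a}_1^n \Psi_0$ and $\mathcal{M}_1(\kappa)\Psi_n = \tilde{\mathbf{a}}^n \mathcal{M}_1(\kappa)\Psi_0$ with $\tilde{\mathbf{a}} := \mathcal{M}_1(\kappa)\mathbf{a}_1\mathcal{M}_1(\kappa)^{*}$, the exact Egorov property applied to the linear symbol $a_0(x,\xi) = \tfrac{1}{\sqrt{2}}(x-i\xi)$ gives, after computing $a_0\circ\kappa^{-1}$ with $\kappa^{-1} = \bigl(\begin{smallmatrix} d & -b \\ -c & a\end{smallmatrix}\bigr)$,
\[
\tilde{\mathbf{a}} = \tfrac{1}{\sqrt{2}}\bigl[(d+ic)x + i(b+ia)\partial_x\bigr].
\]

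The central computation is to iterate $\tilde{\mathbf{a}}$ on states of the form $P(x)e^{i\gamma_\kappa x^2/2}$ with $P$ a polynomial:
\[
\tilde{\mathbf{a}}\bigl(Pe^{i\gamma_\kappa x^2/2}\bigr) = \tfrac{1}{\sqrt{2}}\bigl[\bigl((d+ic)-\gamma_\kappa(b+ia)\bigr)xP + i(b+ia)P'\bigr]\,e^{i\gamma_\kappa x^2/2}.
\]
The one nontrivial step, which I expect to be the only substantive calculation, is the algebraic identity
\[
(d+ic) - \gamma_\kappa(b+ia) = \frac{2}{a+ib},
\]
obtained by expanding the numerator $(d+ic)(a+ib) - (c+id)(b+ia) = 2(ad-bc)$ and using the symplectic relation $\det\kappa = ad - bc = 1$. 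Combined with $i(b+ia) = -(a-ib)$ and the change of variable $y = x/|a+ib|$, $Q(y) = P(|a+ib|\,y)$, this rewrites $\tilde{\mathbf{a}}$ acting on $Pe^{i\gamma_\kappa x^2/2}$ as $\frac{a-ib}{\sqrt{2}\,|a+ib|}\bigl[2yQ(y) - Q'(y)\bigr]\,e^{i\gamma_\kappa x^2/2}$.

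Since $Q\mapsto 2yQ - Q'$ is exactly the physicist Hermite recurrence $H_{n+1}(y) = 2yH_n(y) - H_n'(y)$, iterating $n$ times from $P_0 = 1$ produces $\bigl(\tfrac{a-ib}{\sqrt{2}|a+ib|}\bigr)^n H_n(x/|a+ib|)$. The paper's normalization $\Psi_n = h_n(x)\Psi_0 = \mathbf{a}_1^n\Psi_0$ forces $h_n = H_n/2^{n/2}$ (as is seen from $\mathbf{a}_1\Psi_0 = \sqrt{2}\,x\Psi_0$), which absorbs the $2^{-n/2}$ factor; and since $\frac{a-ib}{|a+ib|}$ is unimodular and is a square root of $\frac{a-ib}{a+ib}$, one obtains
\[
\mathcal{M}_1(\kappa)\Psi_n = (|a+ib|^2\pi)^{-1/4}\Bigl(\frac{a-ib}{a+ib}\Bigr)^{n/2} h_n\!\bigl(x/|a+ib|\bigr)\,e^{i\gamma_\kappa x^2/2},
\]
up to a unimodular factor absorbed into the phase ambiguity of $\mathcal{M}(\kappa)$. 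Applying $\Lambda_h$ then gives the announced formula, and the only hard part is the symplectic identity above; everything else is bookkeeping with unimodular phases and with the Hermite normalization convention.
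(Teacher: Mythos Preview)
Your proof is correct. The reduction to $h=1$ via $\Lambda_h$, the exact Egorov conjugation of the creation operator, the symplectic identity $(d+ic)(a+ib)-(c+id)(b+ia)=2(ad-bc)=2$, and the identification with the Hermite recurrence all check out; the normalization $h_n = 2^{-n/2}H_n$ is consistent with the paper's convention $\Psi_n = \mathbf{a}_1^n\Psi_0$.

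Note, however, that the paper does \emph{not} prove this proposition: it merely states the formula and refers to \cite{Ha98}, Section~2. So there is no ``paper's own proof'' to compare against. Your argument is a clean, self-contained derivation that fits the paper's framework (using only Propositions~\ref{def_meta_egorov} and~\ref{Proposition_metaplectic_flow} and the preceding result on $\mathcal{M}(\kappa)\varphi_0$), and in that sense it supplies what the paper outsources to the reference.
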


In the sequel, we will need to estimate the $H_h(\langle\rho \rangle^N )$-norm of squeezed coherent states in terms of the squeezing parameter. Equivalently, we need to control this norm for a state of the form $\mathcal{M}(\kappa)\varphi_\rho$ in terms of $\kappa$. 
To do so, we start by fixing a norm $||\cdot||$ on $\mathcal{M}_2(\R)$ For convenience, let's assume that for all linear symplectic map, we have 
\begin{equation}\label{tricky_norm}
||\kappa|| \geq 1
\end{equation}
For instance, let's say that $||\kappa||=\sqrt{2} \max (|\kappa|_{11},|\kappa|_{12},|\kappa|_{21},|\kappa|_{22})$. It is not hard to check that this norm satisfies \ref{tricky_norm} since $\det(\kappa) =1$. The main interest of (\ref{tricky_norm}) is that $||\kappa||^a \leq ||\kappa||^b$ if $a \leq b$.

 We have : 

\begin{lem}\label{Control_mathcal_M_kappa} There exists a family of universal constants $(K_{N,k})_{(N,k) \in \N^2}$ such that the following holds : 
let $N \in \N$, $k \in \N$ and $\kappa$ be a symplectic linear map. Then, there exists for all $0 < h \leq 1$, 
$$ || \mathcal{M}(\kappa) (x^k \varphi_0) ||_{H_h(\langle\rho \rangle^N )} \leq K_{N+k} \sum_{l=0}^N h^{(l+k)/2} ||\kappa||^l $$
\end{lem}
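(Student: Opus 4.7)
The strategy is to rescale to $h=1$ coherent states, then to push the differential/multiplication operators $x^\alpha(h\partial)^\beta$ through $\mathcal{M}_h(\kappa)$ using the exact Egorov property of metaplectic operators.

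\textbf{Step 1: Reduction to $h=1$.} Write $\varphi_0 = \Lambda_h \Psi_0$. Using $(\Lambda_h f)(x)=h^{-1/4}f(h^{-1/2}x)$ one checks the elementary intertwining identities
\begin{equation*}
\hat{x}\,\Lambda_h = h^{1/2}\Lambda_h\,\hat{x},\qquad (h\partial_x)\,\Lambda_h = h^{1/2}\Lambda_h\,\partial_x.
\end{equation*}
In particular $x^k\varphi_0 = h^{k/2}\Lambda_h(x^k\Psi_0)$, and combined with the identity $\mathcal{M}_h(\kappa)\Lambda_h = \Lambda_h\mathcal{M}_1(\kappa)$ already recalled in the paper, this gives
\begin{equation*}
x^\alpha(h\partial)^\beta\mathcal{M}_h(\kappa)(x^k\varphi_0) = h^{(k+\alpha+\beta)/2}\,\Lambda_h\bigl(x^\alpha\partial^\beta\mathcal{M}_1(\kappa)(x^k\Psi_0)\bigr).
\end{equation*}
Since $\Lambda_h$ is unitary, the $L^2$-norm of the left-hand side equals $h^{(k+\alpha+\beta)/2}$ times the $h=1$ quantity on the right.

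\textbf{Step 2: Egorov identities for metaplectic operators.} Applying Proposition~\ref{def_meta_egorov}(ii) with $h=1$ to the symbols $x$ and $\xi$ gives
\begin{equation*}
\mathcal{M}_1(\kappa)^*\hat{x}\,\mathcal{M}_1(\kappa) = a\hat{x}+b\hat{\xi}_1,\qquad \mathcal{M}_1(\kappa)^*\hat{\xi}_1\mathcal{M}_1(\kappa) = c\hat{x}+d\hat{\xi}_1,
\end{equation*}
where $\hat{\xi}_1 = -i\partial$. Consequently
\begin{equation*}
x^\alpha\partial^\beta\mathcal{M}_1(\kappa) = \mathcal{M}_1(\kappa)\,\bigl(a x - i b\partial\bigr)^\alpha\bigl(i c x + d\partial\bigr)^\beta.
\end{equation*}
Expanding the non-commutative product, this is $\mathcal{M}_1(\kappa)$ times a (non-commutative) polynomial $P_{\alpha,\beta,\kappa}(x,\partial)$ of degree $\alpha+\beta$, each of whose monomials has a scalar coefficient that is a product of exactly $\alpha+\beta$ entries of $\kappa$. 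Using $\|\kappa\|\geq 1$, each such coefficient is bounded by $C\|\kappa\|^{\alpha+\beta}$.

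\textbf{Step 3: Estimate in $L^2$ and sum.} Since $\mathcal{M}_1(\kappa)$ is unitary and $x^k\Psi_0$ is Schwartz, applying $P_{\alpha,\beta,\kappa}(x,\partial)$ yields a Schwartz function whose $L^2$-norm is bounded by $C_{\alpha+\beta,k}\,\|\kappa\|^{\alpha+\beta}$ for some universal constant depending only on $\alpha+\beta$ and $k$ (one checks that $\partial^j(x^k\Psi_0)$ is a polynomial of degree $\leq j+k$ times $\Psi_0$, hence has a Schwartz norm depending only on $j$ and $k$). Combining Steps 1 and 2 we obtain
\begin{equation*}
\|x^\alpha(h\partial)^\beta\mathcal{M}_h(\kappa)(x^k\varphi_0)\|_{L^2} \leq C_{\alpha+\beta,k}\,h^{(k+\alpha+\beta)/2}\,\|\kappa\|^{\alpha+\beta}.
\end{equation*}
Plugging this into the equivalence of norms for $H_h(\langle\rho\rangle^N)$ recalled in Section~\ref{Section_preliminaries_semiclassical}, summing over $\alpha+\beta\leq N$, and regrouping by $l=\alpha+\beta$, one concludes
\begin{equation*}
\|\mathcal{M}(\kappa)(x^k\varphi_0)\|_{H_h(\langle\rho\rangle^N)} \leq K_{N,k}\,\sum_{l=0}^{N} h^{(l+k)/2}\,\|\kappa\|^{l},
\end{equation*}
with $K_{N,k}$ a universal constant depending only on $N$ and $k$.

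There is essentially no hard step here; the only delicate point is the uniformity in $\kappa$, which is guaranteed by the normalization $\|\kappa\|\geq 1$ ensuring that a coefficient of degree $\leq \alpha+\beta$ in the entries of $\kappa$ is controlled by $\|\kappa\|^{\alpha+\beta}$.
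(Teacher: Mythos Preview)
Your proof is correct and follows essentially the same approach as the paper's: both use the exact Egorov property of metaplectic operators to conjugate the position/momentum operators defining the $H_h(\langle\rho\rangle^N)$ norm through $\mathcal{M}(\kappa)$, then exploit unitarity and the fact that the resulting polynomial in $(x,\xi)$ has coefficients of degree $\alpha+\beta$ in the entries of $\kappa$. The only cosmetic differences are that you use the $x^\alpha(h\partial)^\beta$ characterization of the norm and explicitly rescale to $h=1$ via $\Lambda_h$, whereas the paper uses the Weyl-quantized form $\op(x^\alpha\xi^\beta)$ and absorbs the scaling into the identity $\|\op(x^{l_1}\xi^{l_2})(x^k\varphi_0)\|_{L^2}=h^{(l_1+l_2+k)/2}C_{(l_1,l_2,k)}$.
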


\begin{proof}
Let's write $\kappa= \left( \begin{matrix}
a &b \\
c & d 
\end{matrix} \right)$. 
For a state $u \in H_h(\langle\rho \rangle^N )$, we have  $$ ||u ||^2_{H_h(\langle\rho \rangle^N )}   \sim \sum_{ \alpha + \beta \leq N } || \op(x^\alpha \xi^\beta) u ||_{L^2}^2$$
Let $\alpha, \beta \in N$ such that $\alpha + \beta \leq N$. We want to estimates $|| \op(x^\alpha \xi^\beta)  \mathcal{M}(\kappa)(x^k \varphi_0) ||_{L^2}^2$. We have 
$$ \op(x^\alpha \xi^\beta)  \mathcal{M}(\kappa)(x^k \varphi_0) =\mathcal{M}(\kappa) \op\left( (ax + b \xi)^\alpha (cx + d\xi)^\beta \right) ( x^k \varphi_0)$$
Since $\mathcal{M}(\kappa)$ is unitary on $L^2$, it is enough to estimates the $L^2$ norm of 
$$
\op\left( (ax + b \xi)^\alpha (cx + d\xi)^\beta \right)  (x^k \varphi_0) = \op \left( \sum_{ l=(l_1,l_2) \in \N^2, l_1 + l_2 = \alpha + \beta}  B_l(\kappa) x^{l_1} \xi^{l_2} \right) (x^k\varphi_0)
$$
where $B_{l}$ is some $l_1+ l_2$ multilinear form in $\kappa$, whose coefficients depend on $\alpha$ and $\beta$. In particular, $|B_l(\kappa)| \leq C_l ||\kappa||^{l_1 + l_2}$ for some universal $C_l$. Finally, we observe that $|| \op(x^{l_1} \xi^{l_2} ) (x^k \varphi_0) ||_{L^2}  \coloneqq h^{(l_1+l_2+k)/2} C_{(l_1,l_2,k)}$, for some $C_{(l_1,l_2,k)}$ depending only on $(l_1,l_2,k)$, and we find that
$$ ||  \op\left( (ax + b \xi)^\alpha (cx + d\xi)^\beta \right)  (x^k\varphi_0) || \leq  C_{(\alpha, \beta,k)} \sum_{p=0}^{\alpha+\beta} ||\kappa||^{p}h^{(p+k)/2} $$
 we find the required inequality with $K_{N,k}$ depending on the the $C_{\alpha,\beta,k}$ with $\alpha+\beta \leq N$. 
\end{proof}

As a corollary, by specializing at $h=1$, we obtain the following : 
\begin{cor}\label{cor_norm_H_1}
There exists a family of constants $K_{N,d}, d \in \N, N \in \N$ such that : for all $P \in \C[X]$, for all symplectic linear map $\kappa$ and for all $N \in \N$, 
$$|| \mathcal{M}_1(\kappa) (P \Psi_0)||_{ H_1(\langle \rho \rangle^N)} \leq K_{N,\deg P} N_\infty(P) ||\kappa||^N$$
where $N_\infty(P)$ is the sup norm of the coefficients of $P$.
\end{cor}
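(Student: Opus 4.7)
The corollary is a direct packaging of Lemma \ref{Control_mathcal_M_kappa}; the plan is to specialize the lemma to $h=1$ and then linearly expand $P\Psi_0$ in monomials $x^k\Psi_0$.

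First I would set $h=1$ in Lemma \ref{Control_mathcal_M_kappa}. At $h=1$ the state $\varphi_0$ coincides with $\Psi_0$, and the lemma yields, for every $k \in \N$ and every symplectic linear map $\kappa$,
$$ \|\mathcal{M}_1(\kappa)(x^k \Psi_0)\|_{H_1(\langle\rho\rangle^N)} \leq K_{N,k} \sum_{l=0}^{N} \|\kappa\|^l . $$
Here the normalization condition (\ref{tricky_norm}) is used crucially: since $\|\kappa\| \geq 1$ for any linear symplectic map, we have $\|\kappa\|^l \leq \|\kappa\|^N$ for $0 \leq l \leq N$, so the sum is controlled by $(N+1)\|\kappa\|^N$.

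Second, I would write $P = \sum_{k=0}^{\deg P} p_k X^k$ so that $P\Psi_0 = \sum_k p_k\, x^k \Psi_0$, with $|p_k| \leq N_\infty(P)$. By linearity of $\mathcal{M}_1(\kappa)$ and the triangle inequality in $H_1(\langle\rho\rangle^N)$,
$$\|\mathcal{M}_1(\kappa)(P\Psi_0)\|_{H_1(\langle\rho\rangle^N)} \leq \sum_{k=0}^{\deg P} |p_k|\, \|\mathcal{M}_1(\kappa)(x^k\Psi_0)\|_{H_1(\langle\rho\rangle^N)} \leq (\deg P + 1)(N+1)\, \max_{0 \leq k \leq \deg P} K_{N,k} \cdot N_\infty(P)\, \|\kappa\|^N.$$
Setting $K_{N,d} := (d+1)(N+1)\max_{0\leq k\leq d} K_{N,k}$ gives the stated bound.

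There is no genuine obstacle here: the whole analytic content sits in Lemma \ref{Control_mathcal_M_kappa} (the covariance $\op(x^\alpha\xi^\beta)\mathcal{M}(\kappa) = \mathcal{M}(\kappa)\op((ax+b\xi)^\alpha(cx+d\xi)^\beta)$ and the expansion of the resulting symbols into monomials of controlled coefficients). The only points to keep straight are the use of $\|\kappa\|\geq 1$ to collapse the geometric-like sum into a single $\|\kappa\|^N$, and the absorption of the combinatorial factors depending on $\deg P$ and $N$ into the redefined constant $K_{N,d}$.
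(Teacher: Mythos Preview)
Your proof is correct and is exactly the argument the paper intends: the corollary is stated as following ``by specializing at $h=1$'' from Lemma \ref{Control_mathcal_M_kappa}, and you have carried out precisely that specialization together with the obvious monomial decomposition of $P$ and the use of $\|\kappa\|\geq 1$ to collapse the sum into $\|\kappa\|^N$.
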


\subsubsection{Action of pseudodifferential operators on coherent states}

In this subsection, we give precise results for the actions of semiclassical pseudodifferential operators on coherent states, when the symbol of the pseudodifferential operator belong to the class $S_\delta$. 

\begin{lem}\label{Lemma_action_pseudo_coherent_state}
Suppose that $a \in S_\delta(T^*\R)$ with $0 \leq \delta < 1/2$. Assume that $\rho_0= (x_0, \xi_0) \in T^*\R$. 
Then, for every $N \in \N$, there exists $\rho_N(a,\rho_0) \in L^2$ such that 
$$ \op(a)\varphi_{\rho_0} = \sum_{k=0}^{N-1} h^{k/2} \psi_k(a, \rho_0) + h^{N/2} \rho_N(a,\rho_0)$$
where $$
 \psi_k(a, \rho_0) =  T(\rho_0) \Lambda_h  \text{Op}_1 \left( \sum_{\alpha + \beta = k} \frac{\partial^{\alpha}_x \partial^\beta_\xi a (\rho_0)}{\alpha! \beta ! } x^\alpha\xi^\beta \right)  \Psi_0$$
 and 
 $$||\rho_N(a,\rho_0) ||_{L^2} \leq C_N h^{-\delta N} \sup_{|\gamma| \leq  N + M} ||h^{ \delta |\gamma| }\partial^\gamma a ||_\infty$$
\end{lem}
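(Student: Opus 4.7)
The plan is to reduce to the case $\rho_0=0$ by the translation covariance of Weyl quantization, rescale via $\Lambda_h$ to pass to the $h=1$ setting, Taylor expand the rescaled symbol at the origin, and control the remainder via Calderon-Vaillancourt on a weighted Sobolev space. First, I would use the identity $T(\rho_0)^*\op(a)T(\rho_0)=\op(a(\cdot+\rho_0))$ (recalled in the definition of $T(\rho)$) to write $\op(a)\varphi_{\rho_0}=T(\rho_0)\op(b)\varphi_0$ with $b=a(\cdot+\rho_0)\in S_\delta$ having the same seminorms as $a$. Since $T(\rho_0)$ is unitary on $L^2$ and commutes with the target decomposition, it suffices to treat $\op(b)\varphi_0$. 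Next, I would apply $\varphi_0=\Lambda_h\Psi_0$; a direct change of variables in the Weyl integral yields $\op_h(b)\Lambda_h=\Lambda_h\,\op_1(b_h)$ with $b_h(\rho)=b(h^{1/2}\rho)$, so that $\op(b)\varphi_0=\Lambda_h\op_1(b_h)\Psi_0$.

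Next, I Taylor-expand $b_h$ at the origin to order $N$: for $\gamma=(\alpha,\beta)$, one has $\partial^\gamma b_h(0)=h^{|\gamma|/2}\partial^\gamma a(\rho_0)$, so that
\begin{equation*}
 b_h(\rho)=\sum_{k=0}^{N-1}h^{k/2}\sum_{\alpha+\beta=k}\frac{\partial_x^{\alpha}\partial_\xi^{\beta}a(\rho_0)}{\alpha!\beta!}\,x^{\alpha}\xi^{\beta}+r_N^{(h)}(\rho),
\end{equation*}
with the usual integral remainder $r_N^{(h)}(\rho)=\sum_{|\gamma|=N}\rho^{\gamma}q_\gamma(\rho)$, $q_\gamma(\rho)=\frac{N}{\gamma!}\int_0^1(1-t)^{N-1}(\partial^\gamma b_h)(t\rho)\,dt$. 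Quantizing each polynomial piece and applying $\Lambda_h$ yields precisely $h^{k/2}\psi_k(a,\rho_0)$ after reinstating $T(\rho_0)$, giving the main part of the expansion.

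The main obstacle is the remainder estimate $\|\rho_N(a,\rho_0)\|_{L^2}=h^{-N/2}\|\op_1(r_N^{(h)})\Psi_0\|_{L^2}$. Using $\partial^\mu\partial^\gamma b_h(\cdot)=h^{(|\mu|+N)/2}(\partial^{\mu+\gamma}a)(h^{1/2}\,\cdot+\rho_0)$ together with the $S_\delta$ bounds on $a$, one gets
\begin{equation*}
|\partial^\mu q_\gamma(\rho)|\leq C_{N,\mu}\,h^{(N+|\mu|)(1/2-\delta)}\sup_{|\sigma|\leq N+|\mu|}\|h^{\delta|\sigma|}\partial^\sigma a\|_\infty.
\end{equation*}
This shows that $r_N^{(h)}\in S_0(\langle\rho\rangle^N)$ with seminorms of size $O(h^{N(1/2-\delta)})$ times seminorms of $a$ in $S_\delta$. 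Applying the Calderon-Vaillancourt bound with $h=1$ and order function $\langle\rho\rangle^N$ (in the form stated earlier for $H_h(\langle\rho\rangle^N)\to L^2$), together with the fact that $\Psi_0\in H_1(\langle\rho\rangle^N)$ with a universal norm, one obtains $\|\op_1(r_N^{(h)})\Psi_0\|_{L^2}\leq C_N\,h^{N(1/2-\delta)}\sup_{|\sigma|\leq N+M}\|h^{\delta|\sigma|}\partial^\sigma a\|_\infty$ for some universal $M$. Multiplying by $h^{-N/2}$ gives the claimed bound on $\rho_N$. The only delicate point is the dependence on $N$ of the number of derivatives $M$ required in Calderon-Vaillancourt, but this is absorbed into $C_N$ and the supremum up to order $N+M$.
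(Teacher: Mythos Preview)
Your proof is correct and follows essentially the same route as the paper: translation to center at $\rho_0$, rescaling via $\Lambda_h$ to pass to $\op_1$, Taylor expansion of $b_h(\rho)=a(\rho_0+h^{1/2}\rho)$, and control of the remainder as a symbol in $S_0(\langle\rho\rangle^N)$ acting on a Gaussian in a weighted Sobolev space. The only cosmetic difference is that the paper rescales the remainder back to the $h$-quantization, writing $\Lambda_h\op_1(R_N)\Psi_0=\op(\tilde R_N)\varphi_0$ with $\tilde R_N(\rho)=R_N(h^{-1/2}\rho)\in h^{-\delta N}S_\delta(\langle\rho\rangle^N)$ and invoking $H_h(\langle\rho\rangle^N)\to L^2$ boundedness, whereas you stay at $h=1$ and use $H_1(\langle\rho\rangle^N)$; the two are equivalent since $\Lambda_h$ is unitary.
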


\begin{rem}
\begin{itemize}[nosep]
\item $M$ is a universal constant. 
\item The first term of the expansions is $a(\rho_0) \varphi_{\rho_0}$. 
\item It is effectively an expansion in power of $h^{1/2-\delta}$ since $a \in S_\delta$.
\item We could also write $\text{Op}_1(x^\alpha \xi^\beta) \Psi_0$ in the form $P(x) \Psi_0$ where $P$ is a polynomial of degree $\alpha + \beta$, or equivalently, it is a linear combination of the first $|\alpha| + |\beta|$ excited states.
\end{itemize}
\end{rem}

\begin{proof}
Let's write $\varphi_{\rho_0} = T(\rho_0) \Lambda_h \Psi_0$. 
We have 
\begin{align*}
\op(a) \varphi_{\rho_0} &= \op(a) T(\rho_0) \Lambda_h \Psi_0 \\
&= T(\rho_0) \Lambda_h \text{Op}_1( b_h ) \Psi_0
\end{align*}
where $b_h(\rho) = a (\rho_0 + h^{1/2}\rho)$. 
Let's write the Taylor expansion of $a$ around $\rho_0$ : 
$$b_h(x,\xi) = \sum_{\alpha + \beta \leq N-1} h^{(\alpha+ \beta)/2} \frac{\partial^{\alpha}_x \partial^\beta_\xi a (\rho_0)}{\alpha! \beta ! } x^\alpha\xi^\beta + h^{N/2} R_N(x,\xi)$$ where 
$$ R_N(\rho) = \frac{1}{(N-1)!} \int_0^1 \frac{d^N}{dt^N} a (\rho_0 + t h^{1/2} \rho) (1-t)^{N-1} dt$$
Applying $\text{Op}_1$ to this expansion, we get the required asymptotic with 
$$ \rho_N(a,\rho_0) = T(\rho_0) \Lambda_h \text{Op}_1(R_N) \Psi_0$$
It remains to estimates the $L^2$ norm of $\rho_N$. Since $T(\rho_0)$ is unitary, it is enough to evaluate $$\Lambda_h \text{Op}_1(R_N) \Psi_0 = \op(\tilde{R}_N) \varphi_0$$
where $\tilde{R}_N(\rho) = R_N(h^{-1/2}\rho) =  \frac{1}{(N-1)!} \int_0^1 (1-t)^{N-1}  d^N a (\rho_0 + t \rho)(\rho^{\otimes N}) dt$ \footnote{Here, if $f \in C^N(\R^2, \R)$, we note $d^N f(\rho)(h^{\otimes N}) = \left. \frac{d^N}{dt^N}\right|_{t=0} f(\rho + th)$. }. Using that $a \in S_\delta$, it is not hard to see, after derivation under the integral that, for any $\gamma \in \N^2$ and $\rho \in T^*\R$, 
$$| \partial^{\gamma} \tilde{R}_N (\rho)| \leq C_N \sup_{\gamma_1 \leq  N +| \gamma|}|| \partial^{\gamma_1} a||_\infty \langle \rho\rangle^N \leq  h^{-\delta(N+ |\gamma|)} ||h^{ \delta |\gamma| }\partial^\gamma a ||_\infty\langle \rho\rangle^N $$
This shows that $\tilde{R}_N \in h^{-\delta N} S_\delta(\langle\rho\rangle^N)$ in the sense of \cite{ZW} (Definition 4.4.3). Then, we find that 
$$ h^{\delta N } \op(\tilde{R}_N) : H_h (\langle \rho\rangle^N) \to L^2(\R)$$ is a uniformly bounded family of operators, with norm depending on a finite number of semi-norms of $\tilde{R}_N$ in $S_\delta(\langle \rho\rangle^N)$. We conclude by noting that for any $N \in \N$, $\varphi_0$ is in $H_h (\langle \rho\rangle^N)$, with a norm bounded uniformly in $h$. Hence 
$$||\rho_N||_{L^2} \leq ||\op(\tilde{R}_N)||_{ H_h (\langle \rho\rangle^N) \to L^2(\R)} \times || \varphi_0||_{H_h (\langle \rho \rangle^N)} \leq h^{-\delta N} C_N \sup_{|\gamma| \leq  N + M} ||h^{ \delta |\gamma| }\partial^\gamma b ||_\infty$$
\end{proof}

As a simple corollary, we get : 
\begin{cor}
Assume that $a$ vanishes at order $k$ at $\rho_0$. Then, 
$$ \op(a) \varphi_{\rho_0} = O_{L^2} \left( h^{k(1/2- \delta)} \right) $$
\end{cor}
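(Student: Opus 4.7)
My plan is to deduce this directly from Lemma \ref{Lemma_action_pseudo_coherent_state} by applying its asymptotic expansion and observing that the vanishing hypothesis kills the first $k$ terms.

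Concretely, I would apply the lemma with $N = k+1$. The Taylor coefficients $\partial^{\alpha}_x \partial^\beta_\xi a(\rho_0)$ vanish whenever $\alpha + \beta < k$, so $\psi_j(a, \rho_0) = 0$ for all $j < k$. The expansion therefore reduces to
$$\op(a)\varphi_{\rho_0} = h^{k/2}\psi_k(a,\rho_0) + h^{(k+1)/2}\rho_{k+1}(a,\rho_0).$$

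For the leading term, recall $\psi_k(a,\rho_0) = T(\rho_0)\Lambda_h \text{Op}_1(P_k) \Psi_0$ where $P_k(x,\xi) = \sum_{\alpha + \beta = k} \frac{\partial^\alpha_x \partial^\beta_\xi a(\rho_0)}{\alpha!\beta!} x^\alpha \xi^\beta$. Since $a \in S_\delta$, each coefficient is bounded by $C h^{-\delta k}$, and since $T(\rho_0)$ and $\Lambda_h$ are unitary on $L^2$, I get
$$\|\psi_k(a,\rho_0)\|_{L^2} = \|\text{Op}_1(P_k)\Psi_0\|_{L^2} \leq C h^{-\delta k},$$
because $\text{Op}_1(P_k)\Psi_0$ is a fixed polynomial (of bounded degree $k$) times the Gaussian $\Psi_0$, with coefficients of size $O(h^{-\delta k})$. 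Therefore $\|h^{k/2}\psi_k(a,\rho_0)\|_{L^2} = O(h^{k(1/2-\delta)})$, which matches the claimed bound.

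For the remainder, the lemma gives $\|\rho_{k+1}(a,\rho_0)\|_{L^2} \leq C h^{-\delta(k+1)}$, so $h^{(k+1)/2}\rho_{k+1}$ is of order $h^{(k+1)(1/2-\delta)}$. Because $\delta < 1/2$, the exponent $1/2-\delta$ is positive, hence $h^{(k+1)(1/2-\delta)} \leq h^{k(1/2-\delta)}$ for small $h$, and the remainder is absorbed. There is no real obstacle here; the only point requiring a bit of care is tracking the $h^{-\delta k}$ loss in the coefficients of $P_k$ coming from the $S_\delta$ class, which is exactly what converts $h^{k/2}$ into $h^{k(1/2-\delta)}$.
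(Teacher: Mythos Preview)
Your proof is correct and follows exactly the intended route: the paper presents this as an immediate corollary of Lemma~\ref{Lemma_action_pseudo_coherent_state} without further argument. One could even apply the lemma with $N=k$ rather than $N=k+1$, so that all terms $\psi_0,\dots,\psi_{k-1}$ vanish and the bound comes directly from the remainder estimate $\|h^{k/2}\rho_k\|\le C h^{k(1/2-\delta)}$, but your version with $N=k+1$ is equally valid and has the minor advantage of isolating the actual leading contribution.
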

In particular, if $a$ vanishes in a neighborhood of $\rho_0$, we recover that $\op(a) \varphi_{\rho_0} = \hinf$. This is something well known since $\WF(\varphi_{\rho_0}) = \{ \rho_0 \}$.

\subsubsection{Action of Fourier integral operators on coherent states}

In \cite{CoRo} (Chapter 4), the authors study the quantum evolution of coherent states by the propagator of a Schrödinger equation with a time-dependent Hamiltonian. We refer the reader to their work, and in particular to Theorem 21 in this book for this very general version of the evolution of coherent states. Here, we will simply study the action of the particular type of Fourier integral operator of the form given in equation (\ref{special_form_FIO}) on states of the form $T(\rho_0) \mathcal{M} (\kappa) \varphi_0$. In other words, we want to study the action of a Fourier Integral Operator on these squeezed and translated states. More generally, we will consider also squeezed excited states of the form $T(\rho_0) \mathcal{M} (\kappa) \Lambda_h (P(x) \Psi_0(x) )$. We will give an asymptotic expansion of these evolved states with a controlled remainder. The dependence of this remainder on $\kappa$ will be crucial to use recursively the expansion. 

Let's describe the framework in which we want to work : we suppose that $\Omega_x, \Omega_\eta$ are open intervals of $\R$, $\psi \in \cinf(\Omega_x \times \Omega_\eta)$ is a phase function that generates the twisted graph of some symplectic map $F$ in some open set $\Omega_0 \subset \R^4$, that is 
$$ \Gr^\prime(F)  \cap \Omega_0= \big\{ (x, \partial_x \psi(x, \eta) , \partial_\eta\psi(x,\eta), -\eta) , x \in \Omega_x, \eta \in \Omega_\eta \big\} $$ 
We suppose that $a \in S_{0^+}^{comp} (\Omega_x \times \Omega_\eta)$  and we consider the Fourier integral operator : $$
Su(x) = \frac{1}{(2\pi h)} \int_{\R^2} e^{ \frac{i}{h} (\psi(x,\eta) - y \cdot \eta) } a(x,\eta) u(y) dy d\eta  $$
We do not necessarily assume that $S$ is microlocally unitary, but if it were the case, $a$ would satisfy $|a(x, \eta)|^2 = | \partial^2_{ x \eta}  \psi (x, \eta) | + O(h^{1-\varepsilon })$ for any $\varepsilon>0$. More generally, the amplitude $\alpha$ of $S$ as a Fourier integral operator is given, modulo $O(h^{1^-}) S_{0^+}$, by 
$$ \alpha_S(y,\eta) = \frac{a(x,\eta)}{ | \partial^2_{ x \eta}  \psi (x, \eta) |^{1/2}} \;  , \;  F(y,\eta) = (x,\xi)$$ 

\begin{prop}\label{Prop_propagation_coherent_states}
Assume that $S$ satisfies the above assumptions. Let $\kappa  \in \mathcal{M}_2(\R)$ be a symplectic linear map and $\rho_0 \in T^*\R$. 
Let's note $\rho_1 = F(\rho_0)$. Let $P \in \C[X]$. Then, there exists a family of polynomials $Q_k(P)_{k \in \N}$ such that
\begin{itemize}[nosep]
\item $Q_0(P) =\alpha_S(\rho_0) P$ (up to multiplication by an element of $\mathbb{U}$) ; 
\item $Q_k(P)$ is a polynomial of degree $\deg P + 3k$ and the map $P \mapsto Q_k(P)$ is linear, with coefficients depending on $\kappa
$ and the derivatives of $\psi$ and $a$ at $(x_1, \xi_0)$ up to the $3k$-th order, and we have $$N_\infty(Q_k(P)) \leq C_{3k}(\psi) ||a||_{C^k} ||\kappa||^{3k} N_\infty(P)$$
Moreover, if $(x_1, \xi_0) \not \in \supp a$, then $Q_k = 0$.
\item 
for every $N \in \N$, 
\begin{equation}
S \Big( T(\rho_0) \mathcal{M}(\kappa) \Lambda_h [ P \Psi_0] \Big) = T(\rho_1) \mathcal{M}( d_{\rho_0}F \circ \kappa) \Lambda_h  \left[ \sum_{k=0}^{N-1} h^{k/2} Q_k(P) \Psi_0 \right] + R_N
\end{equation}
with $$||R_N ||_{L^2} \leq  h^{N/2} C_{3N+M}(\psi) ||a||_{C^{N+M}} ||\kappa||^{3N} K_{N,\deg P} N_\infty(P) $$
\end{itemize} 
Here, \begin{itemize}[nosep]
\item $C_{k}(\psi)$ depends on the $C^k$ norm of $\psi$ 
\item $M$ is a universal constant ; 
\item $N_\infty(P)$ is the sup norm on the coefficients of $P$ ; 
\item $(K_{N,d})_{(N,d) \in \N^2}$ is a family of universal constants. 
\item For every $\varepsilon >0$ and $k \in \N$, there exists $C_{\varepsilon, k}$ such that $||a||_{C^k} \leq C_{\varepsilon, k} h^{- \varepsilon}$. 
\end{itemize}
\end{prop}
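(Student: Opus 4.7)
The strategy has three stages: (i) reduce to the centered case $\rho_0 = \rho_1 = 0$ by phase-space translation; (ii) conjugate by $\Lambda_h$ to rescale to unit scale and Taylor-expand the phase and amplitude in powers of $h^{1/2}$; (iii) use the \emph{exact} Egorov property of metaplectic operators (Proposition \ref{def_meta_egorov}(ii)) to commute the resulting polynomial corrections past $\mathcal{M}_1(\kappa)$, paying the price $\|\kappa\|^{3k}$.

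\emph{Centering.} Set $\tilde S = T(-\rho_1) S T(\rho_0)$. A direct change of variables in the oscillatory integral shows that $\tilde S$ is again an FIO of the form (\ref{special_form_FIO}) with shifted phase $\tilde \psi(X, E) = \psi(X+x_1, E+\eta_0) - X\xi_1 - y_0 E$ and amplitude $\tilde a(X, E) = a(X + x_1, E + \eta_0)$. The generating-function identities $\xi_1 = \partial_x \psi(x_1, \eta_0)$ and $y_0 = \partial_\eta \psi(x_1, \eta_0)$ force $d\tilde\psi(0) = 0$, so $\tilde\psi$ generates a symplectic map $\tilde F$ with $\tilde F(0) = 0$ and $d\tilde F(0) = d_{\rho_0} F$. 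It then suffices to prove the statement for $\tilde S$ at $\rho_0 = \rho_1 = 0$, applying $T(\rho_1)$ at the very end.

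\emph{Rescaling and expansion.} Write the state as $\Lambda_h \mathcal{M}_1(\kappa)[P\Psi_0]$ via $\mathcal{M}(\kappa)\Lambda_h = \Lambda_h \mathcal{M}_1(\kappa)$. Substituting $(x, y, \eta) = (h^{1/2} X, h^{1/2} Y, h^{1/2} E)$ in the integral defining $\tilde S$, and Taylor expanding
\begin{equation*}
\tilde\psi(h^{1/2}X, h^{1/2}E)/h = c/h + Q(X, E) + \sum_{j \geq 3} h^{j/2-1} P_j(X, E), \qquad \tilde a(h^{1/2}X, h^{1/2}E) = \sum_{l \geq 0} h^{l/2} A_l(X, E),
\end{equation*}
(with $P_j$, $A_l$ homogeneous of degrees $j$, $l$) produces an asymptotic
\begin{equation*}
\Lambda_h^{-1} \tilde S \Lambda_h = \sum_{k=0}^{N-1} h^{k/2} S_k + h^{N/2} \mathcal{R}_N.
\end{equation*}
Comparing the leading unit-scale quadratic-phase integral with Example (\ref{meta_example_2}) gives $S_0 = \alpha_S(\rho_0) \mathcal{M}_1(d_{\rho_0}F)$ up to a unit-modulus factor. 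For $k \geq 1$, $S_k$ factors as $\mathcal{M}_1(d_{\rho_0}F) \circ \pi_k(X, D_X)$ for a polynomial differential operator $\pi_k$ of total order at most $3k$: the contributions at order $h^{k/2}$ are sums of monomials $\prod_i P_{j_i}^{m_i} \cdot A_l$ with $\sum_i m_i(j_i - 2) + l = k$ and $j_i \geq 3$, whose total polynomial degree $\sum_i m_i j_i + l$ is maximized (for fixed $k$) by taking all $j_i = 3$, giving $3k$.

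\emph{Commutation and remainder.} The exact Egorov identity gives $\pi_k(X, D_X) \mathcal{M}_1(\kappa) = \mathcal{M}_1(\kappa) \tilde\pi_k(X, D_X)$, where $\tilde\pi_k$ has Weyl symbol $\sigma(\pi_k) \circ \kappa$; since $\pi_k$ has total degree $\leq 3k$ and $\kappa$ is linear, $N_\infty(\tilde\pi_k) \leq C \|\kappa\|^{3k} N_\infty(\pi_k)$. Combined with $\mathcal{M}_1(d_{\rho_0}F) \mathcal{M}_1(\kappa) = c \cdot \mathcal{M}_1(d_{\rho_0}F \cdot \kappa)$, this yields $S_k \mathcal{M}_1(\kappa)[P\Psi_0] = \mathcal{M}_1(d_{\rho_0}F \cdot \kappa)[Q_k(P) \Psi_0]$ with $Q_k(P) := \tilde\pi_k[P\Psi_0]/\Psi_0 \in \mathbb{C}[X]$ of degree $\leq \deg P + 3k$, bearing the announced bound on $N_\infty(Q_k(P))$. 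The vanishing when $(x_1, \xi_0) \notin \supp a$ is immediate: every coefficient of $\pi_k$ contains a derivative of $\tilde a$ at $0$, that is of $a$ at $(x_1, \xi_0)$. Finally, the remainder is controlled by estimating $\|\mathcal{R}_N\|_{H_1(\langle \rho \rangle^{3N}) \to L^2}$ through standard Taylor-remainder bounds on $\tilde\psi$ and $\tilde a$, and then applying Corollary \ref{cor_norm_H_1} to $\mathcal{M}_1(\kappa)[P\Psi_0]$ to produce the factors $N_\infty(P)\|\kappa\|^{3N}$. The main obstacle is the combinatorial bookkeeping in the expansion step: producing the clean factorization $S_k = \mathcal{M}_1(d_{\rho_0}F) \circ \pi_k$ with $\pi_k$ of sharp total degree $3k$, and propagating this $3k$ through the Egorov commutation without loss. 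Once this is in place, the centering and remainder estimates are routine.
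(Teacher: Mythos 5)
Your proposal follows essentially the same route as the paper's proof: conjugate by $T(\rho_1)^*\cdots T(\rho_0)$ and $\Lambda_h$, Taylor-expand phase and amplitude in $h^{1/2}$ (the generating-function identities killing the linear terms), identify the quadratic-phase leading operator with $\mathcal{M}_1(d_{\rho_0}F)$ via (\ref{meta_example_2}), commute the degree-$\leq 3k$ polynomial corrections past $\mathcal{M}_1(\kappa)$ by exact Egorov to collect the $\|\kappa\|^{3k}$ factor, and close with Corollary \ref{cor_norm_H_1}. The only place you are lighter than the paper is the remainder: the phase's Taylor remainder stays in the exponential, so bounding $\mathcal{R}_N$ on $H_1(\langle\rho\rangle^{3N})$ is not a "standard Taylor bound" but requires the two stationary-phase lemmas (Lemmas \ref{Lemma_1_stationnary_phase} and \ref{Lemma_2_stationnary_phase}) the paper proves in the appendix.
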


\begin{rem}
This proposition shows that a Fourier Integral operator transforms a wave packet centered at $\rho_0$ into a wave packet centered at $F(\rho_0)$. However, this transformation squeezes the wave packet according to the linearization of $F$ at $\rho_0$: this is the effect of $\mathcal{M}(d_{\rho_0}F)$. The control of the error is important if we want to iterate this formula and apply it to squeezed coherent states $\mathcal{M}(\kappa_h) \varphi_0$, with a symplectic linear map $\kappa_h$ potentially depending of $h$. As soon as 
$$ ||\kappa_h ||^3 \ll h^{-1/2}$$, the remainder stays smaller than the terms in the expansion. 
In particular, suppose that $\kappa_h= \kappa_{n(h)} \dots \kappa_0$ with $||\kappa_i || \leq e^\lambda$ and $n(h) \sim \nu \log(1/h)$. Then, the approximation is valid as soon as 
$$\nu \leq \frac{1 - \varepsilon}{6\lambda} $$
\end{rem}

\begin{proof}
The following computations are performed modulo multiplication by an element of $\mathbb{U}$.

Let's note $\rho_0 = (x_0, \xi_0)$ and $\rho_1 = (x_1, \xi_1)$. Recall that, by definition of $\psi$, 
\begin{equation}\label{formula_x_xi}
\xi_1 = \partial_x \psi(x_1, \xi_0) \quad ; \quad x_0 = \partial_\eta \psi (x_1, \xi_0) 
\end{equation} 
We have, for $u \in L^2(\R)$, 
\begin{align*}
&\left(\Lambda_h^* T(\rho_1)^* S T(\rho_0) \Lambda_h u \right) (x) = h^{1/4} e^{-\frac{i x \xi_1}{h^{1/2}}  } \left( S T(\rho_0)\Lambda_h u \right) (h^{1/2}x+x_1) \\ 
&=  e^{-\frac{i x \xi_1}{h^{1/2}}  }  \frac{1}{2 \pi h } \int_{\R^2} e^{ \frac{i}{h} (\psi(h^{1/2}x+x_1,\eta) - y \cdot \eta) } a(h^{1/2}x+x_1,\eta) e^{\frac{i y \xi_0}{h}  } u(h^{-1/2}y-x_0) dy d\eta \\
&= \frac{1}{2\pi} \int_{\R^2} e^{i \tilde{\psi}_h(x,\eta,y)} a(h^{1/2}x+x_1,h^{1/2}\eta + \xi_0)  u(y) dy d\eta
\end{align*}
after a change of variable, with 
\begin{equation}
\tilde{\psi}_h(x,\eta,y) = \frac{1}{h} \psi(h^{1/2}x+ x_1, h^{1/2}\eta + \xi_0) - y \eta - h^{-1/2} \left( x\xi_1  +  x_0 \eta \right) 
\end{equation}
Let us write the Taylor expansion of $ \psi(h^{1/2}x+ x_1, h^{1/2}\eta + \xi_0)$ at order $N+1 \in \N$ : 
\begin{multline} \label{taylor_psi}
  \psi(h^{1/2}x+ x_1, h^{1/2}\eta + \xi_0) = \psi(x_1, \xi_0) + h^{1/2} \left(x \partial_x \psi (x_1, \xi_0) + \eta \partial_{\eta}\psi(x_1,\xi_0) \right) \\ +  \frac{h}{2} (D^2 \psi( x_1, \xi_0) (x,\eta) , (x,\eta) ) 
  + \sum_{k=3}^{N+1}  h^{k/2}\psi_k(x,\eta)+ h^{(N+2)/2}r_{N+2}^\psi (x,\eta ;h)
\end{multline}
where $\psi_k$ is $k$-multilinear in $(x,\eta)$ with coefficients depending on the derivatives of $\psi$ of order $k$ at $(x_1, \xi_0)$ and for $\alpha \in \N^2$, 
$$h^{(N+2)/2}r_{N+2}^\psi(x,\eta ; h) = \frac{1}{(N+1)!} \int_0^1 (1-t)^{N+1} \frac{d^{N+2}}{dt^{N+2}} \big( \psi(x_1 + th^{1/2} x, \xi_0 + th^{1/2}\eta)\big) dt $$
In particular, we have the estimates
\begin{equation}\label{Taylor_remainder}
 |\partial^\alpha r_{N+2}^\psi(x,\eta ; h) | \leq C_N  \sup_{ N+2 \leq |\beta| \leq N+ 2+ |\alpha|} h^{(|\beta|-N-2)/2} ||  \partial^{ \beta} \psi||_\infty \langle (x,\eta) \rangle^{N+2} 
\end{equation}
Recalling (\ref{formula_x_xi}), we can write : 
\begin{multline}\label{computations_lambda_T_S_T}
\left(\Lambda_h^* T(\rho_1)^* S T(\rho_0) \Lambda_h u \right) (x)  = \\ \frac{1}{2\pi} \int_{T^*\R} e^{i \left(  \frac{1}{2} (D^2 \psi( x_1, \xi_0) (x,\eta) , (x,\eta) )  - y\eta\right)  } e^{i h^{1/2} r_3^\psi(x,\eta;h)} a(h^{1/2}x+x_1,h^{1/2}\eta + \xi_0)  u(y) dy d\eta
\end{multline}
Then, we write the Taylor expansion of $e^{i h^{1/2}r_3^\psi(x,\eta;h)}$ at order $N$:

\begin{equation}\label{devp_exp}
e^{i h^{1/2}r_3^\psi(x,\eta;h)}= \sum_{k=0}^{N-1} \frac{i^k h^{k/2}}{k!} r_3^\psi(x,\eta;h)^k + \underbrace{ \frac{i^N h^{N/2}}{(N-1)!} (r_3^\psi(x,\eta;h))^N \int_0^1 e^{i h^{1/2} s r_3^\psi(x,\eta;h)} (1-s)^{N-1} ds}_{\tilde{r}_N}
\end{equation}
Using (\ref{taylor_psi}), we write $r_3^\psi = \sum_{j=0}^{N-k-1} h^{j/2} \psi_{3+j} + h^{(N-k)/2}r_{3+N-k}^\psi$ and we can expand
$$\left(r_3^\psi\right)^k = \sum_{\alpha_1 + \dots + \alpha_k < N-k} h^{(\alpha_1+\dots + \alpha_k)/2}  \psi_{3+\alpha_1} \dots \psi_{3+\alpha_k} + h^{(N-k)/2} \textit{ Remainder } $$
where the remainder is a linear combination, with universal coefficients, of terms of the form 
\begin{equation}\label{form_sum}
r_{3+\alpha_1}^\psi\dots  r_{3+\alpha_j}^\psi \psi_{3 + \alpha_{j+1}} \dots \psi_{3 + \alpha_k} \quad ; \quad 0 \leq j \leq k  \, , \, \alpha_1 + \dots + \alpha_k = N-k
\end{equation}
Gathering all the terms of order $h^{k/2}$  for $k \leq N-1$, together and gathering all the terms of order $h^{N/2}$ in a single remainder term, we have 

\begin{align*}
e^{i h^{1/2}r_3^\psi(x,\eta;h)}=\sum_{k=0}^{N-1} h^{k/2} \tilde{P}_k(x,\eta ; \psi) + h^{N/2}r_{N,1} + \tilde{r}_N 
\end{align*}
where 
\begin{itemize}[nosep]
\item $\tilde{P}_k (\cdot ; \psi)$ is a polynomial of order $3k$ in $(x,\eta)$ with coefficients of the form $q\left(( \partial^{\alpha} \psi(x_1, \xi_0) )_{|\alpha| \leq 3 + k} \right)$, where $q$ is a universal polynomial of degree $k$; 
\item $r_{N,1}$ is a linear combination of terms of the form (\ref{form_sum}) with $0 \leq k \leq N-1$, $0 \leq j \leq k$ and $\alpha_1, \dots, \alpha_k$, with $\alpha_1 + \dots + \alpha_k = N-k$ ; 
\item $\tilde{r}_N$ is defined in  (\ref{devp_exp}). 
\end{itemize}
Similarly, we can Taylor expand $a(h^{1/2} x + x_1, h^{1/2}\eta + \xi_0 )$ to find that 
\begin{multline}\label{precise_expansion}
e^{i h^{1/2}r_3^\psi(x,\eta;h)}a(h^{1/2} x + x_1, h^{1/2}\eta + \xi_0 )= \sum_{k=0}^{N-1} h^{k/2} P_k(x,\eta ; \psi ,a) \\ 
+ \underbrace{h^{N/2} \sum_{k=0}^{N-1} \tilde{P}_k(x,\eta ; \psi) r_{N-k}^a (x,\eta ; h)
+ h^{N/2}r_{N,1}\times a(h^{1/2} x + x_1, h^{1/2}\eta + \xi_0 )}_{\text{first remainder term}} +\underbrace{ \tilde{r}_N \times a(h^{1/2} x + x_1, h^{1/2}\eta + \xi_0 )}_{\text{second remainder term} }
\end{multline}
where $P_k (\cdot ; \psi,a)$ is a polynomial of degree $3k$ in $(x,\eta)$, given by 
$$ P_k (x,\eta ; \psi, a) = \sum_{k_1 + k_2 = k} \tilde{P}_{k_1}(x,\eta ; \psi) \times \left(\frac{1}{k_2!} d^{k_2} a (x_1, \xi_0) ( (x,\eta)^{\otimes k})\right) $$
and for $p \in \N$, 
$$r_{p}^a(x,\eta ; h) = \frac{h^{-p/2}}{p!}\int^1_0 (1-t)^{p-1} \frac{d^p}{dt^p} a(x_1 + th^{1/2}x, \xi_0 + th^{1/2} \eta) dt$$ 
Plugging (\ref{precise_expansion}) in (\ref{computations_lambda_T_S_T}) with $u = \mathcal{M}_1(\kappa) (P\Psi_0)  $, we find an expansion in power of $h^{1/2}$ for $\Lambda_h^* T(\rho_1)^* S T(\rho_0) \Lambda_h u$. 

\paragraph{Identification of the first term.}
The first term of the expansion is 
$$ \frac{1}{2\pi} \int_{\R^2} e^{i \left(  \frac{1}{2} (D^2 \psi( x_1, \xi_0) (x,\eta) , (x,\eta) )  - y\eta\right)  } a(x_1, \xi_0) u(y) dy d\eta$$
Differentiating the relation 
$$ F \left( \partial_\eta \psi(x,\eta), \eta \right)  = ( x, \partial_x \psi(x,\eta) ) $$
it not hard to see that 
$$ d_{\rho_0}F =\frac{1}{ \partial^2_{x \eta} \psi(x_1, \xi_0)} \left( \begin{matrix}
1 & -  \partial^2_{\eta \eta} \psi(x_1, \xi_0) \\
 \partial^2_{x x} \psi(x_1, \xi_0) & \left(\partial^2_{x \eta} \psi(x_1, \xi_0) \right)^2-  \partial^2_{\eta \eta} \psi(x_1, \xi_0)  \partial^2_{x x} \psi(x_1, \xi_0) 
\end{matrix} \right)$$
As a consequence, comparing with (\ref{meta_example_2}), we observe that
$$v \mapsto \frac{1}{2 \pi} |\partial_{x\eta} \psi(x_1, \xi_0)|^{1/2} \int_{T^*\R} e^{i(\frac{1}{2}D^2 \psi(x_1, \xi_0) (x,\eta)- y \eta)} v(y) dy d\eta$$ is a metaplectic operator associated with $d_{\rho_0} F$, that we note $\mathcal{M}_1(d_{\rho_0} F)$, and hence, wee see that
$$ S T(\rho_0) \mathcal{M}(\kappa) \Lambda_h [P\Psi_0]  = T (\rho_1)  \mathcal{M}(d_{\rho_0}F \circ \kappa) \Lambda_h \left[ \frac{a(x_1, \xi_0)}{|\partial_{x\eta} \psi(x_1, \xi_0)|^{1/2}} P \Psi_0\right] + (\text{smaller terms} ) $$ 
This gives the required form for $Q_0(P)$. 

\paragraph{Identification of higher order terms.}
For the term of order $k$ in the expansion of (\ref{computations_lambda_T_S_T}), based on (\ref{precise_expansion}), we have to understand 
$$ \frac{1}{2\pi} \int_{\R^2} e^{i \left(  \frac{1}{2} (D^2 \psi( x_1, \xi_0) (x,\eta) , (x,\eta) )  - y\eta\right)  } P_k(x,\eta ; \psi ,a) u(y) dy d\eta$$
Hence, we focus on terms of the form 
$$ S_{l,m} (u) = \frac{1}{2\pi} \int_{\R^2} e^{i \left(  \frac{1}{2} (D^2 \psi( x_1, \xi_0) (x,\eta) , (x,\eta) )  - y\eta\right)  } x^l \eta^m u(y) dy d\eta$$
with $l+m \leq3 k$. The term $x^l$ can be put in front of the integral. Concerning, the $\eta$ term, repeated integrations by part (or equivalently, using the usual properties of the Fourier transform), we find that 
$$S_{l,m} (u) = x^l \mathcal{M}_1(d_{\rho_0}F) ((i \partial_y)^m u) ) $$
Now, combining this with the standard commutations properties of metapletic operators we write
\begin{align*}
S_{l,m} (\mathcal{M}_1(\kappa) [P \Psi_0] ) &=\text{Op}_1 (x^l)\mathcal{M}_1(d_{\rho_0}F) \text{Op}_1((-\xi)^m ) \mathcal{M}_1(\kappa) [P \Psi_0] ) \\
&=\mathcal{M}_1 (d_{\rho_0}F \circ \kappa) \text{Op}_1 \left( (x \circ (dF(\rho_0) \circ \kappa)^l \right)  \text{Op}_1\left( (-\xi \circ \kappa)^m \right) [P\Psi_0] ) 
\end{align*}
Finally, the action of $\text{Op}_1 \left( (x \circ (d_{\rho_0}F \circ \kappa)^l \right)  \text{Op}_1\left( (-\xi \circ \kappa)^m \right)$ transforms $P \Psi_0$ into another state of the form $Q \Psi_0$ where $Q$ is of degree $\deg P + l + m$, where the coefficients of $Q$ depend linearly on those of $P$, with coefficients in the linear combination depending on $\kappa$ and on $d_{\rho_0}F$. By developing the powers $\big(x \circ (d_{\rho_0}F \circ \kappa) \big)^l$ and $(-\xi \circ \kappa)^m$, we see that the coefficients of $Q$ are bounded by $C_{l,m} ||d_{\rho_0}F||^l  ||\kappa||^{l+m}$ for some constant $C_{l,m}$. 

As a consequence, we can write the entire term of order $k$ in the form : 
$T (\rho_1)  \mathcal{M}(d_{\rho_0}F \circ \kappa) \Lambda_h ( Q_k(P) \Psi_0))$ where $Q_k(P)$ is a polynomial of order $\deg P + 3k$, the map $P \mapsto Q_k(P)$ is linear and its coefficients depend on $\kappa$, the derivatives of $\psi$ and $a$ at $(x_1, \xi_0)$ up to the $3k$-th order. This gives the required polynomial. By putting the terms  $ ||d_{\rho_0}F||^l $ into $C_{3k}(\psi)$ and using the special form of $P_k$,  we obtain the required estimate 
$$ N_\infty(Q_k(P) ) \leq C_{3k}(\psi) ||a||_{C^k}  ||\kappa||^{3k} N_\infty(P).$$

\paragraph{Control of the remainders.} The last step of the proof consists in proving that the remainder term has the required bound. As already written with the underbrace in (\ref{precise_expansion}), this remainder can be decomposed in two terms: they have different properties. Let us start with the first term, and call it $\tilde{r}_{N,1}$. 

In the products of the form given by (\ref{form_sum}), gathering the factors $r^\psi_{3 + \alpha}$ into a single term and the polynomials $\psi_k$ into a single polynomial, we see that the term $r_{N,1}$, appearing in $\tilde{r}_{N,1}$, is a sum of terms of the form $Q_j^\psi(x,\eta) R_j^\psi(x,\eta ; h)$, for $0 \leq j \leq k$, where $Q_j^\psi$ is a polynomial of degree $3j$ and $R_j^\psi(x,\eta ; h)$ satisfies for $\alpha \in \N^2$, 
$$ | \partial^\alpha R_j^\psi(x,\eta ;h )| \leq C_{3N-3j + |\alpha|} (\psi) \langle (x,\eta) \rangle^{3N - 3j}$$ 
where $C_{3N-3j + |\alpha|}(\psi)$ depends on the derivatives of $\psi$ up to the order $3N-3j + |\alpha| $.\footnote{These estimates comes from (\ref{Taylor_remainder}) and in fact, we can take $$C_{3N-3j+|\alpha|}(\psi) = \sup_{ 3N-3j \leq |\beta| \leq 3N-3j + |\alpha|} h^{(|\beta|-3N+3j)/2} ||\partial^{\beta} \psi||_\infty$$} Using the same kind of estimates for $r^a_{N-k}(x,\eta ; a;h)$, we see that $\tilde{r}_{N,1}$ satisfies : 

\begin{equation}\label{tilde_r_N_1}
\forall \alpha \in \N^2, (x,\eta) \in \R^2 \,,\, |\partial^\alpha \tilde{r}_{N,1}(x,\eta) | \leq C_{3N+ |\alpha|}(\psi) ||a||_{C^{N + |\alpha|}} \langle (x,\eta) \rangle^{3N}
\end{equation}
We are now interested in controlling 
$$ \tilde{R}_{N,1}u(x) \coloneqq \frac{1}{2\pi} \int_{T^*\R} e^{i \left(  \frac{1}{2} (D^2 \psi( x_1, \xi_0) (x,\eta) , (x,\eta) )  - y\eta\right)  } \tilde{r}_{N,1}(x,\eta ) u(y) dy d\eta$$
We will use the following lemma, proved in the appendix \ref{appendix_lemma_stationnary_phase} : 
\begin{lem}\label{Lemma_1_stationnary_phase}
Let $\tilde{b}$ be a symbol in $S(\langle \rho \rangle^N)$. Then, there exists a symbol $b \in S(\langle \rho \rangle^N)$ such that for all $0 < h \leq 1$, 
$$\frac{1}{2\pi h} \int_{T^*\R} e^{\frac{i}{h} \left(  \frac{1}{2} (D^2 \psi( x_1, \xi_0) (x,\eta) , (x,\eta) )  - y\eta\right)  } \tilde{b}(x,\eta ) u(y) dy d\eta = \mathcal{M}(d_{\rho_0}F ) \op(b)u(x)$$
Moreover, there exists a universal integer $M^\prime \in \N$ such that 
$b$ satisfies : for all $\alpha \in \N^2$, 
$$\langle \rho \rangle^N  |\partial^\alpha b(\rho)| \leq C_\alpha \sup_{ |\beta| \leq  |\alpha| +M^\prime} \sup_{\rho \in T^*\R} \left(  |\partial^\beta \tilde{b}(\rho)| \langle \rho \rangle^N \right)  $$
where $C_\alpha$ depends on $d_{\rho_0} F$. 
\end{lem}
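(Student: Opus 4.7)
Let $T$ denote the integral operator on the left-hand side and set $K := d_{\rho_0}F$. The phase $\Phi(x,y,\eta) = Q(x,\eta) - y\eta$, with $Q(x,\eta) = \frac{1}{2}D^2\psi(x_1,\xi_0)(x,\eta)\cdot(x,\eta)$, is purely quadratic in $(x,y,\eta)$ and non-degenerate in $\eta$ (since $\partial^2_{x\eta}\psi(x_1,\xi_0) \neq 0$), and generates the twisted graph of $K$. When $\tilde{b}$ is replaced by the specific constant $|\partial^2_{x\eta}\psi(x_1,\xi_0)|^{1/2}$, the operator $T$ coincides with the metaplectic operator $\mathcal{M}(K)$, up to a unimodular factor, by the identification already carried out in the proof of Proposition \ref{Prop_propagation_coherent_states}. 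The plan is therefore to write $T = \mathcal{M}(K) \circ \op(b)$ for some $b$ linearly related to $\tilde{b}$, and to derive the stated estimates from an explicit formula for $b$.

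To identify $b$, I would compute $\op(b) = \mathcal{M}(K)^{-1} T$ at the level of Schwartz kernels. Both $\mathcal{M}(K)^{-1} = \mathcal{M}(K^{-1})$ and $T$ admit oscillatory integral representations with quadratic phases; their composition involves only Gaussian-type integrals in the intermediate variable $z$ and the corresponding dual variable, so an \emph{exact} stationary phase (the Hessian is constant and the critical point is unique) collapses the $z$-integration and yields a kernel depending only on $(x,y)$. After rewriting this kernel in the Weyl form $(2\pi h)^{-1}\int e^{i(x-y)\xi/h} b(\tfrac{x+y}{2},\xi)\, d\xi$, the symbol $b$ is identified as the image of $\tilde{b}$ (pulled back by a linear change of coordinates determined by $K$) under an $h$-Fourier multiplier $\exp(ih A(D))$ for a constant-coefficient second-order differential operator $A$ on $T^*\R$, in direct analogy with the formula for the Moyal product. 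An equivalent route is to integrate by parts in $\eta$, using that on the critical set $x$ is an affine function of $(y,\eta)$ so that $x - (\text{affine function of }(y,\eta))$ can be rewritten as $-ih\beta^{-1}\partial_\eta$ acting on $e^{i\Phi/h}$; iterating this reduces the $x$-dependence of $\tilde{b}$ to a dependence on $(y,\eta)$ alone, producing the same formula.

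The symbol estimates then follow from this explicit formula. Indeed, $\exp(ihA(D))$ acts continuously on $S(\langle\rho\rangle^N)$ with bounds independent of $h$: each term in its Taylor expansion is controlled by a finite semi-norm of $\tilde{b}$, and the remainder is estimated by arguments analogous to those used for the Moyal product expansion recalled before the paragraph on weighted Sobolev spaces. Differentiating under the integral sign and tracking the linear change of coordinates produce the claimed bound
\[
\langle\rho\rangle^N|\partial^\alpha b(\rho)| \leq C_\alpha \sup_{|\beta|\leq |\alpha|+M'}\sup_{\rho'\in T^*\R}\langle\rho'\rangle^N |\partial^\beta \tilde{b}(\rho')|,
\]
where $M'$ accounts for the order of $A(D)$ and the number of derivatives consumed by the change of variables, and where the constants $C_\alpha$ depend on the matrix entries of $K$ (in particular on $|\partial^2_{x\eta}\psi(x_1,\xi_0)|^{-1}$, which controls the non-degeneracy of the phase).

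The main technical obstacle is the uniform-in-$h$ continuity of the Fourier multiplier $\exp(ihA(D))$ on the weighted symbol classes $S(\langle\rho\rangle^N)$, together with the careful verification that the remainders in the associated expansion are estimated by only finitely many semi-norms of $\tilde{b}$. This parallels the standard argument underlying Weyl's composition formula, but requires attention in order to expose the explicit seminorm loss $M'$ and the dependence of the constants on $K$ rather than on $h$. Once this is in place, the identification and estimates for $b$ follow directly.
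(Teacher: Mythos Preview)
Your proposal is correct and follows essentially the same route as the paper: compute $\mathcal{M}(K)^{-1}T$ (the paper writes $\mathcal{M}(K)^*\mathcal{M}_{\tilde b}$, which is the same since metaplectic operators are unitary), perform the exact Gaussian integration in the intermediate variables to land on a right/Weyl quantization whose symbol is $\tilde{b}$ precomposed with a linear change of variables and then acted on by a constant-coefficient Fourier multiplier $e^{ihA(D)}$, and finally invoke the uniform continuity of such multipliers on $S(\langle\rho\rangle^N)$ (the paper cites \cite{ZW}, Theorem 4.17) to obtain the seminorm bounds with the stated loss $M'$. The only cosmetic difference is that the paper passes explicitly through the right quantization before converting to Weyl, which splits $e^{ihA(D)}$ into two factors $e^{\frac{ih}{2}wD_x^2}$ and $e^{-\frac{ih}{2}\langle D_x,D_\xi\rangle}$.
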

\vspace{0.5cm}
By applying lemma \ref{Lemma_1_stationnary_phase} (in the case $h=1$ in the lemma), we can find a symbol $r_{N,h}$ such that $$\tilde{R}_{N,1}  = \mathcal{M}_1(d_{\rho_0}F) \text{Op}_1(r_{N,h})$$
To conclude the treatment of the first part of the remainder, we compute : 
\begin{align*}
|| \tilde{R}_{N,1}  \mathcal{M}_1 (\kappa)[P \Psi_0]  ||_{L^2} &= || \mathcal{M}_1 (d_{\rho_0}F) \text{Op}_1(r_{N,h})\mathcal{M}_1(\kappa) [P \Psi_0] ||_{L^2}  \\
& \leq ||\text{Op}_1 (r_{N,h})||_{H_1(\langle \rho\rangle^{3N}) \to L^2 }  \times || \mathcal{M}_1(\kappa)  [P \Psi_0] ||_{H_1(\langle \rho\rangle^{3N}) } \\
&\leq C_{M}(r_{N,h}) || \kappa||^{3N}  K_{N,deg P} N_{\infty}(P)
\end{align*}
by using Corollary \ref{cor_norm_H_1}, 
where $C_{M}(r_{N,h})$ depends on the first $M$ semi-norms of $r_{N,h}$ in $S(\langle \rho \rangle^{3N})$, which, in turn depends on the first $M + M^\prime$ semi-norms of $\tilde{r}_{N,h}$ in $S(\langle \rho \rangle^{3N})$ according to Lemma \ref{Lemma_1_stationnary_phase}. By (\ref{tilde_r_N_1}), this can be controlled by some constant $C_{3N + M + M^\prime}(\psi) ||a||_{C^{N + M + M^\prime}}$. \\

Let's turn to the second remainder in (\ref{precise_expansion}). We want to control 
$$ \frac{1}{2\pi} \int_{\R^2} e^{i \left(  \frac{1}{2} (D^2 \psi( x_1, \xi_0) (x,\eta) , (x,\eta) )  - y\eta\right)  } \tilde{r}_N(x,\eta ) u(y) dy d\eta$$ 
Recalling the precise description of $\tilde{r}_N$ in (\ref{devp_exp}), we set, for $s \in [0,1]$ : 
$$ \tilde{R}_s u (x) =  \frac{1}{2\pi} \int_{\R^2} e^{i \left(  \frac{1}{2} (D^2 \psi( x_1, \xi_0) (x,\eta) , (x,\eta) ) + i sh^{1/2} r_3^\psi(x,\eta;h) - y\eta\right)  } r_3^\psi(x,\eta;h )^N a(x_1 + h^{1/2}x, \xi_0 + h^{1/2} \eta) u(y) dy d\eta$$
and we want to estimate $|| \tilde{R}_s \mathcal{M}_1 (\kappa) [P \Psi_0] ||_{L^2}$ uniformly in $s \in [0,1]$. The symbol $$b_N(x,\eta)\coloneqq  r_3^\psi(x,\eta;h)^N a(x_1 + h^{1/2}x, \xi_0 + h^{1/2} \eta)$$ lies in the symbol class $S(\langle \rho \rangle^{3N})$, with a control on its semi-norms due to (\ref{Taylor_remainder}). 
Let's admit the following lemma, whose proof is also put in the appendix \ref{appendix_lemma_stationnary_phase}.

\begin{lem}\label{Lemma_2_stationnary_phase}
For every $s \in [0,1]$, there exists $B_s (\cdot ) \in S\left( \langle \rho \rangle^{6N} \right)$ such that : 
\begin{itemize}
\item $\tilde{R}_s^* \tilde{R}_s = \text{Op}_1(B_s)$ ; 
\item There exists a universal $M \in \N$ such that for all $\alpha \in \N^2$, for all $s \in [0,1]$, with some universal constants $C_\alpha$, 
$$ \sup_{\rho} |\partial^\alpha B_s(\rho)| \leq C_\alpha \left( \sup_{\rho , |\beta| \leq |\alpha| + M} d^{\beta} b_N(\rho) \langle \rho \rangle^{-3N} \right)^2 \langle \rho \rangle^{6N} $$ 
\end{itemize}
\end{lem}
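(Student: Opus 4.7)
I would realize $\tilde R_s^\ast \tilde R_s$ as $\text{Op}_1(B_s)$ by a direct computation of an oscillatory integral followed by the method of stationary phase. The key structural observation is that the ``bad'' factor $e^{ish^{1/2}r_3^\psi}$ in $\tilde R_s$, whose derivatives grow in $\rho$ and therefore prevent a direct application of Lemma \ref{Lemma_1_stationnary_phase} to a deformed amplitude $b_N e^{ish^{1/2}r_3^\psi}$, enters in $\tilde R_s^\ast \tilde R_s$ only through the difference $r_3^\psi(y,\eta_2;h) - r_3^\psi(y,\eta_1;h)$. This difference vanishes on the diagonal $\eta_1 = \eta_2$, so after a symmetric change of variables the phase perturbation carries an extra factor of the off-diagonal parameter $\theta$ and is harmless near the critical set.

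\textbf{Reduction to a two-dimensional oscillatory integral.} Writing the Weyl symbol $B_s(x,\xi) = \int e^{-it\xi}\mathcal{K}_s(x+t/2,x-t/2)\,dt$ and substituting the explicit kernel of $\tilde R_s^\ast \tilde R_s$ gives a fourfold oscillatory integral over $(y,\eta_1,\eta_2,t)$ with amplitude $\overline{b_N(y,\eta_1)}\,b_N(y,\eta_2)$. I then perform the substitution $\eta_1 = \bar\eta - \theta$, $\eta_2 = \bar\eta + \theta$. Since $Q(y,\eta) = \tfrac{1}{2}D^2\psi(x_1,\xi_0)((y,\eta),(y,\eta))$ is quadratic, $Q(y,\eta_2) - Q(y,\eta_1) = 2\theta\,\partial_\eta Q(y,\bar\eta)$ exactly, and the fundamental theorem of calculus gives $r_3^\psi(y,\eta_2;h) - r_3^\psi(y,\eta_1;h) = 2\theta\,\tilde r_s(y,\bar\eta,\theta;h)$ with $\tilde r_s = \tfrac12\int_{-1}^{1}\partial_\eta r_3^\psi(y,\bar\eta + t\theta;h)\,dt$. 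The $t$-integration produces $2\pi\delta(\bar\eta - \xi)$ which collapses $\bar\eta = \xi$ and leaves
$$B_s(x,\xi) = \tfrac{1}{\pi}\iint e^{i\phi(y,\theta;x,\xi,s,h)}\,\overline{b_N(y,\xi-\theta)}\,b_N(y,\xi+\theta)\,dy\,d\theta,$$
with phase $\phi = 2\theta\big[\beta y + \gamma\xi - x + sh^{1/2}\tilde r_s(y,\xi,\theta;h)\big]$, where $\beta = \partial^2_{x\eta}\psi(x_1,\xi_0) \neq 0$ (by the generating-function hypothesis) and $\gamma = \partial^2_{\eta\eta}\psi(x_1,\xi_0)$.

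\textbf{Stationary phase with parameters.} The phase $\phi$ admits a unique critical point at $(y^\ast,\theta^\ast) = (y^\ast(x,\xi,s,h),0)$, with $y^\ast$ implicitly defined by $\beta y^\ast + \gamma\xi - x + sh^{1/2}\partial_\eta r_3^\psi(y^\ast,\xi;h) = 0$; by the implicit function theorem it depends smoothly on all parameters for $h$ small, with $y^\ast = (x-\gamma\xi)/\beta + O(h^{1/2})$. The Hessian at the critical point is essentially anti-diagonal with off-diagonal entry $2(\beta + sh^{1/2}\partial^2_{y\eta}r_3^\psi(y^\ast,\xi;h))$, so it is non-degenerate with $|\det| \sim 4\beta^2$ uniformly in $s\in[0,1]$ and $h$ small. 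I then split $B_s$ with a smooth cutoff supported in a fixed neighbourhood of the critical point. On the stationary piece, apply stationary phase with parameters to extract the leading term $|b_N(y^\ast,\xi)|^2 / |\beta + sh^{1/2}\partial^2_{y\eta}r_3^\psi(y^\ast,\xi;h)|$ and an asymptotic expansion in $h^{1/2}$ whose coefficients are polynomial in derivatives of $b_N$ evaluated at $(y^\ast,\xi)$. On the non-stationary piece, integrate by parts using ${}^{t}L(e^{i\phi}) = e^{i\phi}$ for $L = -i\,|\nabla\phi|^{-2}(\partial_\theta\phi\,\partial_\theta + \partial_y\phi\,\partial_y)$, gaining a factor $(|\beta\theta| + |\beta y + \gamma\xi - x|)^{-1}$ per step and producing rapid decay. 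Differentiating $B_s$ in $(x,\xi)$ differentiates the implicit critical point $y^\ast$ and the integrand, each time producing at most a polynomial factor in $\langle\rho\rangle$ together with one extra derivative of $b_N$, which yields the claimed bound with a universal $M$ and the squared semi-norm of $b_N$ coming from the factor $\overline{b_N}\,b_N$ in the amplitude.

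\textbf{Main obstacle.} The delicate point is the uniformity in $s \in [0,1]$ and $h \in (0,1]$: by (\ref{Taylor_remainder}) only the bounds $|\partial^\alpha r_3^\psi| \leq C_{|\alpha|}(\psi)\langle\rho\rangle^3$ are available, so the perturbation $sh^{1/2}\tilde r_s$ in the phase is not uniformly small on the support of $b_N$, whose diameter is $\sim h^{-1/2}$ after the rescaling $\Lambda_h$. The cancellation identified above, that this perturbation vanishes to first order in $\theta$ at $\eta_1 = \eta_2$, is precisely what controls the Hessian perturbation at the critical point; the integration-by-parts argument then cuts off the non-stationary region before the perturbative analysis of the phase would break down. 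Careful Faà di Bruno bookkeeping for the derivatives of the amplitude $\overline{b_N(y,\xi-\theta)}\,b_N(y,\xi+\theta)$ and of the remainder in stationary phase produces the universal constant $M$ and the precise exponent $\langle\rho\rangle^{6N}$ in the final estimate.
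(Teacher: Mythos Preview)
Your approach is correct and is essentially a variant of the paper's argument. Both proofs hinge on the same observation: in the phase of $\tilde R_s^\ast\tilde R_s$ the troublesome term $sh^{1/2}r_3^\psi$ enters only through a difference, so that $\partial_y\Phi_s=0$ forces $\eta_1=\eta_2$, and on that diagonal the mixed second derivative of the phase equals the convex combination $(1-s)\partial^2_{x\eta}\psi(x_1,\xi_0)+s\,\partial^2_{x\eta}\psi(x_1+h^{1/2}y,\xi_0+h^{1/2}\xi)$, which never vanishes. The paper packages this slightly differently: it first conjugates by $\Lambda_h$, writing $R_s=\Lambda_h\tilde R_s\Lambda_h^\ast$, which rescales $r_3^\psi$ back into the genuine Taylor remainder $\rho_3^\psi$ of $\psi$ and makes the amplitude $\tilde b_N$ supported in a \emph{fixed} compact set; the perturbed phase $\psi_s=\tfrac12 q+s\rho_3^\psi$ is then uniformly smooth there, so the ``main obstacle'' you describe simply disappears and the stationary/non-stationary phase argument is the standard one. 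The paper also lands first in the right quantization and then passes to Weyl, whereas you compute the Weyl symbol directly via the $(\bar\eta,\theta)$ change of variables, which is a bit slicker. One small correction: your claim $y^\ast=(x-\gamma\xi)/\beta+O(h^{1/2})$ is not uniform on the support of $b_N$ (there $sh^{1/2}\partial_\eta r_3^\psi$ can be of size $h^{-1/2}$, the same as the linear part); what matters---and what you do use---is only that $\partial_y$ of the critical equation is bounded away from zero, so the implicit function theorem and the non-stationary bounds go through regardless.
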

\vspace{0.5cm} 
This lemma allows us to control 
\begin{align*}
||\tilde{R}_s||^2_{H_1(\langle \rho \rangle^{3N}) \to L^2 } & \leq ||\tilde{R}_s^* \tilde{R}_s||_{H_1\left( \langle \rho \rangle^{3N} \right) \to H_1\left( \langle \rho \rangle^{-3N}  \right)} \\
&\leq ||\text{Op}_1 (B_s)||_{H_1\left( \langle \rho \rangle^{3N}\right) \to H_1 \left( \langle \rho \rangle^{-3N} \right) } \\
&\leq C_N \sup_{|\alpha| \leq M} \sup_{\rho} |\left( \partial^\alpha B_s(\rho) \right) \langle \rho \rangle^{-6N}| \\
&\leq  C_N \left( \sup_{|\beta| \leq 2M} \sup_{\rho }d^\beta  b_N(\rho) \langle \rho \rangle^{-3N} \right)^2 \\
&\leq \left( C_{3N+ M^\prime} (\psi) ||a||_{C^{N + M^\prime}}\right)^2
\end{align*}
We finally conclude as before for $\tilde{R}_{N,1}$ by using Corollary \ref{cor_norm_H_1}. This concludes the proof of Proposition \ref{Prop_propagation_coherent_states}.  
\end{proof}

\section{Dynamical preliminaries}\label{Section_preliminaires_dyna}
\subsection{Hyperbolic dynamics} \label{Subsection_hyperbolic_dynamics}

We assumed that $F$ is hyperbolic on the trapped set $\mathcal{T}$. As already mentioned, we can fix an adapted Riemannian metric on $U$ such that the following stronger version of the hyperbolic estimates are satisfied for some $\lambda_0 >0$ : for every $\rho \in \mathcal{T}$, $n \in \N$, 
\begin{align}
v \in E_u( \rho) \implies ||d_\rho F^{-n}(v)|| \leq e^{-\lambda_0 n } ||v||\\
v \in E_s( \rho) \implies ||d_\rho F^{n}(v)|| \leq e^{-\lambda_0 n } ||v||
\end{align}

\begin{nota}
We now use the induced Riemannian distance on $U$ and denote it $d$. \\
\end{nota}

If $\rho \in \mathcal{T}$,  $n \in \Z$, we use this Riemannian metric to define the unstable Jacobian $J^u_n (\rho)$ and stable Jacobian $J^s_n (\rho)$ at $\rho$ by : 

\begin{align}\label{Def_jacobian}
v \in E_u( \rho) \implies ||d_\rho F^{n}(v)|| = J^u_n (\rho) ||v|| \\
v \in E_s( \rho) \implies ||d_\rho F^{n}(v)|| = J^s_n (\rho)||v||
\end{align}
These Jacobians quantify the local hyperbolicity of the map. Since $F$ is volume preserving, $J^u_n(\rho) J^s_n(\rho) \sim 1$. 

\begin{rem}
If we define unstable and stable Jacobian $\tilde{J}^u_n$ and $\tilde{J}^s_n$ using another Riemannian metric, then, for every $n \in \Z$ and $\rho \in \mathcal{T}$, 
\begin{equation*}
\tilde{J}^u_n(\rho) \sim J^u_n(\rho)\quad ; \quad  \tilde{J}^s_n(\rho) \sim J^s_n(\rho)
\end{equation*}
\end{rem}

From the compactness of $\mathcal{T}$, there exists $\lambda_1 \geq \lambda_0$ which satisfies 

\begin{align}
 \forall n \in \N , \forall \rho \in \mathcal{T} \; ; \; 
e^{n\lambda_0} \leq J^{u}_n (\rho) \leq  e^{n\lambda_1 }\; \text{ and }  \; e^{-n\lambda_1} \leq J^{s}_n (\rho) \leq  e^{-n\lambda_0}  \label{lamba0et1}
\end{align}
In particular, the following Lyapounov exponents are well-defined 
\begin{align*}
\lambda_{max} = \sup_{\rho \in \mathcal{T}} \limsup_n \frac{1}{n}\log J^u_n(\rho) \\
\lambda_{min} = \inf_{\rho \in \mathcal{T}} \liminf_n \frac{1}{n} \log J^u_n(\rho) >0
\end{align*}
We cite here standard facts about the stable and unstable manifolds (see for instance \cite{KH}, Chapter 6). 
\begin{lem} \label{classical_hyperbolic}
For any $\rho \in \mathcal{T}$, there exist local stable and unstable manifolds $W_s(\rho), W_u( \rho) \subset U$ satisfying, for some $\varepsilon_1 >0$ (only depending on $F$) : 
\begin{enumerate}[label = (\arabic*)]
\item $W_s(\rho), W_u( \rho)$ are $C^\infty$-embedded curves, with the $C^\infty$ norms of the embedding uniformly bounded in $\rho$. 
\item the boundaries of $W_u(\rho)$ and $W_s(\rho)$ do not intersect $\overline{B(\rho, \varepsilon_1)} $ \footnote{in other words, there exists a smooth curve $\gamma : [-\delta,\delta]  \to U$ such that $\gamma(0)=\rho$, $\text{ran}( \gamma )\subset W_{u/s}(\rho)$ and $\overline{B(\rho, \varepsilon_1)} \cap W_{i/s}(\rho) =  \gamma([-\delta/2, \delta/2])$ : it means that the size of the unstable and stable manifolds is bounded from below uniformly. } and $W_{u/s}(\rho) \subset B(\rho, 2 \varepsilon_1)$ (these are local unstable/stable manifolds). 
\item $W_s(\rho) \cap W_u(\rho) = \{ \rho \}$, $T_\rho W_{u/s} (\rho) = E_{u/s} (\rho) $ 

\item $F(W_s(\rho) ) \subset W_s \left(F(\rho) \right)$ and $F^{-}(W_u(\rho) ) \subset W_u \left(F^{-1}(\rho) \right)$
\item \begin{enumerate}
\item For each $\rho^\prime \in W_s(\rho), d( F^{ n } (\rho), F^{  n}(\rho^\prime) ) \to 0$. 
\item For each $\rho^\prime \in W_u(\rho), d( F^{- n } (\rho), F^{  -n}(\rho^\prime) ) \to 0$. 
\end{enumerate}
\item Let $\theta>0$ satisfying $ e^{-\lambda_0} < \theta < 1$. There exists $C>0$ (independent of $\varepsilon_1$) such that the following holds : 
\begin{enumerate} 
\item
If $\rho^\prime \in U$ satisfies $d(F^{ i }(\rho) , F^{i }( \rho^\prime) ) \leq \varepsilon_1$ for all $i = 0 , \dots, n$  then $d\left( \rho^\prime, W_s(\rho) \right)\leq C \theta^n \varepsilon_1$ and for $0 \leq i \leq n$, $d(F^{i}(\rho), F^{i}(\rho^\prime) ) \leq C\varepsilon_1 \theta^{\min(i,n-i)}$. 
\item If $\rho^\prime \in U$ satisfies $d(F^{ -i }(\rho) , F^{-i }( \rho^\prime) ) \leq \varepsilon_1$ for all $i = 0 , \dots, n$  then $d\left( \rho^\prime, W_u(\rho) \right)\leq C \theta^n \varepsilon_1$ and for $0 \leq i \leq n$, $d(F^{-i}(\rho), F^{-i}(\rho^\prime) ) \leq C\varepsilon_1 \theta^{\min(i,n-i)}$. 
\end{enumerate}
\item If $\rho, \rho^\prime \in \mathcal{T}$ satisfy $d(\rho, \rho^\prime) \leq \varepsilon_1$, then  $W_u (\rho) \cap W_s (\rho^\prime)$ consists of exactly one point of $\mathcal{T}$. 
\end{enumerate}
\end{lem}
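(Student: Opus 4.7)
The plan is to assemble these standard hyperbolic-dynamics results in four stages, following the Hadamard--Perron approach as in \cite{KH}, Chapter 6.

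\textbf{Stage 1: construction via graph transform.} For each $\rho \in \mathcal{T}$, I would choose coordinates on a ball $B(\rho, 2\varepsilon_1)$ making $E_u(\rho)$ and $E_s(\rho)$ the coordinate axes; such coordinates exist with uniformly bounded $C^\infty$ norms because $\mathcal{T}$ is compact and the splitting is continuous. In these coordinates, I would define the graph transform $\Gamma_\rho$ acting on the space of $1$-Lipschitz graphs over a small disk of $E_u(\rho)$ with values in $E_s(F(\rho))$: a graph $g$ is sent to the graph whose image under $F^{-1}$ lies, to leading order, on $g$. Using the adapted metric so that (\ref{hyp1})--(\ref{hyp2}) hold with $C=1$, the map $\Gamma_\rho$ is a uniform $C^0$-contraction; its unique fixed point gives $W_u(\rho)$. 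Smoothness and uniform $C^k$ estimates follow from the standard bootstrap argument on the jet bundle of graphs, combined with the fact that the derivatives of $F$ are uniformly bounded on a neighborhood of $\mathcal{T}$. Applying the same construction to $F^{-1}$ yields $W_s(\rho)$. This delivers items $(1)$--$(4)$.

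\textbf{Stage 2: forward/backward attraction.} For $(5)$, given $\rho' \in W_s(\rho)$, the invariance $F(W_s(\rho)) \subset W_s(F(\rho))$ from Stage~1, combined with the uniform contraction on the stable bundle in the adapted metric, yields $d(F^n(\rho), F^n(\rho')) \leq e^{-n\lambda_0} d(\rho,\rho')$ for $\rho'$ close enough to $\rho$, and hence the desired convergence. The unstable case is symmetric via $F^{-1}$.

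\textbf{Stage 3: shadowing-type estimate.} For $(6)(a)$, I would decompose the small initial displacement $\rho - \rho'$ along $E_u(\rho) \oplus E_s(\rho)$. The unstable component is expanded by a factor at least $e^{i\lambda_0}$ through time $i$; since the total displacement stays $\leq \varepsilon_1$, the unstable component at time zero must be $O(e^{-n\lambda_0}\varepsilon_1)$, placing $\rho'$ within that distance of $W_s(\rho)$, which gives the first claim with $\theta = e^{-\lambda_0}$. The intermediate-time bound $d(F^i(\rho),F^i(\rho')) \leq C\varepsilon_1\theta^{\min(i,n-i)}$ comes from applying the same reasoning forward from time $0$ (stable component dominates) and backward from time $n$ (unstable component now shrunk dominates). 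Part $(b)$ is the time-reversed version using $F^{-1}$.

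\textbf{Stage 4: local product structure.} For $(7)$, with $\rho,\rho' \in \mathcal{T}$ and $d(\rho,\rho') \leq \varepsilon_1$ small, in the chart of Stage~1 the curves $W_u(\rho)$ and $W_s(\rho')$ are $C^1$-close to their tangent lines $E_u(\rho)$ and $E_s(\rho')$, which are uniformly transverse by continuity of the splitting and compactness; the implicit function theorem then produces a unique intersection point. That this point lies in $\mathcal{T}$ follows from $(5)$: its forward iterates stay close to the orbit of $\rho' \in \mathcal{T}$ and its backward iterates close to the orbit of $\rho \in \mathcal{T}$, so they remain in $U$ for all times and the point lies in $\mathcal{T}_+ \cap \mathcal{T}_- = \mathcal{T}$.

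The main obstacle, should one insist on writing everything out, is Stage~1: producing $C^\infty$ regularity with bounds uniform in $\rho \in \mathcal{T}$ requires running the graph transform in each $C^k$-jet space and carefully checking the contraction rates degrade in a controlled way, which is the technical heart of Hadamard--Perron. The remaining stages are then essentially formal consequences of the hyperbolic splitting together with the invariance properties produced in Stage~1.
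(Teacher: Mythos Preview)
The paper does not prove this lemma: it is stated as a citation of standard facts, with the reference ``see for instance \cite{KH}, Chapter 6'' given immediately before the statement. Your sketch is therefore strictly more than what the paper provides, and it correctly outlines the Hadamard--Perron graph-transform approach from that same reference.

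One small slip in Stage~3: you write ``which gives the first claim with $\theta = e^{-\lambda_0}$'', but the lemma requires $e^{-\lambda_0} < \theta < 1$ strictly. The strict inequality is needed precisely because the linearized contraction rate $e^{-\lambda_0}$ does not survive the nonlinear corrections exactly; one absorbs the higher-order terms into the gap between $e^{-\lambda_0}$ and $\theta$. This is cosmetic in a sketch, but worth stating correctly.
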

Below, we will require that $C \varepsilon_1 < 1$. Up to making $\varepsilon_1$ smaller, we assume this holds. 
 
For our purpose, we will need a more precise version of these results. The following lemmas are an adaptation of Lemma 2.1 in \cite{NDJ19} to our setting, appearing also in \cite{Vacossin}, where they have been partially proved. 

\begin{lem}\label{Local_hyperbolic_1}
There exist constants $\varepsilon_1>0$ and $C >0$ depending only on $(U,F)$, such that for all $\rho, \rho^\prime \in U$, 
\begin{enumerate}[label = (\arabic*)]
\item if $\rho \in \mathcal{T}$ and $\rho^\prime \in W_s( \rho)$ satisfy $d(\rho, \rho^\prime) \leq \varepsilon_1$, then 
\begin{equation} 
C^{-1} J_n^s(\rho) d( \rho, \rho^\prime)  \leq d\left( F^{n}(\rho) , F^{n} (\rho^\prime) \right) \leq CJ_n^s(\rho) d( \rho, \rho^\prime) \quad , \quad \forall n \in \N 
\end{equation}
\item if $\rho \in \mathcal{T}$ and $\rho^\prime \in W_u( \rho)$ satisfy $d(\rho, \rho^\prime) \leq \varepsilon_1$, then 
\begin{equation}
C^{-1}J_{-n}^u(\rho) d( \rho, \rho^\prime) \leq d\left( F^{-n}(\rho) , F^{-n} (\rho^\prime) \right) \leq CJ_{-n}^u(\rho) d( \rho, \rho^\prime) \quad , \quad  \forall n \in \N 
\end{equation} 
\end{enumerate}
\end{lem}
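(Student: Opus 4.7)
I will prove (1); statement (2) follows by applying (1) to the inverse map $F^{-1}$, whose trapped set is the same and whose unstable direction becomes the stable direction.

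The plan is to combine two standard ingredients: a comparison between the ambient distance $d(\rho,\rho')$ and the intrinsic distance $d_{W_s(\rho)}(\rho,\rho')$ along the stable manifold, and a bounded distortion estimate for the stable Jacobian along orbits in $W_s(\rho)$.

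First, since the local stable manifolds $W_s(\rho)$ are $C^\infty$-embedded curves with $C^\infty$-norms uniformly bounded in $\rho \in \mathcal{T}$ (Lemma \ref{classical_hyperbolic}(1)), for $\varepsilon_1$ small enough there is a constant $C_0 > 0$ such that for every $\rho \in \mathcal{T}$ and $\rho' \in W_s(\rho)$ with $d(\rho,\rho')\leq \varepsilon_1$,
\[
C_0^{-1} d(\rho,\rho') \leq d_{W_s(\rho)}(\rho,\rho') \leq C_0\, d(\rho,\rho'),
\]
where $d_{W_s(\rho)}$ denotes arclength distance along $W_s(\rho)$. The same comparison holds for the iterated pair $(F^n\rho, F^n\rho') \in W_s(F^n\rho)$ thanks to invariance of the stable foliation (Lemma \ref{classical_hyperbolic}(4)) together with the fact that $F^n\rho' \in W_s(F^n\rho)$ stays close to $F^n\rho$ for $n \geq 0$ (Lemma \ref{classical_hyperbolic}(5)). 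So it suffices to prove the estimate with $d$ replaced by the intrinsic distance $d_{W_s}$.

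Next, parametrize $W_s(\rho)$ by arclength as $\gamma : [0,L] \to W_s(\rho)$ with $\gamma(0)=\rho$, $\gamma(L)=\rho'$, where $L = d_{W_s(\rho)}(\rho,\rho')$. Then $\gamma'(t) \in E_s(\gamma(t))$ with $\|\gamma'(t)\|=1$, and
\[
d_{W_s(F^n\rho)}(F^n\rho, F^n\rho') \;=\; \int_0^L \|d_{\gamma(t)} F^n (\gamma'(t))\|\, dt \;=\; \int_0^L J_n^s(\gamma(t))\, dt.
\]
The lemma thus reduces to the bounded distortion estimate
\begin{equation}\label{eq:BD}
C^{-1} J_n^s(\rho) \;\leq\; J_n^s(\gamma(t)) \;\leq\; C\, J_n^s(\rho), \qquad 0 \leq t \leq L, \quad n \in \mathbb{N},
\end{equation}
with $C$ independent of $\rho,\rho',n$.

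To prove \eqref{eq:BD}, write $\log J_n^s = \sum_{i=0}^{n-1} \varphi_s \circ F^i$ with $\varphi_s(\cdot)=\log J_1^s(\cdot)$. Since $\rho$ is hyperbolic and the stable decomposition $E_s$ is (Hölder) continuous on $\mathcal{T}$, and since $F$ is smooth, $\varphi_s$ is $\alpha$-Hölder for some $\alpha > 0$ on a neighbourhood of $\mathcal{T}$ (this is a standard consequence of the Hölder continuity of the Oseledets splitting for $C^\infty$ Anosov-type maps). Applying the exponential contraction statement Lemma \ref{classical_hyperbolic}(5)-(6) for pairs $(F^i\rho, F^i\gamma(t))$ on the same stable leaf yields
\[
d\bigl(F^i\rho,\, F^i\gamma(t)\bigr) \;\leq\; C'\, e^{-\lambda_0 i}\, d(\rho,\gamma(t)) \;\leq\; C'\, e^{-\lambda_0 i}\, \varepsilon_1,
\]
so that
\[
\bigl|\log J_n^s(\gamma(t)) - \log J_n^s(\rho)\bigr| \;\leq\; \sum_{i=0}^{n-1} \bigl|\varphi_s(F^i\rho) - \varphi_s(F^i\gamma(t))\bigr| \;\leq\; \sum_{i=0}^{n-1} C'' e^{-\alpha \lambda_0 i}\, \varepsilon_1^{\alpha},
\]
which is uniformly bounded in $n$ and $t$. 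Exponentiating gives \eqref{eq:BD}. Plugging \eqref{eq:BD} into the integral formula yields
\[
C^{-1} J_n^s(\rho)\, L \;\leq\; d_{W_s(F^n\rho)}(F^n\rho, F^n\rho') \;\leq\; C J_n^s(\rho)\, L,
\]
and combining with the ambient/intrinsic comparison concludes the proof.

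The one step that requires care, and is really the heart of the argument, is the bounded distortion estimate \eqref{eq:BD}: one must argue that $\varphi_s = \log J_1^s$ admits a Hölder extension to a neighbourhood of $\mathcal{T}$ (so that the Birkhoff-type difference telescopes to a geometric series), and one must cleanly apply the exponential contraction along stable leaves from Lemma \ref{classical_hyperbolic}(5)-(6), which is itself the nontrivial ingredient imported from the general theory of hyperbolic sets.
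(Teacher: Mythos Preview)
Your proof is correct and follows the standard bounded-distortion route from hyperbolic dynamics. The paper's own argument is more elementary and avoids the arclength detour: it Taylor-expands $F$ along $W_s(\rho)$ to get the one-step relation
\[
d(F(\rho),F(\rho')) = J_1^s(\rho)\,d(\rho,\rho')\bigl(1+O(d(\rho,\rho'))\bigr),
\]
then iterates using $d(F^k\rho,F^k\rho')\leq C\theta^k\varepsilon_1$ (from Lemma~\ref{classical_hyperbolic}) to obtain
\[
d(F^n\rho,F^n\rho') = J_n^s(\rho)\,d(\rho,\rho')\prod_{k=0}^{n-1}\bigl(1+O(\theta^k\varepsilon_1)\bigr),
\]
and the infinite product is bounded above and below. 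Your Birkhoff-sum/bounded-distortion argument is the same mechanism in disguise --- the product $\prod(1+O(\theta^k))$ is exactly the exponential of your telescoped Hölder sum --- but packaged more abstractly. One small remark: you invoke Hölder continuity of $\varphi_s=\log J_1^s$ on a \emph{neighbourhood} of $\mathcal{T}$ via Hölder regularity of the splitting, which is more than you need; since each $W_s(\rho)$ is a smooth curve and $F$ is smooth, $\varphi_s$ restricted to a single stable leaf is automatically Lipschitz, which is all your argument uses. The paper's version sidesteps this by never naming $\varphi_s$ at all.
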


\begin{proof}
We prove (1). (2) is proved in a similar way by inverting the time direction.  
Let $\rho \in \mathcal{T}, \rho^\prime \in W_s(\rho)$. Since $T_{\rho}(W_s(\rho) )  =E_s(\rho) $ and $d_\rho F \left(E_s(\rho)\right) = E_s(F(\rho))$, the Taylor development of $F$ along $W_s(\rho)$ gives : 
\begin{equation}
d(F(\rho), F(\rho^\prime)) =  J^s_1(\rho) d(\rho, \rho^\prime) + O\left(  d(\rho, \rho^\prime)^2 \right) = J^s_1(\rho) d(\rho, \rho^\prime)  \left( 1 + O\left( d(\rho, \rho^\prime)\right)  \right)
\end{equation}
since $J^s_1 \geq e^{- \lambda_1}$. Applying this equality with $F^k (\rho)$ and $F^k (\rho^\prime)$ instead of $\rho$ and $\rho^\prime$, and recalling that, by lemma \ref{classical_hyperbolic}, $d(F^k(\rho), F^k (\rho^\prime)) \leq C\theta^k d(\rho, \rho^\prime)$, 
we can write, 
\begin{equation}
 d(F^{k+1}(\rho), F^{k+1} (\rho^\prime)) = J^s_1(F^k(\rho)) d(F^k(\rho), F^k (\rho^\prime)) (1 + O(\theta^k \varepsilon_1) )
\end{equation}
By this last inequality and the chain rule, we have 
\begin{equation}\label{inequality_lemma_hyperbolic}
 J^s_n(\rho) d(\rho, \rho^\prime) \prod_{k=0}^{n-1} (1 - C \theta^k \varepsilon_1) \leq d(F^n(\rho), F^n (\rho^\prime)) \leq J^s_n(\rho) d(\rho, \rho^\prime) \prod_{k=0}^{n-1} (1 + C \theta^k \varepsilon_1) 
\end{equation}
We conclude by noting that 
$$  \prod_{k=0}^{n-1} (1 + C \theta^k \varepsilon_1)  \leq \prod_{k=0}^{+ \infty} (1 + C \theta^k \varepsilon_1) < + \infty \; ; \; \prod_{k=0}^{n-1} (1 - C \theta^k \varepsilon_1) \geq  \prod_{k=0}^{\infty} (1 - C \theta^k \varepsilon_1)  \geq C^{-1}$$
(note that in the last inequality and in (\ref{inequality_lemma_hyperbolic}) , we need to ensure that $\varepsilon_1 C<1$ so that the product is effectively non zero). 
\end{proof}

The following lemma gives a stronger version of (6) in Lemma \ref{classical_hyperbolic} (it has been proved in \cite{Vacossin} - Lemma 3.10-, as the following corollary - Corollary 3.11). 

\begin{lem}\label{Local_hyperbolic_2}
There exist $C >0$ and $\varepsilon_1 >0$, depending only on $(U,F)$, such that for all $\rho, \rho^\prime \in U$ and $n \in \N$ : 
If $\rho \in \mathcal{T}$ and $d\left( F^i(\rho) , F^i(\rho^\prime) \right) \leq \varepsilon_1$ for all $i \in \{0, \dots, n \}$ then 
\begin{equation} \label{close_to_leaves}
d\left( \rho^\prime , W_s (\rho) \right) \leq \frac{C}{J_n^u(\rho)} d\left( F^n(\rho^\prime), W_s(F^n(\rho)) \right)
\end{equation}
\begin{equation}
d\Big( F^n( \rho^\prime) , W_u (F^n(\rho)) \Big) \leq \frac{C}{J_n^u(\rho)} d\left( \rho^\prime, W_u(\rho) \right)
\end{equation}
and
\begin{equation}\label{control_of_jacobian}
|| d_{F^{n}(\rho^\prime)} F^{-n} || , ||d_{\rho^\prime}F^n|| \leq C J^u_n(\rho)
\end{equation}
\end{lem}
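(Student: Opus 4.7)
The plan is to mimic the approach of Lemma \ref{Local_hyperbolic_1}: establish the one-step distortion, then iterate, being careful to absorb the cumulative geometric error using the exponential decay of $d(F^i(\rho),F^i(\rho'))$ toward the orbit structure. The forward-shadowing hypothesis combined with Lemma \ref{classical_hyperbolic}(6) ensures that $d(\rho',W_s(\rho))\leq C\theta^n\varepsilon_1$, which will keep the iterations well-controlled.

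First I would fix a point $\rho\in\mathcal{T}$ and write, in a local product chart around each $F^i(\rho)$ aligned with $E_s(F^i(\rho))\oplus E_u(F^i(\rho))$, the decomposition $F^i(\rho')\simeq F^i(\rho)+(s_i,u_i)$ with $s_i\in E_s(F^i(\rho))$ (measuring the component along $W_s$) and $u_i$ transverse to it (essentially measuring $d(F^i(\rho'),W_s(F^i(\rho)))$). Using the invariance $F(W_s(F^i(\rho)))\subset W_s(F^{i+1}(\rho))$ and the Taylor expansion of $F$ around $F^i(\rho)$ (where $E_u,E_s$ are genuine invariant subspaces and the Jacobians $J^u_1,J^s_1$ are defined), the one-step distortion reads
\begin{equation}
|u_{i+1}|=J^u_1(F^i(\rho))|u_i|\bigl(1+O(d(F^i(\rho),F^i(\rho')))\bigr),\qquad |s_{i+1}|=J^s_1(F^i(\rho))|s_i|\bigl(1+O(d)\bigr).
\end{equation}
By Lemma \ref{classical_hyperbolic}(6) the perturbation factor at step $i$ is $(1+O(\theta^{\min(i,n-i)}\varepsilon_1))$, so the telescoping product is uniformly bounded and bounded away from $0$. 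Chaining these estimates in the unstable direction yields $|u_n|\geq C^{-1}J^u_n(\rho)|u_0|$, which is exactly (\ref{close_to_leaves}) after identifying $|u_i|\sim d(F^i(\rho'),W_s(F^i(\rho)))$.

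For the second inequality I would run the same iteration in the stable direction: $|s_n|\leq CJ^s_n(\rho)|s_0|$, and then use $J^s_n(\rho)\leq C/J^u_n(\rho)$ (a consequence of $F$ being area-preserving in dimension~$2$, so $J^u_nJ^s_n\sim 1$). This gives $d(F^n(\rho'),W_u(F^n(\rho)))\leq CJ^u_n(\rho)^{-1}d(\rho',W_u(\rho))$. For the derivative estimate (\ref{control_of_jacobian}) I would decompose $T_{\rho'}U$ using the splitting extended by continuity from $F^i(\rho)$ (valid since $\rho'$ is within $\varepsilon_1$ of $\rho$ and the splitting is continuous on a neighborhood of $\mathcal{T}$). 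Applying the already obtained one-step distortion bounds successively, any unit vector in $T_{\rho'}U$ has its image under $d_{\rho'}F^n$ bounded by $CJ^u_n(\rho)$ on both the unstable component (direct expansion) and the stable component (contraction, hence $\leq C$); the same argument applied to $F^{-n}$ starting from $F^n(\rho')$ controls $\|d_{F^n(\rho')}F^{-n}\|$.

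The main technical obstacle is that $\rho'$ does not lie in $\mathcal{T}$, so $E_{u/s}(\rho')$ and $J^{u/s}_n(\rho')$ are not directly defined; everything must be compared to the reference splitting and Jacobians at $\rho\in\mathcal{T}$. The key mechanism that overcomes this is that the splittings depend continuously on the base point, and the shadowing estimate from Lemma \ref{classical_hyperbolic}(6) gives quantitative control on how far $F^i(\rho')$ deviates from $F^i(\rho)$, which turns the accumulated linearization errors into a convergent geometric product $\prod_i(1+O(\theta^{\min(i,n-i)}\varepsilon_1))$ that can be absorbed into the single constant $C$.
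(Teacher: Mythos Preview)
The paper does not actually prove this lemma in the text; it simply cites \cite{Vacossin} (Lemma 3.10 and Corollary 3.11) and moves on. So there is no in-paper proof to compare against.

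That said, your proposal is the standard approach and is essentially correct. It is exactly the template of the paper's proof of Lemma \ref{Local_hyperbolic_1}, upgraded from ``point on the leaf'' to ``point near the leaf''. Two comments to tighten it:

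\begin{itemize}
\item The one-step multiplicative estimate $|u_{i+1}| = J^u_1(F^i(\rho))\,|u_i|\,(1+O(d_i))$ only holds with an error proportional to $|u_i|$ (rather than $O(d_i^2)$, which could swamp a small $|u_i|$) \emph{because} you work in a chart where $W_s(F^i(\rho))$ is exactly the axis $\{u=0\}$. The invariance $F(W_s)\subset W_s$ then forces the second component of $F$ to vanish on $\{u=0\}$, so its Taylor expansion in $u$ starts linearly. You state this invariance but should make explicit that this is what produces the multiplicative error.
\item For (\ref{control_of_jacobian}), ``extending the splitting by continuity to $\rho'$'' is not quite available, since $E_{u/s}$ are defined only on $\mathcal{T}$. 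Cleaner is to work entirely in the adapted charts: the one-step linearization $d_{F^i(\rho')}F$ differs from $d_{F^i(\rho)}F$ by $O(d_i)$ in operator norm, and $d_{F^i(\rho)}F$ is (in the chart) close to $\mathrm{diag}(J^u_1,J^s_1)$. Chaining these via $\prod_i(1+O(\theta^{\min(i,n-i)}\varepsilon_1))\leq C$ gives the bound for $\|d_{\rho'}F^n\|$; the bound for $\|d_{F^n(\rho')}F^{-n}\|$ follows by running the same argument for $F^{-1}$ along the reversed orbit, using $J^s_1(F^i(\rho))^{-1}\sim J^u_1(F^i(\rho))$. This is the same mechanism the paper uses later in Lemma \ref{Lemma_desc_d_rho_F_n}.
\end{itemize}
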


As an immediate consequence of this lemma, we get : 
\begin{cor}\label{cor_control_jacobian}
There exists $C >0$ and $\varepsilon_1 >0$ (depending only on $(U,F)$) such that for all $\rho, \rho^\prime \in \mathcal{T}$ and $n \in \N$ : 
\begin{enumerate}[label = (\arabic*)]
\item if $d\left( F^i(\rho) , F^i(\rho^\prime) \right) \leq \varepsilon_1$ for all $i \in \{0, \dots, n \}$, then 
\begin{equation}
C^{-1} J^u_n(\rho) \leq  J^u_n(\rho^\prime) \leq CJ^u_n(\rho)
\end{equation}
\item if $d\left( F^{-i}(\rho) , F^{-i}(\rho^\prime) \right) \leq \varepsilon_1$ for all $i \in \{0, \dots, n \}$, then 
\begin{equation} 
  C^{-1} J^s_{-n}(\rho) \leq J^s_{-n}(\rho^\prime) \leq CJ^s_{-n}(\rho)
\end{equation}
\end{enumerate}
\end{cor}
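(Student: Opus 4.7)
The plan is to derive the corollary essentially for free from Lemma~\ref{Local_hyperbolic_2}, exploiting the symmetry of the hypothesis in $\rho$ and $\rho'$. Recall that by (\ref{control_of_jacobian}) applied with $\rho$ as basepoint, the assumption $d(F^i(\rho),F^i(\rho'))\leq \varepsilon_1$ for $i=0,\dots,n$ gives
\[
\|d_{\rho'}F^n\|\leq C J^u_n(\rho).
\]
On the other hand, since $\dim E_u(\rho')=1$, the unstable Jacobian satisfies $J^u_n(\rho')=\|d_{\rho'}F^n|_{E_u(\rho')}\|\leq \|d_{\rho'}F^n\|$. Combining these inequalities yields $J^u_n(\rho')\leq CJ^u_n(\rho)$, which is one half of (1).

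For the reverse inequality I would use that the hypothesis of (1) is symmetric in $\rho$ and $\rho'$: since $\rho'\in\mathcal{T}$ as well, Lemma~\ref{Local_hyperbolic_2} applies verbatim with the roles of $\rho$ and $\rho'$ exchanged, giving
\[
\|d_{\rho}F^n\|\leq C J^u_n(\rho').
\]
By the same restriction argument as above, $J^u_n(\rho)\leq \|d_{\rho}F^n\|\leq CJ^u_n(\rho')$, i.e.\ $C^{-1}J^u_n(\rho)\leq J^u_n(\rho')$, which completes (1).

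For (2) I would simply replay the same argument with reversed time. The hyperbolic structure is symmetric in this sense: applying Lemma~\ref{Local_hyperbolic_2} to the map $F^{-1}$ (which is also a hyperbolic symplectomorphism on $\mathcal{T}$, with unstable/stable bundles interchanged) under the backward closeness assumption $d(F^{-i}(\rho),F^{-i}(\rho'))\leq \varepsilon_1$ gives $\|d_{F^{-n}(\rho')}F^n\|,\|d_{\rho'}F^{-n}\|\leq C J^s_{-n}(\rho)$, since the unstable Jacobian of $F^{-1}$ over $n$ iterations is exactly $J^s_{-n}$ for $F$. Exchanging the roles of $\rho$ and $\rho'$ as in the first part then yields $C^{-1}J^s_{-n}(\rho)\leq J^s_{-n}(\rho')\leq CJ^s_{-n}(\rho)$.

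There is no real obstacle here; the only point requiring a little care is checking that the hypothesis of Lemma~\ref{Local_hyperbolic_2} (which is stated asymmetrically, requiring the basepoint to lie in $\mathcal{T}$) may indeed be applied with either of $\rho,\rho'$ playing the role of basepoint, but this is allowed precisely because the corollary assumes both points are in $\mathcal{T}$. The constant $C$ and the threshold $\varepsilon_1$ are then those furnished by Lemma~\ref{Local_hyperbolic_2}, up to an absorbing factor independent of $n$.
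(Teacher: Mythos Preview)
Your proposal is correct and is precisely the argument the paper has in mind: the paper simply states that the corollary is ``an immediate consequence'' of Lemma~\ref{Local_hyperbolic_2}, and what you have written is the natural way to unpack that, using the bound $\|d_{\rho'}F^n\|\leq C J^u_n(\rho)$ from (\ref{control_of_jacobian}) together with the symmetry in $\rho,\rho'$ afforded by the assumption that both lie in $\mathcal{T}$. Your treatment of part~(2) via time reversal is also the intended one.
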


We also record the following fact (see for instance \cite{NDJ19} - Lemma 2.4). 
\begin{lem} \label{adapted_chart_0}
There exist $\varepsilon_1>0$ and $C>0$ such that the following holds :  For every $\rho \in \mathcal{T}$, there exists a symplectic coordinate chart $\kappa_\rho : V_\rho \to W_\rho \subset \R^2$ such that
\begin{itemize}[itemsep=0.2em]
\item $B(\rho, \varepsilon_1) \subset V_\rho$; 
\item $\kappa_\rho(\rho) = (0,0)$
\item $\kappa_\rho \left( W_u(\rho)  \cap V_\rho \right) = \{ (u,0) , u \in \R \} \cap W_\rho$
\item $d\kappa_\rho (E_s(\rho)) = \R(0,1)$
\item For any $N \in \N$, the $C^N$ norm of $\kappa_\rho$ is bounded uniformly in $\rho$. 
\end{itemize}

\end{lem}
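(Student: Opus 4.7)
The plan is to construct $\kappa_\rho$ as a composition of three symplectic maps: a translation sending $\rho$ to the origin, a linear symplectic map sending the stable/unstable directions at $\rho$ to the vertical/horizontal axes, and a symplectic shear that straightens the curve $W_u(\rho)$ onto the horizontal axis. Uniformity in $\rho$ will come from the compactness of $\mathcal{T}$ together with the uniform $C^\infty$-bounds on $W_u(\rho)$ provided by item (1) of Lemma \ref{classical_hyperbolic}.

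First, embed a neighborhood of $\rho$ into $T^*\R\simeq\R^2$ with its standard symplectic form $du\wedge dv$. Since $E_u(\rho)\oplus E_s(\rho)=T_\rho U$ and these two lines are transverse, I pick non-zero vectors $e_u(\rho)\in E_u(\rho)$, $e_s(\rho)\in E_s(\rho)$ depending continuously on $\rho\in\mathcal{T}$ (possible by the continuity of the splitting on the compact set $\mathcal{T}$) and rescale them so that $\omega(e_u(\rho),e_s(\rho))=1$. The linear map $L_\rho$ sending $(\partial_u,\partial_v)$ to $(e_u(\rho),e_s(\rho))$ is symplectic, and its inverse has a norm bounded uniformly in $\rho$ by compactness. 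Pre-composed with the translation by $-\rho$, this already gives symplectic coordinates with $\rho\mapsto 0$, $E_u(\rho)$ tangent to the $u$-axis and $E_s(\rho)=\R(0,1)$.

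Second, I straighten $W_u(\rho)$. In these coordinates $W_u(\rho)$ is a smooth curve through $0$ tangent to the $u$-axis, so on a uniform neighborhood of $0$ it is the graph $\{(u,f(u))\}$ of a smooth function $f$ with $f(0)=f'(0)=0$. Because any $1$-dimensional submanifold of $\R^2$ is automatically Lagrangian, we can write $f=S'$ for some generating function $S$ with $S(0)=S'(0)=S''(0)=0$. The symplectic shear
\[
\phi(u,v)\,=\,\bigl(u,\,v-S'(u)\bigr)
\]
(which is symplectic, since $\phi^*(du\wedge dv)=du\wedge dv$) maps $W_u(\rho)$ to the $u$-axis and satisfies $d\phi(0)=\operatorname{Id}$ because $S''(0)=0$; in particular it preserves the $v$-axis, hence $d\phi(d\kappa_\rho(E_s(\rho)))=\R(0,1)$ is unchanged. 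Setting $\kappa_\rho\coloneqq \phi\circ L_\rho^{-1}\circ(\cdot-\rho)$ gives a symplectic chart satisfying (ii)-(iv).

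The remaining point is the uniform $C^N$-bound and the inclusion $B(\rho,\varepsilon_1)\subset V_\rho$. The translation is an isometry, and $L_\rho$ has uniformly bounded $C^N$-norm since $e_u,e_s$ are continuous on the compact set $\mathcal{T}$. For $\phi$: by item (1) of Lemma \ref{classical_hyperbolic}, the embeddings of the curves $W_u(\rho)$ have uniformly bounded $C^\infty$-norms, so after composing with the uniformly bounded $L_\rho^{-1}$ the function $f$ — and hence $S$ — has uniformly bounded $C^N$-norms on a uniform neighborhood of $0$. This makes $\phi$, and therefore $\kappa_\rho$, uniformly $C^N$-bounded for every $N$. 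Finally, the domain on which all three maps are defined contains a ball of uniform radius around $\rho$ (the straightening $\phi$ is defined wherever $f$ is, which by item (2) of Lemma \ref{classical_hyperbolic} is a uniform neighborhood of $\rho$), so up to shrinking $\varepsilon_1$ we can ensure $B(\rho,\varepsilon_1)\subset V_\rho$. The only real technical point — expected to be the main obstacle — is checking that the continuity of the hyperbolic splitting $\rho\mapsto (E_u(\rho),E_s(\rho))$ is enough to produce $L_\rho$ with uniform bounds; this is where the compactness of $\mathcal{T}$ is essential.
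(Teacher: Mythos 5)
Your construction is correct, and it is essentially the standard one: the paper does not prove this lemma itself but cites \cite{NDJ19} (Lemma 2.4), where the chart is obtained in the same way, namely an affine symplectic normalization sending $\rho$ to $0$ and $E_u(\rho),E_s(\rho)$ to the coordinate axes, followed by the vertical shear $(u,v)\mapsto(u,v-f(u))$ along the graph of the local unstable leaf. Two small remarks. First, you do not actually need a \emph{continuous} choice of $e_u(\rho),e_s(\rho)$ (which in general would require orientability of the line bundles, though here $\mathcal{T}$ is totally disconnected so it is harmless): all that matters is that $\omega(e_u,e_s)=1$ forces $\|L_\rho^{\pm1}\|$ to be uniformly bounded, which follows from the uniform lower bound on the angle between $E_u$ and $E_s$ on the compact set $\mathcal{T}$. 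Second, the detour through a generating function $S$ with $f=S'$ is unnecessary — the shear $(u,v)\mapsto(u,v-f(u))$ preserves $du\wedge dv$ for any smooth $f$ — but it does no harm. The only step worth spelling out slightly more is why $W_u(\rho)$ is a graph over a $u$-interval of \emph{uniform} length: this follows from the uniform $C^2$ bound on the embeddings in Lemma \ref{classical_hyperbolic}(1) together with $f'(0)=0$, which keeps the tangent direction uniformly transverse to the $v$-axis on a neighborhood of uniform size.
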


Finally, we conclude by a lemma concerning the linearized dynamics. If $\rho \in \mathcal{T}$ and $\rho^\prime \in W_u(\rho)$, the tangent space $T_{\rho^\prime} W_u(\rho)$ will naturally be denoted $E_u(\rho^\prime)$ and if $v \in T_{\rho^\prime} U$, we note $d(v, E_u(\rho^\prime))$ the distance between $v$ and $E_u(\rho^\prime)$ using the Riemanniann metric on $T_{\rho^\prime} U$. 

\begin{lem}\label{Lemma_linearized_dynamics}
There exist $\varepsilon_1 >0$, $ \gamma \in (0,1)$ and $C>0$ such that the following holds. Assume that $\rho \in \mathcal{T}$, $\rho^\prime \in W_u(\rho)$, $v_0 \in T_{\rho^\prime} U$ and $n \in \N$ satisfy :  $\forall i \in \{0, \dots, n \}$, $d(F^i(\rho), F^i(\rho^\prime)) \leq \varepsilon_1$. Assume also that $||v_0|| =1$ and that $d(v_0, E_u(\rho^\prime) ) \leq \gamma$.  Let's note $$v_n = \frac{d_{\rho^\prime} F^n (v_0)}{ ||d_{\rho^\prime} F^n (v_0) || } \in T_{F^n(\rho^\prime)} U  $$
Then (see Figure \ref{figure_linearized_dynamics}), $$d\Big(v_n, E_u(F^n(\rho^\prime) )\Big) \leq C J^u_n(\rho)^{-2}  d( v_0, E_u(\rho^\prime) )$$
\end{lem}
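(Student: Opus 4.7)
The plan is to work in the symplectic charts $\kappa_i : V_{\rho_i} \to \R^2$ provided by Lemma \ref{adapted_chart_0} at each orbit point $\rho_i := F^i(\rho) \in \mathcal{T}$. These charts have $C^N$ norms bounded uniformly in $\rho_i$ and rectify $W_u(\rho_i) \cap V_{\rho_i}$ to a segment of the $u$-axis. The orbit-closeness hypothesis $d(\rho_i, \rho'_i) \leq \varepsilon_1$ ensures $\rho'_i := F^i(\rho') \in V_{\rho_i}$, and invariance of the unstable manifold gives $\rho'_i \in W_u(\rho_i)$, so $\kappa_i(\rho'_i) = (u_i, 0)$. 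Crucially, in these charts $E_u(\rho'_i) = T_{\rho'_i} W_u(\rho_i)$ is always the horizontal line $\R \cdot e_1$, and a unit vector close to it can be parameterized by a single slope $t$, with $|t|$ comparable to $d(v, E_u(\rho'_i))$ uniformly. The hypothesis on $v_0$ then becomes $|t_0| \leq C_0 \gamma$.

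The second step is to derive the local form of the dynamics. Writing $G_i := \kappa_{i+1} \circ F \circ \kappa_i^{-1}$, the invariance of the unstable manifold yields $G_i(u,0) \in \R \times \{0\}$, so the differential at $(u_i,0)$ is upper triangular:
\[
dG_i(u_i, 0) = \begin{pmatrix} \alpha_i & \beta_i \\ 0 & \delta_i \end{pmatrix}, \qquad \alpha_i \delta_i = 1,
\]
the determinant condition being forced by symplecticity of $\kappa_i$, $\kappa_{i+1}$ and $F$. The coefficients are uniformly bounded: Lemma \ref{Local_hyperbolic_2} controls $\|d_{\rho'_i} F\| \leq C J^u_1(\rho_i)$, while continuity of the unstable Jacobian along $W_u(\rho_i)$ together with Corollary \ref{cor_control_jacobian} gives $|\alpha_i| \sim J^u_1(\rho_i)$ with constants uniform in $i$. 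A direct computation then shows that $dG_i(u_i,0)$ sends a vector of slope $t$ to one of slope
\[
t' \;=\; \frac{t}{\alpha_i^2\,\bigl(1 + (\beta_i/\alpha_i)\, t\bigr)},
\]
so to leading order the slope is divided by $\alpha_i^2 \sim J^u_1(\rho_i)^2$.

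Iterating this formula with $t_i$ the slope of $v_i := d_{\rho'} F^i(v_0)/\|d_{\rho'} F^i(v_0)\|$, I would prove by induction on $i$ the bootstrap estimate $|t_i| \leq 2 |t_0| \prod_{j<i} \alpha_j^{-2}$. Under this bound the sum $\sum_{j} |t_j|$ is dominated by $2|t_0| \sum_{j} \prod_{k<j} \alpha_k^{-2}$, a convergent geometric sum (each $\alpha_k^{-2} \leq e^{-2\lambda_0}$), so the accumulated correction $\prod_j \bigl(1 + O(|t_j|)\bigr)^{-1}$ is bounded by $\exp(C C_0 \gamma)$, which is at most $2$ provided $\gamma$ is chosen small enough (uniformly). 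This closes the induction and gives $|t_n| \leq C |t_0| \prod_{j<n} \alpha_j^{-2}$. Since $\prod_{j<n} |\alpha_j| \sim J^u_n(\rho)$ by the chain rule and Corollary \ref{cor_control_jacobian}, translating back to Riemannian distances yields the claim. The main obstacle is precisely this uniformity issue: a naive iteration would produce a $C^n$ multiplicative factor, and it is the smallness of $\gamma$ together with the geometric decay of $\prod_{k<j}\alpha_k^{-2}$ that lets the bootstrap absorb all perturbative terms into one multiplicative constant.
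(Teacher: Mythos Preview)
Your argument is correct and follows essentially the same architecture as the paper's proof: work in the adapted charts $\kappa_i$ of Lemma~\ref{adapted_chart_0}, exploit that the differential of the transition map at a point of the rectified unstable axis is upper triangular, and iterate. The paper also arrives at the $J^u_n(\rho)^{-2}$ factor from the combination of stable contraction and unstable growth in the normalization.

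There is, however, a genuine difference in how the uniformity-in-$n$ constant is obtained. You use symplecticity of the charts to get $\alpha_i\delta_i=1$ exactly, track the \emph{slope} $t_i$, and close a bootstrap $|t_i|\le 2|t_0|\prod_{k<i}\alpha_k^{-2}$ using only the smallness of $\gamma$ and the summability of $\prod_{k<i}\alpha_k^{-2}$. The paper instead tracks the two components $(w_i^u,w_i^s)$ separately, writes the diagonal of the differential at $(u_{i-1},0)$ as the value $(\nu_i,\mu_i)$ at the \emph{base point} $0$ plus corrections $\partial_u\alpha_i(u_{i-1},0)$, $\partial_s\beta_i(u_{i-1},0)$, and controls these corrections via $|u_{i-1}|\le C\theta^{\,n-i}\varepsilon_1$, which comes from Lemma~\ref{Local_hyperbolic_1} applied to $\rho'\in W_u(\rho)$. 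The paper's convergent products are therefore driven by the decay of $d(F^i(\rho),F^i(\rho'))$, whereas yours are driven by the decay of the iterated slope itself. Your route is a bit cleaner algebraically (no perturbation off the base orbit is needed once symplecticity is used), while the paper's route makes the role of the orbit-closeness hypothesis more visibly quantitative. One minor remark: your citation of Corollary~\ref{cor_control_jacobian} for $|\alpha_i|\sim J^u_1(\rho_i)$ is slightly off since that corollary is stated for points of $\mathcal{T}$, but the needed comparison for $\rho'_i\in W_u(\rho_i)$ follows by the same product argument as in the proof of Lemma~\ref{Local_hyperbolic_1}.
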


\begin{figure}
\includegraphics[scale=0.4]{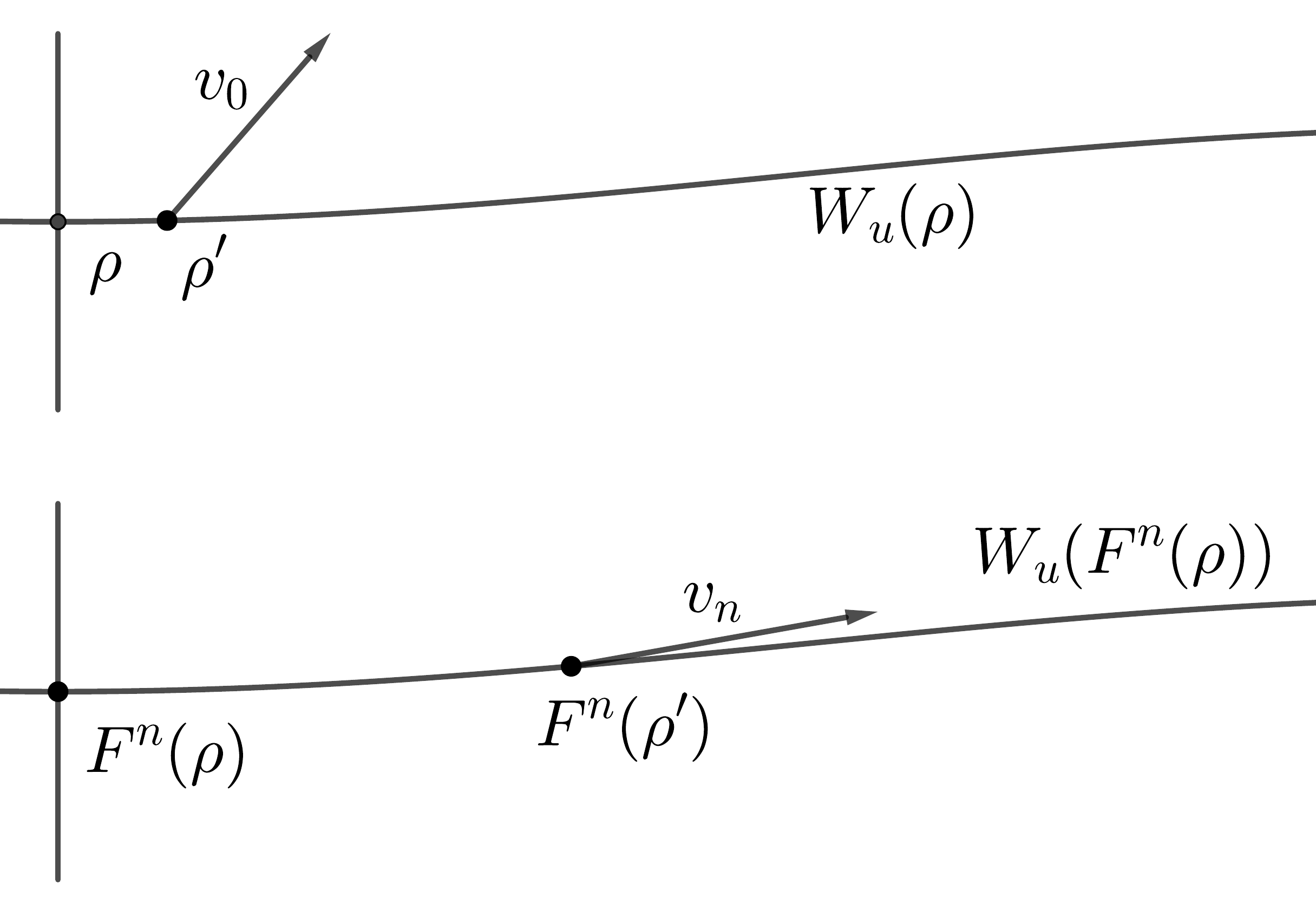}
\caption{The linearized dynamics makes the vector closer and closer to the tangent space of the unstable manifold. See Lemma \ref{Lemma_linearized_dynamics}. The vertical direction corresponds the to the stable direction, in which the dynamics contracts. }
\label{figure_linearized_dynamics}
\end{figure}

\begin{rem}
This is a form of inclination lemma : the tangent vectors are attracted toward the unstable direction upon the evolution. We provided a quantitative statement. 
The assumption $d(v_0, E_u(\rho^\prime) ) \leq \gamma$ is a transversality assumption : it says that $v_0$ has to be sufficiently transverse to the stable direction. 
\end{rem}

\begin{proof}
First note that due to the assumption on $\rho$ and $\rho^\prime$ and Lemma \ref{Local_hyperbolic_1}, $$d(F^i(\rho), F^i(\rho^\prime) \leq C\theta^{n-i} d(F^n(\rho), F^n(\rho^\prime)) \leq C \theta^{n-i}   \varepsilon_1$$
for some $0 < \theta < 1$ and for $0 \leq i \leq n$. 
 We use coordinates charts $\kappa_i$ centered at $F^i(\rho)$ (for $0 \leq i \leq n$), given by Lemma \ref{adapted_chart_0}. Let's note $(u^i, s^i)$ the coordinates in $\kappa_i$. Since $ \kappa_i (W_u(F^i(\rho) ) = \{ (u^i, 0) \}$, the map $F$ between the charts $\kappa_{i-1}$ and $\kappa_{i}$ is given by : 
$$ \kappa_i \circ F \circ \kappa_{i-1}^{-1} (u^{i-1}, s^{i-1} ) = (\nu_i u^{i-1}+ \alpha_i (u^{i-1}, s^{i-1} ) , \mu_i s^{i-1}+ \beta_i (u^{i-1}, s^{i-1} )) $$
with $\beta_i(u^{i-1}, 0) = 0$, $d\alpha_i(0,0) = 0$ and $d\beta_i(0,0) = 0$. 
Remark that $\nu_1 \dots \nu_i  \sim J^u_i(\rho) \sim (\mu_1 \dots \mu_i)^{-1}$ for $1 \leq i \leq n$. 

Let us note $w_0 = d_{\rho^\prime} \kappa_0 (v_0)$ and $\tilde{w}_n = d_{F^n(\rho^\prime)} \kappa_n (v_n)$. Hence, we want to show that $ d(\tilde{w}_n, \R e_u) \leq C J^u_n(\rho)^{-1}d(w_0,\R e_u )$ where $\R e_u = \{ (u,0) \}$. Here, to compute the distance, both between points and tangent vectors, we can simply use the usual euclidean distance in $\R^2$. 
Let us also introduce  $w_i = d_{\rho^\prime} (\kappa_i \circ F^i )(v_0)$ and write $w_i = (w_i^u, w_i^s)$. With these notations, we have $w_n = || d_{\rho^\prime} F^n(v_0)||  \tilde{w}_n $ and 
$$d(\tilde{w}_n, \R e_u) = \frac{w_n^s }{|| d_{\rho^\prime} F^n(v_0)||}$$ 
Since $ || d_{\rho^\prime} F^n(v_0) || \sim ||w_n||$, we are reduced to prove that 
\begin{equation}\label{eq_goal_0}
\frac{|w_n^s|}{||w_n||} \leq C J^u_n(\rho)^{-2} |w_0^s|
\end{equation}
If $\gamma$ is small enough, we can deduce from the transversality assumption on $v_0$ that $|w_0^u| \geq \frac{1}{2} |w_0^s|$. In particular, $||w_0||^2 \geq \frac{4}{3} |w_0^s|^2$. 
 Let us note $(u^i,0)$ the coordinates of $F^i(\rho^\prime)$ in the charts $\kappa_i$ and recall that $|u^i| \leq C \theta^{n-i} \varepsilon_1$. We have the relations
\begin{align*}
 w_i^u = \nu_i w_{i-1}^u + d \alpha_i(u^{i-1},0)  \cdot w_{i-1}\\
  w_i^s = \mu_i w_{i-1}^s +  d\beta_i (u^{i-1},0) \cdot w_{i-1}
\end{align*}
Since $\beta_i (u,0) = 0$, $ d\beta_i(u^{i-1},0) \cdot  w_{i-1} = \partial_{s^i} \beta_i(u^{i-1},0) w_{i-1}^s$. Moreover, $d\beta_i(0,0) = 0$, and hence, 
$|  \partial_{s^i} \beta_i(u^{i-1},0) w_{i-1}^s | \leq C |u^{i-1}| |w_{i-1}^s| \leq C \theta^{n+1-i} \varepsilon_1 |w_{i-1}^s|$. this gives, 
$$|w_i^s| \leq  (\mu_i + C\theta^{n+1-i} \varepsilon_1) \dots (\mu_1 + C \theta^n) |w_0^s|$$
$$ |w_i^s| \leq \mu_1 \dots \mu_i \prod_{k=1}^i \left( 1 + \frac{C\varepsilon_1 \theta^{n+1-k}}{\mu_k} \right)  |w_0^s|$$
Since $\mu_k \geq c$ for some $c >0$ and for all $1 \leq k \leq n$, we can estimate 
$$ \prod_{k=1}^i \left( 1 + \frac{C\varepsilon_1 \theta{n+1-k}}{\mu_k} \right) \leq \prod_{k=0}^{i-1} \left( 1 + C\varepsilon_1 c^{-1} \theta^k  \right) \leq \prod_{i=0}^\infty \left( 1 + C\varepsilon_1 c^{-1} \theta^i  \right)<  + \infty$$ 
As a consequence, $|w_i^s| \leq C J^u_i(\rho)^{-1} |w_0^s|$. We now turn to a lower bound for $||w_n||$. 
From 
$w_i^u = \nu_i w_{i-1}^u + O(|u^{i-1}| ||w_{i-1}||) $, we find that 
$$ |w_i^u| \geq \nu_i |w_{i-1}^u| - C |u^{i-1}| ||w_{i-1}|| \geq  \nu_i |w_{i-1}^u| - C \theta^{n+1-i} ||w_{i-1}||$$
We observe that $||w_{i-1} || \leq |w_{i-1}^u| + |w_{i-1}^s| \leq |w_{i-1}^u| + C J^u_{i-1}(\rho)^{-1}  |w_0^s|$, which gives that 
$$  |w_i^u| \geq |w_{i-1}^u | (\nu_i - C \theta^{n+1-i} ) -  C \theta^{n+1-i} J^u_{i-1}(\rho)^{-1} |w_0^s| $$
Recall that for $\theta_1 = e^{-\lambda_1}$, we have $J^u_i(\rho)^{-1} \leq  \theta_1^i$, so that for some $\theta_2 \in ]\max(\theta, \theta_1),1[$,  $\theta^{n+1-i} J^u_i(\rho)^{-1}  \leq C\theta_2^n$. Iterating this formula, we find that 
$$ |w_n^u|  \geq ( \nu_1 - C \theta^{n}) \dots (\nu_n - C \theta)|w_0^u| - \sum_{i=1}^{n-1} (\nu_n - C \theta) \dots (\nu_i - C \theta^{n+1-i} )\theta_2^n  |w_0^s|$$
By similar arguments as in the case of $|w_n^s|$, we can show that 
 $(\nu_n - C \theta) \dots (\nu_1 - C \theta^{n} ) \geq C^{-1} J^u_n(\rho)$ and $| (\nu_n - C \theta) \dots (\nu_1 - C \theta_{n} )| \leq \nu_1 \dots \nu_n \leq C J^u_n(\rho)$. 
 As a consequence, and using the fact that $|w_0^u| \geq  1/4 ||w_0||$ (by the transversality assumption), we find that 
 $$|w^u_n| \geq C^{-1}(1 - \theta_2^n) J^u_n(\rho)  ||w_0|| \geq C^{-1} J^u_n(\rho) ||w_0||.$$
 We conclude that $||w_n|| \geq |w_n^u| \geq  C^{-1} J^u_n(\rho)  ||w_0||$, which gives (\ref{eq_goal_0}). 
\end{proof}

\subsection{Topological pressure. Dimensions. }

\subsubsection{Topological pressure.}

We recall the definition and some formulas for the topological pressure associated with a continuous function $\varphi : \mathcal{T} \to \R$. The dynamical system we consider is the restriction of $F$ on $\mathcal{T}$. We consider a distance function $d$ on $\mathcal{T}$. For $n \in \N$ and $\epsilon >0$, we say that a subset $E \subset \mathcal{T}$ is $(n,\epsilon)$ separated if for every $x,y \in E, x\ \neq y$, there exists $0 \leq i \leq n-1$, $d( F^i(x), F^i(y) ) > \epsilon$. 

\begin{defi}\label{def_pressure}
If $\varphi$ is a continuous function on $\mathcal{T}$, the topological pressure associated with $\varphi \in C(\mathcal{T}, \R)$ is defined as 
$$ P(\varphi) = \lim_{\epsilon \to 0 } \limsup_{ n \to + \infty} \frac{1}{n}\log  P_0(\varphi, n,\epsilon)$$
where 
$$ P_0(\varphi, n,\epsilon) = \sup \left\{ \sum_{x \in E} \exp \left( \sum_{i=0}^{n-1} \varphi (f^i(x) ) \right)  \; ; \; E \text{ is } (n,\epsilon) \text{ separated} \right\}$$

\end{defi}

In this paper, we will use another formula for the pressure. To state it, let us introduce a few notations : if $\mathcal{Q}$ is a finite open cover of $\mathcal{T}$, we note $\text{diam} \mathcal{Q} = \sup_{A \in \mathcal{Q}} \text{diam} A$ and for $n \in \N$, $\mathcal{Q}^{\wedge n}$ is the open cover of $\mathcal{T}$ by the sets $ \bigcap_{i=0}^{n-1}f^{-i} (A_i)$ where $A_0, \dots, A_{n-1} \in \mathcal{Q}$.
For $\varphi : \mathcal{T} \to \R$ continuous, $n \in \N$ and an open cover $\mathcal{Q}$ of $\mathcal{T}$, we define
$$ P_1(\varphi, n, \mathcal{Q}) = \inf \left\{ \sum_{A \in \alpha} \sup_{x \in A} \exp \left( \sum_{i=0}^{n-1} \varphi(f^i (x) ) \right) \; ; \; \alpha \subset \mathcal{Q}^{\wedge n} , \mathcal{T} \subset \bigcup_{A \in \alpha} A \right\}$$
\begin{prop}\label{Prop_formula_pressure}\cite{Wal} (Theorem 1.6). 
The following formula holds: for any $\varphi \in C(\mathcal{T}, \R)$, 
\begin{equation}
P(\varphi) = \lim_{\text{diam}\mathcal{Q} \to 0 } \lim_{ n \to \infty} \frac{1}{n} \log P_1(\varphi, n , \mathcal{Q})
\end{equation}
\end{prop}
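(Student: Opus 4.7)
The plan is to establish the equivalence of the two characterizations of the topological pressure by relating $(n,\epsilon)$-separated sets to subcovers of $\mathcal{Q}^{\wedge n}$ via the Lebesgue number lemma and the uniform continuity of $\varphi$ on the compact set $\mathcal{T}$. Both $P_0$ and $P_1$ measure the weighted orbit complexity with respect to the Bowen metric $d_n(x,y) = \max_{0 \leq i \leq n-1} d(F^i x, F^i y)$, so the comparison reduces to standard geometric observations.

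First I would prove the easy inequality $P_0(\varphi, n, \epsilon) \leq P_1(\varphi, n, \mathcal{Q})$ whenever $\mathrm{diam}\,\mathcal{Q} < \epsilon$. Indeed, if $E \subset \mathcal{T}$ is $(n,\epsilon)$-separated and $A \in \mathcal{Q}^{\wedge n}$, any two distinct $x,y \in E \cap A$ would satisfy $d(F^i x, F^i y) \leq \mathrm{diam}\,\mathcal{Q} < \epsilon$ for every $0 \leq i \leq n-1$, contradicting separation; hence $|E \cap A| \leq 1$. Writing $S_n \varphi(x) = \sum_{i=0}^{n-1} \varphi(F^i x)$, for any subcover $\alpha \subset \mathcal{Q}^{\wedge n}$ of $\mathcal{T}$ one obtains
$$\sum_{x \in E} e^{S_n \varphi(x)} \leq \sum_{A \in \alpha} \sup_{A} e^{S_n \varphi},$$
and taking a supremum on the left and an infimum on the right yields the claim.

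For the reverse inequality, let $\delta>0$ be smaller than a Lebesgue number of $\mathcal{Q}$, so that every Bowen ball $B_n(x,\delta) = \{y : d_n(x,y) < \delta\}$ is contained in some element $A_x \in \mathcal{Q}^{\wedge n}$. Pick a maximal $(n,\delta)$-separated set $E \subset \mathcal{T}$; maximality forces the family $\{B_n(x,\delta)\}_{x \in E}$ to cover $\mathcal{T}$, hence $\alpha = \{A_x : x \in E\}$ is an admissible subcover. Denote $\omega(\delta) = \sup\{|\varphi(a) - \varphi(b)| : d(a,b) \leq \delta\}$, which tends to $0$ as $\delta \to 0$ by uniform continuity on the compact $\mathcal{T}$. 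For $y \in B_n(x,\delta)$ one has $|S_n \varphi(y) - S_n \varphi(x)| \leq n \omega(\delta)$, and therefore
$$P_1(\varphi, n, \mathcal{Q}) \leq \sum_{x \in E} \sup_{A_x} e^{S_n \varphi} \leq e^{n \omega(\delta)} \sum_{x \in E} e^{S_n \varphi(x)} \leq e^{n \omega(\delta)} P_0(\varphi, n, \delta).$$

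Taking $\tfrac{1}{n}\log$, letting $n \to \infty$ and then $\mathrm{diam}\,\mathcal{Q} \to 0$ (hence $\delta \to 0$ and $\omega(\delta) \to 0$), the two inequalities squeeze the cover-version of the pressure to $P(\varphi)$. The main subtlety I expect is justifying that, for fixed $\mathcal{Q}$, the sequence $\tfrac{1}{n}\log P_1(\varphi, n, \mathcal{Q})$ actually admits a limit (not merely a $\limsup$): this follows from the subadditivity estimate $\log P_1(\varphi, n+m, \mathcal{Q}) \leq \log P_1(\varphi, n, \mathcal{Q}) + \log P_1(\varphi, m, \mathcal{Q})$, obtained via the refinement $\mathcal{Q}^{\wedge(n+m)} = \mathcal{Q}^{\wedge n} \vee F^{-n}\mathcal{Q}^{\wedge m}$, together with Fekete's lemma. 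The existence of the outer limit in $\mathrm{diam}\,\mathcal{Q}$ then comes from monotonicity of $\lim_n \tfrac{1}{n}\log P_1(\varphi, n, \cdot)$ under refinement of covers; the rest is routine.
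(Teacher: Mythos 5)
The paper gives no proof of this proposition---it is quoted from Walters' book---so there is nothing to compare against except the standard argument, which is exactly what you reproduce, and your proof is correct. Two cosmetic remarks: in the reverse inequality the supremum is taken over $A_x$, which contains $B_n(x,\delta)$ but may be larger, so the correct modulus is $\omega(\mathrm{diam}\,\mathcal{Q})$ (any $y\in A_x$ satisfies $d(F^iy,F^ix)\leq \mathrm{diam}\,\mathcal{Q}$ since $x\in A_x$), not $\omega(\delta)$; this changes nothing since both tend to $0$ in the outer limit. Also, the existence of the outer limit over $\mathrm{diam}\,\mathcal{Q}\to 0$ already follows from your two sandwich inequalities (which pin $\lim_n \frac{1}{n}\log P_1(\varphi,n,\mathcal{Q})$ between $\limsup_n\frac1n\log P_0(\varphi,n,2\,\mathrm{diam}\,\mathcal{Q})$ and $P(\varphi)+\omega(\mathrm{diam}\,\mathcal{Q})$), so the appeal to monotonicity under refinement---which is slightly delicate for the $\sup$-version of the cover pressure---is not needed.
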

Note that in particular, it asserts that the limit in $n$ exists for all open cover $\mathcal{Q}$. 
 
\subsubsection{Dimensions}

Let us recall the definition of the upper box dimension of a compact metric space $(X,d)$. We denote by $N_X(\varepsilon)$ the minimal number of open balls of radius $\varepsilon$ needed to cover  $X$. Then, the upper box dimension of $X$ is defined as : 

\begin{equation}
\overline{\dim} X \coloneqq \limsup_{\varepsilon \to 0} \frac{\log N_X(\varepsilon)}{- \log \varepsilon}
\end{equation}

In particular, if $\delta > \overline{\dim}_X $, there exists $\varepsilon_0>0$ such that for every $\varepsilon \leq \varepsilon_0$, $N_X(\varepsilon) \leq \varepsilon^{-\delta}$. 

We recall the following well known result (see for instance \cite{Bar}, Theorem 4.3.2) : 

\begin{prop}
Let $s_0$ be the unique root of the equation $P(- s \varphi_u) =0 \; ; \;s \in \R$. Then, 
For every $\rho \in \mathcal{T}$, $\overline{\dim} \left(  \mathcal{T} \cap W_u(\rho) \right)  = \overline{\dim} \left( \mathcal{T} \cap W_s(\rho) \right)= s_0$. 
Moreover, $\overline{\dim} \mathcal{T}  = 2 s_0$. 
\end{prop}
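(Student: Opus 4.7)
The plan is to prove the formula following the Manning--McCluskey--Bowen strategy for hyperbolic sets of surface diffeomorphisms, as presented in \cite{Bar}, Theorem 4.3.2, using the tools already developed in the paper. First, I would observe that $s \mapsto P(-s\varphi_u)$ has a unique zero $s_0 \geq 0$: the hyperbolicity estimates (\ref{lamba0et1}) give $\varphi_u = \log J^u_1 \geq \lambda_0 > 0$ uniformly on $\mathcal{T}$, so this function is continuous, strictly decreasing, satisfies $P(0) = h_{\mathrm{top}}(F|_{\mathcal{T}}) \geq 0$, and by subadditivity $P(-s\varphi_u) \leq h_{\mathrm{top}} - s \lambda_0 \to -\infty$; the intermediate value theorem then yields a unique root $s_0$.

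For the upper bound $\overline{\dim}(\mathcal{T}\cap W_u(\rho)) \leq s_0$, fix $s > s_0$, so that $P(-s \varphi_u) < 0$. Given a small finite cover $\mathcal{Q}$ of $\mathcal{T}$, Proposition \ref{Prop_formula_pressure} provides, for every $n$ large enough, a subcover $\alpha_n \subset \mathcal{Q}^{\wedge n}$ with $\sum_{A \in \alpha_n} \sup_{x\in A} \exp(-s \sum_{i=0}^{n-1} \varphi_u \circ F^i(x)) \leq e^{n P(-s \varphi_u)/2}$. By Lemma \ref{Local_hyperbolic_1} and Corollary \ref{cor_control_jacobian}, each $A \in \mathcal{Q}^{\wedge n}$ intersects $W_u(\rho)$ in a set of diameter comparable to $\mathrm{diam}(\mathcal{Q}) \cdot J^u_n(x_A)^{-1}$ for any $x_A \in A\cap \mathcal{T}$ (when this intersection is nonempty, the relevant Birkhoff sum equals $\log J^u_n(x_A)$ up to bounded multiplicative error). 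Covering $\mathcal{T}\cap W_u(\rho)$ by such pieces and summing $r_A^s$ yields $N_{\mathcal{T}\cap W_u(\rho)}(r_n) \lesssim e^{n P(-s\varphi_u)/2} r_n^{-s}$ with $r_n \to 0$, which gives $\overline{\dim}(\mathcal{T}\cap W_u(\rho)) \leq s$. Letting $s \downarrow s_0$ concludes the upper bound; the argument on $W_s$ is symmetric after replacing $F$ by $F^{-1}$.

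For the matching lower bound, I would rely on the existence of an equilibrium state $\mu_{s_0}$ for the Hölder potential $-s_0 \varphi_u$ on the hyperbolic set $\mathcal{T}$, guaranteed by the standard thermodynamic formalism on basic sets (via a Markov partition, see \cite{Bar} Chapter 4). Its Gibbs property, combined with $P(-s_0 \varphi_u)=0$, reads $\mu_{s_0}(A) \asymp J^u_n(x)^{-s_0}$ on dynamical rectangles $A \in \mathcal{Q}^{\wedge n}$ of scale $J^u_n(x)^{-1}$ along the unstable direction. Disintegrating $\mu_{s_0}$ along the unstable foliation gives a conditional measure $\mu^u_\rho$ on $\mathcal{T}\cap W_u(\rho)$ satisfying $\mu^u_\rho(B(\rho',r)) \lesssim r^{s_0}$, and the mass distribution principle converts this into $\overline{\dim}(\mathcal{T}\cap W_u(\rho)) \geq s_0$. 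Again the stable case is symmetric.

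To pass from $W_u/W_s$ slices to the whole trapped set, I would exploit the local product structure from item $(7)$ of Lemma \ref{classical_hyperbolic}: the bracket $(\rho_1,\rho_2) \mapsto W_u(\rho_1)\cap W_s(\rho_2)$ is a homeomorphism from $(\mathcal{T}\cap W_u(\rho)) \times (\mathcal{T}\cap W_s(\rho))$ onto a relative neighborhood of $\rho$ in $\mathcal{T}$. Because the phase space is two-dimensional and the stable/unstable manifolds have $C^\infty$ uniform embeddings (Lemma \ref{classical_hyperbolic}(1)), the stable/unstable holonomies along one-dimensional transversals are Lipschitz, hence this bracket is bi-Lipschitz. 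Since box dimension is additive under bi-Lipschitz products, $\overline{\dim}\mathcal{T} = 2 s_0$ follows. The main obstacle I anticipate is the Lipschitz regularity of the holonomies needed in this last step: it is the one point where the two-dimensional hypothesis is essential, and in higher dimensions it would require the more delicate Pesin--Manning--McCluskey regularity theory.
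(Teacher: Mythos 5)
The paper does not prove this proposition at all: it is stated as a ``well known result'' with a citation to \cite{Bar}, Theorem 4.3.2, so there is no internal proof to compare against. Your plan is essentially a reconstruction of the standard McCluskey--Manning argument that the cited reference carries out (pressure-based covering for the upper bound, Gibbs/equilibrium state plus mass distribution for the lower bound, local product structure with Lipschitz holonomies for the full trapped set), so the approach is the right one.

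Two steps as written need repair, though both are fixable by standard means. First, in the upper bound you cover $\mathcal{T}\cap W_u(\rho)$ by the traces of elements of $\mathcal{Q}^{\wedge n}$ and ``sum $r_A^s$''; but these traces have diameters $\asymp J^u_n(x_A)^{-1}$ ranging over the whole interval $[e^{-n\lambda_1}, e^{-n\lambda_0}]$, so this controls the $s$-dimensional Hausdorff content, not the covering number $N_{\mathcal{T}\cap W_u(\rho)}(r)$ at a single scale $r$. For the \emph{upper box} dimension you must pass to a Moran cover: for a given $r$, stop each orbit at the first $n(x)$ with $J^u_{n(x)}(x)^{-1}\le r$, which produces a cover by cylinders of comparable unstable diameter $\asymp r$ whose cardinality is still controlled by the pressure sum. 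Second, your final step invokes additivity of box dimension under bi-Lipschitz products; upper box dimension is only \emph{sub}additive for products ($\overline{\dim}(X\times Y)\le\overline{\dim}X+\overline{\dim}Y$, with possible strict inequality when the covering numbers oscillate), so this only yields $\overline{\dim}\,\mathcal{T}\le 2s_0$. The matching lower bound should be routed through Hausdorff dimension: your mass distribution argument in fact gives $\dim_H(\mathcal{T}\cap W_{u/s}(\rho))\ge s_0$, and since $\dim_H(X\times Y)\ge \dim_H X+\dim_H Y$ and $\dim_H\le\overline{\dim}$, the bi-Lipschitz product structure gives $\overline{\dim}\,\mathcal{T}\ge\dim_H\mathcal{T}\ge 2s_0$. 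With these two adjustments the proof is the standard one from the cited reference.
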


\begin{rem}
In fact, this holds also for the Hausdorff dimension and the lower-box dimension but we will mainly use the upper-box dimension for practical and technical reasons. In the following, we note $s_0 = d_H$. 
\end{rem}

We will need the slightly more precise following result, which allows to control $N_{W_{u/s}(\rho) \cap \mathcal{T}}$ uniformly in $\rho$ : 
\begin{prop}\label{upper_box_dim}
There exists $\varepsilon_1 >0$ such that the following holds.  For every $\varepsilon>0$, there exists $C_\varepsilon>0$ such that for every $\rho \in \mathcal{T}$, if $X_\rho= W_{u/s}(\rho) \cap \mathcal{T} $, 
\begin{align*}
N_{X_\rho}(r) \leq C_\varepsilon r^{-(d_H + \varepsilon)} ; \; \forall r \leq \varepsilon_1
\end{align*}
\end{prop}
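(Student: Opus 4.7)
The plan is to upgrade the pointwise bound of the previous proposition into a uniform one, via a compactness argument combined with a bi-Lipschitz stable holonomy between local unstable manifolds (the case $X_\rho = W_s(\rho) \cap \mathcal{T}$ is symmetric under time reversal). First, by compactness of $\mathcal{T}$, select finitely many $\rho_1, \dots, \rho_I \in \mathcal{T}$ such that the balls $B(\rho_i, \varepsilon_1/10)$ cover $\mathcal{T}$, and apply the previous proposition at each $\rho_i$ to obtain constants $C_i, r_i > 0$ with $N_{X_{\rho_i}}(r) \leq C_i r^{-(d_H+\varepsilon)}$ for $r \leq r_i$.

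Given $\rho \in \mathcal{T}$, choose $i$ with $d(\rho, \rho_i) \leq \varepsilon_1/10$ and use the local product structure (property (7) of Lemma \ref{classical_hyperbolic}) to define the stable holonomy
\begin{equation*}
h_{\rho,i} : W_u(\rho) \cap B(\rho_i, \varepsilon_1/2) \longrightarrow W_u(\rho_i), \qquad q \longmapsto W_s(q) \cap W_u(\rho_i).
\end{equation*}
This map sends $X_\rho \cap B(\rho_i, \varepsilon_1/2)$ into $X_{\rho_i}$, since $W_s(q) \cap W_u(\rho_i) \in \mathcal{T}$ whenever $q \in \mathcal{T}$. Assuming $h_{\rho,i}$ is bi-Lipschitz with a constant $L$ independent of $\rho$ and $i$, a covering of $X_{\rho_i}$ by balls of radius $r/L$ pulls back via $h_{\rho,i}^{-1}$ to a covering of $X_\rho \cap B(\rho_i, \varepsilon_1/2)$ by balls of radius $r$. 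Since $X_\rho$ has diameter $O(\varepsilon_1)$, it is contained in the union of at most $I$ such pieces (one per $\rho_j$ whose ball $B(\rho_j, \varepsilon_1/2)$ it meets), so
\begin{equation*}
N_{X_\rho}(r) \leq I \, L^{d_H+\varepsilon} \max_i C_i \cdot r^{-(d_H+\varepsilon)},
\end{equation*}
for all $r$ below some uniform threshold (obtained by shrinking $\varepsilon_1$ to $\min_i r_i / L$ if necessary).

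The main obstacle is establishing the uniform bi-Lipschitz property of the stable holonomy. In the adapted symplectic chart of Lemma \ref{adapted_chart_0} centered at $\rho_i$, the map $h_{\rho,i}$ simply reads off the $u$-coordinate of the intersection of the (almost vertical) curve $W_s(q)$ with the horizontal axis $W_u(\rho_i) = \{(u,0)\}$. Explicit bi-Lipschitz bounds then follow from two ingredients already present in the framework: the uniform $C^1$ norms of the local stable and unstable manifolds (property (1) of Lemma \ref{classical_hyperbolic}) and the uniform transversality of $E^u$ and $E^s$ along $\mathcal{T}$, supplemented by the hyperbolic estimates of Lemmas \ref{Local_hyperbolic_1}--\ref{Local_hyperbolic_2}. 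An alternative route would bypass the holonomy altogether by symbolically coding $\mathcal{T}$ via a Markov partition and exploiting Bowen's formula $P(-d_H \varphi_u) = 0$ to produce, for every $\varepsilon>0$, uniform covers of local unstable pieces by dynamical balls of the prescribed scale.
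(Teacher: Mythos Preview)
Your proposal is correct and follows essentially the same approach as the paper: both arguments reduce to finitely many reference leaves by compactness of $\mathcal{T}$ and then transfer the covering bound via the uniformly Lipschitz stable holonomy between nearby unstable leaves. The paper's proof is more terse (it simply invokes the Lipschitz property of the holonomy maps and the elementary inequality $N_{f(X)}(r) \leq N_X(r/L)$ for $L$-Lipschitz $f$), while you spell out the construction of $h_{\rho,i}$ and its bi-Lipschitz bound in more detail; your alternative symbolic-dynamics route is not used in the paper.
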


\begin{proof}
Obviously, this holds at every $\rho \in \mathcal{T}$ with $C_\varepsilon$ \textit{a priori} depending on $\rho$. 
The uniformity is a consequence of the fact that the holonomy maps are Lipschitz, with uniform Lipschitz norm due to the compactness of $\mathcal{T}$ (see for instance \cite{Vacossin}, Corollary 3.3). Then, due to the compactness of $\mathcal{T}$, one can estimate $N_{W_{u/s}(\rho) \cap \mathcal{T}}(\varepsilon)$ by considering only a finite number of (un)stable leaves as references and apply : 
\emph{Assume that $(X,d)$, $(Y,d^\prime)$ are compact metric spaces and $f : X \to Y$ is $C$-Lipschitz. Then, for every $\varepsilon >0$, 
$$N_{f(X)}(\varepsilon) \leq N_X(\varepsilon/C)$$}
\end{proof}

We finish by a lemma estimating the number of balls of size $\delta$ needed to cover $\mathcal{T} \cap W_u(\rho_0) \cap J$ where $J \subset W_u(\rho_0)$ is an interval of size $l$. The difference with Proposition \ref{upper_box_dim} is that the size of $J$ can be much small that $\varepsilon_1$.

\begin{lem}\label{Lemma_number_of_intervals}
Let $\rho_O \in \mathcal{T}$. Let $\kappa : U_0 \to V_0 \subset \R$ be a smooth chart such that the image of the local unstable manifold passing through $\rho_0$ is given by a graph 
$$ \kappa(W_u(\rho_O)  \cap U_0 )= \{ (x,g(x)) , x \in I  \} $$
for some open interval $I$. 
For $J \subset I$, let's note $$X(J) = \{ x \in J, \kappa^{-1} (x,g(x)) \in \mathcal{T} \} $$
Then, for every $\varepsilon >0$, there exists $C_\varepsilon>0$ depending only on $\varepsilon$, $F$ and $\kappa$ such that : 
for all $J \subset I$ interval of length $l$ and for all $\delta \in ]0,l]$, 
$$N_{X(J)}(\delta) \leq C_\varepsilon \left( \frac{\delta}{l} \right)^{-(d_H + \varepsilon)}.$$
\end{lem}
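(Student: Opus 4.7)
My plan is to reduce the statement, via the expanding dynamics of $F$ along $W_u$, to the uniform covering estimate of Proposition \ref{upper_box_dim}, which is only useful at scales $\leq \varepsilon_1$. The point is that the interval $J$ may be much smaller than $\varepsilon_1$, so one iterates $F$ enough times to blow $J$ up to a macroscopic interval of size comparable to $\varepsilon_1$.

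\textbf{Setup.} If $X(J)=\emptyset$ there is nothing to prove, so fix a point $\rho_*\in\mathcal{T}$ whose $\kappa$-image is $(x_*,g(x_*))$ with $x_*\in J$. Since $\rho_*\in W_u(\rho_0)$, locally $W_u(\rho_*)=W_u(\rho_0)$. The chart $\kappa$ and the graph parametrisation $x\mapsto\kappa^{-1}(x,g(x))$ are smooth with $C^N$ norms controlled by those of $\kappa$ (by Lemma \ref{adapted_chart_0} and the fact that unstable leaves are uniformly smooth, Lemma \ref{classical_hyperbolic}), so that for any $J'\subset I$,
\[
N_{X(J')}(\delta)\sim N_{\{\kappa^{-1}(x,g(x))\,:\,x\in X(J')\}}(\delta)
\]
uniformly in $\rho_0$ and $J'$. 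Hence I may equivalently count covers of the subset $\widetilde X(J)\coloneqq\kappa^{-1}(\{(x,g(x)) :x\in X(J)\})\subset W_u(\rho_*)\cap\mathcal{T}$ by balls of radius $\sim\delta$ in the Riemannian metric on $U$.

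\textbf{Choice of $n$.} If $l\geq\varepsilon_1$ the bound is an immediate consequence of Proposition \ref{upper_box_dim} (with $\rho=\rho_*$) since $\delta\leq l$ and one absorbs $(\varepsilon_1/l)^{-(d_H+\varepsilon)}\leq 1$ into $C_\varepsilon$. Assume $l<\varepsilon_1$ and choose $n\in\N$ as the largest integer with $J^u_n(\rho_*)\,l\leq\varepsilon_1$. By (\ref{lamba0et1}), $J^u_{n+1}(\rho_*)\leq e^{\lambda_1}J^u_n(\rho_*)$, so that $J^u_n(\rho_*)\,l\sim\varepsilon_1$, that is
\[
c\,\varepsilon_1/l\;\leq\;J^u_n(\rho_*)\;\leq\;\varepsilon_1/l
\]
for some $c>0$ depending only on $F$.

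\textbf{Transporting by $F^n$.} By Lemma \ref{Local_hyperbolic_1}(2) applied with base point $F^n(\rho_*)$ and its preimages on $W_u(F^n(\rho_*))$, any $\rho\in \widetilde X(J)$ satisfies $d(\rho,\rho_*)\leq C\,l$ on $W_u(\rho_*)$, and
\[
d\bigl(F^n(\rho),F^n(\rho_*)\bigr)\;\sim\;J^u_n(\rho_*)\,d(\rho,\rho_*)\;\leq\;C'\varepsilon_1.
\]
Since $F^n(\mathcal{T})\subset\mathcal{T}$ and $F^n$ maps $W_u(\rho_*)$ into $W_u(F^n(\rho_*))$, we obtain $F^n(\widetilde X(J))\subset\mathcal{T}\cap W_u(F^n(\rho_*))$. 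Moreover, by the same two-sided estimate, any ball of radius $r$ on $W_u(F^n(\rho_*))$ centered in $F^n(\widetilde X(J))$ pulls back by $F^{-n}$ to a subset of $W_u(\rho_*)$ of diameter $\leq C\,r/J^u_n(\rho_*)$, hence contained in a ball of radius $\sim r/J^u_n(\rho_*)$.

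\textbf{Conclusion.} Set $r=C''J^u_n(\rho_*)\,\delta$, chosen so that a cover of $\widetilde X(J)$ by $\delta$-balls is produced by pulling back an $r$-cover of $F^n(\widetilde X(J))$. Because $\delta\leq l$, we have $r\leq C''\varepsilon_1$, so Proposition \ref{upper_box_dim} (applied to $F^n(\rho_*)\in\mathcal{T}$) yields
\[
N_{\widetilde X(J)}(\delta)\;\leq\; N_{F^n(\widetilde X(J))}(r)\;\leq\;C_\varepsilon\bigl(J^u_n(\rho_*)\,\delta\bigr)^{-(d_H+\varepsilon)}.
\]
Substituting $J^u_n(\rho_*)\geq c\,\varepsilon_1/l$ gives the bound $C_\varepsilon(\delta/l)^{-(d_H+\varepsilon)}$ with an updated constant. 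The only delicate step is the comparison between the flat covering on $J\subset\R$ and the Riemannian covering on $W_u(\rho_*)$, which is routine once one invokes the uniform smoothness of unstable manifolds and of the chart $\kappa$; everything else is a direct application of the hyperbolicity lemmas of Section \ref{Subsection_hyperbolic_dynamics}.
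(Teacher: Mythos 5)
Your proof is correct and follows essentially the same strategy as the paper's: expand $J$ by iterating $F$ along the unstable leaf until it reaches macroscopic size $\sim\varepsilon_1$, apply the uniform covering bound of Proposition \ref{upper_box_dim} at the rescaled radius, and pull the cover back using the two-sided Jacobian estimates of Lemma \ref{Local_hyperbolic_1}. The only (cosmetic) difference is that you select the stopping time $n$ via the condition $J^u_n(\rho_*)\,l\leq\varepsilon_1$ whereas the paper defines it directly by the diameter condition $\operatorname{diam}F^m(J_0)\leq\varepsilon_1$; these agree up to constants, though the paper's choice avoids having to check separately that the iterates stay inside the local unstable manifolds.
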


\begin{proof}
Let's note $N=N_{X(J)}(\delta)$. 
If $N= 0$, there is obviously nothing to prove. So let's assume that $N \neq 0$ and let's fix a reference point $x_0 \in X(J)$ : to $x_0$ corresponds a point $\rho_0 = \kappa^{-1}(x_0,g(x_0) )  \in \mathcal{T}$ and we are interested in a piece of unstable manifold of $\rho_0$ of size $l$. 

We know that the upper-box dimension of each $\mathcal{T} \cap W_u(\rho)$ is equal to $d_H$. However, since here we are interested by a small piece of an unstable manifold of size $l$, we will expand this piece to reach a size of order 1. 
We note $J_0 = \kappa^{-1} \big( \{  (x,g(x))  , x \in J  \} \big)$ and 
for $m \in \N$, we note $\rho_m = F^m(\rho_0)$ and introduce
$$ T \coloneqq \max \{ m \in \N, F^{m}(J) \subset W_u(\rho_m) \text{ and } \text{diam} F^m(J_0)  \leq \varepsilon_1 \}$$
In particular, the definition of $T$ implies that for all $\rho \in J_0$, $F^m(\rho)$ is well-defined for $0 \leq m \leq T$ and satisfies $d(F^m(\rho), F^m(\rho_0) ) \leq \varepsilon_1$.

\textbf{Claim : } We first claim that if $J^\prime \subset J$ is a subinterval with $X(J^\prime) \neq \emptyset$, then 
$$ \text{diam } J_T^\prime  \sim \frac{ \text{diam} J^\prime}{ J^u_{-T} (\rho_T) } $$
where $J_T^\prime = F^T(J_0^\prime)$ for $J_0^\prime =\kappa^{-1} \big( \{  (x,g(x))  , x \in J^\prime  \} \big)$. 
In particular, it holds for $J^\prime = J$. 

\textbf{Proof of the claim : }
Let's prove this claim and suppose that $J^\prime \subset J$ is an interval of length $l^\prime$ and consider $x^\prime \in X(J^\prime)$. Let's note $\rho^\prime = \kappa^{-1}(x^\prime, g(x^\prime) ) \in \mathcal{T}$. If $\hat{x} \in J^\prime$ and $\hat{\rho} = \kappa^{-1}(\hat{x}, g(\hat{x}) ) \in W_u(\rho^\prime)$, we have 
$$  d(F^T(\rho^\prime) ,F^T(\hat{\rho}) ) \sim \frac{d(\rho^\prime,\hat{\rho})}{J^u_{-T}(F^T(\rho^\prime) ) } \sim \frac{|x^\prime - \hat{x}|}{J^u_{-T}(F^T(\rho^\prime) )} $$
Since $d(F^m(\rho{^\prime}), F^m(\rho_0) ) \leq \varepsilon_1$ for $0 \leq m \leq T$, we have $$J^u_{-T}(F^T(\rho^{\prime}) ) \sim J^u_{-T}(\rho_T )$$
In particular, if we choose $\hat{x}$ such that $|x^\prime - \hat{x} | \geq \text{diam } J^\prime/3$, we have $$\text{ diam } J_T^\prime \geq C^{-1}  \frac{|x^\prime - \hat{x}|}{J^u_{-T}(\rho_T )} \geq C^{-1}  \frac{\text{diam } J^\prime}{J^u_{-T}(\rho_T )}$$
For the converse inequality, assume that $\rho_1, \rho_2 \in J_0^\prime$.
$$d(F^T(\rho_1), F^T(\rho_2 ) ) \leq d(F^T(\rho_1), F^T(\rho^\prime ) ) + d(F^T(\rho^\prime), F^T(\rho_2 ) ) \leq C \frac{|x_1 - x^\prime| + |x^\prime- x_2|}{J^u_{-T}(\rho_T)} \leq C \frac{\text{diam }J^\prime}{J^u_{-T}(\rho_T)}$$ 
which finally gives the required inequality by taking the supremum over $\rho_1$ and $\rho_2$. 

\textbf{End of proof. }
We have, $J^u_{-T}(\rho_T) \text{diam } J_T \sim \text{diam } J \sim l $. 
By definition of $T$, $$\text{diam } F^{T+1} (J_0) \geq \varepsilon_1$$ so that $$\text{diam } F^T(J_0) \geq C^{-1} \varepsilon_1$$ and hence, $J^u_{-T}(\rho_T) \leq C l $ (this $C$ also depends on $\varepsilon_1$, which is not a problem since $\varepsilon_1$ depends only on $F$). 
Let us fix $k>0$, to be determined later. By Proposition \ref{Prop_formula_pressure}, we can cover $J_T \cap \mathcal{T}$ by $N$ balls of diameter at most $k \delta$ with 
$N \leq C_\varepsilon (k \delta)^{- d_H - \varepsilon}$.
Let's choose $\rho_1, \dots, \rho_N \in \mathcal{T} \cap J_T$ such that $$\mathcal{T} \cap J_T \subset \bigcup_{i=1}^{N} B(\rho_i, k \delta).$$ We note $x_i$ the point in $J$ such that $\rho_i = F^T(\rho_i)$ with $\kappa(\rho_i) =(x_i,g(x_i) )$. If $x \in X(J)$, then $\rho \coloneqq F^T(\kappa^{-1}(x,g(x) ) ) \in \mathcal{T} \cap J_T$ and there exists $i \in \{1, \dots, N \}$ such that $d(\rho_i, \rho) \leq k \delta$. As a consequence, $|x- x_i| \leq C J^u_{-T}(\rho_T) d(\rho_i, \rho) \leq  C^\prime  l k\delta$ for some constant $C^\prime$ depending on $F$ and $\kappa$.
We now fix $k = (2 C^\prime l)^{-1}$, so that $X(J)$ can be covered by $N$ intervals of length $\delta$. As a consequence, 
$$N_{X(J)}(\delta) \leq N \leq C_\varepsilon \left( \frac{\delta}{2 C^\prime l}\right)^{- d_H - \varepsilon} = C_\varepsilon^\prime \left(\frac{\delta}{l} \right)^{-d_H -\varepsilon}$$ 

\end{proof}

\subsection{Escape function}
In this subsection, we record the construction of escape functions of \cite{NSZ14}, specialized to our open map $F : \widetilde{D} \subset U \to \widetilde{A} \subset U$. We do not give the proof, since it is entirely contained in \cite{NSZ14} (Lemmata 4.1 - 4.4). 

\begin{lem}
Assume that $\mathcal{V}_2$ is a small neighborhood of $\mathcal{T}$ in which $F$ is well defined. Then, there exists $C_0 >0$ and a neighborhood  $\mathcal{V}_1 \subset \mathcal{V}_2$ of $\mathcal{T}$ such that the following holds : 
For every $\epsilon>0$, there exist functions $\hat{\varphi}_{\pm} = \hat{\varphi}_{\pm,\epsilon} \in \cinf \left( \mathcal{V}_1 \cup F\left(\mathcal{V}_1\right) , [\varepsilon, + \infty [ \right)$ such that 
\begin{align*}
&\hat{\varphi}_\pm(\rho) \sim d\left(\rho , \mathcal{T}_\pm \right)^2 + \epsilon ;  \\
& \pm (\hat{\varphi}_{\pm}(\rho)  - \hat{\varphi}_{\pm} (F(\rho) ) + C_0 \epsilon \sim \hat{\varphi}_{\pm} (\rho);  \\
& \hat{\varphi}_{+} (\rho) + \hat{\varphi}_{-}(\rho)  \sim \epsilon ;\\
& \partial^\alpha \hat{\varphi}_{\pm} (\rho) = O\left( \hat{\varphi}_{\pm}(\rho)^{1- |\alpha|/2} \right).
\end{align*}
The constants in the $\sim$ and $O$ are independent of $\rho \in  \mathcal{V}_1 \cup F\left(\mathcal{V}_1\right)$ and $\epsilon$. 
\end{lem}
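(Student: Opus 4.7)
The plan is to construct $\hat{\varphi}_{\pm}$ as smooth globally-defined proxies for $\rho \mapsto d(\rho, \mathcal{T}_{\pm})^{2} + \epsilon$. The main obstacle is that $\mathcal{T}_{\pm}$ has a Cantor-like transverse structure (since $\mathcal{T}$ is totally disconnected), so the squared distance is only Lipschitz and a naive regularization could destroy the Lyapunov-type invariance. The way around this is to exploit the fact that $\mathcal{T}_{+}$ is laminated by (pieces of) smooth unstable manifolds and to replace $d(\cdot, \mathcal{T}_{+})$ locally by the distance to a single well-chosen unstable leaf $W_{u}(\rho_{0})$, which is smooth and, by the local product structure of $\mathcal{T}$ (Lemma \ref{classical_hyperbolic}(7)), is comparable to $d(\cdot, \mathcal{T}_{+})$ on a fixed-size ball around $\rho_{0}$.

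To implement this, for each $\rho_{0} \in \mathcal{T}$ I apply Lemma \ref{adapted_chart_0} to get a symplectic chart $\kappa_{\rho_{0}}$ on $B(\rho_{0}, \varepsilon_{1})$ sending $W_{u}(\rho_{0})$ to the horizontal axis and $E_{s}(\rho_{0})$ to the vertical direction. In the resulting coordinates $(u,s)$ I set $\psi_{+,\rho_{0}}(\rho) = s(\kappa_{\rho_{0}}(\rho))^{2} + \epsilon$, and analogously $\psi_{-,\rho_{0}}$ after swapping the roles of stable and unstable. By uniform transversality of the hyperbolic splitting, $\psi_{+,\rho_{0}} \sim d(\rho, \mathcal{T}_{+})^{2} + \epsilon$ on $B(\rho_{0}, \varepsilon_{1}/2)$. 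I then pick a finite subcover of $\mathcal{T}$ by balls $B(\rho_{j}, \varepsilon_{1}/2)$, a subordinate smooth partition of unity $(\chi_{j})_{j}$, a tubular neighborhood $\mathcal{V}_{1}$ of $\mathcal{T}$ inside this cover, and define
\[
\hat{\varphi}_{\pm}(\rho) = \sum_{j} \chi_{j}(\rho)\, \psi_{\pm, \rho_{j}}(\rho).
\]
Smoothness and the lower bound $\hat{\varphi}_{\pm} \geq \epsilon$ are immediate, the first property follows from summing the local comparisons, and the derivative bound $\partial^{\alpha} \hat{\varphi}_{\pm} = O(\hat{\varphi}_{\pm}^{1-|\alpha|/2})$ reduces to the elementary identity $|\partial^{\alpha}(s^{2}+\epsilon)| = O((s^{2}+\epsilon)^{1-|\alpha|/2})$, which is stable under convex combinations with smooth cutoffs.

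For the Lyapunov inequality, I work in pairs of adapted charts at $\rho_{0}$ and $F(\rho_{0})$, in which $F$ is approximately $(u,s) \mapsto (au, bs)$ with stable contraction $|b| \leq e^{-\lambda_{0}} < 1$ by \eqref{lamba0et1}. This yields $\psi_{+,\rho_{0}}(F(\rho)) \leq \theta^{2}\psi_{+,\rho_{0}}(\rho) + O(\epsilon)$ for some $\theta<1$, and summing over the partition gives $\hat{\varphi}_{+}(F(\rho)) \leq \theta^{2}\hat{\varphi}_{+}(\rho) + C_{1}\epsilon$, which rearranges into the claimed $\hat{\varphi}_{+}(\rho) - \hat{\varphi}_{+}(F(\rho)) + C_{0}\epsilon \sim \hat{\varphi}_{+}(\rho)$ as soon as $C_{0} > C_{1}$. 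The matching upper bound is automatic since both differences are controlled by $\hat{\varphi}_{+}(\rho)$. The $-$ case follows symmetrically from the unstable expansion of $F$ (equivalently, the stable contraction of $F^{-1}$).

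The hardest part of the argument, which is exactly the content of the calibration in \cite{NSZ14}, Lemmata 4.1--4.4, is the bookkeeping of constants. One must choose $\mathcal{V}_{1}$ thin enough around $\mathcal{T}$ that the balance $\hat{\varphi}_{+} + \hat{\varphi}_{-} \sim \epsilon$ holds (both tails must be reachable within squared distance $O(\epsilon)$ at every relevant $\rho$), and at the same time control uniformly in $\epsilon$ the error terms coming from the nonlinearity of $F$ and from the chart overlaps in the partition of unity. No individual step is deep; the work is to arrange every $\sim$ and every $O$ to hold with constants depending only on $(F,\mathcal{T})$.
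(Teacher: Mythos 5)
Your construction has a genuine gap at its first and most important step: the claimed comparison $\psi_{+,\rho_0}(\rho)=s(\kappa_{\rho_0}(\rho))^2+\epsilon \sim d(\rho,\mathcal{T}_+)^2+\epsilon$ on a fixed ball $B(\rho_0,\varepsilon_1/2)$ is false. Near $\mathcal{T}$, the outgoing tail $\mathcal{T}_+$ is not the single leaf $W_u(\rho_0)$ but the union of the local unstable manifolds of \emph{all} points of $\mathcal{T}$ in that ball, and these leaves fill out a Cantor family whose transverse extent is comparable to $\varepsilon_1$. Take $\rho\in W_u(\rho_1)\subset\mathcal{T}_+$ with $\rho_1\in\mathcal{T}\cap B(\rho_0,\varepsilon_1/2)$ at transverse distance $\delta\sim\varepsilon_1$ from $W_u(\rho_0)$: then $d(\rho,\mathcal{T}_+)^2+\epsilon=\epsilon$ while $\psi_{+,\rho_0}(\rho)\sim\delta^2+\epsilon\gg\epsilon$. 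So the single-leaf proxy only dominates $d(\cdot,\mathcal{T}_+)^2+\epsilon$ from above; the reverse inequality, hence the first property of the lemma, fails, and Lemma \ref{classical_hyperbolic}(7) (local product structure) does not give what you invoke it for — it says $W_u(\rho)\cap W_s(\rho')$ is a single point, not that $\mathcal{T}_+$ is locally a single leaf. This is precisely the Cantor-structure obstacle you name in your opening paragraph; replacing the lamination by one smooth leaf does not overcome it, it changes the function being approximated. The construction the paper relies on (\cite{NSZ14}, Lemmata 4.1--4.4, to which the paper defers without reproducing the proof) instead regularizes the genuine squared distance $d(\cdot,\mathcal{T}_\pm)^2$ by mollification at the scale $\sqrt{\epsilon}$: since $|d(\rho,\mathcal{T}_\pm)^2-d(\rho',\mathcal{T}_\pm)^2|\leq (d(\rho,\mathcal{T}_\pm)+d(\rho',\mathcal{T}_\pm))\,d(\rho,\rho')$, averaging over a ball of radius $\sqrt{\epsilon}$ perturbs the function by $O(d^2+\epsilon)$ and produces exactly the anisotropic derivative bounds $\partial^\alpha\hat{\varphi}_\pm=O(\hat{\varphi}_\pm^{1-|\alpha|/2})$, while the Lyapunov property comes from the contraction of $d(\cdot,\mathcal{T}_+)$ under $F$ (the direction transverse to the unstable lamination is the stable one).

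A secondary problem is your treatment of the third property: you propose to "choose $\mathcal{V}_1$ thin enough around $\mathcal{T}$ that the balance $\hat{\varphi}_++\hat{\varphi}_-\sim\epsilon$ holds". The quantifiers in the lemma require $\mathcal{V}_1$ to be fixed \emph{before} $\epsilon$, so you are not allowed to shrink it as $\epsilon\to0$; on any fixed neighborhood the bound $d(\rho,\mathcal{T}_+)^2+d(\rho,\mathcal{T}_-)^2\lesssim\epsilon$ cannot hold uniformly for small $\epsilon$. (This tension is already present in the statement as printed — in \cite{NSZ14} the corresponding property reads $\hat{\varphi}_++\hat{\varphi}_-\sim d(\rho,\mathcal{T})^2+\epsilon$ — but your argument does not resolve it either way.)
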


Armed with these two functions, we construct the following escape function
\begin{equation}\label{escape_function}
\hat{g}_\epsilon= \log( M\epsilon + \hat{\varphi}_- ) - \log (M \epsilon + \hat{\varphi}_+ ) 
\end{equation}
where $M \gg 1$ is a constant independent of $\epsilon$ and sufficiently large so that the following lemma holds : 

\begin{lem}
For$M$ large enough, there exists $C_1>0$ such that, uniformly with respect to $\epsilon$, 
$$ \rho \in \mathcal{V}_1 \cup F\left(\mathcal{V}_1\right) , d(\rho, \mathcal{T}) \geq C_1 \epsilon \implies \hat{g}_\epsilon(F(\rho) ) - \hat{g}_\epsilon \geq 1/C_1. $$
\end{lem}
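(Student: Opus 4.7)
The plan is to directly compute the discrete increment and reduce the question to an algebraic monotonicity estimate. I would first write
\[
\hat{g}_\epsilon(F(\rho)) - \hat{g}_\epsilon(\rho) = \log \frac{M\epsilon + \hat{\varphi}_-(F(\rho))}{M\epsilon + \hat{\varphi}_-(\rho)} + \log \frac{M\epsilon + \hat{\varphi}_+(\rho)}{M\epsilon + \hat{\varphi}_+(F(\rho))}.
\]
The second property of $\hat{\varphi}_\pm$ can be rephrased as: there exists $c > 0$ (independent of $\rho, \epsilon$) with
\[
\hat{\varphi}_-(F(\rho)) \geq (1+c)\hat{\varphi}_-(\rho) - C_0 \epsilon, \qquad \hat{\varphi}_+(F(\rho)) \leq (1-c)\hat{\varphi}_+(\rho) + C_0 \epsilon.
\]
Substituting these discrete Lyapunov-type inequalities into the two log ratios yields the lower bound
\[
\hat{g}_\epsilon(F(\rho)) - \hat{g}_\epsilon(\rho) \geq \log \frac{(M-C_0)\epsilon + (1+c)\hat{\varphi}_-(\rho)}{M\epsilon + \hat{\varphi}_-(\rho)} + \log \frac{M\epsilon + \hat{\varphi}_+(\rho)}{(M+C_0)\epsilon + (1-c)\hat{\varphi}_+(\rho)}.
\]

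Setting $u = \hat{\varphi}_-(\rho)/\epsilon$ and $v = \hat{\varphi}_+(\rho)/\epsilon$, the right-hand side equals $\log \Phi(u,v)$ with
\[
\Phi(u,v) = \frac{((M-C_0) + (1+c)u)(M+v)}{(M+u)((M+C_0) + (1-c)v)}.
\]
A short computation shows that each factor of $\Phi$ is strictly increasing in its respective variable — the derivatives are both proportional to $cM + C_0 > 0$ — and that $\Phi(u,v) \to (1+c)/(1-c) > 1$ as $u$ or $v$ tends to infinity. Hence obtaining a uniform lower bound on $\log \Phi$ reduces to making $\max(u,v)$ sufficiently large.

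The final step is to exploit the hypothesis $d(\rho, \mathcal{T}) \geq C_1 \epsilon$. Combining the first property of $\hat{\varphi}_\pm$ with the transversality of the tails — the local product structure $\mathcal{T} = \mathcal{T}_+ \cap \mathcal{T}_-$ giving $\max_{\pm} d(\rho, \mathcal{T}_\pm) \gtrsim d(\rho, \mathcal{T})$ — one shows that at least one of $\hat{\varphi}_\pm(\rho)$ is bounded below by a quantity that grows with $C_1$. I would then first fix $M$ large enough (in particular $M \gg C_0/c$) so that the $C_0$ error terms are absorbed and the second factor of $\Phi$ stays close to $1$ even in the worst case, and subsequently take $C_1$ large enough that the dominant factor of $\Phi$ enforces $\Phi(u,v) \geq \Phi_0 > 1$ uniformly in $\epsilon$ and $\rho$. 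Up to replacing $C_1$ by $\max(C_1, 1/\log \Phi_0)$, this gives the announced lower bound.

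The main obstacle is the quantitative calibration of $M$ and $C_1$. In the regime where both $\hat{\varphi}_-(\rho)$ and $\hat{\varphi}_+(\rho)$ are comparable to $\epsilon$ (i.e., $\rho$ very close to $\mathcal{T}$), each factor of $\Phi$ is close to $1$ and $\log \Phi$ can have either sign. The hypothesis $d(\rho, \mathcal{T}) \geq C_1 \epsilon$ must push at least one of the $\hat{\varphi}_\pm(\rho)$ strictly above this ambiguous region, and that step relies on the transverse structure of $\mathcal{T}_\pm$ together with the quadratic relation $\hat{\varphi}_\pm \sim d(\rho,\mathcal{T}_\pm)^2 + \epsilon$; this is where the hyperbolic geometry enters quantitatively, and it is the nontrivial content beyond the elementary algebra of the previous steps.
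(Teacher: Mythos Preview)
The paper does not give its own proof here; it defers entirely to \cite{NSZ14}, Lemmata 4.1--4.4. Your strategy --- expand the increment as two log-ratios, insert the discrete Lyapunov inequalities for $\hat\varphi_\pm$, and reduce to the monotonicity of $\Phi(u,v)$ in the normalized variables $u=\hat\varphi_-(\rho)/\epsilon$, $v=\hat\varphi_+(\rho)/\epsilon$ --- is the natural one, and the algebraic verifications (monotonicity of each factor, limits $(1+c)$ and $1/(1-c)$) are correct.

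There is, however, a genuine scaling mismatch in your final step. You claim that $d(\rho,\mathcal T)\ge C_1\epsilon$, together with $\max_\pm d(\rho,\mathcal T_\pm)\gtrsim d(\rho,\mathcal T)$ and the \emph{quadratic} relation $\hat\varphi_\pm\sim d(\rho,\mathcal T_\pm)^2+\epsilon$, pushes $\max(u,v)$ large with $C_1$. It does not: from $d(\rho,\mathcal T_\pm)\gtrsim C_1\epsilon$ one obtains only $\hat\varphi_\pm\gtrsim C_1^2\epsilon^2+\epsilon$, hence $\max(u,v)\gtrsim C_1^2\epsilon+1$, which tends to $1$ as $\epsilon\to0$ for any fixed $C_1$. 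Your $\Phi$-argument then yields no $\epsilon$-uniform lower bound. This is in fact a typo in the paper's statement: the correct hypothesis is $d(\rho,\mathcal T)\ge C_1\sqrt{\epsilon}$, which is the scale used consistently elsewhere (the $h^\delta$-neighborhoods once $\epsilon=h^{2\delta}$, and the order-function estimate $\langle(\rho-\zeta)/\sqrt{\epsilon}\rangle^\mu$ in the lemma immediately following). With that correction one gets $\max(u,v)\gtrsim C_1^2$, and your monotonicity argument concludes cleanly.
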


Since we will be interested in the dynamics in a neighborhood of $\mathcal{T}$, we fix a smooth cut-off function $\hat{\chi} \in \cinfc( \mathcal{V}_1 \cup F(\mathcal{V}_1) ) $, independent of $\epsilon$, such that $\hat{\chi} = 1$ in a neighborhood of $\mathcal{T}$ and we set 

\begin{equation}\label{escape_function_2}
g_\epsilon = \hat{\chi} \hat{g}_\epsilon
\end{equation}

As a consequence of the construction of $\hat{\varphi}_\pm$, it is also possible  to check that 
\begin{lem}
$g_\epsilon$ satisfies the following estimates : there exist $C>0$, $\mu >0$ and a family of constants $C_\alpha>0$, $\alpha \in \N^2$, independent of $\epsilon$ such that for all $\rho, \zeta \in U$, 
\begin{align*}
& |g_\epsilon (\rho)| \leq C |\log \epsilon| \\
& |\partial^\alpha g_\epsilon(\rho)| \leq C_\alpha \left(\epsilon^{-|\alpha|/2} \right) \\
&\frac{\exp(g_\epsilon(\rho))}{\exp(g_\epsilon(\zeta))} \leq C\left\langle \frac{\rho - \zeta}{\sqrt{\epsilon}} \right\rangle^\mu
\end{align*}
\end{lem}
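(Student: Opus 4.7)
The plan is to verify the three estimates separately, leveraging the properties of $\hat{\varphi}_\pm$ stated in the first lemma of Subsection~3.3, and the fact that $\hat\chi$ is smooth, compactly supported in $\mathcal{V}_1 \cup F(\mathcal V_1)$, and independent of $\epsilon$. For the first bound, observe that $g_\epsilon = 0$ outside $\supp \hat\chi$, and on this compact set $\hat\varphi_\pm$ are bounded above by some constant, while $M\epsilon + \hat\varphi_\pm \geq M\epsilon$. Therefore $\log(M\epsilon+ \hat\varphi_\pm)$ ranges between $\log(M\epsilon)+O(1)$ and $O(1)$, which gives $|\hat g_\epsilon| \leq C|\log \epsilon|$ on $\supp \hat\chi$ and hence $|g_\epsilon| \leq C|\log\epsilon|$.

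For the derivative bounds, Faà di Bruno's formula applied to $\log(M\epsilon + \hat\varphi_\pm)$ expresses $\partial^\alpha \log(M\epsilon + \hat\varphi_\pm)$ as a linear combination of terms of the form
\[
\frac{\partial^{\alpha_1}\hat\varphi_\pm \cdots \partial^{\alpha_k} \hat\varphi_\pm}{(M\epsilon + \hat\varphi_\pm)^k}, \quad \alpha_1 + \dots + \alpha_k = \alpha,\ |\alpha_i| \geq 1.
\]
Using the estimate $|\partial^{\alpha_i} \hat\varphi_\pm| = O(\hat\varphi_\pm^{1-|\alpha_i|/2}) = O((M\epsilon + \hat\varphi_\pm)^{1-|\alpha_i|/2})$ together with the trivial lower bound $M\epsilon + \hat\varphi_\pm \geq M\epsilon$, each such term is bounded by $C(M\epsilon+\hat\varphi_\pm)^{-|\alpha|/2} \leq C\epsilon^{-|\alpha|/2}$. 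The Leibniz rule then combines this with the smooth factor $\hat\chi$ (whose derivatives are $O(1)$) to give $|\partial^\alpha g_\epsilon| \leq C_\alpha \epsilon^{-|\alpha|/2}$.

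For the last (order-function) estimate, the key observation is that $f_\pm(\rho) \coloneqq M\epsilon + \hat\varphi_\pm(\rho) \sim d(\rho, \mathcal{T}_\pm)^2 + \epsilon$ satisfies
\[
\frac{f_\pm(\rho)}{f_\pm(\zeta)} \leq C \left\langle \frac{\rho-\zeta}{\sqrt\epsilon}\right\rangle^{2}.
\]
Indeed, the triangle inequality gives $d(\rho,\mathcal{T}_\pm)^2 \leq 2d(\zeta,\mathcal{T}_\pm)^2 + 2|\rho-\zeta|^2$, so $f_\pm(\rho) \leq C f_\pm(\zeta) + C|\rho-\zeta|^2 \leq C f_\pm(\zeta)(1 + |\rho-\zeta|^2/\epsilon)$. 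Taking logarithms and forming $\hat g_\epsilon(\rho) - \hat g_\epsilon(\zeta) = \log(f_-(\rho)/f_-(\zeta)) - \log(f_+(\rho)/f_+(\zeta))$ yields $|\hat g_\epsilon(\rho) - \hat g_\epsilon(\zeta)| \leq C + 4\log \langle (\rho-\zeta)/\sqrt\epsilon\rangle$ whenever both points lie in $\mathcal V_1 \cup F(\mathcal V_1)$.

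Finally, we remove the cut-off $\hat\chi$ by writing $g_\epsilon(\rho) - g_\epsilon(\zeta) = \hat\chi(\rho)(\hat g_\epsilon(\rho) - \hat g_\epsilon(\zeta)) + (\hat\chi(\rho) - \hat\chi(\zeta)) \hat g_\epsilon(\zeta)$. The first term is controlled by the previous paragraph. For the second, the Lipschitz bound $|\hat\chi(\rho) - \hat\chi(\zeta)| \leq C|\rho-\zeta|$ combined with $|\hat g_\epsilon(\zeta)| \leq C|\log \epsilon|$ gives a contribution $\leq C|\rho-\zeta||\log\epsilon|$; when $|\rho-\zeta| \leq \sqrt\epsilon$ this is $O(1)$, and when $|\rho-\zeta| \geq \sqrt\epsilon$ it is bounded by a constant multiple of $|\log\epsilon| \leq \mu \log\langle(\rho-\zeta)/\sqrt\epsilon\rangle$ provided $\mu$ is chosen large enough relative to the diameter of $\supp \hat\chi$ and the Lipschitz constant of $\hat\chi$. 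The main technical point is this handling of $\hat\chi$: it forces $\mu$ in the final inequality to be large enough to absorb the boundary-effect term, but it is harmless once $\mu$ is fixed as a sufficiently large constant depending only on $\hat\chi$.
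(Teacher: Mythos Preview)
The paper itself does not prove this lemma; it simply records it and refers the reader to \cite{NSZ14}, Lemmata~4.1--4.4. Your argument is the standard one and the first two estimates, as well as the order-function bound for $\hat g_\epsilon$ itself, are handled correctly.

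There is, however, a small gap in your final step, where you absorb the cut-off $\hat\chi$. You bound the second term of your decomposition by $C|\rho-\zeta|\,|\log\epsilon|$ and then, for $|\rho-\zeta|\ge\sqrt\epsilon$, claim that this is ``bounded by a constant multiple of $|\log\epsilon| \le \mu\log\langle(\rho-\zeta)/\sqrt\epsilon\rangle$''. The second inequality here is false when $|\rho-\zeta|$ is only slightly larger than $\sqrt\epsilon$: then $\log\langle(\rho-\zeta)/\sqrt\epsilon\rangle$ is bounded, while $|\log\epsilon|$ is not. In other words, passing first to the uniform bound $|\rho-\zeta|\le D$ loses exactly the information you need in the intermediate regime $\sqrt\epsilon \ll |\rho-\zeta| \ll 1$.

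The fix is short. Set $r=|\rho-\zeta|$ and consider $\phi(r)=LCr|\log\epsilon| - \mu\log(r/\sqrt\epsilon)$ on $[\sqrt\epsilon,D]$, where $D$ bounds the diameter of $\supp\hat\chi$ (outside of which the term vanishes anyway). This function is convex in $r$, so its maximum is attained at an endpoint: at $r=\sqrt\epsilon$ it equals $LC\sqrt\epsilon\,|\log\epsilon|=o(1)$, and at $r=D$ it equals $LCD|\log\epsilon| - \mu\big(\log D + \tfrac12|\log\epsilon|\big)$, which is bounded above once $\mu>2LCD$. Hence $LCr|\log\epsilon| \le C' + \mu\log\langle r/\sqrt\epsilon\rangle$ uniformly, which is what you need. (You should also note, for completeness, that the decomposition $g_\epsilon(\rho)-g_\epsilon(\zeta)=\hat\chi(\rho)(\hat g_\epsilon(\rho)-\hat g_\epsilon(\zeta))+(\hat\chi(\rho)-\hat\chi(\zeta))\hat g_\epsilon(\zeta)$ only makes literal sense when $\zeta$ lies in the domain of $\hat g_\epsilon$; the remaining cases are trivial since $\supp\hat\chi\Subset\mathcal V_1\cup F(\mathcal V_1)$ forces $|\rho-\zeta|$ to be bounded below by a fixed positive constant whenever one point lies in $\supp\hat\chi$ and the other outside $\mathcal V_1\cup F(\mathcal V_1)$.)
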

This last inequality makes $e^{g_\epsilon}$ an order function in the rescaled variable $\rho / \sqrt{\epsilon}$. 

We will specialize to $\epsilon = h^{2\delta}$ where $\delta  = \frac{1}{2} - \varepsilon$. For this reason, it is important that the constants do not depend on $\epsilon$.

\section{Proof of Theorem \ref{Theorem_main}} \label{Section_proof_main_theorem}
From now on, $M_h(z)=M(z;h)$ is an open hyperbolic quantum map satisfying the assumptions of Theorem \ref{Theorem_main}. Recall that we note $\alpha_h(z)$ the amplitude of $M_h(z)$. 
Our aim is to understand the zeros of the Fredholm determinants
$$ \det \left( \Id - M_h(z) \right) $$ 
Since the spectrum of $M_h(z)$ doesn't change by conjugation, we will instead study 
\begin{equation}
M_t (z;h) \coloneqq e^{-tG} M_h(z) e^{tG}
\end{equation}
where $t$ will be chosen below and $G = \op(g)$ where $g=g_{h^{2\delta}}$ is the escape function constructed (\ref{escape_function_2}), specialized in the case $\epsilon= h^{2\delta}$ where $\delta=1/2- \varepsilon$, for some fixed $\varepsilon$. To alleviate the notations, we now omit to write that $M_t(z)$ depends on $h$. The role of this conjugation is to damp the quantum map outside a small neighborhood of the trapped set so that it confers to the new operator nicer microlocal properties. 
To exploit the hyperbolicity of $F$ and the special structure of the trapped set, we note that the zeros (repeated with multiplicity) of $\det \left( \Id - M_t(z) \right) $ are among the zeros of 
$$ \det \left( \Id - M_t(z)^{2N} \right) $$
We will use this remark with an exponent $N(h)$ depending on $h$ in a controlled way and we will assume that $N(h) \leq C \log \frac{1}{h}$ for some $C >0$. A precise value of $N(h)$ will be given later. 

\subsection{Application of a Jensen formula. }

The proof of Theorem \ref{Theorem_main} relies on the following Proposition, whose proof will occupy the end of this section. Recall that $\Omega= ]-R, R[ + i ]-R, R[$ with $R$ fixed but large (in particular, $R \geq 4$). 

\begin{prop}\label{Prop_Key_2}
Let  $\varepsilon >0$. Let $g=g_{h^{2\delta}}$ be the escape function defined in (\ref{escape_function_2}) (with $1/2 - \delta = \varepsilon>0)$). Let's note $M_t(z) = e^{-t\op(g)} M_h(z) e^{t \op(g)}$.  Let us fix $\beta \in ]0,R[$.  
Then, there exist $t=t_\varepsilon >0$, $C = C_\varepsilon >0$ , $\nu_\varepsilon >0$, $\vartheta_\varepsilon >0$ and $N=N_{\varepsilon}(h) \in \N$ such that
\begin{itemize} 
\item When $\varepsilon \to 0$, 
$$ \nu_\varepsilon = d_H + O(\varepsilon) \; ; \; \vartheta_\varepsilon = \frac{1-O(\varepsilon)}{6 \lambda_{\max}} \;  $$ 
\item at fixed $\varepsilon$, when $h \to 0$, $N_{\varepsilon}(h) \sim \vartheta_\varepsilon \log(1/h) \;  $
\item for all $h$ sufficiently small and for all $z \in \Omega$ with $ \im(z) \in [-\beta, 4]$, 
\begin{equation}\label{key_equation}
 \tr\left( \left(M_t(z)^{N}\right)^* M_t(z)^N \right) \leq C h^{-\nu_\varepsilon} h^{ -\vartheta_\varepsilon P\left(-2\im z t_{ret} - \varphi_u \right)}  
\end{equation}
\end{itemize}
\end{prop}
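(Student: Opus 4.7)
The plan is to evaluate the Hilbert-Schmidt norm squared of $M_t(z)^N$ via the coherent-state resolution of the identity (Lemma \ref{trace_coherent_state}):
$$ \tr\bigl((M_t(z)^N)^* M_t(z)^N\bigr) = \|M_t(z)^N\|^2_{HS} = \frac{1}{2\pi h}\int_U w_z(\rho)\, d\rho, \qquad w_z(\rho) := \|M_t(z)^N\varphi_\rho\|^2_{L^2}.$$
The dichotomy supplied by Proposition \ref{Prop_Key} then restricts attention to $\mathcal{T}(h^\delta)$: choosing $t$ large enough (depending on $\varepsilon$), $w_z(\rho)=\hinf$ outside this neighbourhood, while on $\mathcal{T}(h^\delta)$ the crucial estimate (\ref{crucial_bound}) can be rewritten as
$$ w_z(\rho) \leq C\, J^u_N(\rho)^{d_H}\exp\Big(\sum_{i=0}^{N-1}\varphi_z(F^i\rho)\Big),\qquad \varphi_z := -\varphi_u-2\im z\, t_{ret},$$
uniformly in $\im z\in[-\beta,4]$.

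Next I would cover $\mathcal{T}$ by Bowen balls $B_j := B_{\rho_j}(\eta,N)$, with $\rho_j$ a maximal $(N,\eta)$-separated subset of $\mathcal{T}$ and $\eta$ a small parameter depending only on $\varepsilon$. By the hyperbolic estimates of Section \ref{Subsection_hyperbolic_dynamics}, each $B_j$ is a dynamical rectangle of stable extent $\asymp\eta$ and unstable extent $\asymp\eta/J^u_N(\rho_j)$. Since $N\sim\vartheta_\varepsilon\log(1/h)$ with $6\lambda_{\max}\vartheta_\varepsilon\leq 1-O(\varepsilon)$, one has $J^u_N\leq h^{-1/6+O(\varepsilon)}$, so the $h^\delta$-thickening of $B_j$ does not alter its shape, and on $B_j^{(h^\delta)}$ Corollary \ref{cor_control_jacobian} together with the uniform continuity of $\varphi_z$ gives $J^u_N(\rho)\asymp J^u_N(\rho_j)$ and $|S_N\varphi_z(\rho)-S_N\varphi_z(\rho_j)|\leq C$.

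The main new ingredient is a fractal volume bound for the intersection with $\mathcal{T}(h^\delta)$. Using the local product structure of $\mathcal{T}$ through the (uniformly Lipschitz) holonomies, one applies Lemma \ref{Lemma_number_of_intervals} to the unstable slice $W_u(\rho_j)\cap B_j$ of length $\eta/J^u_N(\rho_j)$, and a similar estimate along the stable direction coming from Proposition \ref{upper_box_dim}, to obtain
$$ \text{vol}\Bigl(B_j^{(h^\delta)}\cap \mathcal{T}(h^\delta)\Bigr) \leq C_\varepsilon\, h^{2\delta(1-d_H-\varepsilon)}\, \eta^{2(d_H+\varepsilon)}\, J^u_N(\rho_j)^{-(d_H+\varepsilon)}.$$
Multiplying by the pointwise bound on $w_z$, the factor $J^u_N(\rho_j)^{d_H}$ is almost entirely cancelled, leaving only $J^u_N(\rho_j)^{-\varepsilon}=e^{-\varepsilon S_N\varphi_u(\rho_j)}$:
$$ \int_{B_j^{(h^\delta)}} w_z\, d\rho \leq C_\varepsilon\, h^{2\delta(1-d_H-\varepsilon)}\exp\bigl(S_N(\varphi_z-\varepsilon\varphi_u)(\rho_j)\bigr).$$

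Summing over $j$ and invoking Proposition \ref{Prop_formula_pressure} through the variational characterisation via $(N,\eta)$-separated sets, one obtains $\sum_j e^{S_N(\varphi_z-\varepsilon\varphi_u)(\rho_j)} \leq e^{N(P(\varphi_z-\varepsilon\varphi_u)+O(\varepsilon))}$ for $\eta$ and $h$ small enough. Continuity of the pressure yields $P(\varphi_z-\varepsilon\varphi_u)=P(\varphi_z)+O(\varepsilon)$, and inserting $2\delta=1-2\varepsilon$, $N\sim\vartheta_\varepsilon\log(1/h)$ gives
$$ \|M_t(z)^N\|^2_{HS}\;\leq\; C\, h^{-1}\cdot h^{2\delta(1-d_H-\varepsilon)}\cdot h^{-\vartheta_\varepsilon P(\varphi_z)-O(\varepsilon)}\;=\; C\, h^{-d_H-O(\varepsilon)}\, h^{-\vartheta_\varepsilon P(-2\im z\, t_{ret}-\varphi_u)},$$
which is (\ref{key_equation}). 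The main obstacle is the fractal volume bound: one must propagate the uniform box-dimension estimates of $\mathcal{T}$ along the dynamically shrunk unstable slices of size $\eta/J^u_N(\rho_j)$, a task which is precisely what Lemma \ref{Lemma_number_of_intervals} is designed for. Secondary technical points are keeping $N(h)$ within the logarithmic range allowed by Proposition \ref{Prop_Key}, checking that the pressure approximation is uniform in $\im z\in[-\beta,4]$ (which follows from $t_{ret}$ being a fixed continuous function on $\mathcal{T}$), and absorbing the various $O(\varepsilon)$ losses into the parameters $\nu_\varepsilon$ and $\vartheta_\varepsilon$.
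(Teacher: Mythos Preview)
Your proposal is correct and follows essentially the same strategy as the paper: both combine the coherent-state resolution of identity, the pointwise bound from Proposition~\ref{Prop_Key}, a fractal volume estimate built on Lemma~\ref{Lemma_number_of_intervals}, and a summation that produces the topological pressure. The only substantive organizational difference is that the paper works with a fixed symbolic partition $(\mathcal{W}_q)_{q\in\mathcal{A}}$ refined into words $\mathbf{q}\in\mathcal{A}^n$ (so that the sum $\sum_{\mathbf{q}}\prod_i p_{q_i}$ factorizes as $(\sum_q p_q)^n$ and connects to the $P_1$-formula in Proposition~\ref{Prop_formula_pressure}), whereas you cover by Bowen balls centred on a maximal $(N,\eta)$-separated set and invoke the $P_0$-formula directly; the paper's volume bound is stated as Proposition~\ref{Prop_volume_small_neigh}, which is exactly your ``fractal volume bound'' with the anisotropic neighbourhood $\mathcal{T}_{\delta,\delta_1}$ in place of $\mathcal{T}(h^\delta)$. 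One small caveat: the precise form of Proposition~\ref{Prop_Key} in the paper is stated for the conjugated operator $e^{-tG}\mathfrak{M}^{n-1}M^{n_0}A_qB_q'\tilde E_t$ acting in local adapted charts rather than for $M_t(z)^N\varphi_\rho$ itself, so the paper inserts a short reduction (equations~(\ref{equation_trace_1})--(\ref{equation_trace_2})) that you implicitly absorb into your citation of the crucial bound.
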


\begin{rem}
Since $\im z \geq - \beta$ and since the function $ s \mapsto P(-2 s t_{ret} - \varphi_u)$ is non increasing, the right hand side can be estimated by $h^{-\nu_{\varepsilon}} h^{-\vartheta_\varepsilon P(2 \beta t_{ret} - \varphi_u) }= h^{-d_H + p(\beta) - O(\varepsilon)}$. This is where the function $p(\beta) =- \frac{1}{6 \lambda_{\max}} P(2 \beta t_{ret} - \varphi_u)$ appears. 
\end{rem}

Armed with this proposition, we can conclude the proof of Theorem \ref{Theorem_main} by using standard arguments of spectral theory and complex analysis (we mainly borrow the arguments from \cite{DaDy12}, \cite{DyBW}).

\vspace*{0.5cm}
\textit{Proof of Theorem \ref{Theorem_main}}. 
The exponent $d_H$ is known from \cite{NSZ14} in Theorem 4. We focus on the potential improvement given by $p(\gamma+ \varepsilon) - \varepsilon$. 

We fix $0<r<R$ and $\gamma >0$ and note $\Omega_0 = \{ |\re z| \leq r, \im(z) \in [-\gamma, 2] \} $. For $\eta>0$, we also note $\Omega_\eta = \{ | \re z | < R , \im z \in ]-\gamma - \eta, 4[ \}$. 
Since $\det(\Id- M(z;h) )=\det(\Id- M_t(z))$ and due to the relation: 
$\Id - A^{2N} = (\Id -A)(\Id + A + \dots A^{2N-1})$, we have (we note $m_T(\Omega_1)$ the numbers of zeros of $\det(\I - T)$ in $\Omega_1$, counted with multiplicity), 
$$m_M(\Omega_0) \leq m_{M_t} (\Omega_0) \leq m_{M_t^{2N}}(\Omega_0)$$ 
that is, it is enough to estimates the number of zeros of
$f(z) =  \det \left( \Id - M_t(z)^{2N} \right) $. 

We claim that if $H$ is some Hilbert space and if $A : H \to H$ is a trace-class operator, then $\log |\det ( I - A^2)| \leq ||A||_{HS}^2 = \tr( A^* A)$. Indeed, if we denote $\lambda_j(A)$ (resp. $\sigma_j(A)$) the eigenvalues (resp. singular values) of $A$ repeated with multiplicity, one has, 
\begin{align*}
\log | \det (I- A^2) | &= \sum_{j} \log (|1 - \lambda_j(A^2) |) = \sum_j \log |1- \lambda_j(A)^2 |  \leq \sum_j \log (1+ \lambda_j(A)^2) \leq \sum_j \lambda_j(A)^2 \\
\end{align*}
Weyl's inequalities imply that (see for instance \cite{DyZw}, Appendix B.5.1) $$\sum_j \lambda_j(A)^2  \leq \sum_j \sigma_j(A)^2 = ||A||_{HS}^2 = \tr(A^* A)$$
which gives the desired result. 
Hence, we have 
\begin{equation}
\log | \det \left( \Id - M_t(z)^{2N} \right) | \leq \tr \left( (M_t^{N}(z))^* M_t^N(z) \right) 
\end{equation}
which is known to be controlled by Proposition \ref{Prop_Key_2}. 
Let's note $z_0 = i \in \Omega_0$. 
By the Riemann mapping theorem, for any $\eta >0$, there exists a conformal map $c : \Omega_\eta \to \{ |z| < 1 \}$ such that $c(z_0) = 0$. $c(\Omega_0) \Subset c(\Omega_\eta)$, so that there exists $\delta >0$ such that 
$c(\Omega_0) \subset \{ |z| < 1 - \delta \}$.  We now apply Jensen's formula to the function $f \circ c$. 
Let $n(t)$ denote the number of zeros of $f \circ c $ (counted with multiplicities), in the disc of radius $t$ .
We have, by Jensen's formula, 
$$ \int_0^{1- \delta/2} \frac{n(t)}{t} dt = \frac{1}{2\pi} \int_{0}^{2\pi} \log |f\circ c ( (1-\delta/2) e^{i \theta})| d\theta  - \log| f(z_0)|$$
Therefore, 
\begin{align*}
m_{M} (\Omega_0) \leq m_{M_t^{2N}}(\Omega_0)& \leq n(1-\delta) \\
&\leq  \frac{2}{\delta(1- \delta) }  \int_{1-\delta}^{1 - \delta/2} \frac{n(t)}{t}dt \\ 
&\leq \frac{2}{\delta(1- \delta) } \int_0^{1-\delta/2} \frac{n(t)}{t}dt \\
&\leq  \frac{2}{\delta(1- \delta) } \left(  \frac{1}{2\pi} \int_{0}^{2\pi} \log |f\circ c ( (1-\delta/2) e^{i \theta})| d\theta  - \log| f(z_0)|\right)  \\
&\leq  \frac{2}{\delta(1- \delta) } \left( \sup_{z \in \Omega_\eta} \log |f(z)| - \log |f(z_0)| \right) 
\end{align*}
We apply Proposition \ref{Prop_Key_2} with a small parameter $\varepsilon^\prime$, depending on $\varepsilon$, giving exponents $ \nu_{\varepsilon^\prime}, \vartheta_{\varepsilon^\prime}$. 
Since $\nu_{\varepsilon^\prime} = d_H + O(\varepsilon^\prime)$ and $\vartheta_{\varepsilon^\prime}= \frac{1}{6 \lambda_{\max}} + O(\varepsilon^\prime)$,  we can choose $\varepsilon^\prime$ small enough so that $\nu_{\varepsilon^\prime} -  6 \lambda_{\max} \vartheta_{\varepsilon^\prime} p(\gamma + \eta) \leq d_H - p(\gamma+ \eta) + \varepsilon$. 
Hence, we have 
$$\sup_{z \in \Omega_\eta} \log |f(z)|\leq  \sup_{z \in \Omega_\eta} h^{- \nu_{\varepsilon^\prime}} h^{-\vartheta_{\varepsilon^\prime} P\left(-2 \im z t_{ret}  - \varphi_u \right) }  \leq  h^{ - d_H - \varepsilon + p(\gamma+ \eta)} $$
since the map $\beta \mapsto p(\beta)$ is non increasing (recall the definition of $p(\beta)$ in (\ref{definition_p_beta}). 
To handle the term $- \log |f(z_0)|$, since $\alpha_h(z_0) <1$ near $\mathcal{T}$, by choosing $t$ large enough, we may ensure that there exists $\rho \in [0,1[$ such that for $h$ small enough, $||M_t(z_0)|| \leq \rho$ (see the proof of Lemma 5.3 in \cite{NSZ14}). As a consequence, $||M_t^{2N(h)}(z_0) || \leq C_t \rho^{2N}$, so that for $h$ small enough, $||M_t^{2N}||< 1/2$. In particular, for such $h$, $\Id - M_t(z_0)^{2N} $ is invertible and  $$\left| \left| \left( \Id - M_t(z_0)^{2N} \right)^{-1} \right| \right| \leq 2.$$ 
As a consequence, one has 
\begin{align*}
- \log | \det \left( \Id - M_t(z_0)^{2N} \right) | &= \log   \left| \det \left( \Id - M_t(z_0)^{2N} \right)^{-1} \right| \\
&= \log   \left|  \det \left( \Id + M_t(z_0)^{2N} \left( \Id - M_t(z_0)^{2N} \right)^{-1} \right) \right|  \\
& \leq \left| \left| M_t(z_0)^{2N} \left( \Id - M_t(z_0)^{2N} \right)^{-1}  \right| \right|_{tr} \\
 &\leq || M_t(z_0)^{2N} ||_{tr} \left| \left| \left( \Id - M_t(z)^{2N} \right)^{-1} \right| \right|\\
 & \leq 2  ||M_t(z_0)^N||_{HS} \leq   Ch^{ - d_H - \varepsilon + p(\gamma+ \eta)}
\end{align*} 
This concludes the proof. 

\begin{flushright}
\qed
\end{flushright}

\subsection{Proof of Proposition \ref{Prop_Key_2} }
We start the proof of Proposition \ref{Prop_Key_2}. We fix some $\varepsilon >0$ and we froze the complex variable $z$ and note $M_h$ and $\alpha_h$ instead of $M_h(z)$ and $\alpha_h(z)$ : we momentarily forget this dependence but keep in mind that $\im(z) \in [-\beta,4]$ for some $\beta >0$. In particular, $\alpha_h(z) = e^{- \im z t_{ret}} + O\left( h^{-1}S_{0^+} \right)$ in a neighborhood of $\mathcal{T}$ and the constant in the estimates below can be chosen independent of $z$.

\paragraph{Reduction to FIO acting on $\R$.} We will note $\R_J = \bigsqcup_{j=1}^J \R$ and $L^2(\R_J) = \bigoplus_{j=1}^J L^2(\R)$. 
Recall that by construction (see \ref{subsubsection_open_quantum_map}), $M_h$ is an operator of the form $(M_{ij}(h))$ where $M_{ij}(h) : L^2(Y_j) \to L^2(Y_i)$. It will be more convenient for us to work on $L^2(\R)$. For this purpose, recall that, for all $i,j$, there exists $\tilde{M}_{ij}(h)  \in I_{0^+} \left( \R \times \R, \Gr(F_{ij})^\prime \right)$ and cut-off functions $\Psi_i, \Psi_j$ such that as operators $L^2(Y_j) \to L^2(Y_i)$
$$ M_{ij}(h) = \Psi_i \tilde{M}_{ij}(h) \Psi_j + \hinf$$ 
and as operator $L^2(\R) \to L^2(\R)$, 
$$ \tilde{M}_{ij}(h) = \Psi_i \tilde{M}_{ij}(h) \Psi_j + \hinf $$ 
where, in the two equalities above, the $\hinf$ hold for the trace norm. 
Let's note $M_\psi(h) = (\Psi_i M_{ij} (h)\Psi_j)_{ij}$. 
As soon as $N \leq C \log \frac{1}{h}$, $M(h)^N = M_\Psi(h)^N + \hinf$ as operators $L^2(Y) \to L^2(Y)$ and 
$M_\Psi(h)^N = \tilde{M}(h)^N + \hinf$ as operators $L^2(\R_J) \to L^2(\R_J)$. The same holds after conjugation by $e^{tG}$.  In particular, this sows that 
$$ \tr_{L^2(Y) } \left((M_t^{N})^* M_t^N  \right)  = \tr_{L^2(\R_J) } \left((\tilde{M}_t^{N})^* \tilde{M}_t^N  \right) + \hinf$$ 
Since the $\hinf$ will finally be adsorbed in our required inequality, it is enough to work with $\tilde{M}(h)$ instead of $M_h$.

From now on, we will write $M_h$ for the operator $\tilde{M}_h : L^2(\R_J) \to L^2(\R_J)$. There exists $\Psi_A, \Psi_D$ such that 
$$ \supp \Psi_A \Subset \widetilde{A} \quad ; \quad \supp \Psi_D \Subset \widetilde{D}$$ and 
$$\Psi_A M_h = M_h + \hinf ; \quad M_h \Psi_D  = M_h+ \hinf $$ 
Moreover, we will now omit the $h$-dependence of the semiclassical operators in the notations when this dependence is obvious. In particular, we will simply write $M$, $\alpha$ or $M_t$ instead of $M_h$, $\alpha_h$ and $M_t(h)$ respectively.

\begin{nota}
A function $a$ on $T^*\R_J = \bigsqcup_{j=1}^J T^*\R$ is a $J$-uple of functions $(a_1, \dots, a_J)$. The quantization $\op(a)$ is the diagonal matrix with diagonal entries $\op(a_j)$. The support of $a$ is the disjoint union of the supports of the $a_j$'s, so as the wavefront set of $\op(a)$. 
\end{nota}

\subsubsection{Refined quantum partition}

In virtue of Proposition \ref{Prop_formula_pressure}, applied with $\varphi = -2 \im z t_{ret} - \varphi_u$,  there exists $\eta >0$ such that for any open cover $\mathcal{Q}$ of $\mathcal{T}$ of diameter smaller than $\eta$, one has 
\begin{equation}\label{equa_pressure}
\left| \lim_{n \to + \infty} \frac{1}{n} \log P_1(\varphi, n , \mathcal{Q}) - P(\varphi)\right| \leq \varepsilon/3
\end{equation}

We consider some $\varepsilon_0 >0$, which is supposed to be small enough to satisfy all the assumptions which will appear in the following and which will follow us throughout the end of the chapter. In particular, we first impose $\varepsilon_0 < \eta$.  \\
Since $\mathcal{T}$ is totally disconnected, there exists an open cover of $\mathcal{T}$ by a finite number of \emph{disjoint} open sets (of $U$) of diameter smaller than $\varepsilon_0$ : 
$$ \mathcal{T} \subset \bigcup_{\mathfrak{A} \in \mathcal{Q}} \mathfrak{A} $$
We fix some $\rho_\mathfrak{A} \in \mathcal{T} \cap \mathfrak{A}$ and we assume that for all $ \mathfrak{A} \in \mathcal{Q}$, there exists $j_\mathfrak{A},l_\mathfrak{A},m_\mathfrak{A} \in \{1, \dots, J \}$ such that 
$$\mathfrak{A} \subset B(\rho_\mathfrak{A} , 2 \varepsilon_0) \subset \widetilde{A}_{ j_\mathfrak{A} l_\mathfrak{A} } \cap \widetilde{D}_{m_\mathfrak{A} j_\mathfrak{A}} \subset U_{j_\mathfrak{A}}$$
$\varepsilon_0$ is supposed to be small enough so that : 
\begin{itemize}
\item $e^{- \tau_m} \leq \alpha_h \leq e^{ \tau_M}$ in $B(\rho_\mathfrak{A}, 2\varepsilon_0)$ for some $\tau_m, \tau_M$, for all $h$ small enough. 
\item If $\varepsilon_1$ denotes the one appearing in Lemma \ref{adapted_chart_0},  $2 \varepsilon_0 \leq \varepsilon_1$, and then, there exists a chart $\kappa_\mathfrak{A} : B(\rho_\mathfrak{A}, 2 \varepsilon_0) \to W_\mathfrak{A} = \kappa_\mathfrak{A}(B(\rho_\mathfrak{A}, 2 \varepsilon_0) ) $, given by Lemma \ref{adapted_chart_0}, adapted to the dynamics, where $W_\mathfrak{A}$ is a subset of $T^*\R$ centered at $0$. . 
\item  There exist Fourier integral operators $B_\mathfrak{A}, B_\mathfrak{A}^\prime \in I_0(\R \times \R, \Gr^\prime(\kappa_\mathfrak{A})) \times I_0(\R \times \R, \Gr^\prime(\kappa_\mathfrak{A}^{-1}))$, 
quantizing $\kappa_\mathfrak{A}$ in a neighborhood of $\kappa_\mathfrak{A} \left( \overline{ \mathfrak{A}}  \right) \times \overline{\mathfrak{A}}$. 
\end{itemize}

\begin{nota}
We will still denote $B_\mathfrak{A}$ and $B_\mathfrak{A}^\prime$ the operators 
\begin{align*}
B_\mathfrak{A} =\text{Diag}( 0 ,\dots B_\mathfrak{A}, \dots ,0) : L^2 (\R_J) \to L^2 (\R_J) \quad ; \quad  
B_\mathfrak{A}^\prime = \text{Diag} (0 ,\dots ,B_\mathfrak{A}^\prime, \dots,0) : L^2 (\R_J) \to L^2(\R_J) 
\end{align*}
with the non zero entry in position $j_\mathfrak{A}$. When we say that $(B_\mathfrak{A},B_\mathfrak{A}^\prime)$ quantize $\kappa_\mathfrak{A}$ in a neighborhood of $\kappa_\mathfrak{A} \left( \overline{ \mathfrak{A}}  \right) \times \overline{\mathfrak{A}}$, we mean that $B_\mathfrak{A}^\prime B_\mathfrak{A} = I + \hinf $ microlocally in a neighborhood of $\overline{\mathfrak{A}}$ (in the sense that if $\supp (c)$  is included in this neighborhood of $\overline{\mathfrak{A}}$ and if $C = \op(c)$, then $ B_\mathfrak{A}^\prime B_\mathfrak{A} C = C + \hinf  \; ; \; CB_\mathfrak{A}^\prime B_\mathfrak{A}= C + \hinf$) and  $B_\mathfrak{A} B_\mathfrak{A}^\prime = I + \hinf$ microlocally in a neighborhood of $\kappa_\mathfrak{A}(\overline{\mathfrak{A}})$. 
\end{nota}
In virtue of the equation (\ref{equa_pressure}), there exists $n_0 \in \N$ such that 
$$  \left| \frac{1}{n_0} \log P_1(\varphi, n_0 , \mathcal{Q} ) - P(\varphi) \right| \leq 2\varepsilon/3$$
As a consequence, there exists a subpartition $(\mathcal{W}_q)_{q \in \mathcal{A}} \subset \mathcal{Q}^{n_0}$ such that $\mathcal{T} \subset \bigcup_{q \in \mathcal{A}} \mathcal{W}_q$ and 
\begin{equation}\label{nice_formula}
 \sum_{q \in \mathcal{A}} \sup_{ \rho \in \mathcal{W}_q \cap \mathcal{T} } \exp \left( \sum_{i=0}^{n-1} \varphi(F^i(\rho) ) \right)  \leq e^{n_0 (P(\varphi) + \varepsilon)}
\end{equation}
For $q \in \mathcal{A}$, we can find an open set $\mathcal{V}_q \Subset \mathcal{W}_q$ such that $\mathcal{T} \cap \mathcal{W}_q \subset \mathcal{V}_q$. $(\mathcal{V}_q)_{q \in \mathcal{A}}$ is still a cover of $\mathcal{T}$. 
We complete this cover with
\begin{equation}
\mathcal{V}_\infty =  \R_J \setminus \bigcup_{q \in \mathcal{A}} \mathcal{V}_q
\end{equation}
We note $\mathcal{A}_\infty = \mathcal{A} \cup \{ \infty \}$. Note also that for $q \in \mathcal{A}$, $\mathcal{W}_q$ is of the form $$\mathfrak{A}_0 \cap F^{-1}(\mathfrak{A}_1) \cap \dots \cap F^{-(n-1)}(\mathfrak{A}_{n-1})$$
and in particular $\mathcal{W}_q \subset \mathfrak{A}_0 $ : we note $j_q,l_q,m_q, \rho_q, \kappa_q, B_q, B^\prime_q$, $W_q$, instead of $j_{\mathfrak{A}_0},l_{\mathfrak{A}_0},m_{\mathfrak{A}_0}, \rho_{\mathfrak{A}_0}$, $\kappa_{\mathfrak{A}_0}, B_{\mathfrak{A}_0}, B^\prime_{\mathfrak{A}_0}$, $W_{\mathfrak{A}_0}$. 
Then,  for $q \in \mathcal{A}$, we consider a cut-off function $\chi_q \in \cinfc(\R_J, [0,1])$ such that $\supp(\chi_q) \subset \mathcal{W}_q$ and $\chi_q \equiv 1$ in a neighborhood of $\overline{\mathcal{V}_q}$. Finally, we note $\chi_\infty = 1 - \sum_{q \in \mathcal{A}} \chi_q$. We note that $\chi_q$ is supported in only one copy of $\R$ in $\R_J$ when $q \in \mathcal{A}$ and $\chi_\infty$ has non-zero components in all the copies of $\R$ in $\R_J$. Moreover, $\supp(\chi_\infty) \subset \mathcal{V}_\infty$. 

We then quantize the symbols $\chi_q$, $q \in \mathcal{A}_\infty$  : 
\begin{equation}
A_q = \op(\chi_q)
\end{equation}
Note that for $q \in \mathcal{A}$, $A_q$ is a diagonal matrix with a single non zero coefficient. 
The family $(A_q)_{ q \in \mathcal{A}_\infty}$ satisfies the following properties : 
\begin{equation}\label{properties_Aq_bis}
\sum_{ q \in \mathcal{A}_\infty} A_q = \Id \quad ; \quad \forall q \in \mathcal{A}_\infty , ||A_q|| \leq 1 + O(h )
\end{equation}
Since $M^{n_0} = \sum_{ q \in \mathcal{A}_\infty} M^{n_0}A_q $, we may write 
$$ M^{nn_0} = \sum_{ \mathbf{q} \in \mathcal{A}^n_\infty} M_{\mathbf{q} }$$ 
where for $\mathbf{q} = q_0 \dots q_{n-1}  \in \mathcal{A}^n_\infty$, 

\begin{equation}
M_{\mathbf{q} }  \coloneqq M^{n_0}A_{q_{n-1}} \dots M^{n_0}A_{q_0}
\end{equation} 
For $\mathbf{q} = q_0 \dots q_{n-1}  \in \mathcal{A}^n_\infty$, we also define a family of refined neighborhoods, forming a refined cover of $\mathcal{T}$, 

\begin{equation} 
 \mathcal{V}_{\textbf{q}}^{-} = \bigcap_{i=0}^{n-1} F^{-in_0} \left( \mathcal{V}_{q_i} \right) \quad ; \quad \mathcal{V}_{\textbf{q}}^+ = F^{nn_0} \left( \mathcal{V}_{\textbf{q}}^{-}\right) = \bigcap_{i=0}^{n-1} F^{(n-i)n_0}\left(\mathcal{V}_{q_i} \right)
  \end{equation}
  and we adopt the same definitions by changing $\mathcal{V}$ into $\mathcal{W}$. 
Roughly speaking, we expect that each operator $M_\mathbf{q}$ acts from $\mathcal{W}_\mathbf{q}^-$ to $\mathcal{W}_\mathbf{q}^+$ and is negligible elsewhere. Combining (\ref{properties_Aq_bis}), the fact that $\alpha_h \leq e^{\tau_M}$ in $B(\rho_\mathfrak{A})$ and the bound on $M$, the following bound is valid : 

\begin{equation}
||M_\mathbf{q}||_{L^2 \to L^2} \leq \left(e^{\tau_M}+ O(h^{1 -})\right)^{nn_0}
\end{equation}
As soon as $|n| \leq C_0 |\log h|$, we have $||M_\mathbf{q}||_{L^2 \to L^2} \leq Ce^{n n_0 \tau_M }$, for some $C$ depending on $C_0$ and a finite number of semi-norms of $\alpha_h$ and then
$$ ||M_\mathbf{q}|| \leq Ch^{-K}$$ for some $C,K>0$ depending on $C_0$ and $\alpha_h$.

\subsubsection{Local unstable Jacobian} 
We want to define unstable Jacobians associated with these refined partition. Let's fix a word $\mathbf{q} =q_0 \dots q_{n-1} \in \mathcal{A}^n$ and assume that $\mathcal{W}_\mathbf{q}^- \neq \emptyset$. 
Fix $\rho \in \mathcal{W}_\mathbf{q}^-$. By definition of $\mathcal{W}_{q_i}$, there exists $\mathfrak{A}_{0,i}, \dots, \mathfrak{A}_{n_0-1,i} \in \mathcal{Q}$ such that 
$$\mathcal{W}_{q_i} = \bigcap_{j=0}^{n_0-1} F^{-j}(\mathfrak{A}_{j,i})$$
Hence, for $0 \leq l \leq n^\prime = n \times n_0 - 1$, there exists $\rho_l \in \mathcal{T}$ such that  $d(\rho_l, F^l(\rho) ) \leq 2 \varepsilon_0$. Hence, 

$$d(F(\rho_l), \rho_{l+1}) \leq d( F(\rho_l) , F^{l+1}(\rho) ) + d (F^{l+1}(\rho), \rho_{l+1} ) \leq C \varepsilon_0$$
That is to say, $(\rho_0, \dots, \rho_{n^\prime})$ is a $C\varepsilon_0$ pseudo orbit. Assume that $\delta_0 >0$ is a small fixed parameter. In virtue of the shadowing lemma (see \cite{KH}, Section 18.1), if $\varepsilon_0$ is sufficiently small, $(\rho_0, \dots, \rho_{n^\prime})$ is $\delta_0$ shadowed by an orbit of $F$ : there exists $\rho^\prime \in \mathcal{T}$ such that for $i \in \{0, \dots, n^\prime \}$, $d(\rho_i, F^i(\rho^\prime) ) \leq \delta_0$. Consequently, 
$d(F^i (\rho) , F^i(\rho^\prime)) \leq \delta_0 + C\varepsilon_0$. If $\rho_2$ is another point in $\mathcal{W}_\mathbf{q}^-$, for $i= 0, \dots, n^\prime$, $d(F^i(\rho_2), F^i(\rho^\prime) ) \leq 2 \varepsilon_0 + C\varepsilon_0 + \delta_0$.  For convenience, set $\varepsilon_2 =2 \varepsilon_0+ \delta_0 + C\varepsilon_0$ and note that $\varepsilon_2$ can be arbitrarily small depending on $\varepsilon_0$. As a consequence, we have proven the following

\begin{lem}\label{help_def_jacobian_2_bis}
If $\mathcal{W}_\mathbf{q}^- \neq \emptyset$, there exists $\rho^\prime \in \mathcal{T}$ such that $\forall l \in \{0, \dots, nn_0-1 \}$ and for any $\rho \in \mathcal{W}_\mathbf{q}^-$, $d(F^{l}(\rho), F^l(\rho^\prime)) \leq \varepsilon_2$.
\end{lem}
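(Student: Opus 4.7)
The strategy is to formalize the shadowing argument already sketched informally in the paragraph preceding the statement. My plan is to first construct an explicit pseudo-orbit in $\mathcal{T}$ from the refined partition data, then appeal to the shadowing lemma to produce $\rho^\prime$, and finally upgrade the pointwise conclusion to a uniform statement over $\mathcal{W}_\mathbf{q}^-$ using that all points of $\mathcal{W}_\mathbf{q}^-$ share the same itinerary in $\mathcal{Q}$.

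First, I would fix once and for all a reference point $\rho_\star \in \mathcal{W}_\mathbf{q}^-$ (possible since $\mathcal{W}_\mathbf{q}^-$ is assumed non-empty). For each $l \in \{0,\dots,nn_0-1\}$, write $l = i n_0 + j$ with $0 \leq i \leq n-1$, $0 \leq j \leq n_0-1$. The definition $\mathcal{W}_{q_i} = \bigcap_{j=0}^{n_0-1} F^{-j}(\mathfrak{A}_{j,i})$ gives $F^l(\rho_\star) \in \mathfrak{A}_{j,i} \in \mathcal{Q}$, and I set $\rho_l \coloneqq \rho_{\mathfrak{A}_{j,i}} \in \mathcal{T}$, the reference point in $\mathcal{T}$ fixed when $\mathcal{Q}$ was constructed. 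The inclusion $\mathfrak{A}_{j,i} \subset B(\rho_l, 2\varepsilon_0)$ then yields $d(\rho_l, F^l(\rho_\star)) \leq 2\varepsilon_0$.

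Next, I would check that $(\rho_0, \dots, \rho_{nn_0-1})$ is a $C\varepsilon_0$-pseudo-orbit for $F$ inside $\mathcal{T}$: by the triangle inequality and the Lipschitz continuity of $F$ on a fixed compact set containing the pseudo-orbit,
$$d(F(\rho_l), \rho_{l+1}) \leq d(F(\rho_l), F^{l+1}(\rho_\star)) + d(F^{l+1}(\rho_\star), \rho_{l+1}) \leq (\mathrm{Lip}(F)+1)\cdot 2\varepsilon_0.$$
Provided $\varepsilon_0$ is small enough, which I would include as one of the standing smallness requirements on $\varepsilon_0$, I can invoke the shadowing lemma for the hyperbolic set $\mathcal{T}$ (\cite{KH}, Section 18.1) to obtain a point $\rho^\prime \in \mathcal{T}$ whose orbit $\delta_0$-shadows the pseudo-orbit, i.e.\ $d(\rho_l, F^l(\rho^\prime)) \leq \delta_0$ for all admissible $l$.

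Finally, to upgrade the conclusion to one uniform in $\rho \in \mathcal{W}_\mathbf{q}^-$, I would observe that the construction of the pseudo-orbit used only the itinerary $(\mathfrak{A}_{j,i})$, which is common to \emph{every} point of $\mathcal{W}_\mathbf{q}^-$. Hence for any $\rho \in \mathcal{W}_\mathbf{q}^-$ the point $F^l(\rho)$ still lies in $\mathfrak{A}_{j,i}$, so $d(F^l(\rho), \rho_l) \leq 2\varepsilon_0$, and a last triangle inequality gives
$$d(F^l(\rho), F^l(\rho^\prime)) \leq d(F^l(\rho), \rho_l) + d(\rho_l, F^l(\rho^\prime)) \leq 2\varepsilon_0 + \delta_0 \leq \varepsilon_2,$$
with the value $\varepsilon_2 = 2\varepsilon_0 + C\varepsilon_0 + \delta_0$ used in the text (the extra $C\varepsilon_0$ absorbs any buffer one wants for later uses). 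There is no genuine obstacle here; the only subtle point is compatibility of the shadowing constants $(\varepsilon_0, \delta_0)$, which is secured by the standing assumption that $\varepsilon_0$ is taken sufficiently small relative to the hyperbolic data of $F$ on $\mathcal{T}$.
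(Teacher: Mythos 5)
Your proof is correct and follows essentially the same route as the paper: build the pseudo-orbit $(\rho_l)$ from the reference points $\rho_{\mathfrak{A}} \in \mathcal{T}$ determined by the common itinerary, apply the shadowing lemma to get $\rho^\prime$, and close with a triangle inequality that works for every $\rho \in \mathcal{W}_\mathbf{q}^-$ because the itinerary is shared. The observation that the pseudo-orbit depends only on the word $\mathbf{q}$ and not on the chosen base point is exactly the paper's reason the bound is uniform, just stated a bit more explicitly.
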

We fix any $\rho^\prime$ satisfying the conclusions of this lemma and we arbitrarily set  (recall also the definition of $J^u_n(\rho)$ in (\ref{Def_jacobian}) for $\rho \in \mathcal{T}$ )
\begin{equation}\label{def_jacobian_2_bis}
J_\mathbf{q}^u \coloneqq J^u_{n n_0}(\rho^\prime) = \prod_{j=0}^{n-1} J^u_{n_0} (F^{j n_0} (\rho^\prime) )
\end{equation}
If $\rho^\prime_1$ is another point satisfying this conclusion, we have $d(F^i(\rho^\prime), F^i(\rho^\prime_1)) \leq 2 \varepsilon_2$ for $i \in \{0, \dots, n^\prime \}$ and in virtue of Corollary (\ref{cor_control_jacobian}), 
$$ J^u_{nn_0}(\rho^\prime) \sim J^u_{n n_0}(\rho^\prime_1)$$
Hence, up to global multiplicative constant, the definition of this unstable Jacobian is independent of the choice of $\rho^\prime$. Notice that if $\mathcal{W}_\mathbf{q}^-  \cap \mathcal{T} \neq \emptyset$, any $\rho^\prime \in \mathcal{T} \cap \mathcal{W}_\mathbf{q}^-$ satisfies the conclusions of Lemma \ref{help_def_jacobian_2_bis} and $J_\mathbf{q}^u \sim J^u_{nn_0}(\rho^\prime)$.

 We have the following facts concerning these local unstable Jacobian : 
\begin{lem}\label{Lemma_unstable_Jacobian}
If $\varepsilon_0$ is small enough, the following holds. 
There exists $C >0$ such that for all $\mathbf{q} \in \mathcal{A}^n$ and for all $\rho \in \mathcal{W}_\mathbf{q}^-$,  we have
\begin{itemize}
\item $||d_\rho F^{nn_0} || \leq C J^u_\mathbf{q}$
\item$ d(F^{nn_0-1}(\rho),  \mathcal{T}_+) \leq  C \left(J^u_{\mathbf{q}}\right)^{-1} d(\rho, \mathcal{T}_+) $
\item $ d(\rho, \mathcal{T}_-) \leq C \left(J^u_{\mathbf{q}}\right)^{-1} d(F^{nn_0-1}(\rho), \mathcal{T}_-) $
\end{itemize}
\end{lem}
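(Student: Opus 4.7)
The plan is to exploit the shadowing point $\rho'\in\mathcal{T}$ produced by Lemma~\ref{help_def_jacobian_2_bis} and invoke Lemma~\ref{Local_hyperbolic_2} after swapping the roles of its $\rho$ and $\rho'$. Taking $\varepsilon_0$ (hence $\varepsilon_2$) small enough, the uniform Lipschitz bound for $F$ on a precompact subset of $\widetilde{D}$ extends the shadowing inequality $d(F^i(\rho),F^i(\rho'))\leq\varepsilon_2$ from $i\in\{0,\dots,nn_0-1\}$ to $i\in\{0,\dots,nn_0\}$ with constant $\varepsilon_1$. Corollary~\ref{cor_control_jacobian} then provides $J^u_m(\rho'')\sim J^u_m(\rho')\sim J^u_{\mathbf{q}}$ for any $\rho''\in\mathcal{T}$ whose orbit $\varepsilon_1$-shadows that of $\rho$ up to time $m\leq nn_0$, a comparison that will be used freely below.

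Assertion~(1) is then an immediate application of equation~\eqref{control_of_jacobian} in Lemma~\ref{Local_hyperbolic_2}, with our $\rho'\in\mathcal{T}$ in the role of the lemma's $\rho$, our $\rho$ in the role of its $\rho'$, and $n=nn_0$: it gives $\|d_\rho F^{nn_0}\|\leq C J^u_{nn_0}(\rho')=C J^u_{\mathbf{q}}$.

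For~(2), I would pick $\rho_\ast\in\mathcal{T}_+$ minimizing $d(\rho,\mathcal{T}_+)=d(\rho,\rho_\ast)$. The local product structure of Lemma~\ref{classical_hyperbolic}(7) identifies a neighborhood of $\mathcal{T}$ inside $\mathcal{T}_+$ with a union of local unstable manifolds of points of $\mathcal{T}$, so one finds $\rho''\in\mathcal{T}$ close to $\rho'$ with $\rho_\ast\in W_u(\rho'')$. Applying the second inequality of~\eqref{close_to_leaves}, with this $\rho''$ in the role of the lemma's $\rho\in\mathcal{T}$, our $\rho$ in the role of its $\rho'$, and $n=nn_0-1$, gives
\[
d\bigl(F^{nn_0-1}(\rho),W_u(F^{nn_0-1}(\rho''))\bigr)\leq \frac{C}{J^u_{nn_0-1}(\rho'')}\,d(\rho,W_u(\rho'')).
\]
The left side dominates $d(F^{nn_0-1}(\rho),\mathcal{T}_+)$ because $W_u(F^{nn_0-1}(\rho''))\subset\mathcal{T}_+$; the right side is bounded above by $d(\rho,\rho_\ast)=d(\rho,\mathcal{T}_+)$ since $\rho_\ast\in W_u(\rho'')$; together with $J^u_{nn_0-1}(\rho'')\sim J^u_{\mathbf{q}}$ this closes~(2).

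Assertion~(3) is the mirror statement and I would prove it by a parallel argument using stable leaves: select $\rho_\ast\in\mathcal{T}_-$ minimizing $d(\rho,\mathcal{T}_-)$ and $\rho''\in\mathcal{T}$ close to $\rho'$ such that $\rho_\ast\in W_s(\rho'')$, then observe that the inclusion $W_s(\rho'')\subset\mathcal{T}_-$ combined with $\rho_\ast\in W_s(\rho'')$ forces $d(\rho,W_s(\rho''))=d(\rho,\mathcal{T}_-)$; the first inequality of~\eqref{close_to_leaves} gives
\[
d(\rho,\mathcal{T}_-)\leq\frac{C}{J^u_{nn_0-1}(\rho'')}\,d\bigl(F^{nn_0-1}(\rho),W_s(F^{nn_0-1}(\rho''))\bigr).
\]
The hard part will be replacing $W_s(F^{nn_0-1}(\rho''))$ on the right by the full set $\mathcal{T}_-$, for the obvious inclusion goes the wrong way (the closest stable leaf of $\mathcal{T}_-$ to $F^{nn_0-1}(\rho)$ need not be $W_s(F^{nn_0-1}(\rho''))$). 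My plan for handling this obstacle is to work in the adapted coordinate charts of Lemma~\ref{adapted_chart_0} at $\rho'$ and $F^{nn_0-1}(\rho')$, in which $F^{nn_0-1}$ is a small perturbation of a linear map with unstable eigenvalue $\sim J^u_{\mathbf{q}}$; the Markov-type correspondence between the local unstable Cantor cross-sections at the two base points (available because $\rho\in\mathcal{W}_{\mathbf{q}}^-$ constrains the orbit to the chosen cells at each step) then shows that $d(F^{nn_0-1}(\rho),\mathcal{T}_-)\sim\nu\,d(\rho,\mathcal{T}_-)$ up to a uniform multiplicative constant, which is exactly~(3).
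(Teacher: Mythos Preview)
Your treatment of (1) and (2) is essentially the paper's approach. One point you gloss over in (2): to apply Lemma~\ref{Local_hyperbolic_2} you need $d(F^i(\rho''),F^i(\rho))\leq\varepsilon_1$ for \emph{all} $i\in\{0,\dots,nn_0-1\}$, and ``$\rho''$ close to $\rho'$'' does not by itself guarantee this. The paper makes the specific choice $\rho''=W_u(\zeta)\cap W_s(\rho')$ (your $\zeta$ being the base point of the leaf through $\rho_\ast$); because $\rho''\in W_s(\rho')$, the forward iterates contract toward those of $\rho'$, which in turn shadow those of $\rho$. You should make this explicit.

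For (3) there is a genuine gap. You pick the minimizer $\rho_\ast$ for $d(\rho,\mathcal{T}_-)$, which pins down the \emph{left} side of the desired inequality but leaves the right side as $d(F^{nn_0-1}(\rho),W_s(F^{nn_0-1}(\rho'')))$, and as you correctly note the inclusion $W_s\subset\mathcal{T}_-$ goes the wrong way there. Your proposed ``Markov-type correspondence'' fix is not a proof: the claim that $d(F^{nn_0-1}(\rho),\mathcal{T}_-)\sim\nu\,d(\rho,\mathcal{T}_-)$ is precisely the content of (3) (together with its converse), so invoking it is circular.

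The fix is simply to run the mirror of your (2) argument. Choose $\zeta\in\mathcal{T}$ with $d(F^{nn_0-1}(\rho),\mathcal{T}_-)=d(F^{nn_0-1}(\rho),W_s(\zeta))$, set $\zeta'=W_s(\zeta)\cap W_u(F^{nn_0-1}(\rho'))$, and let $\zeta_0=F^{-(nn_0-1)}(\zeta')$. Since $\zeta'\in W_u(F^{nn_0-1}(\rho'))$, the \emph{backward} orbit of $\zeta'$ shadows that of $F^{nn_0-1}(\rho')$, hence of $\rho$, so Lemma~\ref{Local_hyperbolic_2} applies with $\zeta_0$ in the role of its $\rho$. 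The first inequality of~\eqref{close_to_leaves} then gives
\[
d(\rho,\mathcal{T}_-)\leq d(\rho,W_s(\zeta_0))\leq \frac{C}{J^u_{nn_0-1}(\zeta_0)}\,d\bigl(F^{nn_0-1}(\rho),W_s(\zeta')\bigr)=\frac{C}{J^u_{nn_0-1}(\zeta_0)}\,d(F^{nn_0-1}(\rho),\mathcal{T}_-),
\]
the last equality because $W_s(\zeta')=W_s(\zeta)$. This is exactly what the paper means by ``similar, by inverting the time direction''.
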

\begin{proof}
The three points are consequences of Lemma \ref{Local_hyperbolic_2}. The first point is an easy one. Concerning the other two, first recall that $\mathcal{T}_+$ (resp. $\mathcal{T}_-$) is, in a neighborhood of $\mathcal{T}$, equal to the union of local unstable (resp. stable manifolds). Let's consider the second inequality. The proof of the third one is similar, by inverting the time direction. We fix $\zeta \in \mathcal{T}$ such that $d(\rho, \mathcal{T}_+) = d(\rho, W_u(\zeta) )$ and $d(\zeta, \rho) \leq  2 \varepsilon_0$. Recall that by Lemma \ref{help_def_jacobian_2_bis}, there exists $\rho^\prime$ such that $\forall i \in \{0, \dots, nn_0-1 \}$ and $d(F^{i}(\rho), F^i(\rho^\prime)) \leq \varepsilon_2$. We hence consider the unique point $\zeta^\prime \in W_u(\zeta) \cap W_s(\rho^\prime)$. Since $\zeta^\prime \in W_s(\rho^\prime)$, $d(F^i(\zeta^\prime) , F^i(\rho^\prime)  ) \leq C J_i^s(\rho) d(\rho^\prime, \zeta^\prime)$ for all $0 \leq i \leq nn_0-1$. If $\varepsilon_0$ is small enough, we may assume that $C J_i^s(\rho) d(\rho^\prime, \zeta^\prime) \leq \frac{1}{2}\varepsilon_1$ for $0 \leq i \leq n-1$ (where $\varepsilon_1$ appears in Lemma \ref{Local_hyperbolic_2}). As a consequence, $J^u_{nn_0}(\zeta^\prime) \sim J^u_{nn_0}(\rho^\prime) \sim J^u_\mathbf{q}$. Moreover, 
$d(F^i(\rho) , F^i(\zeta^\prime) ) \leq \frac{1}{2} \varepsilon_1+ \varepsilon_2$ for all $0 \leq i \leq nn_0-1$. Hence, if $\varepsilon_2 \leq \frac{1}{2} \varepsilon_1$, 
$$d(F^{nn_0-1}(\rho), \mathcal{T}_+) \leq d(F^{nn_0-1}(\rho), W_u(F^n(\zeta^\prime))) \leq C J_{nn_0}^u(\zeta^\prime) d(\rho, W_u(\zeta^\prime)) \leq C J_\mathbf{q}^u d(\rho, \mathcal{T}_+)$$
\end{proof}

\subsubsection{Numerology}

In this subsection, we introduce the parameters we will work with. Recall that $\varepsilon$ has been fixed. We set $\delta = 1/2 - \varepsilon$ : it is related to the regularity of the escape function $g$. For technical reasons, we also introduce 
\begin{equation}\label{exponent_delta}
\delta_0 = \frac{1}{2} -\frac{ \varepsilon}{2} ; \; \delta_1 = \delta - \frac{\varepsilon}{2} = \frac{1}{2}- \frac{3\varepsilon}{2}
\end{equation}
satisfying $\delta_1 < \delta < \delta_0 < 1/2$. 
Recall that $n_0$ has been chosen in (\ref{nice_formula}) and that 
$$ \lambda_{\max} = \sup_{ \rho \in \mathcal{T}} \limsup_{ n \to + \infty} \frac{1}{n}\log J^u_n(\rho) $$
We define precisely the parameter $\vartheta_\varepsilon$ appearing in Proposition \ref{Prop_Key_2} as \begin{equation}\label{definition_vartheta}
\vartheta_\varepsilon = \frac{1-4 \varepsilon}{6 \lambda_{\max}(1+\varepsilon)^2}
\end{equation}
The precise value of $\vartheta_\varepsilon$ will be used in the following : what is important is that $\vartheta_\varepsilon = \frac{1}{6 \lambda_{\max}} - O(\varepsilon) < 1/6 \lambda_{\max}$. Finally, we set 
$$ n=n(h) = \left\lfloor \frac{\vartheta_\varepsilon}{n_0} \log \frac{1}{h} \right\rfloor $$
which satisfies
$$ e^{\lambda_{\max}(1+ \varepsilon) nn_0} \leq h^{-\frac{1-4 \varepsilon}{6(1+\varepsilon)}}$$
In particular, we assume that $\varepsilon$ is small enough to ensure that 
$$ h^{\delta_0} h^{-\frac{1-4 \varepsilon}{6(1+\varepsilon)}}\leq h^{1/3}$$
This will constraints the width of the evolved coherent states. 

\subsubsection{Reduction to $L^2$-bounds of an evolved coherent state.} 
We can find a uniform $T_0 \in \N$ such that if $\rho \in \mathcal{V}_\infty$, there exists $k \in \{ - T_0, \dots, T_0 \}$ such that $F^{k}(\rho)$ "falls" in the hole - that is, either there exists $k \in \{1, \dots, T_0 \}$ such that $F^i(\rho) \in \widetilde{D}$ for $1 \leq i \leq k-1$ and $F^k(\rho) \in U \setminus \widetilde{D}$ or there exists $k \in \{1, \dots, T_0 \}$ such that $F^{-i}(\rho) \in \widetilde{A}$ for $1 \leq i \leq k-1$ and $F^{-k}(\rho) \in U \setminus \widetilde{A}$. By standard properties of the Fourier integral operators, each component $(M^{T_0})_{i j }$ of $M^{T_0}$ is a Fourier integral operator associated with the component $(F^{T_0})_{i j }$ of $F^{T_0}$. In particular, $\WF^\prime (M^{T_0}) \subset \gr( F^{T_0})$. 

Let us study $M^{2T_0 + nn_0}=M^{T_0}M^{nn_0}M^{T_0}$, and let's decompose $M^{nn_0} = \sum_{\mathbf{q} \in \mathcal{A}_\infty^n} M_{\mathbf{q}}$.  If $\mathbf{q}=q_0 \dots q_{n-1} \in \mathcal{A}^n_\infty$ and if there exists an index $i \in \{0, \dots, n-1 \} $ such that $q_i = \infty$, one can isolate this index $i$ and trap $A_{q_i}$ between two Fourier integral operators $M_1, M_2$, belonging to a finite family of FIO associated to $F^{T_0}$, so that we can write 

$$M^{T_0} M_\mathbf{q} M^{T_0}  = B_1 M_1 A_{\infty} M_2 B_2$$ 
where $B_1, B_2$ satisfy the $L^2$-bound : 
$$ ||B_1|| \times ||B_2|| \leq C(||\alpha_h||_\infty)^{nn_0-1} =O(h^{-K})$$
for some integer $K$, and we have $M_1 A_\infty M_2 = \hinf $, with constants that can be chosen independent of $\mathbf{q}$. Hence, the same is true for $M^{T_0} M_\mathbf{q} M^{T_0}$. So, we can write, keeping in mind that $|\mathcal{A}|^n = O(h^{-K})$ for some $K>0$ : 
\begin{align*}
M^{nn_0+ 2 T_0} = &\sum_{ \textbf{q} \in \mathcal{A}^n_\infty} M^{T_0} M_\mathbf{q} M^{T_0} \\
= &\sum_{ \textbf{q} \in \mathcal{A}^n} M^{T_0} M_\textbf{q} M^{T_0} + \hinf \\
= & M^{T_0} \left( \sum_{ \textbf{q} \in \mathcal{A}^n} M_\textbf{q} \right)M^{T_0} + \hinf \\
\end{align*}
Let us note 
\begin{equation}\mathfrak{M}= M^{n_0}(\Id - A_\infty) = M^{n_0} \sum_{q \in \mathcal{A}} A_q
\end{equation}
 We have shown the following lemma : 

\begin{lem}
There exists $T_0 \in \N$ such that
$$ M^{2T_0 + nn_0} = M^{T_0}\mathfrak{M}^n M^{T_0} + \hinf$$ 
\end{lem}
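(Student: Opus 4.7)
The plan is to formalize the calculation carried out in the paragraph immediately preceding the lemma. The entire content of the lemma is that, among the $|\mathcal{A}_\infty|^n$ terms obtained by expanding $M^{nn_0} = \sum_{\mathbf{q}\in\mathcal{A}^n_\infty} M_\mathbf{q}$, only those words $\mathbf{q}$ with no $\infty$-letter survive once we sandwich by $M^{T_0}$ on each side.

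First I would fix $T_0$ as the uniform escape time already described: by the fact that each $\mathcal{V}_{q}$ ($q\in\mathcal{A}$) covers a neighborhood of $\mathcal{T}$, any point of $\mathcal{V}_\infty$ belongs to $U\setminus K$, where $K$ is a fixed neighborhood of $\mathcal{T}$, and therefore leaves $\widetilde D$ in forward time at most $T_0$ or leaves $\widetilde A$ in backward time at most $T_0$. A compactness argument on $\overline{\mathcal{V}_\infty}\cap \widetilde A$ (or rather on the relevant part of the wavefront) gives such a uniform $T_0$.

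Next I would insert the partition of unity: write $M^{nn_0+2T_0} = M^{T_0}\bigl(\sum_{\mathbf{q}\in\mathcal{A}_\infty^n} M_\mathbf{q}\bigr) M^{T_0}$, and split the sum according to whether $\mathbf{q}\in \mathcal{A}^n$ (good) or $\mathbf{q}$ contains at least one $\infty$ (bad). The good terms reassemble exactly into $M^{T_0}\mathfrak{M}^n M^{T_0}$ by definition of $\mathfrak{M} = M^{n_0}\sum_{q\in\mathcal{A}} A_q$. For a bad word, pick (say) the smallest index $i$ with $q_i=\infty$, and factor $M^{T_0} M_\mathbf{q} M^{T_0} = B_1 \cdot (M_1 A_\infty M_2) \cdot B_2$, where $B_1, B_2$ collect the surrounding factors, and $M_1, M_2$ are Fourier integral operators belonging to finite families associated respectively with $F^{k_1}$ and $F^{k_2}$ for some $0\leq k_1, k_2 \leq T_0$. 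The choice of $T_0$ means that $(F^{k_2})^{-1}(\mathrm{ess\,supp}\,\chi_\infty) \cap (F^{k_1})(\widetilde A)$ leaves either $\widetilde A$ or $\widetilde D$; combined with $\WF'(M^{T_0})\subset \mathrm{Gr}(F^{T_0})$, this forces $M_1 A_\infty M_2 = O(h^\infty)_{L^2\to L^2}$, uniformly over the finite family.

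The main obstacle is controlling the summation: the number of bad words is bounded by $|\mathcal{A}_\infty|^n$, and since $n = n(h) \sim \vartheta_\varepsilon \log(1/h)/n_0$, this count is $O(h^{-K})$ for some $K>0$. Similarly, the bound $\|M_\mathbf{q}\|\leq C h^{-K'}$ established earlier gives $\|B_1\|\,\|B_2\| = O(h^{-K''})$ uniformly in $\mathbf{q}$. Multiplying these polynomial losses by the $O(h^\infty)$ factor coming from $M_1 A_\infty M_2$ still yields $O(h^\infty)$, so the contribution of all bad words can be absorbed in the error. This gives
\[
M^{nn_0+2T_0} = M^{T_0}\mathfrak{M}^n M^{T_0} + O(h^\infty),
\]
which is the claim.
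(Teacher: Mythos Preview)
Your proposal is correct and follows essentially the same approach as the paper: expand $M^{nn_0}$ via the partition $(A_q)_{q\in\mathcal{A}_\infty}$, observe that the good words reassemble into $\mathfrak{M}^n$, and kill each bad word by trapping $A_\infty$ between two FIOs from a finite family so that the escape property of $\mathcal{V}_\infty$ forces $M_1 A_\infty M_2 = O(h^\infty)$, then absorb the polynomial losses from the word count and from $\|B_1\|\,\|B_2\|$. The only cosmetic difference is that the paper does not commit to the \emph{smallest} index $i$ and simply states that $M_1,M_2$ belong to a finite family of FIOs associated with $F^{T_0}$, whereas you allow $F^{k_1},F^{k_2}$ with $0\le k_1,k_2\le T_0$; either way one needs at least $T_0$ applications of $M$ on one side of $A_\infty$ (guaranteed by the outer $M^{T_0}$ factors), and your wavefront-set sentence, while slightly informal, captures exactly this.
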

Let us now look at what this equation implies on the trace of $M^{2T_0+nn_0}$. In the following computations, we use : 
If $A$ is an Hilbert-Schmidt operator and $B$ bounded,\begin{enumerate}[label= (\roman*)]
\item $\tr(A^*A)  =||A||_{HS}^2$ ; 
\item $||AB||_{HS} \leq ||B|| \times ||A||_{HS}$  ; $||BA||_{HS} \leq ||B|| \times ||A||_{HS}$ 
\end{enumerate} 

\begin{align*}
\tr \left( \left(M_t^{2T_0+nn_0} \right)^* M_t^{2T_0 + nn_0} \right) &=||M_t^{2T_0 + nn_0}||_{HS}^2 \\
&=  \left| \left| M_t^{T_0} \mathfrak{M}^n_t M_t^{T_0} \right| \right|_{HS}^2 + \hinf \\
&\leq ||M_t^{T_0}||^4 ||\mathfrak{M}_t^n||_{HS}^2 + \hinf \\
&\leq  ||M_t^{T_0}||^4 \tr\left(  \left(\mathfrak{M}_t^n\right)^* \mathfrak{M}_t^n \right) + \hinf 
\end{align*}
Hence, is is enough to find the expected upper bound (\ref{key_equation}) for $\tr\left(  \left(\mathfrak{M}_t^n\right)^* \mathfrak{M}_t^n \right)$ to obtain the same kind of upper bounds for $\tr\left( \left(M_t(z)^{N}\right)^* M_t(z)^N \right)$. 

\paragraph{Evolution in local adapted charts. } 

We will be interested in the evolution of coherent states through the action of $\mathfrak{M}$. It will be more convenient to work in the charts $\kappa_q$ in which the action of $F$ is well adapted to the position-momentum coordinate $(x,\xi)$. For this purpose, we start by writing, 
 $$ \mathfrak{M}_t^n =e^{-tG} \mathfrak{M}^{n-1}\sum_{q \in \mathcal{A} } M^{n_0} A_q e^{tG}$$
 Recall that $B^\prime_q B_q = I+ \hinf$ microlocally near $\supp(a_q)$, hence,
 $$ \mathfrak{M}_t^n =e^{-tG} \mathfrak{M}^{n-1} \sum_{q \in \mathcal{A}}   M^{n_0} A_q B_q^\prime B_q e^{tG} B_q^\prime B_q+ \hinf $$
 Let's note 
 \begin{equation}\label{tilde_E_t}
 \tilde{E}_t = B_q e^{tG} B_q^\prime 
 \end{equation}
 We also fix $\widetilde{A}_q = \op(\tilde{a}_q)$ such that $\WF(\widetilde{A}_q) \subset \mathcal{W}_q$ and $\tilde{a}_q=1$ near $\supp(\chi_q)$. 
This gives : 
\begin{align*}
 \tr\left(  \left(\mathfrak{M}_t^n\right)^* \mathfrak{M}_t^n \right)&=\sum_{q,p \in \mathcal{A}} \tr \left(  \left( e^{-tG} \mathfrak{M}^{n-1} M^{n_0} A_p B_p^\prime \tilde{E}_t B_p \widetilde{A}_p \right)^* e^{-tG} \mathfrak{M}^{n-1} M^{n_0} A_q B_q^\prime \tilde{E}_t  B_q \widetilde{A}_q\right) + \hinf  \\
&=\sum_{q,p \in \mathcal{A}} \tr \left( B_p^* \left( e^{-tG} \mathfrak{M}^{n-1} M^{n_0} A_p B_p^\prime \tilde{E}_t  \right)^* e^{-tG}\mathfrak{M}^{n-1} M^{n_0} A_q B_q^\prime  \tilde{E}_t  B_q \widetilde{A}_q \widetilde{A}_p^* \right) + \hinf \\
&=\sum_{q \in \mathcal{A}} \tr \left( B_q^* \left(  e^{-tG}\mathfrak{M}^{n-1} M^{n_0} A_q B_q^\prime \tilde{E}_t \right)^* e^{-tG} \mathfrak{M}^{n-1} M^{n_0} A_q B_q^\prime \tilde{E}_t B_q \widetilde{A}_q \widetilde{A}_q^* \right) + \hinf \\
&\leq C Q \sup_{q \in \mathcal{A}} \tr \left( \left(  e^{-tG} \mathfrak{M}^{n-1} M^{n_0} A_q B_q^\prime \tilde{E}_t \right)^* e^{-tG}\mathfrak{M}^{n-1} M^{n_0} A_q B_q^\prime \tilde{E}_t \right) + \hinf 
\end{align*}
where $C$ is such that $C_0 ||B_q|| \times || B_q \widetilde{A}_q \widetilde{A}_q^* || \leq C$ for all $q \in \mathcal{A}$ (and $0 < h \leq 1$) and $Q= |\mathcal{A}|$. The passage from the second to the third line holds since $\widetilde{A}_q\widetilde{A}_p^* = \hinf$ when $q \neq p$, in virtue of the fact that $\mathcal{W}_p \cap \mathcal{W}_q \neq \emptyset$.  This computations show that it is enough to control, uniformly in $q$, the trace 

\begin{equation}
\tr \left( \left(  e^{-tG} \mathfrak{M}^{n-1} M^{n_0} A_q B_q^\prime \tilde{E}_t \right)^* e^{-tG} \mathfrak{M}^{n-1} M^{n_0} A_q B_q^\prime \tilde{E}_t \right)
\end{equation}
since we now have : 
\begin{equation}\label{equation_trace_1}
 \tr \left( \left(M_t^{2T_0+nn_0} \right)^* M_t^{2T_0 + nn_0} \right) \leq C Q \sup_{q \in \mathcal{A}} \tr \left( \left(  e^{-tG} \mathfrak{M}^{n-1} M^{n_0} A_q B_q^\prime \tilde{E}_t \right)^* e^{-tG}\mathfrak{M}^{n-1} M^{n_0} A_q B_q^\prime \tilde{E}_t \right) + \hinf 
\end{equation}

From now on, we will note $\rho, \zeta$, etc. points in $U$ and $\hat{\rho}, \hat{\zeta}$, etc. their images in the local charts $\kappa_q$. 
The resolution of identity of Lemma \ref{trace_coherent_state}, valid at the level of operators on $L^2(\R)$, extends to the case of matrix operator acting on $L^2(\R_J)$, in the following sense :

$$ \tr(A) =\sum_{j=1}^J  \frac{1}{2\pi h} \int_{T^*\R}  < A_{jj} \varphi_{\hat{\rho}}, \varphi_{\hat{\rho}}> d\hat{\rho}$$ 
Hence, if $K = e^{-tG} \mathfrak{M}^{n-1} M^{n_0} A_q B_q^\prime \tilde{E}_t $, we have 

\begin{align*}
\tr \left(K^*K \right) &= \sum_{j=1}^J  \frac{1}{2\pi h} \int_{T^*\R}  < (K^*K)_{jj} \varphi_{\hat{\rho}}, \varphi_{\hat{\rho}}> d\hat{\rho}\\
& = \sum_{1 \leq i,j \leq J} \frac{1}{2\pi h} \int_{T^*\R}  < K_{ij} \varphi_{\hat{\rho}}, K_{ij} \varphi_{\hat{\rho}}> d\hat{\rho}\\
\end{align*}
Since $A_q B^\prime_q$ is diagonal with only one non-zero diagonal entry in position $j_q$, $B_{ij}=0$ except when $j=j_q$. 
We can write : 
\begin{equation}\label{equation_trace_2}
\tr \left( \left(  e^{-tG} \mathfrak{M}^{n-1} M A_q B_q^\prime \right)^* e^{-tG} \mathfrak{M}^{n-1} M^{n_0} A_q B_q^\prime \tilde{E}_t \right) =\frac{1}{2\pi h} \int_{T^*\R}  \left| \left| e^{-tG}\mathfrak{M}^{n-1} M^{n_0} A_q B_q^\prime \tilde{E}_t \tilde{\varphi}_{\hat{\rho}} \right| \right|^2 d\hat{\rho}
\end{equation}
 where $ \tilde{\varphi}_{\hat{\rho}} $ is the column vector with only one non-zero entry equal to $\varphi_{\hat{\rho}}$ in position $j_q$.

\subsubsection{End of the proof. }

The main ingredient for the proof of the improved fractal Weyl law, which is also the main novelty of this article, is a good control for 
\begin{equation} w(\hat{\rho}) \coloneqq  \left| \left| e^{-tG}\mathfrak{M}^{n-1} M^{n_0} A_q B_q^\prime \tilde{E}_t \tilde{\varphi}_{\hat{\rho}} \right| \right|^2
\end{equation}
This weight $w$ depends on the parameter $t$ which governs the weight of the escape function. We omit to write this dependence explicitly : indeed, what is important is that once $t$ is fixed sufficiently large, $w$ will satisfy the expected decay in Proposition \ref{Prop_Key}.  
  To state this bound, let's introduce, for $\rho \in \mathcal{W}_\mathbf{q}^-$, 
$$ \Pi_{\alpha, \mathbf{q}}(\rho) =  \prod_{i=0}^{nn_0-1} \alpha\left( F^i(\rho) \right)$$
where 
\begin{equation}
\alpha(\rho) = \exp \left( -\im z t_{ret}(\rho) \right) \; ; \; \rho \in \bigcup_{q \in \mathcal{A}} \mathcal{W}_q
\end{equation}
so that, for $\rho \in \bigcup_{q \in \mathcal{A}}  \mathcal{W}_q$,  we have $\alpha_h(\rho) = \alpha(\rho) + O\left( h^{1^-} S_{0^+} \right)$. 
We also introduce the following neighborhood of $\mathcal{T}$ \begin{equation}
\mathcal{T}_{\delta,\delta_1} = \left\{ \rho \, ; \, d(\rho, \mathcal{T}_-) \leq h^{\delta}, d(\rho, \mathcal{T}_+) \leq h^{\delta_1} \right\} \subset U \subset T^* \R_J
\end{equation}

\begin{prop}\label{Prop_Key}
For any $L>0$, there exists $t= t(\varepsilon, L)$ such that the following holds. Let $\hat{\rho} \in \R^2$. If $\hat{\rho} \not \in \kappa_q(\mathcal{W}_q)$, then $w(\hat{\rho}) = O\left( \left( \frac{h}{\langle \hat{\rho} \rangle}\right)^\infty \right)$ with uniform constants.  Otherwise, assume that $\hat{\rho}= \kappa_q(\rho)  \in \kappa_q( \mathcal{W}_q)$. We have 
\begin{enumerate}
\item If, for all $\mathbf{q} \in \mathcal{A}^{n+1}$, $\rho \not \in \mathcal{W}_\mathbf{q}^-$, then $w(\hat{\rho}) = \hinf$ with uniform constants. 
\item Otherwise, there exists a unique $\mathbf{q} \in \mathcal{A}^{n+1}$ such that $\rho \in \mathcal{W}_\mathbf{q}^-$. In that case, for some uniform constants $C>0$ and $h_0 >0$, one has, for $0 < h \leq h_0$, 
\begin{enumerate}[label= (\roman*)]
\item If $\rho \not \in \mathcal{T}_{\delta, \delta_1}$, $w(\hat{\rho}) \leq  h^L$ ; 
\item If $\rho \in \mathcal{T}_{\delta, \delta_1}$, $$w(\hat{\rho}) \leq C  \left( \Pi_{\alpha, \mathbf{q}}(\rho)\right)^2  \left(J^u_{\mathbf{q}}\right)^{d_H - 1 + \varepsilon} h^{(\delta_0- \delta)(d_H + \varepsilon) + \delta -1/2}.$$
\end{enumerate}
\end{enumerate}
\end{prop}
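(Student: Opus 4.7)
\textbf{Proof plan for Proposition \ref{Prop_Key}.}

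The easy cases (the states $\hat{\rho} \notin \kappa_q(\mathcal{W}_q)$ and the states $\rho \in \kappa_q(\mathcal{W}_q)$ with $\rho$ outside every $\mathcal{W}_\mathbf{q}^-$) are reduced to wavefront set considerations. For $\hat{\rho} \notin \kappa_q(\mathcal{W}_q)$, the coherent state $\tilde{\varphi}_{\hat{\rho}}$ is microlocalized outside the wavefront set of $B_q^\prime$, so repeated integrations by parts (or pseudolocality of $\op(\tilde{a}_q)$ inserted in $A_q B_q'$) yield the $O((h/\langle \hat{\rho}\rangle)^\infty)$ bound. For $\rho \in \kappa_q(\mathcal{W}_q)$ not lying in any $\mathcal{W}_\mathbf{q}^-$, at some step of the iteration $F^{in_0}(\rho)$ falls outside $\bigcup_{q \in \mathcal{A}} \mathcal{W}_q$, so it is damped by the corresponding cutoff $\sum_q A_q$ inside $\mathfrak{M}$, producing an $\hinf$ contribution.

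The main case is $\rho \in \mathcal{W}_\mathbf{q}^-$. I would iterate the propagation formula of Proposition~\ref{Prop_propagation_coherent_states} along the trajectory $\rho, F^{n_0}(\rho), \ldots, F^{(n-1)n_0}(\rho)$, working in the adapted charts $\kappa_{q_i}$ where, by Lemma~\ref{adapted_chart_0}, $E^u$ is horizontal. At each step, the coherent state picks up an amplitude factor $\alpha_h(F^{in_0}(\rho)) + O(h^{1-})$, producing the prefactor $\Pi_{\alpha,\mathbf{q}}(\rho)$, and its squeezing matrix composes with $d_\rho F^{n_0}$, whose norm at step $i$ is controlled by $J^u_{n_0}(F^{in_0}(\rho^\prime))$ (Lemma~\ref{Local_hyperbolic_2}). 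After $n$ iterations the state is (modulo a small error) of the form
\begin{equation*}
\mathfrak{M}^{n-1} M^{n_0} A_q B_q' \tilde{E}_t \tilde{\varphi}_{\hat{\rho}} \approx \Pi_{\alpha,\mathbf{q}}(\rho) \, T(\hat{\rho}_n) \mathcal{M}(\kappa_n) \Lambda_h (P\Psi_0),
\end{equation*}
where $\|\kappa_n\| \lesssim J^u_\mathbf{q}$, $P$ is a polynomial of bounded degree, and $\hat{\rho}_n$ is the chart-image of (a point close to) $F^{nn_0}(\rho)$. The numerology $n = \lfloor (\vartheta_\varepsilon/n_0)\log(1/h)\rfloor$ with $\vartheta_\varepsilon < 1/(6\lambda_{\max})$ is exactly what keeps the remainder $h^{N/2}\|\kappa_n\|^{3N}$ in Proposition~\ref{Prop_propagation_coherent_states} subleading for all $N$. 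The inclination estimate of Lemma~\ref{Lemma_linearized_dynamics} shows that the leading squeezing direction of $\mathcal{M}(\kappa_n)\Lambda_h$ is within distance $(J^u_\mathbf{q})^{-2}$ of the unstable tangent line at $\hat{\rho}_n$, so up to negligible error the evolved state is a Lagrangian state on a piece of $W^u(\rho_n^*)$ for some $\rho_n^* \in \mathcal{T}$ close to $F^{nn_0}(\rho)$, of amplitude $\sim \Pi_{\alpha,\mathbf{q}}(\rho) (J^u_\mathbf{q})^{-1/2} h^{-1/4}$ and support of length $\lesssim h^{1/2} J^u_\mathbf{q}$.

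Now I apply $e^{-tG}$. In the chart, $g$ is essentially $\log d(\cdot,\mathcal{T}_-) - \log d(\cdot,\mathcal{T}_+)$ cut off, so choosing $t$ large pushes the $L^\infty$ mass of the evolved state to a set
\begin{equation*}
X_-(\rho,\rho_n^*) = \{x \in \R : |x| \lesssim h^{1/2} J^u_\mathbf{q}, \ d((x,\psi_u'(x)), \mathcal{T}_-) \leq h^\delta\},
\end{equation*}
and produces an $O(h^L)$ error elsewhere (this is the source of case (i) in the statement and of the restriction to $\mathcal{T}_{\delta,\delta_1}$ via the symmetric stable-side bound, using the last two items of Lemma~\ref{Lemma_unstable_Jacobian} to transfer the conditions at $F^{nn_0}(\rho)$ back to $\rho$). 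After damping, the $L^2$ norm squared is bounded by
\begin{equation*}
C\,\Pi_{\alpha,\mathbf{q}}(\rho)^2 (J^u_\mathbf{q})^{-1} h^{-1/2} \operatorname{Len}(X_-(\rho,\rho_n^*)).
\end{equation*}
The final ingredient is Lemma~\ref{Lemma_number_of_intervals} applied to the piece of $W^u(\rho_n^*)$ of length $l \sim h^{1/2}J^u_\mathbf{q}$ at scale $\delta_\mathrm{scale} \sim h^\delta$: this covers $X_-$ by at most $C_\varepsilon (h^\delta/(h^{1/2}J^u_\mathbf{q}))^{-d_H-\varepsilon}$ intervals, so $\operatorname{Len}(X_-) \lesssim h^\delta (h^\delta/(h^{1/2}J^u_\mathbf{q}))^{-d_H-\varepsilon}$. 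Multiplying the pieces together yields exactly the bound in (ii), once one absorbs the $(\delta-\delta_0)(d_H+\varepsilon)$ slack coming from the slight regularization $\delta_0>\delta$ of the escape function cutoff.

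The main obstacle is to make the Lagrangian-state description rigorous uniformly in $n \sim \vartheta_\varepsilon \log(1/h)$ steps: accumulated semiclassical errors in Proposition~\ref{Prop_propagation_coherent_states} must be controlled together with the fact that $\kappa_n$ grows like $h^{-1/6+O(\varepsilon)}$, and the approximation of the squeezed state by an honest Lagrangian distribution along $W^u(\rho_n^*)$ must be accurate enough that the damping by $e^{-tG}$ ``sees'' only the part of the unstable leaf within $h^\delta$ of $\mathcal{T}_-$. Everything else --- the box-dimension covering and the amplitude bookkeeping --- is more or less dictated by the hyperbolic geometry once this propagation picture is in place.
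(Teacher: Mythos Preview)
Your plan is correct and follows essentially the same route as the paper's proof in Section~\ref{Section_proof_key_prop}: wavefront-set arguments for the easy cases, iterated application of Proposition~\ref{Prop_propagation_coherent_states} to produce a squeezed state aligned with $W_u$, the action of $e^{-tG}$ reduced (at leading order) to multiplication by $e^{-tg(\zeta_n(x))}$, and then the box-dimension covering of Lemma~\ref{Lemma_number_of_intervals}. One small correction: the parameter $\delta_0$ is not a regularization of the escape function but the scale at which the Gaussian tail of the squeezed state is truncated (the effective $x$-support is taken to be $[-\alpha_n^{1/2}h^{\delta_0},\alpha_n^{1/2}h^{\delta_0}]$ rather than the naive $h^{1/2}J^u_\mathbf{q}$), which is why the exponent reads $(\delta_0-\delta)(d_H+\varepsilon)$ rather than $(1/2-\delta)(d_H+\varepsilon)$; and the step ``$e^{-tG}$ acts by multiplication'' requires its own stationary-phase expansion (the paper's Lemma~\ref{Lemme_stationnary_phase_S_delta} and Proposition~\ref{Prop_expansion_S_delta}) since the amplitude of the squeezed state oscillates on a scale intermediate between $h^\delta$ and $h^{1/2}$.
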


This key proposition is proved in Section \ref{Section_proof_key_prop}. 
We will also require the following proposition : 

\begin{prop}\label{Prop_volume_small_neigh}
Let $\mathbf{q} = q_0 \dots q_n \in \mathcal{A}^{n+1}$ with $n= n(h)$ and assume $\mathcal{W}_\mathbf{q}^-  \neq \emptyset$. Then, for some uniform constant $C>0$, and for $h$ small enough, the following estimate holds : 
$$\text{Vol}\Big(\mathcal{T}_{\delta,\delta_1} \cap \mathcal{W}_\mathbf{q}^- \Big) \leq C h^{2 \delta_1} h^{-(\delta + \delta_1)(d_H + \varepsilon)} \left(J_\mathbf{q}^u\right)^{-(d_H + \varepsilon)}.$$
\end{prop}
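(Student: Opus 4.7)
The plan is to work in a symplectic chart adapted to the dynamics at a shadowing point of $\mathcal{W}_\mathbf{q}^-$, to locate $\mathcal{W}_\mathbf{q}^-$ inside an anisotropic rectangle whose horizontal (unstable) side is $\sim 1/J_\mathbf{q}^u$, and to cover $\mathcal{T}_{\delta,\delta_1} \cap \mathcal{W}_\mathbf{q}^-$ by a product of anisotropic boxes whose counts are controlled by the one-dimensional covering numbers of Lemma \ref{Lemma_number_of_intervals} and Proposition \ref{upper_box_dim}.

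First, since $\mathcal{W}_\mathbf{q}^- \neq \emptyset$, Lemma \ref{help_def_jacobian_2_bis} provides a shadowing point $\rho^\prime \in \mathcal{T}$ such that $d(F^l(\rho), F^l(\rho^\prime)) \leq \varepsilon_2$ for every $\rho \in \mathcal{W}_\mathbf{q}^-$ and $0 \leq l \leq nn_0 - 1$. I will work in the adapted symplectic chart centered at $\rho^\prime$ (Lemma \ref{adapted_chart_0}) so that $W_u(\rho^\prime)$ is the horizontal axis and the nearby stable leaves are uniformly Lipschitz graphs over the vertical direction. Using $F^{nn_0}(\mathcal{W}_\mathbf{q}^-) = \mathcal{W}_\mathbf{q}^+$ together with the diameter bound on $\mathcal{W}_\mathbf{q}^+$ and the first assertion of Lemma \ref{Lemma_unstable_Jacobian} ($\|d_\rho F^{nn_0}\| \leq C J_\mathbf{q}^u$), one checks that $\mathcal{W}_\mathbf{q}^-$ lies in a curvilinear rectangle of unstable side $\leq C/J_\mathbf{q}^u$ and stable side $\leq C$.

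Next, I will cover the two one-dimensional Cantor-like factors separately. Lemma \ref{Lemma_number_of_intervals} applied to the piece of $W_u(\rho^\prime)$ of length $\lesssim 1/J_\mathbf{q}^u$ yields a cover of $\mathcal{T} \cap W_u(\rho^\prime) \cap \mathcal{W}_\mathbf{q}^-$ by
$$N_1 \leq C_\varepsilon \bigl(h^\delta\, J_\mathbf{q}^u\bigr)^{-(d_H + \varepsilon)}$$
intervals of length $h^\delta$, while Proposition \ref{upper_box_dim} produces a cover of $\mathcal{T} \cap W_s(\rho^\prime)$ inside the chart by $N_2 \leq C_\varepsilon h^{-\delta_1 (d_H + \varepsilon)}$ intervals of length $h^{\delta_1}$. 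Item (7) of Lemma \ref{classical_hyperbolic} and the uniform Lipschitz control on the stable/unstable holonomies identify $\mathcal{T} \cap \mathcal{W}_\mathbf{q}^-$ with a subset of the product of these two Cantor factors, so that the $N_1 N_2$ products of the corresponding stable-by-unstable strips form a cover of $\mathcal{T} \cap \mathcal{W}_\mathbf{q}^-$ by curvilinear rectangles of sides $h^\delta$ (unstable) and $h^{\delta_1}$ (stable). Finally, the uniform transversality of $\mathcal{T}_-$ and $\mathcal{T}_+$ near $\mathcal{T}$ shows that any $\rho$ with $d(\rho,\mathcal{T}_-) \leq h^\delta$ and $d(\rho,\mathcal{T}_+) \leq h^{\delta_1}$ is within $O(h^{\delta_1})$ of a point of $\mathcal{T}$; enlarging each box by a bounded factor therefore produces a cover of $\mathcal{T}_{\delta,\delta_1} \cap \mathcal{W}_\mathbf{q}^-$ by $N_1 N_2$ rectangles, each of volume $O(h^{\delta + \delta_1})$. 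Summing gives
$$
\text{Vol}\bigl(\mathcal{T}_{\delta,\delta_1}\cap \mathcal{W}_\mathbf{q}^-\bigr) \leq C\, N_1 N_2\, h^{\delta + \delta_1} \leq C_\varepsilon\, h^{\delta + \delta_1}\, h^{-(\delta + \delta_1)(d_H + \varepsilon)}\, \bigl(J_\mathbf{q}^u\bigr)^{-(d_H + \varepsilon)},
$$
and since $\delta_1 < \delta$ and $h \leq 1$ we have $h^{\delta + \delta_1} \leq h^{2\delta_1}$, which yields the claimed bound.

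The main obstacle is the product-covering step: turning the local product structure of $\mathcal{T}$ into an honest cover of $\mathcal{T}_{\delta,\delta_1} \cap \mathcal{W}_\mathbf{q}^-$ requires both a uniform (in $\rho^\prime$) Lipschitz constant for the stable and unstable holonomies and a uniform lower bound on the transversality angle between stable and unstable leaves, so that "close to both $\mathcal{T}_-$ and $\mathcal{T}_+$" actually implies "close to $\mathcal{T}$" with the correct scale. Once these quantitative ingredients (already available in the hyperbolic toolbox of Section \ref{Subsection_hyperbolic_dynamics}) are combined with the adapted chart and the box-dimension control of Lemma \ref{Lemma_number_of_intervals}, the estimate follows essentially by the elementary product-volume computation above.
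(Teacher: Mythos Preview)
Your proposal is correct and follows essentially the same route as the paper: work in an adapted chart near a shadowing point, bound the unstable width of $\mathcal{W}_\mathbf{q}^-$ by $C/J_\mathbf{q}^u$, cover the stable and unstable transversals of $\mathcal{T}$ at scales $h^{\delta_1}$ and $h^\delta$ respectively via Proposition~\ref{upper_box_dim} and Lemma~\ref{Lemma_number_of_intervals}, and assemble a product cover using the local product structure and Lipschitz holonomies. The paper carries out the product-covering step you flag as the ``main obstacle'' as an explicit claim (a chain of holonomy estimates yielding $Ch^{\delta_1}\times Ch^{\delta_1}$ boxes), and one small correction: to get the $C/J_\mathbf{q}^u$ unstable width you should invoke the third item of Lemma~\ref{Lemma_unstable_Jacobian} (or equivalently (\ref{close_to_leaves})), not the first, since you need a lower bound on the unstable expansion rather than an upper bound on $\|d_\rho F^{nn_0}\|$.
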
 

\begin{proof}
We assume that $\mathcal{W}_\mathbf{q}^- \neq \emptyset$. According to Lemma \ref{Lemma_unstable_Jacobian}, there exists $\rho_- \in \mathcal{T}$ such that for all $\rho \in \mathcal{W}_\mathbf{q}^-$, 
\begin{equation}\label{eq_close_to_leave}
d(\rho, W_s(\rho_-)) \leq C \left( J^u_\mathbf{q} \right)^{-1} \varepsilon_0
\end{equation} 
We also consider $\rho_+ \in \mathcal{T}$ such that $d(\rho, \rho_+) \leq 2h^{\delta_1}$. In particular, $d(\rho_-, \rho_+) \ll \varepsilon_0$ and we may consider a point $\rho_O \in W_s(\rho_-) \cap W_u(\rho_+)$ and we decide to work in an adapted chart $\kappa$ centered at $\rho_O$. We want to estimate the volume of $\kappa \Big(\mathcal{T}_{\delta,\delta_1} \cap \mathcal{W}_\mathbf{q}^- \Big)$. We assume that $\mathcal{W}_q$ is included in the domain of this chart (and so is $\mathcal{W}_\mathbf{q}^-$) and we choose this chart such that the image of $W_s(\rho_-) = W_s(\rho_O)$ is given by $\{ (0, \xi), \xi \in V \}$ : this is possible in virtue of Lemma \ref{adapted_chart_0} (by considering $F^{-1}$ instead of $F$ to change the unstable manifold into the stable one) \footnote{In fact, without the assumption on $\kappa$ being symplectic, we may assume that both $W_s(\rho_O)$ and $W_u(\rho_O)$ are rectified.}. 
In virtue of (\ref{eq_close_to_leave}), we have for some uniform constant $C^\prime>0$, 
$$(x,\xi) \in \kappa_q( \mathcal{W}_\mathbf{q}^- ) \implies |x| \leq C^\prime \left( J^u_\mathbf{q} \right)^{-1} \varepsilon_0$$
Let's consider $\Xi(\mathcal{T}) = \{ \xi \in V, \kappa_q^{-1}(0, \xi) \in \mathcal{T} \}$ and let's cover it by $N_s$ intervals of size $2h^{\delta_1}$ centered at point $\xi_1 , \dots, \xi_{N_s} \in \Xi(\mathcal{T} )$.
Since $\overline{\text{dim}} \mathcal{T} \cap W_s(\rho_-)= d_H$ and in virtue of Proposition \ref{upper_box_dim}, we may choose $N_s$ such that
\begin{equation} \label{equation_N_s}
N_s \leq C h^{-\delta_1(d_H + \varepsilon)}
\end{equation} 
for some uniform constant $C>0$. 
 For $1 \leq i \leq N_s$, let's note $\sigma_i = \kappa^{-1}(0, \xi_i) $. The local unstable manifold  passing through $\sigma_i$ can be written, in the chart $\kappa$, as a graph
 $\{ (x,g_i(x)), x \in U_i \}$. We note $$X_i(\mathcal{T}) = \{ x \in U_i, \kappa^{-1}(x,g_i(x) ) \in \mathcal{T} , |x|\leq 2C^\prime  \left( J^u_\mathbf{q} \right)^{-1} \varepsilon_0\}$$ and we cover $X_i(\mathcal{T})$ by $N_{i,u}$ intervals of size $2h^{\delta}$, centered at points $x_{i,j}$, $1 \leq j \leq N_{i,u}$. Lemma \ref{Lemma_number_of_intervals} shows that we can take $N_{i,u}$ such that for all $1 \leq i \leq N_s$, 
\begin{equation} \label{equation_N_u}
N_{i,u} \leq C \left( h^{\delta} J^u_\mathbf{q} \right)^{- d_H - \varepsilon} 
\end{equation}
 for some uniform constant $C$. 
 
 For $1 \leq i \leq N_s$ and $1 \leq j \leq N_{i,u}$, let's also note $\xi_{i,j} = g_i(x_{i,j})$. We claim that there exists a uniform constant $C>0$ such that
 
\begin{equation}\label{Claim_small_neigh}
  \kappa \left(\mathcal{T}_{\delta,\delta_1} \cap \mathcal{W}_\mathbf{q}^-  \right) \subset \bigcup_{ i=1}^{N_s} \bigcup_{j=1}^{N_{i,u}} [x_{i,j} - Ch^{\delta_1}, x_{i,j} +C h^{\delta_1} ] \times [ \xi_{i,j} - Ch^{\delta_1} , \xi_{i,j} + Ch^{\delta_1} ]
\end{equation} 

This claim obviously implies the proposition, by combining it with the bounds on $N_s$ (\ref{equation_N_s}) and the $N_{i,u}$ (\ref{equation_N_u}). We now turn to the proof of this claim. \\
Let's consider $(x,\xi) = \kappa(\sigma) \in  \kappa \left(\mathcal{T}_{\delta,\delta_1} \cap \mathcal{W}_\mathbf{q}^-  \right) $. 
We introduce different points (and encourage the reader to use Figure \ref{Fig_small_neigh} to follow the different steps) : 
\begin{itemize}
\item Since $d(\sigma, \mathcal{T}_+) \leq h^{\delta_1}$, there exists $\sigma_+ \in \mathcal{T}$ such that $d(\sigma, W_u(\sigma_+) ) \leq h^{\delta_1}$. We can replace $\sigma_+$ by the unique point in the intersection $W_u(\sigma_+) \cap W_s(\rho_O)$ and we can note $\kappa(\sigma_+) = (0, \xi_+)$. 
\item Since $\xi_+ \in \Xi (\mathcal{T})$, there exists $i \in \{1, \dots, N_s \}$ such that $|\xi_i - \xi_+| \leq h^{\delta_1}$. In particular, $d(\sigma_i, \sigma_+) \leq C h^{\delta_1}$. 
\item Since $d(\sigma, \mathcal{T}_-) \leq h^{\delta}$, there exists $\sigma_- \in \mathcal{T}$ such that $d(\sigma, W_s(\sigma_-) ) \leq h^{\delta}$. We note $\sigma_O$ the unique point in $W_s(\sigma_-) \cap W_u(\sigma_+)$.
\item We also note $\sigma_i^\prime$ the unique point in $W_s(\sigma_-) \cap W_u(\sigma_i)$. Due to the Lipschitzness of the holonomy maps (with uniform Lipschitz constant), 
$$ d(\sigma_O, \sigma^\prime_i) \leq C d(\sigma_+, \sigma_i) \leq Ch^{\delta_1}$$
\item Due to the local product structure near $\sigma_O$, we have $d(\sigma, \sigma_O)^2 \sim d(\sigma, W_s(\sigma_O) )^2 + d(\sigma,W_u(\sigma_O) )^2 \sim h^{2 \delta_1} + h^{2 \delta}$. It gives 
$d(\sigma, \sigma_O) \leq C h^{\delta_1}$ and hence, $d(\sigma_i^\prime, \sigma) \leq Ch^{\delta_1}$. 
\item Let's note $\sigma_i^\prime =(x^\prime, g_i(x^\prime))$. Since $x^\prime \in X_i(\mathcal{T})$, there exists $j \in \{1 , \dots, N_{i,u} \}$ such that $|x_{i,j} - x^\prime| \leq h^\delta$. Then we have 
$$ d(\sigma_i^\prime, \kappa( (x_{i,j}, \xi_{i,j} ) ) \leq C |x^\prime_i - x_{i,j}| \leq C h^\delta$$
\item We conclude that $d(\sigma, \kappa( (x_{i,j}, \xi_{i,j} )) \leq C h^{\delta_1}$, which gives $|x- x_{i,j}| \leq Ch^{\delta_1}, |\xi- \xi_{i,j}| \leq Ch^{\delta_1}$. 
\end{itemize}

\begin{figure}
\centering
\includegraphics[scale=0.5]{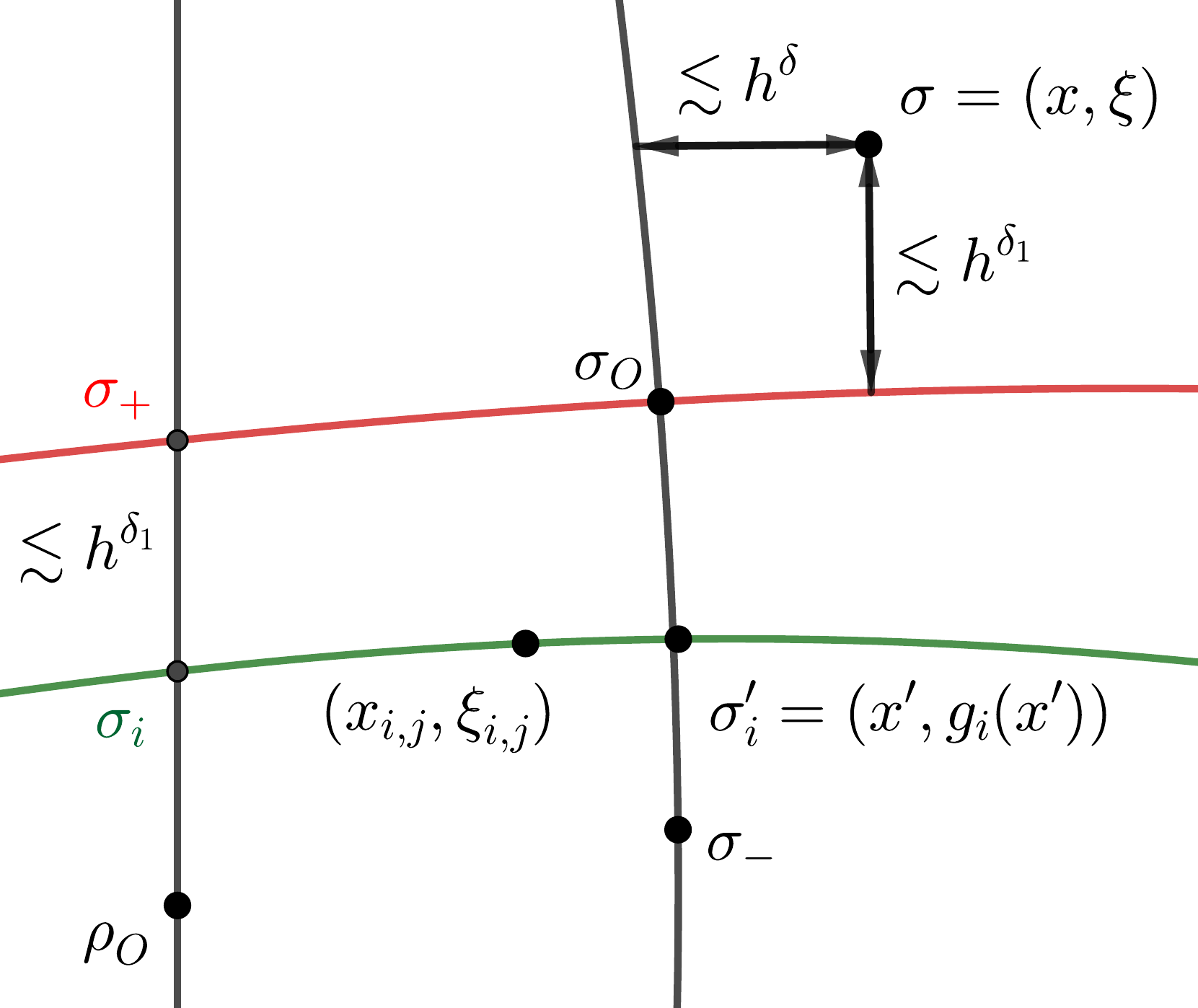}
\caption{The different points introduced in the proof of the claim \ref{Claim_small_neigh}. To alleviate the figure, we use the same notations for a point $\sigma$ and its image trough $\kappa$. }
\label{Fig_small_neigh}
\end{figure}

 \end{proof}

We can now conclude the proof of the main trace estimate. Set $N = 2T_0 + n(h)$. We want to plug the estimates of Proposition \ref{Prop_Key} into (\ref{equation_trace_1}) and (\ref{equation_trace_2}). For $q \in \mathcal{A}$, let's note $$O_q = \kappa_q \left( \mathcal{T}_{\delta, \delta_1} \cap \bigcup_{\mathbf{q} \in \mathcal{A}^{n+1}} \mathcal{W}_\mathbf{q}^- \right)$$ and let's write 
\begin{align*}
\tr( \left(M_t^N\right)^* M_t^N) &\leq C \sup_{q \in \mathcal{A}} \frac{1}{2 \pi h} \int_{\R^2} w(\hat{\rho}) d\hat{\rho}  \\
&\leq \frac{C}{h} \sup_{q \in \mathcal{A}} \left( \int_{O_q} w(\hat{\rho}) d\hat{\rho} + \int_{\R^2 \setminus O_q} w(\hat{\rho}) d\hat{\rho}  \right) \\
& \leq \sup_{q \in \mathcal{A}} \left( C h^{( \delta - \delta_0)(d_H + \varepsilon) + \delta -3/2  } \sum_{ \mathbf{q} \in \mathcal{A}^{n+1}} \int_{\kappa_q\Big(\mathcal{T}_{\delta, \delta_1} \cap \mathcal{W}_{\mathbf{q}}^- \Big)  } \left(\Pi_{\alpha, \mathbf{q}}(\rho)\right)^2 \left( J^u_{\mathbf{q}}\right)^{d_H-1 + \varepsilon}  d\hat{\rho} \right)+ O(h^L) \\
\end{align*}
The last inequality holds since the integral outside $O_q$ can be made $O(h^L)$ by choosing correctly$t$, to make $L$ arbitrarily large. Indeed, using Proposition \ref{Prop_Key}, the part outside $\kappa_q (\mathcal{W}_q)$ is $\hinf$ and the integral on $\kappa_q(\mathcal{W}_q) \setminus O_q$ is $O(h^L)$.

Let $\mathbf{q}=q_0 \dots q_{n-1} \in \mathcal{A}^{n}$. We write $\mathcal{W}_{q_i} = \bigcap_{j=0}^{n_0-1} F^{-1}(\mathfrak{A}_{i,j})$ with $\mathfrak{A}_{i,j} \in \mathcal{Q}$ and for $\rho \in \mathcal{W}_\mathbf{q}^-$.
Let's note $\varphi = -2 \im z t_{ret} - \varphi_u$ and recall that $\alpha = \exp( - \im z t_{ret} ) $. We have uniformly with respect to $\mathbf{q} \in \mathcal{A}^n$ and $\rho \in \mathcal{W}_\mathbf{q}^-$,
\begin{align*}
\left(\Pi_{\alpha, \mathbf{q}}(\rho)\right)^2 \left( J^u_{\mathbf{q}} \right)^{-1} &\leq C\left( \Pi_{\alpha, \mathbf{q}}(\rho)\right)^2 \left( J^u_{nn_0}(\rho)\right)^{-1} \\
&\leq C \prod_{i=0}^{n-1}  \left(  \exp \left(\sum_{j=0}^{n_0-1} \varphi ( F^{in_0 + j} (\rho)) \right) \right)  \\
& \leq C  \prod_{i=0}^{n-1}  \left( \sup_{\rho_i \in \mathcal{W}_{q_i}} \exp \left(\sum_{j=0}^{n_0-1} \varphi ( F^{ j} (\rho_i)) \right) \right) \\
& \leq  C  \prod_{i=0}^{n-1}  \left( C_0 \sup_{\rho_i \in \mathcal{W}_{q_i} \cap \mathcal{T}} \exp \left(\sum_{j=0}^{n_0-1} \varphi ( F^{ j} (\rho_i)) \right) \right) \\
\end{align*}
The last inequality holds for some $C_0 >0$ independent of $n_0$ (and $z$), since $\varphi$ is Hölder continuous (with constant uniform with respect to $z$). Indeed, if $\varepsilon_0$ is small enough, in virtue of Lemma \ref{Local_hyperbolic_1}, there exists $\theta \in [0,1)$ and $C >0$ such that if $\rho_1  \in \mathcal{W}_{q_i}$ and if $\rho_2  \in \mathcal{W}_{q_i} \cap \mathcal{T}$  then $d(F^j(\rho_1), F^j(\rho_2) ) \leq C \theta^{n-j}$. As a consequence, 
$|\varphi(F^j(\rho_1)) - \varphi(F^j(\rho_2)) | \leq C \theta_1^{n-j}$ (with $\theta_1 = \theta^\beta$ for some $0 < \beta \leq 1$). Since $\sum_{j=0}^{n-1} \theta_1^{n-j} \leq \sum_{j=0}^\infty \theta_1^j < + \infty$, we find that 
$$ \exp \left(\sum_{j=0}^{n_0-1} \varphi ( F^{ j} (\rho_1) \right)  \sim \exp \left(\sum_{j=0}^{n_0-1} \varphi ( F^{ j} (\rho_2)) \right)$$ 
For $q \in \mathcal{A}$, let's call $$p_{q} = \sup_{\rho \in \mathcal{W}_{q} \cap \mathcal{T}} \exp \left(\sum_{j=0}^{n_0-1} \varphi ( F^{ j} (\rho))  \right) $$
 and recall that, due to our special choice of the partition $(\mathcal{W}_q)_q$ (see (\ref{nice_formula})), we have $\sum_{q \in \mathcal{A}} p_q \leq e^{ n_0 (P(\varphi) + \varepsilon)}$. We may assume that $n_0$ is big enough so that $C_0 \leq e^{n_0 \varepsilon}$, and hence, $\sum_{q \in \mathcal{A}} C_0 p_q \leq e^{ n_0 (P(\varphi) + 2\varepsilon)}$. 
As a consequence, we find that 
\begin{align*}
\tr( \left(M_t^N\right)^* M_t^N)&\leq Ch^{( \delta - \delta_0)(d_H + \varepsilon) + \delta -3/2  } \sum_{ \mathbf{q} \in \mathcal{A}^{n}} \text{Vol}\Big(\mathcal{T}_{\delta,\delta_1} \cap \mathcal{W}_\mathbf{q}^- \Big) \left( J^u_{\mathbf{q}}\right)^{d_H + \varepsilon} \prod_{i=0}^{n-1} C_0 p_{q_i}\\
&\leq Ch^{( \delta - \delta_0)(d_H + \varepsilon) + \delta -3/2  } \sum_{ \mathbf{q} \in \mathcal{A}^{n}}  h^{2 \delta_1} h^{-(\delta + \delta_1)(d_H + \varepsilon)} \left(J_\mathbf{q}^u\right)^{-(d_H + \varepsilon)} \left( J^u_{\mathbf{q}}\right)^{d_H+ \varepsilon} \prod_{i=0}^{n-1} C_0 p_{q_i}\\ \\
&\leq C h^{-\nu_\varepsilon}  \sum_{ \mathbf{q} \in \mathcal{A}^{n}}  \prod_{i=0}^{n-1} C_0 p_{q_i} =  Ch^{-\nu_\varepsilon}  \left( \sum_{q \in \mathcal{A}} C_0 p_q \right)^n \leq  Ch^{-\nu_\varepsilon}e^{ n n_0 (P(\varphi) + 2\varepsilon)}
\end{align*}
where $$\nu_\varepsilon = d_H + (\delta_0 - \delta)(d_H + \varepsilon) + (1/2 - \delta) + (\delta_1 + \delta) \varepsilon + (2\delta_1   - 1)  - d_H(\delta + \delta_1- 1)= d_H + O(\varepsilon)
$$
(see the definitions of these exponents in (\ref{exponent_delta})). 
Recalling that $nn_0 \leq \vartheta_\varepsilon \log 1/h$, we find that 
$$ \tr( \left(M_t^N\right)^* M_t^N) \leq Ch^{-\nu_\varepsilon} h^{ -\vartheta_\varepsilon (P ( - 2 \im z t_{ret}-\varphi_u ) + 2\varepsilon)}$$
We can finally insert the term $2\vartheta_\varepsilon \varepsilon$ into the $\nu_\epsilon$ and we find that $$ \tr( \left(M_t^N\right)^* M_t^N) \leq Ch^{-\nu_\varepsilon } h^{- \vartheta_\varepsilon P ( - 2 \im z t_{ret}-\varphi_u )}$$ 
This concludes the proof of Proposition \ref{Prop_Key_2}.

\section{Proof of Proposition \ref{Prop_Key}.}\label{Section_proof_key_prop}
In this section we fix some $q=q_0$ and we aim at proving Proposition \ref{Prop_Key}. 
If $\hat{\rho} \not \in \kappa_q(\mathcal{W}_q)$, as we will explain, the estimate in $O \left(\left( \frac{h}{\langle \hat{\rho} \rangle} \right)^\infty \right)$ is nothing but a consequence of the fact that $\WF(B_q A_q B_q^\prime) \Subset \kappa_q \left( \mathcal{W}_q\right)$ and one can for instance apply Lemma 15 in \cite{CoRo}, Chapter 2, Section 3. 

The main part of the Proposition \ref{Prop_Key} concerns points $\hat{\rho}  = \kappa_q(\rho) \in \kappa_q(\mathcal{W}_q)$. 
To prove this proposition, we study separately the actions of the different operators in $e^{-tG} \mathfrak{M}^{n-1} M^{n_0} A_q B_q^\prime \tilde{E}_t$. 
\begin{itemize}
\item First, we analyze the action of $\tilde{E}_t$. We show that it is essentially given by the multiplication by $e^{t g(\rho)}$. 
\item We go on studying the propagation of Gaussian coherent state through the iterated actions of $\mathfrak{M}$. The hyperbolicity of the trajectories leads to a deformation of the Gaussian state. The results we obtain are related to the results of \cite{CoRo} with Hamiltonian flow. In particular, this is where we use the fact that $\vartheta_\varepsilon < 1/6$. The approximation we use fails for longer logarithmic times. 
\item Finally, we analyze the action of $e^{-tG}$ on the evolved coherent states. In a way, we treat this evolved state as a Lagrangian state with rapidly oscillating amplitude, of the form $a(x) e^{ i \frac{\phi(x)}{h}}$. The scale of oscillation of $a$ is larger than $h^\delta$, scale on which $g$ oscillates. We show that, at leading order, the action of $e^{-tG}$ is well approximated by the multiplication by $e^{- tg(x, \phi^\prime(x) ) }$. 
\end{itemize}

\begin{nota}
In the following, we will be lead to consider states $u \in L^2(\R_J)$ such that all the components of $u$ are $\hinf$, except one equal to some $v \in L^2(\R)$. By abuse, we will note $v$ instead of $u$ as soon as the component where $u$ is non zero is explicit in the context. For instance, we can simply note $\varphi_{\hat{\rho}}$ instead of $\tilde{\varphi}_{\hat{\rho}}$ as soon as we specify that $\hat{\rho} \in \kappa_q(\mathcal{W}_q)$. Another example : for any $u \in L^2(\R_J)$ and $q \in \mathcal{A}$, $B_q A_q u$ has only one non zero component at $j_q$ and we can use this component to write $u$. This will be widely used in the sequel since most of the time we will consider this type of elements. 
\end{nota}

\subsection{Preparatory work}
Due to standard properties of Fourier integral operators, we can consider a pseudodifferential operator $\Xi_q$ such that  $\WF(\Xi_q) \subset W_q$, $T^* \R \setminus \WF(1 - \Xi_q) \subset \kappa_q(\mathcal{W}_q)$ and $\Xi_q B_q^\prime B_q  = \Xi_q + \hinf$ (recall that $W_q = \kappa_q(B(\rho_q, 2 \varepsilon_0))$ and that $\mathcal{W}_q \Subset B(\rho_q, 2 \varepsilon_0)$ by construction). 
With these properties, we have in particular
$A_q B_q^\prime \tilde{E}_t = A_q B_q^\prime \tilde{E}_t   \Xi_q+ \hinf$ . This allows us to change harmlessly $\tilde{E}_t$ into $E_t \coloneqq \tilde{E}_ t\Xi_q$ in all the computations below. 
We first write 
\begin{align*}
e^{-tG}\mathfrak{M}^{n-1} M^{n_0} A_q B_q^\prime E_t  &= \sum_{(q_1, \dots, q_{n-1}) \in \mathcal{A}^{n-1} } e^{-t G} M^{n_0} A_{q_{n-1}} \dots M^{n_0} A_{q_1} M^{n_0} A_q B^\prime_q E_t \\
&= \sum_{(q_1, \dots, q_{n}) \in \mathcal{A}^n }  A_{q_n} e^{-t G} M^{n_0} A_{q_{n-1}} \dots M^{n_0} A_{q_1} M^{n_0} A_q B^\prime_q  E_t + \hinf \\
&= \sum_{(q_1, \dots, q_n) \in \mathcal{A}^{n} } A_{q_n} e^{-t G} B^\prime_{q_{n}} M_{q_n, q_{n-1} } \dots M_{q_2, q_1} M_{q_1,q} E_t+ \hinf
\end{align*} 
where 
\begin{equation}
M_{p,q} = B_p M^{n_0} A_q B_q^\prime
\end{equation}
We say that a pair $(p,q)$ is admissible if $F^{n_0}(\mathcal{W}_{q}) \cap \mathcal{W}_{p} \neq \emptyset$.  By standard properties of Fourier integral operators, if $(p,q)$ is not admissible, $M_{p,q} = \hinf$. 
We say that a word $(q_1, \dots, q_n ) \in \mathcal{A}^n$ is admissible if all the pairs $(q_i, q_{i-1})$ are admissible (with $q_0 = q)$. Hence, since $n =O\left( \log \frac{1}{h}\right)$, we can restrict the indices in the above sum to the admissible words.  

Suppose that $(p,q)$ is an admissible pair. By composition of Fourier integral operators, $M_{p,q}$ is a Fourier integral operator associated with the symplectic map $F_{p,q} \coloneqq \kappa_p \circ F^{n_0} \circ \kappa_q^{-1}$. Since $\text{diam}( \mathcal{W}_q) \leq \varepsilon_0$, by taking $\varepsilon_0$ sufficiently small, we can assume that $F^{n_0}(\mathcal{W}_q) $ is included in the domain of $\kappa_p$. Indeed, there exists $\tilde{\rho}_q \in \mathcal{W}_q$ such that $F^{n_0}(\tilde{\rho}_q) \in \mathcal{W}_p$ and hence if $\rho \in \mathcal{W}_q$, $$d(F^{n_0}(\rho), \rho_p) \leq d(F^{n_0}(\rho), F^{n_0}(\tilde{\rho}_q) ) +  d(F^{n_0}(\tilde{\rho}_q),\rho_p)  \leq C \varepsilon_0$$  
We note $(y,\eta)$ the variables in the charts and $(\partial_y, \partial_\eta)$ the canonical basis of $\R^2$  and we have 
\begin{itemize}
\item $F_{p,q}(0) = \kappa_p \circ F^{n_0} (\rho_q) = O (\varepsilon_0) $; 
\item $d_0 F_{p,q}  = d_{F(\rho_q)} \kappa_p  \circ d_{\rho_q}F^{n_0} \circ \left[ d_{\rho_q} \kappa_q  \right]^{-1} $ ; 
\item $d_{\rho_q} F^{n_0} (E_u(\rho_q)) = E_u(F^{n_0}(\rho_q))$ and $\rho \mapsto E_u(\rho)$ is Lipschitz. Hence, if we note $e_u(\rho_q) = \left( d_{\rho_q} \kappa_q  \right)^{-1} (\partial_y) \in E_u(\rho_q)$, due to the definitions of the adapted charts in Lemma \ref{adapted_chart_0}, there exists $\lambda_{p,q} \in \R^*$ such that 
$$ d_{\rho_q}(\kappa_p \circ F^{n_0}) (e_u(\rho_q)) =\lambda_{p,q} \partial_y + O(\varepsilon_0)$$
\item Similarly, $d_0 F_{p,q} (\partial_\eta) = \mu_{p,q} \partial_\eta + O(\varepsilon_0)$ for some $\mu_{p,q} \in \R^*$
\end{itemize}
Eventually, we use the fact that $F_{p,q} - F_{p,q}(0) - d_0 F_{p,q} =O(\varepsilon_0)_{C^1 ( W_q ) }$ and we get that 
\begin{equation}
F_{p,q}(y,\eta) = (\lambda_{p,q} y  + y_r(y, \eta) , \mu_{p,q} \eta + \eta_r(y,\eta) )  , (y,\eta) \in W_q
\end{equation}
where $y_r(y, \eta)$ and $\eta_r(y, \eta)$ are $O(\varepsilon_0)_{C^1}$.
In particular, if $\varepsilon_0$ is small enough, $(x,\xi, y, \eta) \in \Gr(F_{p,q}) \mapsto (x,\eta)$ is a local diffeomorphism near $(0,0,0,0)$. Then, there exists a phase function $\psi_{p,q}$ which generates $F_{p,q}$ in a neighborhood $\Omega$ of $(0,0,0,0)$. Assuming $\varepsilon_0$ small enough, we can assume that $ F_{p,q} (W_q) \times W_q \subset \Omega$. 

As a consequence (see for instance \cite{Al08}, \cite{ZW} Chapter 10), the Fourier integral operator $M_{p,q}$ can be written under the form (\ref{special_form_FIO}), up to $\hinf$, that is, 
\begin{equation}
M_{p,q} u (x) = \frac{1}{2 \pi h } \int_{\R^2} e^{\frac{i}{h} \left( \psi_{p,q}(x,\eta)- y\eta \right) } \alpha_{p,q}(x,\eta) u(y) dy d\eta
\end{equation}
where $\alpha_{p,q}$ is a symbol in $S_{0^+}(\R^2)$. It has an asymptotic expansion
\begin{equation}
\alpha_{p,q} \sim \sum_{j \geq 0} h^j \alpha_{p,q}^{(j)} 
\end{equation}
where $\alpha_{p,q}^{(j)} \in h^{0^-}S_{0^+}$, for all $j \geq 1$ (that is, $\alpha_{p,q}^{(j)}  \in \bigcap_{ \eta >0} h^{-\eta} S_{0^+}$) 
and we have 
\begin{equation}
| \alpha_{p,q}^{(0)} (x,\eta)| =  |\partial^2_{x,\eta} \psi (x,\eta)|^{1/2} \chi_q(\rho)  \times \left( \prod_{i=0}^{n_0-1}  \alpha \circ F^i(\rho)\right)  \quad ; \quad \rho = \kappa_q^{-1}(y,\eta) \; ; \; (x,\xi) = F_{p,q}(y,\eta) 
\end{equation}
Here, we use the fact that in $W_q$, $\alpha_h = \alpha +  O \left(h^{1^-} S_{0^+} \right)$ to put the $O \left(h^{1^-} S_{0^+} \right)$ in $\alpha_{p,q}^{(1)}$. 
Moreover, we have the following support properties : for $j \in \N$, 
\begin{equation}\label{Property_suppor}
(x,\eta) \in \supp (\alpha_{p,q}^{(j)}) \implies (y,\eta) \in \kappa_q \left( \supp (\chi_q) \right) \quad ; \quad (x,\xi) = F_{p,q}(y,\eta) 
\end{equation}
We now pick an admissible word $(q_1, \dots, q_n)$ and for $\hat{\rho} \in \R^2$, we aim at studying
$$ || A_{q_n} e^{-t G} B^\prime_{q_{n}} M_{q_n, q_{n-1} } \dots M_{q_2, q_1} M_{q_1,q} E_t \tilde{\varphi}_{\hat{\rho}} ||$$
We have $M_{q_1, q} E_t = B_{q_1} M^{n_0} A_q B_q^\prime B_q e^{tG} B_q^\prime$. Since $\WF\left( A_q B_q^\prime B_q e^{tG} \right) \subset \supp(\chi_q)$ and $B_q^\prime$ is a Fourier integral operator associated with $\kappa_q^{-1}$, we can find $\tilde{\chi}_q$ such that $\supp(\tilde{\chi}_q) \subset \kappa_q(\mathcal{W}_q) \subset W_q$ and 
$$ A B_q^\prime B_q e^{tG} B_q^\prime = A B_q^\prime B_q e^{tG} B_q^\prime \op(\tilde{\chi}_q) + \hinf$$

To prove the estimate in $O \left(\left( \frac{h}{\langle \hat{\rho} \rangle} \right)^\infty \right)$, we invoke Lemma 15 in \cite{CoRo}, Chapter 2, Section 3,  which allows us to say that, if $\hat{\rho} \not \in \kappa_q(\mathcal{W}_q)$, $$\op(\tilde{\chi}_q) \varphi_{\hat{\rho}} =O \left(\left( \frac{h}{\langle \hat{\rho} \rangle} \right)^\infty \right).$$ Since both $M_{q_n, q_{n-1}} \dots M_{q_1, q}$ and the number of terms in the sum are $O(h^{-K})$ for some $K>0$, we deduce the first part of Proposition \ref{Prop_Key} : 
\begin{lem}
Uniformly for $\hat{\rho} \not \in \kappa_q (\mathcal{W}_q) $, $w(\hat{\rho}) =O \left(\left( \frac{h}{\langle \hat{\rho} \rangle} \right)^\infty \right) $. 
\end{lem}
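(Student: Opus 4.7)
The claim is essentially microlocal: the coherent state $\tilde\varphi_{\hat\rho}$ is concentrated at $\hat\rho$, while the rightmost "business end" of the operator string $e^{-tG}\mathfrak{M}^{n-1}M^{n_0}A_q B_q'\tilde E_t$ has wavefront set confined to $\kappa_q(\mathcal{W}_q)$; so once $\hat\rho$ lies outside this set, we expect super-polynomial decay, which will absorb the polynomial-in-$h^{-1}$ losses coming from the rest of the string and from the $O(\log 1/h)$ combinatorial factors.

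The plan is to proceed in three steps. First, I would exploit the decomposition already set up in the preparatory subsection to write
$$e^{-tG}\mathfrak{M}^{n-1}M^{n_0}A_q B_q'E_t = \sum_{(q_1,\ldots,q_n)\in \mathcal{A}^n} A_{q_n} e^{-tG} B_{q_n}' M_{q_n,q_{n-1}}\cdots M_{q_1,q}E_t + O(h^\infty),$$
restrict to admissible words, and combine the uniform bounds $\|A_{q_i}\|,\|B_{q_i}'\|,\|M_{q_i,q_{i-1}}\|,\|\tilde E_t\|,\|e^{-tG}\|\leq Ch^{-K}$ with $|\mathcal{A}|^{n(h)}=O(h^{-K_0})$ to reduce the claim to a uniform estimate for each summand applied to $\tilde\varphi_{\hat\rho}$, with loss at most $Ch^{-K_1}$ in front.

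Second, I would push the right-end cut-off onto the coherent state. Since $\chi_q$ is supported in $\mathcal{W}_q$ and $B_q,B_q'$ quantize $\kappa_q,\kappa_q^{-1}$ microlocally on $B(\rho_q,2\varepsilon_0)$, there exists $\tilde\chi_q\in \cinfc(\R^2)$ with $\supp\tilde\chi_q\subset\kappa_q(\mathcal{W}_q)$ such that
$$A_q B_q'\tilde E_t \;=\; A_q B_q'\,B_q e^{tG} B_q'\op(\tilde\chi_q) + O(h^\infty).$$
This is just the fact that the full operator microlocally lives over $\kappa_q(\mathcal{W}_q)$ on the right. Thus matters reduce to bounding $\op(\tilde\chi_q)\varphi_{\hat\rho}$ in $L^2$ for $\hat\rho\notin\kappa_q(\mathcal{W}_q)$.

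Third, I would invoke the standard coherent-state lemma (e.g.\ \cite{CoRo}, Chapter 2, Section 3, Lemma 15): if $a\in\cinfc(\R^2)$ and $\hat\rho\notin\supp a$, then $\op(a)\varphi_{\hat\rho} = O\bigl((h/\langle\hat\rho\rangle)^\infty\bigr)$ in $L^2$. This follows from a non-stationary phase argument in the integral representation of $\op(a)\varphi_{\hat\rho}$, the $\langle\hat\rho\rangle$ decay being produced by repeated integrations by parts using the oscillations $e^{i(x-x_0)\xi_0/h}$ of the coherent state. The only subtle point is ensuring that "super-polynomial" beats the fixed $h^{-K_1}$ loss from Step 1; this is automatic, and the $\langle\hat\rho\rangle^{-\infty}$ factor is preserved because the remaining operators on the left act away from $\hat\rho$. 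I do not foresee a real obstacle — the whole statement is a soft microlocal consequence of the construction — the only care needed is to verify that the constants in Step 1 are genuinely uniform in $\hat\rho$, which they are because the $M_{q_i,q_{i-1}}$ are independent of $\hat\rho$.
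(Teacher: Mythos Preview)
Your proposal is correct and follows essentially the same approach as the paper: insert the cut-off $\op(\tilde{\chi}_q)$ with $\supp\tilde{\chi}_q\subset\kappa_q(\mathcal{W}_q)$ to the right of $A_qB_q'\tilde E_t$, apply the coherent-state lemma (Lemma~15 of \cite{CoRo}, Chapter~2, Section~3) to get $\op(\tilde{\chi}_q)\varphi_{\hat\rho}=O\bigl((h/\langle\hat\rho\rangle)^\infty\bigr)$ when $\hat\rho\notin\kappa_q(\mathcal{W}_q)$, and absorb the $O(h^{-K})$ losses from the product of $M_{q_i,q_{i-1}}$'s and the number of admissible words. The paper presents these steps in the preparatory subsection immediately preceding the lemma, which is then stated as a direct consequence.
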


We now focus on $\hat{\rho} \in \kappa_q(\mathcal{W}_q) \Subset W_q$, for which $F_{p,q} (\hat{\rho})$ is well defined. 
We finish this preparatory subsection with an important computation. First note that the neighborhood $\bigcup_{q \in \mathcal{A}} \mathcal{W}_q$ has been fixed by dynamical considerations. We may assume that the cut-off function $\hat{\chi}$ used in (\ref{escape_function_2}) for the construction of $g$, is chosen such that $\supp \hat{\chi} \Subset \bigcup_{q \in \mathcal{A}} \mathcal{W}_q$. As a consequence, we can apply Proposition \ref{Prop_def_FIO_loc} and we have : 

\begin{lem}\label{Lemma_G_FIO}
For all $q \in \mathcal{A}$, there exists $(g_{j,q})_{j \geq 1} \in S_\delta$ such that for all $N \in \N$, the following holds: 
$$ B_q G B_q^\prime = \op \left( g \circ \kappa_q^{-1} \right) + \sum_{j=1}^{N-1} h^{j(1- 2 \delta)} \op(g_{j,q}) + R_N$$
where $||R_N||_{L^2 \to L^2} \leq C_{2N+M,g} h^{N(1-2 \delta)}$, for some constants $C_{2N+M,g}$ depending on semi-norms of $g$ in $S_\delta$ up to order $2N+M$. 
\end{lem}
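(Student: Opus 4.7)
The plan is to invoke Proposition~\ref{Prop_def_FIO_loc} with $\kappa = \kappa_q$ applied to a localized piece of $g$, and then convert the resulting $C^k$-type remainder into the $S_\delta$-type bound claimed, reading off the asymptotic in powers of $h^{1-2\delta}$ from the general expansion in powers of $h$.

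First I would perform a microlocal localization. Since $\WF'(B_q) \subset \Gr(\kappa_q)$ and $\WF'(B_q') \subset \Gr(\kappa_q^{-1})$, and both of these sets project on the ``physical side'' to a compact subset of $B(\rho_q, 2\varepsilon_0)$, only the restriction of $g$ to a small neighborhood of $\rho_q$ contributes to $B_q G B_q'$ modulo $\hinf$. Fix a cut-off $\chi \in \cinfc(B(\rho_q, 2\varepsilon_0))$ equal to $1$ on the neighborhood $W$ appearing in Proposition~\ref{Prop_def_FIO_loc}, and decompose $g = g\chi + g(1-\chi)$. Standard composition estimates for FIOs with pseudodifferential operators whose wavefronts are disjoint from the relevant graph yield
$$B_q \op\bigl(g(1-\chi)\bigr) B_q' = \hinf_{L^2 \to L^2},$$
so it suffices to expand $B_q \op(g\chi) B_q'$.

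Next I would apply Proposition~\ref{Prop_def_FIO_loc} directly to the symbol $g\chi \in S_\delta^{comp}(W)$. For any $N' \in \N$, it gives
$$B_q \op(g\chi) B_q' = \op\bigl((g\chi)\circ\kappa_q^{-1}\bigr) + \sum_{j=1}^{N'-1} h^j \op\bigl((L_j(g\chi))\circ\kappa_q^{-1}\bigr) + O_{L^2\to L^2}\bigl(h^{N'}\|g\chi\|_{C^{2N'+M}}\bigr),$$
where $L_j$ is a differential operator of order $2j$ supported in $V'$. Since $g\in S_\delta$, each derivative costs $h^{-\delta}$, so $L_j(g\chi) \in h^{-2j\delta}S_\delta$. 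Setting $g_{j,q} \coloneqq h^{2j\delta}(L_j(g\chi))\circ\kappa_q^{-1} \in S_\delta$ recasts the $j$-th term as $h^{j(1-2\delta)}\op(g_{j,q})$, the structure required by the lemma. Moreover $\op((g\chi)\circ\kappa_q^{-1}) = \op(g\circ\kappa_q^{-1}) + \hinf$, since the two symbols coincide on a neighborhood of $\WF'(B_q B_q')$.

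The last step is to convert the remainder to the claimed scaling $h^{N(1-2\delta)}$. Using the elementary bound $\|g\chi\|_{C^k} \leq C_k h^{-k\delta}\|g\|_{S_\delta, k}$, the error above is $O\bigl(h^{N'(1-2\delta)-M\delta}\|g\|_{S_\delta, 2N'+M}\bigr)$. To reach the target, take $N' = N + \lceil M\delta/(1-2\delta)\rceil + 1$, so that $N'(1-2\delta) - M\delta \geq N(1-2\delta)$, and absorb into $R_N$ the additional terms $h^{j(1-2\delta)}\op(g_{j,q})$ for $N\leq j\leq N'-1$; by Calder\'on--Vaillancourt, each such term has $L^2$-operator norm bounded by $h^{j(1-2\delta)}$ times finitely many $S_\delta$-seminorms of $g$.

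\emph{Main obstacle.} There is no genuine analytic obstruction: the only technical point is the standard discrepancy between the ``pure $h$'' expansion naturally produced by stationary phase (which is what Proposition~\ref{Prop_def_FIO_loc} delivers) and the effective small parameter $h^{1-2\delta}$ relevant for symbols of class $S_\delta$. The resulting loss $h^{-M\delta}$ in the error term is absorbed by expanding one step further than is strictly needed, a routine bookkeeping step.
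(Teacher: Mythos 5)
Your proof follows essentially the same route as the paper's: apply Proposition~\ref{Prop_def_FIO_loc} to (a localized piece of) $g$, set $g_{j,q}=h^{2\delta j}(L_{j}g)\circ\kappa_q^{-1}\in S_\delta$, and convert the $C^k$-type remainder into the $h^{N(1-2\delta)}$ bound — your explicit cut-off and the extra expansion step absorbing the $h^{-M\delta}$ loss are details the paper leaves implicit. The one inaccuracy is the assertion that $g\in S_\delta$: since $|g|\leq C|\log h^{2\delta}|$ one only has $g\in\log\frac{1}{h}\,S_\delta$ (the paper applies the proposition to $\left(\log\frac{1}{h}\right)^{-1}g$ and remarks that all derivatives of order $\geq 1$ do satisfy the $S_\delta$ bounds), but this logarithmic factor is harmless for your argument.
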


\begin{rem}
Even if $g \not \in S_\delta$, it still satisfies $|\partial^\alpha g| \leq C_{|\alpha|,g}  h^{- \delta |\alpha|}$ as soon as $\alpha \neq 0$. This allows us to fairly define these semi-norms. 
\end{rem}

\begin{proof}
Let's note that $g \in \log \frac{1}{h} S_\delta$ so that we can apply Proposition \ref{Prop_def_FIO_loc} to $\left( \log \frac{1}{h} \right)^{-1} g$. We can find differential operators $L_{j,q}$ such that 
$$ B_q G B_q^\prime = \op( g \circ \kappa_q^{-1} ) + \sum_{j=1}^{N-1} h^j \op \left( (L_{j,q} g) \circ \kappa^{-1}_q \right) + O_{L^2 \to L^2} \left( h^N || g||_{C^{2N + M}} \right) $$
In fact, due to the properties of $\hat{g}$, $\partial^\alpha g \in h^{- \delta |\alpha|} S_\delta$ as soon as $\alpha \neq 0$ and the terms $g_{j,q} \coloneqq h^{2 \delta j } (L_{j,q} g) \circ \kappa^{-1}_q \in S_\delta$ for $j \geq 1$. Moreover, the $O$ is in fact an $O( C_{2N + M,g} h^{N(1- 2 \delta)})$, where $C_{2N + M,g}$ depends on semi-norms of $g$ in $S_\delta$ up to order $2N+M$. 
\end{proof}
 
 \subsection{Action of $E_t$} 
 We begin with the action of $E_t$. 
Recall the definition of $E_t = B_q e^{tG} B_q^\prime  \Xi_q$ in (\ref{tilde_E_t}). 

\begin{lem}\label{Lemma_Action_tilde_E_t}
For any $N \in \N$ and $\hat{\rho} = \kappa_q(\rho) \in \kappa_q(\mathcal{W}_q) $,  there exists $\psi_0, \dots, \psi_{2N-1}$ and $r_{N}$ such that 
$$ E_t \varphi_{\hat{\rho}} = \sum_{j=0}^{2N-1} h^{j(1/2 - \delta)} \psi_j + r_{N}$$ satisfying : 
\begin{itemize}
\item $\psi_0 = e^{ t g(\rho)} \varphi_{\hat{\rho}}$ ; 
\item For $1 \leq j \leq 2N- 1$, $\psi_j$ is of the form 
$$\psi_j =  e^{ t g( \rho)}T(\hat{\rho}) \Lambda_h \left( P^{(j)}_{t,h} \Psi_0 \right)$$
where $P^{(j)}_{t,h} $ is a polynomial of degree at most $2j$, with coefficients depending on $t$, $g$ (and hence, $h$) and $\hat{\rho}$. It satisfies, $N_\infty(P^{(j)}_{t,h} )\leq C_{j,t}$ with $h$-independent constants, depending on derivatives of $g$. 
\item $||r_{N}||_{L^2} \leq C_{N} (1+|t|)^{2N+1} h^{N(1- 2\delta)} h^{-K_0t}$ for some $K_0>0$ depending on $g$. 
\end{itemize}
\end{lem}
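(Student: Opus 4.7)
The plan is to reduce $E_t$ to a pseudodifferential evolution in the adapted chart and then apply the coherent state expansion of Lemma \ref{Lemma_action_pseudo_coherent_state} order by order. First I would use the microlocal inversion $B_q^\prime B_q = I + \hinf$ on $\WF(\Xi_q)$, together with the fact that $\varphi_{\hat\rho}$ is microlocalized at $\hat\rho \in \kappa_q(\mathcal{W}_q)$, to derive from $\partial_t e^{tG} = G e^{tG}$ the approximate evolution equation
\begin{equation*}
\partial_t (E_t \varphi_{\hat\rho}) = \tilde{G}_q (E_t \varphi_{\hat\rho}) + O_{L^2}(h^\infty),
\end{equation*}
where $\tilde{G}_q := B_q G B_q^\prime$. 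By Lemma \ref{Lemma_G_FIO}, $\tilde{G}_q$ is a pseudodifferential operator with principal symbol $\tilde g := g \circ \kappa_q^{-1}$, modulo subleading terms of size $h^{j(1-2\delta)}$.

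Next, applying Lemma \ref{Lemma_action_pseudo_coherent_state} to $\tilde{G}_q \varphi_{\hat\rho}$ (and using that each derivative of $g$ costs $h^{-\delta}$) yields an expansion
\begin{equation*}
\tilde{G}_q \varphi_{\hat\rho} = g(\rho)\,\varphi_{\hat\rho} + \sum_{j \geq 1} h^{j(1/2-\delta)}\, T(\hat\rho)\Lambda_h\bigl(Q^{(j)}_h \Psi_0\bigr) + O_{L^2}(h^{N(1-2\delta)}),
\end{equation*}
with $Q^{(j)}_h$ polynomials of degree $\leq j$ whose coefficients are products of rescaled derivatives of $\tilde g$ at $\hat\rho$ (of uniformly bounded size). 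I would then seek a WKB ansatz
\begin{equation*}
E_t \varphi_{\hat\rho} = e^{t g(\rho)} \Bigl( \varphi_{\hat\rho} + \sum_{j=1}^{2N-1} h^{j(1/2-\delta)}\, T(\hat\rho)\Lambda_h\bigl(P^{(j)}_{t,h}\Psi_0\bigr) \Bigr) + r_N,
\end{equation*}
and determine the $P^{(j)}_{t,h}$ recursively by substitution into the evolution equation and matching powers of $h^{1/2-\delta}$. At each order this produces a linear ODE in $t$ for the coefficients of $P^{(j)}_{t,h}$ with polynomial (in $t$) forcing depending on the previously built $P^{(k)}_{t,h}$ for $k<j$ and on derivatives of $\tilde g$. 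Each pseudodifferential application increases the polynomial degree by at most $2$, so induction gives $\deg P^{(j)}_{t,h} \leq 2j$ and a bound $N_\infty(P^{(j)}_{t,h}) \leq C_{j,t}$.

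The main obstacle is controlling the remainder $r_N$. Duhamel's formula expresses it as a time integral of $E_{t-s}$ applied to a residual of order $h^{N(1-2\delta)}$, so one needs an \emph{a priori} bound on $\|E_t\|_{L^2 \to L^2}$. Since $g \in \log(1/h)\,S_\delta$ one has $\|G\|_{L^2\to L^2} \leq C\log(1/h)$, whence $\|e^{tG}\|_{L^2\to L^2} \leq h^{-K_0 t}$ for some $K_0>0$ depending on $g$; this is exactly the factor appearing in the statement. The polynomial prefactor $(1+|t|)^{2N+1}$ comes from combining the Duhamel integration with the fact that the coefficients of $P^{(j)}_{t,h}$ are themselves polynomial in $t$ of degree $\leq 2j$. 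The delicate bookkeeping is to track uniformly in $\hat\rho \in \kappa_q(\mathcal{W}_q)$ the $(t,h,j)$-dependence of the constants, which can be done using Corollary \ref{cor_norm_H_1} to convert bounds on $\text{Op}_1(P^{(j)}_{t,h})\Psi_0$ into control of the $L^2$-norm of the ansatz components, and using that the derivatives of $\tilde g$ entering the construction are bounded uniformly in $\hat\rho$ with $\partial^\alpha \tilde g \in h^{-\delta|\alpha|}S_\delta$.
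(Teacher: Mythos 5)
Your proposal follows essentially the same path as the paper: derive the (approximate) ODE $\partial_t(E_t\varphi_{\hat\rho}) = \tilde G_q (E_t\varphi_{\hat\rho}) + \hinf$ via the microlocal inversion $B_q'B_q=I$ on $\WF(\Xi_q)$, expand $\tilde G_q$ with Lemma \ref{Lemma_G_FIO}, conjugate by $T(\hat\rho)\Lambda_h$ to reduce to $\text{Op}_1$ acting on $P\Psi_0$, build the expansion recursively by matching powers of $h^{1/2-\delta}$, and control the remainder by Duhamel/Gronwall together with $\|e^{tG}\|\leq h^{-K_0 t}$. Two small inaccuracies in your write-up (neither fatal): you invoke Lemma \ref{Lemma_action_pseudo_coherent_state} as stated for $\varphi_{\hat\rho}$, but running the recursion actually requires its straightforward extension to states $T(\hat\rho)\Lambda_h(P\Psi_0)$ with $P$ an arbitrary polynomial — the paper handles this by working throughout in the class $\mathcal{C}=\{P\Psi_0\}$ via the Appendix formalism; and the degree bound is not a matter of "each application increases degree by at most $2$" but rather that the $k$-th term of the coherent-state Taylor expansion raises the degree by $k$, so the degree budget accumulated at total order $l$ in $h^{1/2-\delta}$ is controlled by $l$ (the paper's claimed bound $\leq 2l$ is conservative, and in the end $\deg P^{(j)}_{t,h}\leq 2j$ holds either way).
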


\begin{proof}
Let's fix $\hat{\rho} = \kappa_q(\rho) \in \kappa_q(\mathcal{W}_q)$, $N \in \N$ and set $\phi(t) =  E_t \varphi_{\hat{\rho}}$. $\phi$ solves the equation 
$$\phi^\prime(t) =  B_q G e^{tG} B_q^\prime\Xi_q \varphi_{\hat{\rho}}$$ Since $B_q B_q^\prime = I $ microlocally near $\WF(\Xi_q)$, we have $ e^{tG} B_q^\prime\Xi_q =  B_q^\prime B_q e^{tG} B_q^\prime\Xi_q+ \hinf$. Hence, up to $\hinf$ , $\psi(t)$ solves 
$\phi^\prime(t) = \tilde{G} \phi(t)$ with $\tilde{G} = B_q G B_q^\prime$. It is enough to find an expansion for the solution of this equation. By Lemma \ref{Lemma_G_FIO}, there exists $C_N$ (depending on $g$) such that, with $G_j = \op (g_{j,q}) $ and $G_0 = \op( g \circ \kappa_q^{-1} ) $, 
$$ \left| \left| \tilde{G} - \sum_{j=0}^{N-1} h^{j(1-2 \delta)} G_j \right| \right| \leq C_N h^{N(1- 2 \delta)}$$
Set $\psi(t) = T(\hat{\rho})^* \phi(t)$. It solves : 
$\psi^\prime(t) = T(\hat{\rho})^* \tilde{G} T(\hat{\rho})\psi(t)$.  
We also set $u(t) = \Lambda_h^* \psi(t)$, which solves $u^\prime(t) = A u(t)$ where $A = \Lambda_h^* T(\hat{\rho})^* \tilde{G} T(\hat{\rho}) \Lambda_h$. Let's also note $\tilde{A}_j = \Lambda_h^* T(\hat{\rho})^* G_j T(\hat{\rho}) \Lambda_h = \text{Op}_1 \left(a_j \right)$ where $a_j (\hat{\zeta} ) = g_{j,q} (\hat{\rho} + h^{1/2}\hat{\zeta})$.
We wish to apply the formalism of Appendix \ref{Appendix_exp} with $H = L^2(\R)$, the operator $A : H \to H$, $\mathcal{C}= \{ P \Psi_0, P \in \C[X] \}$ with initial state $u(0) = \Psi_0$. The parameter $h$ in Appendix \ref{Appendix_exp} is replaced by $\tilde{h}=h^{1/2-\delta}$. 
If $P \in \C[X]$, we approximate the action of $A$ by 
\begin{align*}
A (P \Psi_0) &= \sum_{j=0}^{N-1} \tilde{h}^{2j} \tilde{A}_j (P\Psi_0) + O_N\left( \tilde{h}^{2N} ||P\Psi_0|| \right) \\
&= \sum_{j=0}^{N-1}\tilde{h}^{2j} \left( \sum_{k=0}^{2N-1-2j} \tilde{h}^k A_{j,k}  (P\Psi_0) + R_{N,j}(P\Psi_0) \right) + O_N\left( \tilde{h}^{2N} ||P\Psi_0|| \right) \\
\end{align*}
where, according to Lemma \ref{Lemma_action_pseudo_coherent_state}, 
$$A_{j,k} = \text{Op}_1 \left( \sum_{ \alpha + \beta = k} h^{\delta k} \frac{\partial_x^\alpha \partial_\xi^\beta h_j}{\alpha! \beta!}(0)x^\alpha \xi^\beta \right) \quad ; \quad R_{N,j}(P \Psi_0) = O_{N,j, \deg P} \left( \tilde{h}^{2N-2j} N_\infty(P) \right) $$
where the constant in $O_{N,j, \deg P}$ depend on $g$ trough its semi-norms, but are $h$-independent. 
Gathering the term of same order together, we can write 
$$A (P \Psi_0) = \sum_{l=0}^{2N-1} \tilde{h}^l A_l( P \Psi_0) + O_{N,\deg P} \left( \tilde{h}^{2N} N_\infty(P) \right)$$
here 
$A_l= \sum_{2j+k =l} A_{j,k}$. It is not hard to see that $A_l(P\Psi_0) = P_l \Psi_0$ where $P \mapsto P_l$ is linear and $\deg P_l \leq \deg P + 2l$. 
Since $g_{j,q} \in S_\delta$ if $j \geq 1$ and since $ h^{|\gamma| \delta} \partial^\gamma g= O(1)$, we observe that as soon as $l \geq 1$, there exists $C_l$ depending on $g$ (trough a finite number of semi-norms), but independent of $h$, such that $$N_\infty(P_l) \leq C_l N_\infty(P)$$
Concerning $A_0$, it is clear that it is in fact  $g (\rho) \Id$. 
We now apply the formulas given in Appendix \ref{Appendix_exp} and use the notations introduced in this appendix, that is
$$R_{2N-1} (t) = e^{tA} \Psi_0 - e^{t A_0} \sum_{l=0}^{2N-1} \tilde{h}^l v_l(t)$$
with $v_l$ constructed inductively by (\ref{formula_v_k}) and $v_0 = \Psi_0$. 
Since $A_0$ is a multiplication, $A_k(s) = A_k$ for all $s \in \R$ and we see by induction that  $v_k(t)$ is of the form 
$$v_k(t) = \sum_{l=0}^k t^l P_{l,k} \Psi_0 = P_{k}(t) \Psi_0$$ where $P_{l,k} \in \C[X]$ has degree at most $2k$. In particular, $N_\infty(P_k(t)) \leq c_k (1+|t|)^k$ for some $h$-independent $c_k$ depending on $g$. 
Concerning the remainder, we have 
\begin{align*}
\left| \left| \tilde{r}_{k,2N}(t) \right|\right| &= e^{t g(\rho)} \left| \left| \left( A - \sum_{j=0}^{2N-k-1} \tilde{h}^j A_j \right) v_k(t) \right|\right|  \\& = O_{N,k} \left(  e^{t g(\rho)} \tilde{h}^{2N-k} N_\infty(P_k(t) \right) \\&\leq C_{N,k} (1+|t|)^k e^{t g(\rho)}  \tilde{h}^{2N-k}
\end{align*}
Finally, we recall that
$R_{2N-1}^\prime(t) = AR_{2N-1}(t)+ \sum_{j=0}^{2N-1} \tilde{h}^j \tilde{r}_{j,2N}(t)$. 
Hence, integrating this inequality, we find that 
$$|| R_{2N-1}(t) || \leq  \int_0^{t} ||A || \times  ||R_{2N-1}(s) ||ds  + C_N \tilde{h}^N e^{t g(\rho)} (1+ |t|)^{2N-1} $$
By  a version of Gronwall's lemma, we can find a constant $C_N$ such that
$$ ||R_{2N-1}(t) || \leq C_N \tilde{h}^{2N} e^{ t \max(|g(\rho)|, ||A||)} t^{2N+1}  $$
(where $C_N$ depends on finitely many semi-norms of $g$).  Since $g \in \log(1/h)S_\delta$, we can find $K_0 >0$ such that $\max(|g(\rho)|, ||A||) ) \leq K_0 \log (1/h)$. 
Going back to $\phi(t)$, we have proved the Lemma. 
\end{proof}

\begin{rem}
$t$ is supposed to be fixed, so that the only meaningful term involving $t$ is $h^{-Bt}$. The other mentions of $t$ can be put into the constants $C_N$. All the polynomials depend also on $h$, we will omit to mention it in the subscripts, but we keep in mind that in the following, all the polynomials potentially depend on $h$. Nevertheless, their $N_\infty$-norm can be controlled in an $h$-independent way. 
\end{rem}

\subsection{Repeated actions of $M_{q_i,q_{i-1}}$}
We fix some $\mathbf{q}=q q_1 \dots q_n \in \mathcal{A}^{n+1}$. 
Each term in the development of $E_t \varphi_{\hat{\rho}}$ is a sum of term of the form

$$  e^{ t g( \rho)} T(\hat{\rho}) \Lambda_h (P_0 \Psi_0)$$
with some $P_0 \in \C[X]$ depending on $h$. We now focus on the evolution of each of these terms under the repeated actions of $M_{q_i,q_{i-1}}$. 
We recall that this operator has the form 
\begin{equation}
M_{p,q} u (x) = \frac{1}{2 \pi h } \int_{\R^2} e^{\frac{i}{h} \left( \psi_{p,q}(x,\eta)- y\eta \right) } \alpha_{p,q}(x,\eta) u(y) dy d\eta
\end{equation}
with
\begin{equation}
\alpha_{p,q} \sim \sum_{j \geq 0} h^j \alpha_{p,q}^{(j)} 
\end{equation}
This will allow us to use Proposition \ref{Prop_propagation_coherent_states}, but we will have to deal with two different scales of asymptotic expansion : $h$ and $h^{1/2}$. To simplify the notations in this context, we note for $1 \leq i \leq n$, 
\begin{align*}
&M_{q_i,q_{i-1}} = M_i \\
&\psi_{q_i,q_{i-1}} = \psi_i \\
&F_{q_i,q_{i-1}} = F_i \\
&F^{(i)} = F_i \circ\dots \circ F_1= \kappa_{q_i} \circ F^{n_0 i} \circ \kappa_q^{-1}\\
&\alpha_{q_i,q_{i-1}}^{(j)} = \alpha_i^{(j)}  \\
\end{align*}
For $0 \leq i \leq n$, we also note $\hat{\rho}_i = F_i \circ \dots \circ F_1 (\hat{\rho})$ (with $\hat{\rho}_0 = \hat{\rho}$) and set $\hat{\rho}_i = (x_i, \xi_i)$. 

We fix a parameter $N$ and we start with an initial state 
\begin{equation}
u_0 = T(\hat{\rho}_0) \Lambda_h \big( P_0\Psi_0 \big)
\end{equation}
with $P_0$ a polynomial of degree $d_0$. Our aim is to show that we have an asymptotic expansion for $u_i = M_i \dots M_1 u_0$ of the form 

$$ u_i = \sum_{2j +k <2N }  h^j h^{k/2} u_i^{(j,k)} + r_i^{(N)}$$
where $u_i^{(j,k)} $ has the form 
$$T(\hat{\rho}_i) \mathcal{M}\left(d_{\hat{\rho}} F^{(i)} \right) \Lambda_h \left( P_i^{(j,k)} \Psi_0 \right) $$ with $P_i^{(j,k)}$ polynomial and 
with a good control on $r_i^{(N)}$. 
For $1 \leq i \leq n$ and $0 \leq j \leq N-1$, we apply Proposition \ref{Prop_propagation_coherent_states} to the operator 
$$(M_i^{(j)} u)(x) = \frac{1}{2\pi h} \int_{\R^2} e^{\frac{i}{h} (\psi_i(x,\eta) - y \eta)} \alpha_i^{(j)}(x,\eta) u(y) dy d\eta$$
and for a state of the form 
$$u = T(\hat{\rho}_{i-1}) \mathcal{M}\left( d_{\hat{\rho}}F^{(i-1)} \right) \Lambda_h \left( P \Psi_0 \right) $$
For each such polynomial $P$, we can find a family $Q_i^{(j,k)}(P)$ of polynomials such that 
\begin{itemize}[nosep]
\item $Q_i^{(j,0)}(P) =\frac{\alpha_i^{(j)}(x_i, \xi_{i-1})}{ | \partial^2_{ x \eta}  \psi_i (x_i, \xi_{i-1}) |^{1/2}}P$ (up to a multiplicative factor of norm 1 that we omit in the proof) ; 
\item $Q_i^{(j,k)}(P)$ is a polynomial of degree $\deg P + 3k$  and the map $P \mapsto Q_i^{(j,k)}$ is linear, with coefficients depending on $F^{(i)}
$ and the derivatives of $\psi_i$ and $\alpha_i^{(j)}$ at $(x_i, \xi_{i-1})$ up to the $3k$-th order and we have $$N_\infty(Q_i^{(j,k)}) \leq C_{3k}(\psi_i)||\alpha_i^{(j)}||_{C^k} ||d_{\hat{\rho}} F^{(i)}||^{3k} N_\infty(P)$$
Moreover, if $(x_i, \xi_{i-1}) \not \in \supp \alpha_i^{(j)}$, then $Q_i^{(j,k)} = 0$.
\item 
for every $N \in \N$, 
\begin{equation}\label{Formule_step_i_k}
M_i^{(j)} \Big( T(\hat{\rho}_{i-1}) \mathcal{M}(d_{\hat{\rho}} F^{(i-1)}) \Lambda_h [ P \Psi_0] \Big) = T(\hat{\rho}_i) \mathcal{M}( d_{\hat{\rho}} F^{(i)}) \Lambda_h  \left[ \sum_{k=0}^{N-1} h^{k/2} Q_i^{(j,k)}(P) \Psi_0 \right] + R_i^{(j,N)}(P)
\end{equation}
with $$||R_i^{(j,N)}(P) ||_{L^2} \leq  h^{N/2} C_{3N+M}(\psi_i)||\alpha^{(j)}_i||_{C^{N + M}}||d_{\hat{\rho}} F^{(i)}||^{3N} K_{N,\deg P} N_\infty(P) $$ 
\end{itemize}

\begin{rem}\label{Rem_condition_support}
In virtue of the properties of $\alpha_i^{(j)}$, the condition $(x_i, \xi_{i-1}) \in \supp \alpha_i^{(j)} \iff F^{i n_0}(\rho) \in \supp (\chi_q \alpha)$. 
\end{rem}

We also write the expansion of $M_i$ in the form, for every $N$, 
\begin{equation}\label{Formule_step_i_j}
M_i = \sum_{j=0}^{N-1} h^j M_i^{(j)} + \widetilde{S}_i^{(N)}
\end{equation}
with 
$$ ||\widetilde{S}_i^{(N)} || \leq \widetilde{C}_{i,N,\varepsilon} h^{N(1- \varepsilon)}$$
Since $M_i$ belongs to the finite family $(M_{p,q})$, we can replace $\widetilde{C}_{i,N,\varepsilon}$ by $\widetilde{C}_{N,\varepsilon} = \sup_{i}\widetilde{C}_{i,N,\varepsilon}$. 

\vspace{0.5cm} 
We now give the iteration formulas for the required expansion. 
We state $P_0^{(0,0)} = P_0$ and $P_0^{(j,k)} =0$ for the other values of $(j,k)$. For $2j + k < 2N$, we define inductively $P_i^{(j,k)}=P_{i,P_0}^{(j,k)}$ by the formula (to alleviate the notations, we will omit to specify the dependence in $P_0$ when this is not necessary) : 
\begin{equation}\label{Formule_iteration}
P_i^{(j,k)} = \sum_{j_1 + j_2 =j} \sum_{k_1 + k_2 = k} Q_i^{(j_2,k_2)} \left( P_{i-1}^{(j_1,k_1)} \right) 
 \end{equation}
Concerning the remainder term, we set 
\begin{multline}\label{Formule_iteration_reste}
r_i^{(N)}=r_{i,P_0}^{(N)} = M_i \left( r_{i-1}^{(N)} \right) + \sum_{2j + k <2N} h^{j + k/2} \widetilde{S}_i^{(N - j - \lceil k/2 \rceil)} \left( u_{i-1}^{(j,k)} \right) \\+ \sum_{2j_1 + 2j_2 + k_1 <2N} h^{N-j_1- j_2 - k_1/2}R_i^{(j_1, 2(N-j_1-j_2)-k_1)}\left( P_{i-1}^{(j_1,k_1)} \right)
\end{multline}

\begin{lem}\label{Lemma_Expansion_propagation}
With the above notations, we have for $1 \leq i \leq n$, 
$$  u_i = \sum_{2j +k <2N} h^j h^{k/2} u_{i,P_0}^{(j,k)} + r_{i,P_0}^{(N)} \quad ; \quad u_i^{(j,k)} = T(\hat{\rho}_i) \mathcal{M}\left(d_{\hat{\rho}} F^{(i)} \right) \Lambda_h \left( P_{i,P_0}^{(j,k)} \Psi_0 \right)$$
\end{lem}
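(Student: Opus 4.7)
The plan is to prove the lemma by induction on $i \in \{1,\dots,n\}$. The proof is essentially an algebraic verification: one substitutes the two available expansions — namely (\ref{Formule_step_i_j}) expanding $M_i$ in integer powers of $h$, and (\ref{Formule_step_i_k}) expanding $M_i^{(j)}$ applied to a squeezed state in half-integer powers of $h$ — and checks that the inductive formulas (\ref{Formule_iteration}) and (\ref{Formule_iteration_reste}) correctly reconstruct the decomposition. The main difficulty is purely the bookkeeping with two scales.

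For the base case $i=1$, I note that $F^{(0)} = \mathrm{Id}$ so $\mathcal{M}(d_{\hat{\rho}}F^{(0)}) = \mathrm{Id}$ (up to the unimodular phase we systematically ignore). First applying (\ref{Formule_step_i_j}) to write $M_1 = \sum_{j} h^j M_1^{(j)} + \widetilde{S}_1^{(N)}$, and then (\ref{Formule_step_i_k}) to each $M_1^{(j)} u_0$ with truncation order $2N - 2j$, one gets precisely the claimed expansion, and since $P_0^{(j',k')} = 0$ unless $(j',k')=(0,0)$ the sum in (\ref{Formule_iteration}) collapses to $P_1^{(j,k)} = Q_1^{(j,k)}(P_0)$.

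For the inductive step, assume the decomposition holds at rank $i-1$. Writing $u_i = M_i u_{i-1}$ and substituting both expansions:
\begin{equation*}
u_i = \Bigl(\sum_{j=0}^{N-1} h^j M_i^{(j)} + \widetilde{S}_i^{(N)}\Bigr) \Bigl(\sum_{2j'+k' <2N} h^{j'+k'/2} u_{i-1}^{(j',k')} + r_{i-1}^{(N)}\Bigr).
\end{equation*}
For each triple $(j,j',k')$ with $j<N$ and $2j'+k'<2N$, I apply (\ref{Formule_step_i_k}) to $M_i^{(j)} u_{i-1}^{(j',k')}$ at truncation order $N' = 2(N-j-j')-k'$. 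The main part contributes to the terms of total power $h^{J+K/2}$ with $J=j+j'$, $K=k+k'$, and $2J+K<2N$, and summing over $j+j'=J$, $k+k'=K$ yields exactly the polynomial $P_i^{(J,K)}$ prescribed by (\ref{Formule_iteration}). The corresponding remainder $R_i^{(j,N')}(P_{i-1}^{(j',k')})$ is of size $h^{N'/2}$, which, multiplied by the prefactor $h^{j+j'+k'/2}$, gives an overall size $h^N$ — this accounts for the third sum in (\ref{Formule_iteration_reste}), where one rewrites $(j,j',k')$ as $(j_2,j_1,k_1)$.

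The other two remainder sources are treated similarly: $M_i$ applied to $r_{i-1}^{(N)}$ gives the first term of (\ref{Formule_iteration_reste}); and applying $\widetilde{S}_i$ to each $u_{i-1}^{(j,k)}$ contributes the second term, where the truncation index $N - j - \lceil k/2 \rceil$ is chosen so that the combined size $h^{j+k/2} \cdot h^{(N-j-\lceil k/2 \rceil)(1-\varepsilon)}$ is at least $h^{N(1-\varepsilon)}$. The main obstacle is verifying that, with these three truncation schemes chosen jointly, no main term of order less than $h^N$ is lost and every error is absorbed into one of the three families — this is straightforward combinatorics on the lattice $\{(j,k) : 2j+k<2N\}$, using the fact that if $2J+K \geq 2N$ then the corresponding contribution has size at least $h^N$ and can legitimately be moved to the remainder.
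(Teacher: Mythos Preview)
Your proposal is correct and takes essentially the same approach as the paper: the paper does not spell out a proof of this lemma, treating it as an immediate consequence of the iterative definitions (\ref{Formule_iteration}) and (\ref{Formule_iteration_reste}), and your inductive verification is precisely the straightforward bookkeeping that justifies this. The only thing to be careful about is matching the truncation indices exactly to the paper's conventions (note, for instance, that in the third sum of (\ref{Formule_iteration_reste}) the superscript $j_1$ on $R_i$ appears to be a typo for $j_2$, and the explicit power of $h$ there records the prefactor rather than the total size), but your argument handles the combinatorics correctly.
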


We now analyze these formulas to understand more precisely these terms and obtain a good control of the remainder. In particular, concerning the polynomial $P_i^{(j,k)}$, we want to control their degree and the norms of their coefficients. 
\paragraph{Leading term. }
First note that the leading term (that is the term $(0,0)$) has a nice form. Indeed, up to a factor of norm 1, it is given by

\begin{align*}
P_i^{(0,0)} &= P_0 \times \prod_{l=1}^i \frac{\alpha_l^{(0)}(x_l, \xi_{l-1})}{| \partial^2_{ x \eta}  \psi_l(x_l, \xi_{l-1}) |^{1/2}} \\
&= P_0 \times  \prod_{l=0}^{i-1} \left[ \left( \chi_{q_l} \prod_{j=0}^{n_0-1} \alpha \circ F^j  \right) ( F^{l n_0} (\rho) )\right]
\end{align*}
The product on the right plays a crucial role in the analysis. Let's note $$p_{\alpha,l}(\rho) = \left( \chi_{q_l} \prod_{j=0}^{n_0-1} \alpha \circ F^j  \right) ( F^{l n_0} (\rho) )\quad ; \quad \pi_{\alpha,i}(\rho) = \prod_{l=0}^{i-1} p_{\alpha,l}(\rho)$$
  We remark that 
$$\pi_{\alpha,i}(\rho) \leq \Pi_{\alpha, q_0 \dots q_{i-1} } (\rho)$$
Recall that $ \Pi_{\alpha, \mathbf{q}}(\rho) =  \prod_{i=0}^{nn_0-1} \alpha\left( F^i(\rho) \right)$. 
To simplify the notations, let's note $\Pi_{\alpha, i} = \Pi_{\alpha, q_0 \dots q_{i-1} } $.

Moreover, combining the support property (\ref{Property_suppor}) of $\alpha_i^{(j)}$, Remark \ref{Rem_condition_support} and the properties of $Q_i^{(j,k)}$ given by Proposition \ref{Prop_propagation_coherent_states}, we see that for $\mathbf{q} = q_0 \dots q_{i-1}$, 

\begin{equation}\label{Property_support}
\rho \not \in \mathcal{W}_\mathbf{q}^- \implies \forall j,k \in \N , \quad  Q_i^{(j,k)} = 0
\end{equation}

\paragraph{Analysis of the polynomial $P_{i,P_0}^{(j,k)}$.}
According to (\ref{Property_support}), we  assume that $\rho = \kappa_q^{-1}(\hat{\rho}) \in \mathcal{W}_{q_0 \dots q_{i-1}}^-$. Otherwise, there is nothing more to say. 
We start by the easiest part of the analysis : 

\begin{lem}
For all $0 \leq i \leq n$ and all $(j,k)$ with $2j + k < 2N$, $P_{i,P_0}{(j,k)}$ is of degree at most $3k+ \deg P_0$. 
\end{lem}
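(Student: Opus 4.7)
The plan is a straightforward induction on $i$, using the recursive definition (\ref{Formule_iteration}) together with the degree-raising property of the operators $Q_i^{(j,k)}$ given by Proposition \ref{Prop_propagation_coherent_states}.

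First, I would handle the base case $i=0$. By definition we set $P_0^{(0,0)}=P_0$, which has degree $\deg P_0 = 3\cdot 0 + \deg P_0$, and $P_0^{(j,k)}=0$ for $(j,k)\neq (0,0)$, for which the degree bound holds trivially (with the convention $\deg 0 = -\infty$, or simply because the zero polynomial satisfies any degree bound).

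For the inductive step, suppose that for some $1\leq i\leq n$ and for all $(j_1,k_1)$ with $2j_1+k_1<2N$ we have $\deg P_{i-1}^{(j_1,k_1)}\leq 3k_1+\deg P_0$. Fix $(j,k)$ with $2j+k<2N$. By (\ref{Formule_iteration}),
$$P_i^{(j,k)} = \sum_{\substack{j_1+j_2=j\\ k_1+k_2=k}} Q_i^{(j_2,k_2)}\bigl(P_{i-1}^{(j_1,k_1)}\bigr).$$
Proposition \ref{Prop_propagation_coherent_states} ensures that the linear map $P\mapsto Q_i^{(j_2,k_2)}(P)$ raises the degree by at most $3k_2$, so
$$\deg Q_i^{(j_2,k_2)}\bigl(P_{i-1}^{(j_1,k_1)}\bigr)\leq \deg P_{i-1}^{(j_1,k_1)}+3k_2\leq 3k_1+\deg P_0+3k_2 = 3k+\deg P_0.$$
Taking the maximum over all pairs $(j_1,k_1),(j_2,k_2)$ appearing in the sum yields $\deg P_i^{(j,k)}\leq 3k+\deg P_0$, which closes the induction.

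There is no real obstacle here: the only ingredient beyond bookkeeping is the degree-raising bound $\deg Q_i^{(j,k)}(P)\leq \deg P + 3k$, which is already recorded in the statement of Proposition \ref{Prop_propagation_coherent_states}. The analogous estimates on $N_\infty(P_i^{(j,k)})$ that will be needed in the sequel will require more care (they involve products of factors like $\|d_{\hat\rho}F^{(i)}\|^{3k_2}$ and hence of the unstable Jacobians $J^u_{\mathbf{q}}$), but for the degree alone the induction closes immediately.
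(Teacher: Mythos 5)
Your proof is correct and follows exactly the same route as the paper: induction on $i$, with the base case $P_0^{(0,0)}=P_0$ and the inductive step combining the recursion (\ref{Formule_iteration}) with the bound $\deg Q_i^{(j_2,k_2)}(P)\leq \deg P+3k_2$ from Proposition \ref{Prop_propagation_coherent_states}. Nothing is missing.
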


\begin{proof}
We argue by induction on $i$. This is obvious for the case $i=0$. To pass from $i-1$ to $i$, we use (\ref{Formule_iteration}) which shows that
\begin{align*}
\deg P_i^{(j,k)}& \leq \max_{j_1 + j_2=j, k_1+k_2=k} \deg Q_i^{(j_2,k_2)}\left(P_{i-1}^{(j_1,k_1)} \right) \\
& \leq \max_{j_1 + j_2=j, k_1+k_2=k} 3k_2 + \deg P_{i-1}^{(j_1,k_1)} \\
& \leq \max_{j_1 + j_2=j, k_1+k_2=k} 3k_2 +3k_1 + d_0 \\
&\leq 3k + d_0
\end{align*}
\end{proof}

The analysis of $N_\infty \left( P_i^{(j,k)} \right)$ is a bit more tedious.

\begin{lem}\label{Lemme_estimates_Pijk}
For every $\varepsilon>0$, there exists a family of constants $C_{j,k,\varepsilon}$ depending on the dynamical system and on $M_h$ such that: 
For all $0 \leq i \leq n$ and all $(j,k)$ with $2j + k < 2N$, we have 
$$N_\infty \left( P_{i,P_0}^{(j,k)} \right) \leq C_{j,k,\varepsilon} h^{-k \varepsilon} i^{2j +k} \Pi_{\alpha,i} (\rho) \left(J^u_{q_0\dots q_{i-1}}\right)^{3k} N_\infty(P_0)$$ 

\end{lem}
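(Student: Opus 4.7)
The natural approach is induction on $i$, using the recursion formula (\ref{Formule_iteration}) and the bounds on $Q_i^{(j,k)}$ supplied by Proposition \ref{Prop_propagation_coherent_states}. The base case $i=0$ is immediate since $P_0^{(0,0)}=P_0$ and all other $P_0^{(j,k)}$ vanish. For the inductive step, I would split the recursion
$$P_i^{(j,k)}=\sum_{j_1+j_2=j}\sum_{k_1+k_2=k}Q_i^{(j_2,k_2)}\left(P_{i-1}^{(j_1,k_1)}\right)$$
into the \emph{leading} contribution $(j_2,k_2)=(0,0)$, which is responsible for propagating the $\Pi_{\alpha,i}$ factor, and all the \emph{subleading} contributions with $j_2+k_2\geq 1$, which I would treat as lower-order perturbations.

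For the leading term, Proposition \ref{Prop_propagation_coherent_states} gives $Q_i^{(0,0)}(P_{i-1}^{(j,k)})=c_i\,P_{i-1}^{(j,k)}$ where $|c_i|=|\alpha_i^{(0)}(x_i,\xi_{i-1})|\cdot|\partial^2_{x\eta}\psi_i(x_i,\xi_{i-1})|^{-1/2}\leq C\,p_{\alpha,i-1}(\rho)$. Since $p_{\alpha,i-1}(\rho)\cdot\Pi_{\alpha,i-1}(\rho)\leq \Pi_{\alpha,i}(\rho)$ and $J^u_{q_0\dots q_{i-2}}\leq J^u_{q_0\dots q_{i-1}}$, applying the inductive hypothesis produces exactly the bound with $i$ replaced by $(i-1)$ on the right. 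For the subleading contributions I would use: (i) the elementary estimate $||d_{\hat\rho}F^{(i)}||^{3k_2}\leq C(J^u_{q_0\dots q_{i-1}})^{3k_2}$ derived from Lemma \ref{Lemma_unstable_Jacobian} (together with the fact that the charts $\kappa_q$ have uniformly bounded $C^1$-norm); (ii) the fact that $\alpha_i^{(0)}\in S$ and $\alpha_i^{(j_2)}\in\bigcap_{\eta>0}h^{-\eta}S_\eta$ for $j_2\geq 1$, so $\|\alpha_i^{(j_2)}\|_{C^{k_2}}\leq C_{j_2,k_2,\eta}h^{-\eta(1+k_2)}$ with $\eta$ to be chosen below; (iii) the lower bound $\alpha\geq c>0$ (since $\im z\leq 4$ and $t_{\mathrm{ret}}$ is bounded), which gives $\Pi_{\alpha,i-1}(\rho)\leq C\,\Pi_{\alpha,i}(\rho)$ and lets me freely replace $\Pi_{\alpha,i-1}$ by $\Pi_{\alpha,i}$ in any subleading bound.

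Combining these inputs with the inductive hypothesis yields, for each subleading term, a bound of the form
$$C_{j_2,k_2,\eta}\,h^{-\eta(k_2+1)\mathbf 1_{j_2\geq 1}-k_1\varepsilon}\,(i-1)^{2j_1+k_1}\,\Pi_{\alpha,i}(\rho)\,(J^u_{q_0\dots q_{i-1}})^{3k}\,N_\infty(P_0).$$
Summing the leading term (which provides the base growth $(i-1)^{2j+k}$) with the subleading terms (which, when iterated, produce polynomial-in-$i$ factors dominated by $i^{2j+k}$ thanks to the standard discrete Duhamel/Gronwall-type argument for such recursions) gives the desired bound at step $i$. The $\eta$-losses from the subleading amplitudes are absorbed by choosing $\eta$ sufficiently small relative to $\varepsilon$ and by using $i\leq C\log(1/h)$, so that $h^{-\eta}$ contributions multiplied by powers of $i$ stay within $h^{-k\varepsilon}i^{2j+k}$ up to the constant $C_{j,k,\varepsilon}$ (which is allowed to depend on $j,k,\varepsilon$).

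The main obstacle will be the bookkeeping in the second step: making sure that the accumulation over $i$ of the subleading corrections, each carrying a bounded multiplicative factor, yields only the polynomial growth $i^{2j+k}$ rather than something exponential. This will follow because only finitely many indices $(j_2,k_2)\neq (0,0)$ arise (constrained by $2j+k<2N$), so the recursion for $P_i^{(j,k)}$ has the shape of a perturbation of the trivial recursion $P_i^{(j,k)}=c_i P_{i-1}^{(j,k)}$ and produces at most a polynomial-in-$i$ amplification, which a straightforward induction then controls.
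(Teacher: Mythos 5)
Your induction-on-$i$ strategy is a genuinely different organization from the paper's argument, though it rests on the same two pillars: the bounds of Proposition \ref{Prop_propagation_coherent_states} and the fact that $Q_l^{(0,0)}$ is scalar multiplication by $p_{\alpha,l}(\rho)$. The paper instead iterates (\ref{Formule_iteration}) all the way down, writing $P_i^{(j,k)}$ as a sum over chains $Q_i^{(j_i,k_i)}\circ\dots\circ Q_1^{(j_1,k_1)}(P_0)$, and exploits the combinatorial constraint $\#\{l:(j_l,k_l)\neq(0,0)\}\leq 2j+k$: the trivial steps contribute exactly $\prod_{l\notin\mathcal L}p_{\alpha,l}$, the at most $2j+k$ nontrivial steps each contribute a Jacobian factor bounded by $\sup_{l\leq i}\|d_{\hat\rho}F^{(l)}\|^{3k_l}\leq(CJ^u_{q_0\dots q_{i-1}})^{3k_l}$, and the number of admissible subsets $\mathcal L$ is $O(i^{2j+k})$. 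Your discrete-Gronwall version encodes the same constraint implicitly (each subleading step strictly decreases $2j+k$, and the summation $\sum_{i'}(i')^{m-1}\lesssim i^{m}$ reproduces the subset count), and your treatment of the $h^{-\eta}$ amplitude losses and of $\Pi_{\alpha,i-1}\leq C\,\Pi_{\alpha,i}$ matches what the paper does. What the paper's organization buys is that constants only ever attach to the boundedly many nontrivial factors.

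That last point is where your write-up needs care. In your leading-term step you invoke $J^u_{q_0\dots q_{i-2}}\leq J^u_{q_0\dots q_{i-1}}$ with no constant. But $J^u_{\mathbf q}$ is defined in (\ref{def_jacobian_2_bis}) via an arbitrarily chosen shadowing orbit and is canonical only up to a uniform multiplicative constant, so what is actually available is $J^u_{q_0\dots q_{i-2}}\leq C_1 e^{-n_0\lambda_0}J^u_{q_0\dots q_{i-1}}$ with $C_1$ possibly larger than $e^{n_0\lambda_0}$. A per-step factor $C_1^{3k}>1$ in the \emph{leading} term of your recursion compounds to $C_1^{3kn}\sim h^{-3k\vartheta_\varepsilon\log C_1}$ over $n\sim\vartheta_\varepsilon\log(1/h)$ steps, which is a fixed polynomial loss not absorbable into $h^{-k\varepsilon}$ and would break the lemma. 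The fix is easy but should be stated: run the induction with a genuinely monotone surrogate, e.g.\ $\tilde J_l=\max\bigl(1,\sup_{l'\leq l}\|d_{\hat\rho}F^{(l')}\|\bigr)$ or $J^u_{ln_0}(\rho')$ for a single shadowing orbit $\rho'$ of the full word (so the ratios telescope exactly), and convert to $J^u_{q_0\dots q_{i-1}}$ only once at the end, paying a single constant $C^{3k}$. With that adjustment (and the paper-style choice of $\eta$ depending on $j,k,\varepsilon$ to absorb the $h^{0^-}$ losses of the $\alpha_i^{(j_2)}$, $j_2\geq 1$), your induction closes and yields the stated bound.
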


\begin{rem}
The dependence on $i$ is of major importance. Here, $i \leq n =O( \log 1/h)$. Hence, the term $i^{2j+k}$ is essentially harmless compared to the second part $ \Pi_{\alpha,i} (\rho) \left(J^u_{q_0 \dots q_{i-1}}\right)^{3k}h^{-k \varepsilon}$. The factor $\Pi_{\alpha,i} (\rho) $ does not depend on $k$ and is common to all the terms. It can be put in front of the all expansion. On the contrary, the growth of $J^u_{q_0 \dots q_{i-1}}$ influences the precision and the validity of the expansion. So that the expansion holds, we need to require 
 $$\left(J^u_{q_0 \dots q_{i-1}}\right)^3 h^{- \varepsilon} \ll h^{-1/2}$$
 As a consequence, this is where the assumption $$\vartheta_\varepsilon < \frac{1-4 \varepsilon}{6 \lambda_{\max}}$$ (see its definition in (\ref{definition_vartheta})) is important and lead to a valid expansion.
 \end{rem}
 \begin{rem}
 The constant $C_{j,k,\varepsilon}$ depends on $M_h$ through its amplitude $\alpha_h$ as a Fourier integral operator in a class $ I_{\eta} (\R \times \R, \Gr(F)^\prime)$ (for some $\eta=\eta(\varepsilon)$) and it depends only a finite number $N_{j,k}$ of derivatives. 
\end{rem}

\begin{proof}
To alleviate the notations, we renormalize $P_0$ so that $N_\infty(P_0) =1$. 
We fix $(j,k)$ such that $2j + k < 2N$. By iterating (\ref{Formule_iteration}), we find that 
$$ P_i^{(j,k)} = \sum_{ \substack{ j_1 + \dots + j_i = j \\k_1 + \dots + k_i = k }} Q_i^{(j_i,k_i)} \circ \dots \circ Q_1^{(j_1,k_1)} (P_0)$$ 
We now use the simple following idea : when $i$ is large that is when $i \gg 2j + k$, and when $j_1 + \dots + j_i = j $ and $k_1 + \dots + k_i = k $, most of the couples $(j_l,k_l)$ are equal to $(0,0)$. From a more quantitative point of view, we have 
$$ \# \{ 1 \leq l \leq i , (j_l,k_l) \neq (0,0)  \} \leq 2j+k $$
Indeed, $$ 2j + k  = 2 (j_1 + \dots + j_i) + (k_1 + \dots + k_i) \geq \# \{ 1 \leq l \leq i , (j_l,k_l) \neq (0,0)  \} $$
Let's note $\mathcal{P}(i,2j+k)$ the set of subsets of $\{1, \dots, i \}$ of cardinals smaller than $2j+k$. For $\mathcal{L} \in\mathcal{P}(i,2j+k)$ we define the set of indices $\mathcal{I}_\mathcal{L} \subset \N^i \times \N^i$ by 
$$(\overrightarrow{j}, \overrightarrow{k})= \big((j_1, \dots, j_i) , (k_1, \dots, k_i) \big) \in \mathcal{I}_\mathcal{L} \iff \left\{ \begin{array}{l}
j_1 + \dots + j_i = j \\
k_1 + \dots + k_i = k \\
\forall 1 \leq l \leq i , (j_l, k_l) \neq (0,0) \iff l \in \mathcal{L}
\end{array} \right. $$
With these notations, we have
$$ P_i^{(j,k)} = \sum_{ \mathcal{L} \in \mathcal{P}(i,2j+k)} \sum_{(\overrightarrow{j}, \overrightarrow{k}) \in \mathcal{I}_\mathcal{L}} Q_i^{(j_i,k_i)} \circ \dots \circ Q_1^{(j_1,k_1)} (P_0)$$
Let's fix $\mathcal{L} \in  \mathcal{P}(i,2j+k)$ and $(\overrightarrow{j}, \overrightarrow{k}) \in \mathcal{I}_\mathcal{L}$. Let's write $\mathcal{L}= \{ l_1 < \dots < l_m \}$.  Since $Q_l^{(0,0)}$ is simply a multiplication by $p_{\alpha,l}(\rho)$, we have : 

$$ Q_i^{(j_i,k_i)} \circ \dots \circ Q_1^{(j_1,k_1)} (P_0)  =\left(\prod_{l \not \in \mathcal{L}} p_{\alpha,l}\right) \times  Q_{l_m}^{(j_{l_m},k_{l_m})} \circ \dots  \circ  Q_{l_1}^{(j_{l_1},k_{l_1})}(P_0)$$
Using Proposition \ref{Prop_propagation_coherent_states}, we can estimate 
$$ N_\infty \left( Q_{l_m}^{(j_{l_m},k_{l_m})} \circ \dots  \circ  Q_{l_1}^{(j_{l_1},k_{l_1})}(P_0)\right)  \leq  N_\infty(P_0) \times \prod_{p = 1}^m C_{3k_{l_p}}(\psi_{l_p}) || \alpha^{(j_{l_p})}_{l_p} ||_{C^{k_{l_p}}} || d_{\hat{\rho}} F^{(l_p)} ||^{3 k_{l_p}} $$
For $1 \leq l \leq i$, $\psi_l$ (resp. $\alpha_l^{(\cdot)})$ belongs to a finite family of functions (corresponding to the finite number of admissible transitions). Hence, recalling that $\alpha_l^{(j)} \in S_{0^+}$ if $j=0$ and $h^{0^-} S_{0^+}$ if $j \geq 1$, we can find a global uniform constant depending on the dynamical system, and on a certain number $N_{j,k}$ of derivatives of $\alpha$ such that for all $j^\prime \leq j$, $k^\prime \leq k$ and for all $l$, 
$$C_{3k^\prime}(\psi_{l}) || \alpha^{(j^\prime)}_{l} ||_{C^{k^\prime}}  \leq C_{j,k,\varepsilon}   \left\{ \begin{array}{l}
h^{- k^\prime \varepsilon/2}  \text{ if }j^\prime=0 \\
h^{- k^\prime\varepsilon/2} h^{-\eta_{k,j}} \text{ if } j^\prime \geq 1 
\end{array}  \right. .$$
where we artificially choose $\eta_{k,j} = \frac{k \varepsilon }{2j}$ and use the fact that $\alpha_l^{(j)} \in S_{\varepsilon/2}$ (resp. $h^{-\eta_{j,k}} S_{\varepsilon/2}$) if $j^\prime=0$ (resp. $j^\prime \geq 1$). 
As a consequence, since $j_{l_1} + \dots + j_{l_m}=j$ and $k_{l_1} + \dots + k_{l_p} = k$, we have 
$$ N_\infty \left( Q_{l_m}^{(j_{l_m},k_{l_m})} \circ \dots  \circ  Q_{l_1}^{(j_{l_1},k_{l_1})}(P_0)\right)  \leq    C_{j,k,\varepsilon}^m N_\infty(P_0)  h^{-  k \varepsilon/2}h^{-j \eta_{k,j}} \left( \sup_{ 1 \leq l \leq i } ||d_{\hat{\rho}} F^{(l)}|| \right)^{3k}$$
Since $m \leq 2j +k$, there exists a global constant, still denoted $C_{j,k,\varepsilon}$, such that, uniformly in $\mathcal{I}_\mathcal{L}$, 
 $$N_\infty \left( Q_{l_m}^{(j_{l_m},k_{l_m})} \circ \dots  \circ  Q_{l_1}^{(j_{l_1},k_{l_1})}(P_0)\right) \leq C_{j,k,\varepsilon} N_\infty(P_0)  h^{- k \varepsilon} \left( \sup_{ 1 \leq l \leq i } ||d_{\hat{\rho}} F^{(l)}|| \right)^{3k}$$
 We remark that for $1 \leq l \leq n$, $ || d_{\hat{\rho}} F^{(l)} || \leq C || d_{\hat{\rho}} F^{n_0 l}  || \leq C J^u_{q_0 \dots q_{l-1}}$.
 Finally, since $|\alpha| \geq e^{- \tau_m}$ in the neighborhood $\bigcup_{q \in \mathcal{A}} \mathcal{W}_q$ of $\mathcal{T}$, 
 we see that for every $(\overrightarrow{j}, \overrightarrow{k} )\in \mathcal{I}_\mathcal{L}$ we have
 $$ N_\infty\left( Q_i^{(j_i,k_i)} \circ \dots \circ Q_1^{(j_1,k_1)} (P_0) \right)  \leq C_{j,k,\varepsilon} N_\infty(P_0)  h^{-k \varepsilon} \Pi_{\alpha,i}(\rho) \left( J^u_{q_0 \dots q_{i-1}}\right)^{3k}$$
 We can now conclude the proof. Indeed, we have 
 $$N_\infty\left( P_i^{(j,k)}\right) \leq  \sum_{ \mathcal{L} \in \mathcal{P}(i,2j+k)}  \# \mathcal{I}_\mathcal{L}\times  \left( C_{j,k,\varepsilon} N_\infty(P_0)  h^{-k \varepsilon} \Pi_{\alpha,i}(\rho) \left( J^u_{q_0 \dots q_{i-1}}\right)^{3k} \right)$$
 If $\mathcal{L} \in \mathcal{P}(i,2j+k) $, we estimate (crudely) the cardinal of $\mathcal{I}_\mathcal{L}$ by 
 $$\#\mathcal{I}_\mathcal{L} \leq  (j+1)^{\#\mathcal{L}} (k+1)^{\#\mathcal{L}} \leq  (j+1)^{2j+k} (k+1)^{2j+k} $$
 Finally, 
 $$i \mapsto \sum_{ \mathcal{L} \in \mathcal{P}(i,2j+k)} 1 = \sum_{m=0}^{2j+k} { i \choose m } $$
 is a polynomial function of $i$, of degree $2j+k$. Hence, thee exists $C_{j,k}$ such that $$\left| \sum_{ \mathcal{L} \in \mathcal{P}(i,2j+k)} { i \choose \#\mathcal{L} } \right| \leq C_{j,k} i^{2j+k}$$
  This concludes the proof. 
\end{proof}

\paragraph{Control of the remainder.}  Armed with Lemma \ref{Lemme_estimates_Pijk} and the iterative formula (\ref{Formule_iteration_reste}), we can deduce a control for the remainder term. 
Let's consider $B \geq1$ such that $||M_{q,p} || \leq B$ for all admissible pair $(q,p)$ (it is possible to take $B \leq (1+ \varepsilon)||\alpha||_\infty$, or even with $\varepsilon$ going to $0$ as $h \to 0$, but the precise value of $B$ is not relevant for this term). For this reason, we will also get rid of the precise value of $\Pi_{\alpha, i}$ and assume that $||\alpha||_\infty \leq B$ so that $|| \Pi_{\alpha,i}|| \leq B^i$.

Plugging the previous estimates into (\ref{Formule_iteration_reste}), we get 
\begin{multline*}
||r_i^{(N)} || \leq B ||r_{i-1}^{(N)} || + \sum_{2j + k <2N} h^{j + k/2} \widetilde{C}_{N-j-\lceil k/2 \rceil, \varepsilon} h^{(N - j -\lceil k/2 \rceil) (1- \varepsilon)} ||u_{i-1}^{(j,k)}||  \\ 
+ \sum_{2j_1 + 2j_2 + k_1 < 2N} h^{N} C_{3(2N-2j_1-2j_2-k_1)+M}(\psi_{i-1})h^{-\varepsilon M} \left( h^{-\varepsilon} ||d_{\hat{\rho}} F^{(i)}||^3\right)^{2N-2j_1-2j_2-k_1} K_{3k_1,3k_1+\deg P_0 } N_\infty(P_{i-1}^{(j_1,k_1)}  ) 
\end{multline*}
Recall that $||P\Psi_0||_{L^2} \leq K_{\deg P} N_\infty(P) $ for some family of constants $K_n$ depending only on $n$. 
By the expression of $u_i^{(j,k)}$ , we have 
$$ ||u_i^{(j,k)} ||_{L^2} = ||P_i^{(j,k)} \Psi_0||_{L^2} \leq K_{3k+ \deg P_0} N_\infty\left( P_i^{(j,k)} \right)$$
We also recall that we can bound $||d_{\hat{\rho}} F^{(i)} ||$ by $CJ_{q_0 \dots q_{i-1}}$ for some global constant $C$.
\begin{multline*}
||r_i^{(N)} || \leq B ||r_{i-1}^{(N)} || + \sum_{2j + k <2N} h^{N(1-\varepsilon)} \widetilde{C}_{N-j-\lceil k/2 \rceil, \varepsilon}   K_{3k} C_{j,k,\varepsilon} B^i i^{2j+k} \left(J^u_{q_0 \dots q_{i-1}}\right)^{3k} \\ 
+ \sum_{2j_1 + 2j_2 + k_1 < 2N}  h^{N} C_{N,j,\varepsilon}h^{-M\varepsilon} K_{3k_1} C_{j_1,k_1,\varepsilon} B^i i^{2j_1+k_1} \left( h^{-\varepsilon} \left(J^u_{q_0 \dots q_{i-1}} \right)^3 \right)^{2(N-j_1-j_2)}
\end{multline*}
Finally, we plug the bound $J^u_{q_0\dots q_{i-1} } \leq C_\varepsilon e^{i \lambda_{\max}(1 + \varepsilon)}$ into the previous inequality. We can find a constant $C_{N,\deg P_0,\varepsilon}$ such that 
$$ ||r_i^{(N)}|| \leq B ||r_{i-1}^{(N)}|| + C_{N,\deg P_0, \varepsilon}B^i i^{2N} e^{6N i \lambda_{\max}(1+ \varepsilon) }  h^{N(1-2\varepsilon)}h^{-M \varepsilon}   $$  
This being valid for all $1 \leq i \leq n$, by induction on $i$, we find that 
\begin{align*}
||r_i^{(N)}|| &\leq \sum_{l=0}^i B^{i-l} \times C_{N,\deg P_0,\varepsilon} B^l l^{N} e^{6N l \lambda_{\max} (1+ \varepsilon)}h^{N(1-2\varepsilon)}h^{-M \varepsilon}   \\
&\leq  C_{N,\deg P_0,\varepsilon} B^i \sum_{l=0}^i l^{N} e^{ 6N l \lambda_{\max}(1 + \varepsilon)}h^{N(1-2\varepsilon)}h^{-M \varepsilon}  
\end{align*}
Let $c_{N,\varepsilon}>0$ be such that $\sum_{l=0}^i l^{N} e^{ 6N l \lambda_{\max}(1+ \varepsilon)}  \leq c_{N,\varepsilon} e^{6N i \lambda_{\max}(1+\varepsilon)^2}$ for all $i \in \N$. This gives, for a constant $C_{N,\deg P_0,\varepsilon}$, 
$$||r_i^{(N)}|| \leq C_{N,\deg P_0,\varepsilon} B^i e^{6N i \lambda_{\max}(1+\varepsilon)^2} h^{N(1-2\varepsilon)}h^{-M \varepsilon}  $$
To conclude, recall that 
$n(h) \leq \vartheta_\varepsilon \log 1/h$ with $\vartheta_\varepsilon = \frac{1- 4\varepsilon}{6 \lambda(1+\varepsilon)^2}$. Hence, as soon as $i \leq n(h)$, $ e^{6N i \lambda_{\max}(1+\varepsilon)^2} \leq h^{-N (1 -4 \varepsilon)}$ and this shows the following lemma 

\begin{lem}\label{Lemma_control_remainder}
There exists constants $C_{N,d,\varepsilon}$ such that for all $N \in \N$ and for all $P_0 \in \C[X]$, we have for all $1 \leq i \leq n(h)$, 
$$ ||r_{i,P_0}^{(N)}|| \leq C_{N,\deg P_0, \varepsilon} h^{2N \varepsilon} h^{-K} N_\infty(P_0)$$ 
with $K = \vartheta_\varepsilon \log B + M \varepsilon$.
\end{lem}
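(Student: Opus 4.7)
The plan is to read the lemma off of the recursive formula \eqref{Formule_iteration_reste} by a Gronwall-type induction on $i$, feeding in the bounds already established for the principal terms $P_{i-1}^{(j,k)}$ and for the two kinds of single-step errors (the tail of the expansion in $h$ of $M_i$ coming from \eqref{Formule_step_i_j}, and the Taylor-type remainder $R_i^{(j,N)}$ coming from Proposition \ref{Prop_propagation_coherent_states}). So the first step is simply to take the three terms on the right of \eqref{Formule_iteration_reste} and plug in: $\|M_i\|\le B$, the bound on $\|\widetilde S_i^{(N-j-\lceil k/2\rceil)}\|$, the explicit bound on $\|R_i^{(j_1,\cdot)}(P_{i-1}^{(j_1,k_1)})\|$ (which involves $\|d_{\hat\rho}F^{(i)}\|^{3(\cdot)}$ and the $C^{N+M}$-seminorms of the $\alpha_i^{(j_1)}$), and the bound on $N_\infty(P_{i-1}^{(j_1,k_1)})$ from Lemma \ref{Lemme_estimates_Pijk}. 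Since both $\psi_i$ and $\alpha_i^{(j)}$ range over finite families, every such constant can be absorbed into a single $C_{N,\deg P_0,\varepsilon}$, at the price of an $h^{-M\varepsilon}$ coming from the symbol class $S_{\varepsilon/2}$.

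Once that bookkeeping is done, one arrives at a scalar inequality of the shape
\begin{equation*}
\|r_i^{(N)}\|\le B\,\|r_{i-1}^{(N)}\| + C_{N,\deg P_0,\varepsilon}\,B^{i}\,i^{2N}\,e^{6Ni\lambda_{\max}(1+\varepsilon)}\,h^{N(1-2\varepsilon)}\,h^{-M\varepsilon},
\end{equation*}
where the factor $e^{6Ni\lambda_{\max}(1+\varepsilon)}$ comes from dominating every $\|d_{\hat\rho}F^{(l)}\|$ by $CJ^u_{q_0\dots q_{l-1}}\le C_\varepsilon e^{l\lambda_{\max}(1+\varepsilon)}$. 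Iterating and using $r_0^{(N)}=0$, one obtains a sum $\sum_{l=0}^{i}B^{i-l}\cdot B^l l^{2N}e^{6Nl\lambda_{\max}(1+\varepsilon)}$; the exponential dominates, so the sum is controlled by $c_{N,\varepsilon}\,B^{i}\,e^{6Ni\lambda_{\max}(1+\varepsilon)^{2}}$ (one power of $(1+\varepsilon)$ is eaten in passing from the summands to the sum).

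The last step is the calibration that gives the lemma its force: for $i\le n(h)\le \vartheta_\varepsilon\log(1/h)$ with $\vartheta_\varepsilon=\tfrac{1-4\varepsilon}{6\lambda_{\max}(1+\varepsilon)^{2}}$ (the definition \eqref{definition_vartheta}), one has $B^{i}\le h^{-\vartheta_\varepsilon\log B}$ and $e^{6Ni\lambda_{\max}(1+\varepsilon)^{2}}\le h^{-N(1-4\varepsilon)}$. Multiplying by the prefactor $h^{N(1-2\varepsilon)}h^{-M\varepsilon}$ collapses everything to $h^{2N\varepsilon}h^{-K}$ with $K=\vartheta_\varepsilon\log B+M\varepsilon$, which is the claim.

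The only genuinely delicate point in this plan is the exponent calibration: one must be careful that each time the Jacobian $J^u_{q_0\dots q_{i-1}}$ appears in the source terms, it does so with an exponent at most $3N$, cubed because of the $\|\kappa\|^{3N}$ loss in Proposition \ref{Prop_propagation_coherent_states} and doubled only in the $R_i^{(j,N)}$ term at level $2N-2j_1-2j_2-k_1\le 2N$; this is why $\vartheta_\varepsilon$ is set at $1/(6\lambda_{\max})$ up to $\varepsilon$ corrections, and why the statement cannot be proved for longer times. Every other estimate is automatic once Lemma \ref{Lemme_estimates_Pijk} and Proposition \ref{Prop_propagation_coherent_states} are used as black boxes.
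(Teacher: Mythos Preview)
Your proposal is correct and follows essentially the same route as the paper: plug the bounds from Lemma \ref{Lemme_estimates_Pijk} and Proposition \ref{Prop_propagation_coherent_states} into the recursion \eqref{Formule_iteration_reste}, derive the scalar inequality $\|r_i^{(N)}\|\le B\|r_{i-1}^{(N)}\|+C\,B^{i}i^{2N}e^{6Ni\lambda_{\max}(1+\varepsilon)}h^{N(1-2\varepsilon)}h^{-M\varepsilon}$, iterate, absorb the polynomial factor into an extra $(1+\varepsilon)$ in the exponent, and finish with the calibration $\vartheta_\varepsilon=\frac{1-4\varepsilon}{6\lambda_{\max}(1+\varepsilon)^2}$. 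Your identification of the $6N$ Jacobian exponent as the bottleneck forcing $\vartheta_\varepsilon\approx 1/(6\lambda_{\max})$ is exactly the point the paper makes as well.
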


\paragraph{First consequences.}
Since $N$ can be taken arbitrarily large, we recover the known fact that a wave packet centered at $\hat{\rho}$ is changed after $n$ steps into an excited squeezed state centered at $F^{(n)}(\hat{\rho})$. The squeezing is governed by the unstable Jacobian along the orbit of $\rho$. In particular, we obtain the expected following corollary, which gives the first point in Proposition \ref{Prop_Key}. 

\begin{cor}
Let's note $\mathbf{q} = q_0 \dots q_n \in \mathcal{A}^{n+1}$. Let $\hat{\rho} \in \kappa_q(\mathcal{W}_q)$ and let us note $\rho = \kappa_q^{-1} (\hat{\rho})$. 
\begin{itemize}
\item If $\rho \not \in \mathcal{W}_{\mathbf{q}}^-$, then $A_{q_n} e^{-tG}  B^\prime_{q_n} M_{q_n,q_{n-1}} \dots M_{q_1,q} E_t \varphi_{\hat{\rho}} = \hinf$.
\item If $\rho \in \mathcal{W}_{\mathbf{q}}^-$, $$ e^{-tG} \mathfrak{M}^{n-1} M^{n_0} A_q B_q^\prime E_t \varphi_{\rho} = A_{q_n} e^{-tG}  B^\prime_{q_n} M_{q_n , q_{n-1}} \dots M_{q_1,q_0} E_t \varphi_{\rho} + \hinf$$
\end{itemize}
with constants independent of $\mathbf{q}$ and $\rho$. 
\end{cor}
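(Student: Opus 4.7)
The plan is to combine the expansion of $E_t \varphi_{\hat{\rho}}$ from Lemma \ref{Lemma_Action_tilde_E_t} with the iterated propagation formula of Lemma \ref{Lemma_Expansion_propagation}, and then exploit the support property \eqref{Property_support}.

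First, I would fix an arbitrary $L>0$ and choose $N \in \N$ large enough that $N(1-2\delta) \geq L + K_0 t + K + 1$ (where $K_0$ comes from Lemma \ref{Lemma_Action_tilde_E_t} and $K$ from Lemma \ref{Lemma_control_remainder}). Applying Lemma \ref{Lemma_Action_tilde_E_t} with this $N$ decomposes $E_t \varphi_{\hat{\rho}}$ as a finite sum of states of the form $e^{tg(\rho)} T(\hat{\rho}) \Lambda_h(P_0^{(j)} \Psi_0)$ (with $P_0^{(j)}$ of controlled degree and $N_\infty$-norm) plus an $L^2$-remainder of size $O(h^L)$. Since $M_{q_n,q_{n-1}}\cdots M_{q_1,q}$ has operator norm $O(h^{-K'})$ for some $K'$ uniform in $\mathbf{q}$ (as $n = O(\log 1/h)$), the image of this remainder is still $O(h^{L-K'})$, hence harmless by taking $L$ large.

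Next, for each leading term $e^{tg(\rho)} T(\hat{\rho}) \Lambda_h(P_0^{(j)} \Psi_0)$, I would apply Lemma \ref{Lemma_Expansion_propagation} with the same $N$, yielding
\[
M_{q_n,q_{n-1}}\cdots M_{q_1,q}\bigl(T(\hat{\rho})\Lambda_h(P_0^{(j)}\Psi_0)\bigr) = \sum_{2j'+k<2N} h^{j'+k/2} u_n^{(j',k)} + r_{n,P_0^{(j)}}^{(N)},
\]
where by Lemma \ref{Lemma_control_remainder} the remainder is bounded by $C_{N,\deg P_0^{(j)},\varepsilon} h^{2N\varepsilon - K} N_\infty(P_0^{(j)})$, which is $O(h^L)$ for $N$ chosen as above. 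The key observation is then the support property \eqref{Property_support}: if $\rho \notin \mathcal{W}_\mathbf{q}^-$, then $Q_i^{(j_i,k_i)} = 0$ for some $1 \leq i \leq n$ in every composition appearing in the iterated formula \eqref{Formule_iteration}, which forces every $P_n^{(j',k)} = 0$, hence every $u_n^{(j',k)} = 0$. This proves the first item: the whole state is an $O(h^L)$ remainder, and since $L$ is arbitrary, it is $O(h^\infty)$. Multiplication by $A_{q_n} e^{-tG} B'_{q_n}$ only costs a polynomial factor in $h$, which is absorbed.

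For the second item, one writes
\[
e^{-tG}\mathfrak{M}^{n-1} M^{n_0} A_q B_q^\prime E_t\varphi_{\hat{\rho}} = \sum_{\mathbf{q}' \in \mathcal{A}^{n+1},\ q'_0 = q} A_{q'_n} e^{-tG} B'_{q'_n} M_{q'_n,q'_{n-1}}\cdots M_{q'_1,q} E_t \varphi_{\hat{\rho}} + O(h^\infty),
\]
where we have restricted to admissible words (non-admissible ones giving $O(h^\infty)$ by composition of FIOs, with uniform constants since the total number of words is $O(h^{-K''})$). By the partition-of-unity construction of the sets $\mathcal{W}_{q_i}$ (they cover $\mathcal{T}$ and the family is finite at each step), for $\rho \in \kappa_q^{-1}(\kappa_q(\mathcal{W}_q))$ there is at most one $\mathbf{q}' \in \mathcal{A}^{n+1}$ with $q'_0 = q$ such that $\rho \in \mathcal{W}_{\mathbf{q}'}^-$; uniqueness follows because the $\mathcal{V}_{q_i}$ are disjoint in each $\mathcal{Q}^{\wedge n_0}$-level and the $\chi_{q_i}$ have pairwise disjoint essential supports at this level. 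All other terms in the sum satisfy the hypothesis of the first item and thus contribute $O(h^\infty)$, while the unique surviving term is precisely the one in the statement.

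The only delicate point is to verify that the constants controlling the $O(h^\infty)$ remainders are uniform in $\mathbf{q}$ and $\rho$. This is the case because: (i) the constants in Lemma \ref{Lemma_Action_tilde_E_t} depend only on $g$, $t$ and $N$, not on $\mathbf{q}$; (ii) the constants in Lemma \ref{Lemma_control_remainder} and Lemma \ref{Lemme_estimates_Pijk} depend only on the finite family $(M_{p,q})$ of FIOs, on a finite number of seminorms of $\psi_{p,q}$ and $\alpha_{p,q}^{(j)}$, and on $\varepsilon$; and (iii) the number of words $\mathbf{q}' \in \mathcal{A}^{n+1}$ grows at most polynomially in $h^{-1}$, which is easily absorbed by an $h^\infty$ bound.
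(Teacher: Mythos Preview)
Your expansion-and-support argument handles most of the first item, but there is a genuine gap in the case distinction. The vanishing of the $Q_i^{(j,k)}$ (and hence of all $P_n^{(j',k)}$) via \eqref{Property_support} is triggered when $F^{(i-1)n_0}(\rho)\notin\supp\chi_{q_{i-1}}$ for some $i\in\{1,\dots,n\}$, because the cutoff $A_{q_{i-1}}$ is built into $M_{q_i,q_{i-1}}$. In other words, the polynomial argument covers the indices $l=0,\dots,n-1$. But $\rho\notin\mathcal{W}_{\mathbf q}^-$ with $\mathbf q=q_0\dots q_n$ only says $F^{ln_0}(\rho)\notin\mathcal{W}_{q_l}$ for \emph{some} $l\in\{0,\dots,n\}$; when the only such $l$ is $l=n$, all the $P_n^{(j',k)}$ may be nonzero and your sentence ``Multiplication by $A_{q_n}e^{-tG}B'_{q_n}$ only costs a polynomial factor'' does not conclude.

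The missing ingredient is exactly what the paper's proof invokes: the wavefront set inclusion $\WF(B_{q_n}A_{q_n}e^{-tG}B'_{q_n})\Subset\kappa_{q_n}(\mathcal{W}_{q_n})$. If $l=n$, the propagated state is a sum of squeezed excited states centered at $\hat\rho_n=\kappa_{q_n}(F^{nn_0}(\rho))\notin\kappa_{q_n}(\mathcal{W}_{q_n})$; since $\supp\chi_{q_n}\Subset\mathcal{W}_{q_n}$, the point $\hat\rho_n$ lies at an $h$-independent positive distance from the operator's wavefront set, and because the squeezing satisfies $(\alpha_n h)^{1/2}\le Ch^{1/3}$ the state is still microlocalized at $\hat\rho_n$, so the final operator kills it to $O(h^\infty)$. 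Once you add this step, your argument is complete; your treatment of the second item (sum over words, disjointness of the $\mathcal{W}_{\mathbf q'}^-$, polynomial number of words) and the uniformity discussion are correct.
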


\begin{proof}
This is a consequence of the previous results and the fact that $\WF(B_{q_n} A_{q_n} e^{-tG} B_{q_n}^\prime) \Subset \kappa_{q_n} (\mathcal{W}_{q_n})$. 
\end{proof}

Moreover, we can combine Lemma \ref{Lemma_Action_tilde_E_t} (the running index in the formula of Lemma \ref{Lemma_Action_tilde_E_t} was $j$, it becomes $l$ in the sum below) and Lemma \ref{Lemma_Expansion_propagation} to get :

\begin{prop}\label{Proposition_expansions_before_last_exp}
Assume that $\hat{\rho} = \kappa_q(\rho) \in \kappa_q(\mathcal{W}_q)$ with $\rho \in \mathcal{W}_{q_0 \dots q_n}^-$. Then, for any $N \in \N$, we have the following expansion (with $n=n(h)$)
$$ M_{q_n,q_{n-1}} \dots M_{q_1,q_0} E_t \varphi_{\hat{\rho}} = \sum_{2j + k +l < 2N} h^{j +k/2}h^{l(1/2- \delta)} u_n^{(j,k,l)} + R_n^{(N)}$$
where 
$$u_n^{(j,k,l)} = e^{ t g(\rho)} T(\hat{\rho}_n) \mathcal{M}(d_{\hat{\rho}} F^{(n)} ) \Lambda_h \left(P^{(j,k,l)}_n \Psi_0 \right)$$
$P_n^{(j,k,l)}$ is a polynomial of degree at most $3k + 2l$ and 
$$ N_\infty  \left(P^{(j,k,l)}_n \right) \leq C_{j,k,l, \varepsilon} n^{2j+k} \Pi_{\alpha,n}(\rho) \left( J^u_{q_0 \dots q_{n-1}} \right)^{3k} h^{-k \varepsilon}$$
Concerning the leading term, $P_n^{(0,0,0))} = \pi_{\alpha,n}(\rho)$. 
Concerning the remainder $R_n^{(N)}$ we have 
$$ ||R_n^{(N)} ||_{L^2} \leq C_{N,\varepsilon} h^{-(K+K_0t)}  h^{2N \varepsilon} $$ 

\end{prop}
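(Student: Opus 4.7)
The plan is to first expand $E_t \varphi_{\hat\rho}$ using Lemma \ref{Lemma_Action_tilde_E_t}, and then propagate each term of this expansion through the product $M_{q_n,q_{n-1}} \dots M_{q_1,q_0}$ using Lemma \ref{Lemma_Expansion_propagation}. Finally, we regroup terms by total order and control the remainder using Lemma \ref{Lemma_control_remainder} together with the operator bound $\|M_{q_n,q_{n-1}}\cdots M_{q_1,q_0}\| \leq Ch^{-K^\prime}$ valid for $n=n(h) = O(\log 1/h)$.

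Concretely, Lemma \ref{Lemma_Action_tilde_E_t} gives, for any fixed $L \in \mathbb{N}$ (to be taken large enough so that the residual term is negligible compared to $h^{2N\varepsilon}$),
\[
E_t\varphi_{\hat\rho} = \sum_{l=0}^{2L-1} h^{l(1/2-\delta)} \psi_l + r^{(L)}, \qquad \psi_l = e^{tg(\rho)} T(\hat\rho)\Lambda_h\bigl(P^{(l)}_{t,h}\Psi_0\bigr),
\]
with $P^{(0)}_{t,h} = 1$ and $\deg P^{(l)}_{t,h}\leq 2l$, and with $\|r^{(L)}\|\lesssim h^{L(1-2\delta)-K_0 t}$. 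I then apply Lemma \ref{Lemma_Expansion_propagation} to each $\psi_l$, taking as initial polynomial $P_0 = e^{tg(\rho)}P^{(l)}_{t,h}$. This gives, for each $l$,
\[
M_{q_n,q_{n-1}} \dots M_{q_1,q_0} \psi_l = \sum_{2j+k<2N} h^{j+k/2}\, T(\hat\rho_n)\mathcal{M}(d_{\hat\rho}F^{(n)})\Lambda_h\bigl(P^{(j,k)}_{n,P^{(l)}_{t,h}}\Psi_0\bigr) + r^{(N)}_{n,P^{(l)}_{t,h}}.
\]
Multiplying by $h^{l(1/2-\delta)}$ and collecting the three asymptotic indices $(j,k,l)$, the contributions with $2j+k+l<2N$ assemble into the announced main terms, while those with $2j+k+l\geq 2N$ are absorbed into $R_n^{(N)}$.

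For the definition of $P^{(j,k,l)}_n$, I just set $P^{(j,k,l)}_n := P^{(j,k)}_{n,P^{(l)}_{t,h}}$, which by linearity of $P_0\mapsto P^{(j,k)}_{n,P_0}$ and by Lemma \ref{Lemme_estimates_Pijk} applied with $P_0 = P^{(l)}_{t,h}$ (of degree $\leq 2l$ and norm $N_\infty\leq C_{l,t}$) immediately gives $\deg P^{(j,k,l)}_n \leq 3k+2l$ and
\[
N_\infty\bigl(P^{(j,k,l)}_n\bigr)\leq C_{j,k,l,\varepsilon}\, n^{2j+k}\, \Pi_{\alpha,n}(\rho)\,\bigl(J^u_{q_0\dots q_{n-1}}\bigr)^{3k} h^{-k\varepsilon}.
\]
The leading-order polynomial $P_n^{(0,0,0)}$ is obtained by the $(j,k)=(0,0)$ contribution applied to $P^{(0)}_{t,h}=1$, which following the explicit formula for $P_i^{(0,0)}$ derived right after Lemma \ref{Lemma_Expansion_propagation} equals $\pi_{\alpha,n}(\rho)$ (the factor $e^{tg(\rho)}$ is absorbed in the definition of $u_n^{(j,k,l)}$).

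The main technical point, and the place where some care is needed, is the remainder bound. Three sources contribute to $R^{(N)}_n$: (i) the image of $r^{(L)}$ under $M_{q_n,q_{n-1}}\cdots M_{q_1,q_0}$, controlled by $\|M_{q_n,q_{n-1}}\cdots M_{q_1,q_0}\| \cdot \|r^{(L)}\| \leq h^{-K^\prime} h^{L(1-2\delta)-K_0 t}$, which is $\hinf$ once $L$ is chosen large enough depending on $N$, $\varepsilon$ and $t$; (ii) for each $l$, the remainder $r^{(N)}_{n,P^{(l)}_{t,h}}$ from Lemma \ref{Lemma_control_remainder}, bounded by $C_{N,2l,\varepsilon} h^{2N\varepsilon} h^{-K} N_\infty(P^{(l)}_{t,h}) \leq C_{N,l,\varepsilon,t} h^{2N\varepsilon} h^{-K}$; (iii) the regrouped terms with $2j+k+l\geq 2N$, each of which carries a factor $h^{j+k/2+l(1/2-\delta)}\geq h^{N(1-2\delta)}$, and whose polynomial norms are controlled uniformly by Lemma \ref{Lemme_estimates_Pijk}, yielding an overall bound of the form $C_{N,\varepsilon} h^{-(K+K_0 t)} h^{2N\varepsilon}$ after absorbing polynomial factors $n^{2j+k}\leq (\log 1/h)^{2N}$ and the unstable Jacobian powers into the constant via the choice $\vartheta_\varepsilon < (1-4\varepsilon)/(6\lambda_{\max}(1+\varepsilon)^2)$. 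Summing (i)--(iii) and adjusting constants gives the stated remainder bound, which completes the proof.
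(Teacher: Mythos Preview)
Your proof is correct and follows essentially the same approach as the paper: expand $E_t\varphi_{\hat\rho}$ via Lemma \ref{Lemma_Action_tilde_E_t}, propagate each term with Lemma \ref{Lemma_Expansion_propagation}, and control remainders with Lemma \ref{Lemma_control_remainder}. The only notable bookkeeping difference is that the paper takes $L=N$ directly and applies the propagation expansion at the \emph{adaptive} order $N-\lfloor l/2\rfloor$ for the $l$-th term, so that exactly the indices $2j+k+l<2N$ appear and your contribution (iii) never arises; the full remainder is then just
\[
R_n^{(N)} = M_{q_n,q_{n-1}}\cdots M_{q_1,q_0}\, r_{2N} \;+\; \sum_{l=0}^{2N-1} h^{l(1/2-\delta)}\, r_{n,P^{(l)}_{t,h}}^{(N-\lfloor l/2\rfloor)},
\]
and one checks $h^{l(1/2-\delta)} h^{2(N-\lfloor l/2\rfloor)\varepsilon}\leq h^{2N\varepsilon}$ since $1/2-\delta=\varepsilon$. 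Your route (uniform order $N$, auxiliary $L$, then regrouping the surplus terms into the remainder) is equally valid, and your verification that each surplus term still carries at least $h^{2N\varepsilon}$ after accounting for the Jacobian powers via the choice of $\vartheta_\varepsilon$ is correct; the paper's adaptive order just makes this last step unnecessary.
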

\begin{proof}
We simply state $P_n^{(j,k,l)} = P_{n,P^{(l)}_{t,h}}^{(j,k)}$ which satisfies the required bound for the degree and $N_\infty$. Here, $P^{(l)}_{t,h}$ appears in the expansion of Lemma \ref{Lemma_Action_tilde_E_t}. Lemma \ref{Lemma_Action_tilde_E_t} and Lemma \ref{Lemma_Expansion_propagation} show that $$ M_{q_n,q_{n-1}} \dots M_{q_1,q_0} E_t \varphi_\rho = \sum_{2j + k +l < 2N} h^{j +k/2}h^{l(1/2- \delta)} u_n^{(j,k,l)}  + R_n^{(N)}$$
with $u_n^{(j,k,l)}$ given by the required formula and 
$$ R_n^{(N)} = M_{q_n,q_{n-1}} \dots M_{q_1,q_0} r_{2N}  + \sum_{l=0}^{2N-1} h^{l(1/2- \delta)} r_{n,P^{(l)}_{t,h}}^{(N-\lfloor l/2 \rfloor)} $$ 
We can use the bound $|| r_{2N}||  \leq C_N h^{N(1-2 \delta)}$ and the bound for $r_{n,P}^{(N)}$ in Lemma \ref{Lemma_control_remainder}. Since the degrees of the polynomial $P^{(l)}_{t,h}$ are bounded by $4N$, we can forget the depence in $\deg P$ in the estimates of Lemma \ref{Lemma_control_remainder}, so that we find 
$$ ||R_n^{(N)} || \leq C_{N,\varepsilon} h^{-K} h^{-K_0 t}  \sum_0^{2N} h^{l(1/2-\delta)} h^{2(N- l/2)\varepsilon} \leq C_{N,\varepsilon} h^{-(K+K_0t)} h^{2N \varepsilon} $$ 
where the last inequality follows from $\varepsilon = 1/2 -  \delta$. 
\end{proof}

\begin{rem}
This expansion mixes up different scales : \begin{itemize}
\item the scale $h^{1-2\delta}=h^{2\varepsilon}$ : it comes from the symbol class in which $g$ lives ; 
\item a second scale which is the scale $h^{1/2}$ when $n$ is independent of $h$. In our context, it is better to think this second scale to be $h^{1/2} (J^u_{q_0 \dots q_n})^{3}h^{-\varepsilon}$. This scale depends on the starting point $\rho$.  The definition of  $\vartheta_\varepsilon$ ensures that the higher order terms are smaller than the leading term, . 
\end{itemize}
\end{rem}

Since we can choose $N$ as large as we want, we can ensure that the remainder decays in $h$ and that the leading term controls the whole expansion.
Note also that the constants $C_{j,k,l,\varepsilon}$ and $C_{N,\varepsilon}$ depends on $g$ and $M=M_h(z)$ and they can be chosen uniform in $z \in \Omega(h) \cap \{ \im z \in [-\beta,4] \}$.

\subsection{Final action of $A_{q_n} e^{-tG} B_{q_n}^\prime$}

From now on, and until the end of the section, we assume that $\rho \in \mathcal{W}_{\mathbf{q}}^-$ and we prove the missing items of Proposition \ref{Prop_Key}.

We need to understand the action of $e^{-tG} B_{q_n}^\prime$ on the terms $u_n^{(j,k,l)}$ of the last expansion. Since all these terms have the same form, we consider a general polynomial $P$ of degree $d$ and want to understand 
$$ e^{-tG}  B_{q_n}^\prime \left( T(\hat{\rho}_n) \mathcal{M}\left( d_{\hat{\rho}} F^{(n)} \right) \Lambda_h (P \Psi_0) \right) $$
It is no more possible to reuse the strategy of Lemma \ref{Lemma_Action_tilde_E_t}. Indeed, if $g$ still oscillates on scale $h^{\delta}$,  $\mathcal{M}\left( d_{\hat{\rho}} F^{(n)} \right) \Lambda_h (P \Psi_0)$ is no more a wave packet in a box of size $h^{1/2}$. To see that in a model case, assume that $ d_{\hat{\rho}} F^{(n)} $ is given by the diagonal matrix 
$$ \left( \begin{matrix}
\lambda_h & 0 \\
0 &\lambda^{-1}_h
\end{matrix} \right) $$
with $\lambda_h \sim J^u_{q_0 \dots q_{n-1}} \sim h^{-\alpha}$ where
$$ \lambda_{\min} \vartheta_\varepsilon \leq \alpha \leq \lambda_{\max} \vartheta_{\varepsilon} = \frac{1-4 \varepsilon}{6(1+ \varepsilon)^2} $$
Then $\mathcal{M}\left( d_{\hat{\rho}} F^{(n)} \right)$ is nothing but $\Lambda_{\lambda_h^2}$ and hence, $\mathcal{M}\left( d_{\hat{\rho}} F^{(n)} \right) \Lambda_h (P \Psi_0) = \Lambda_{h \lambda_h^2 } (P \Psi_0)$. 
This states oscillate in the $x$-direction on a scale $h^{1/2 - \alpha} \gg h^{\delta}$.

\subsubsection{Precise description of $d_{\hat{\rho}} F^{(n)}$. } It is not possible to write $d_{\hat{\rho}} F^{(n)}$ exactly as a diagonal matrix in the standard position/momentum variable. However, the following lemma shows that $d_{\hat{\rho}} F^{(n)}$ stays close to a diagonal matrix : 

\begin{lem}\label{Lemma_desc_d_rho_F_n}
There exists $\varepsilon_2$ which can be made arbitrarily small depending on $\varepsilon_0$
such that the following holds. There exists $\lambda_{n,\mathbf{q}}, \mu_{n,\mathbf{q}} \in \R^+$  such that for all $n$, $\mathbf{q}=q_0 \dots q_n$ and $\hat{\rho} \in \kappa_q\left( \mathcal{W}_{\mathbf{q}}^-,\right)$, we have for some global constant $C>0$ : 
\begin{itemize}
\item  $C^{-1} J_{\mathbf{q}}^u \leq \lambda_{n,\mathbf{q}} \leq C J_{\mathbf{q}}^u $ ; 
\item $C^{-1} \leq \mu_{n,\mathbf{q}} \lambda_{n,\mathbf{q}} \leq C $  ; 
\item  $d_{\hat{\rho} F^{(n)}}$ is close to a diagonal matrix : $$
\left| \left| d_{\hat{\rho}} F^{(n)} - \left( \begin{matrix}
\lambda_{n,\mathbf{q}}& 0 \\
0 & \mu_{n,\mathbf{q}}
\end{matrix} \right)  \right| \right| \leq \varepsilon_2 J^u_{\mathbf{q}}
$$
\end{itemize}

\end{lem}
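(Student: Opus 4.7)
The plan is to compare $d_\rho F^{nn_0}$ with the hyperbolic splitting along a shadowing orbit inside $\mathcal{T}$, since the charts $\kappa_q$ and $\kappa_{q_n}$ are adapted to the splitting at points of $\mathcal{T}$.

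\textbf{Setup.} First, I invoke Lemma \ref{help_def_jacobian_2_bis} to pick $\rho^\prime \in \mathcal{T}$ with $d(F^i(\rho),F^i(\rho^\prime)) \leq \varepsilon_2$ for $0 \leq i \leq nn_0-1$, where $\varepsilon_2 \to 0$ as $\varepsilon_0 \to 0$. I then set
\begin{equation*}
\lambda_{n,\mathbf{q}} := J^u_{nn_0}(\rho^\prime), \qquad \mu_{n,\mathbf{q}} := J^s_{nn_0}(\rho^\prime).
\end{equation*}
Definition (\ref{def_jacobian_2_bis}) together with Corollary \ref{cor_control_jacobian} immediately give $\lambda_{n,\mathbf{q}} \sim J^u_{\mathbf{q}}$; since $F^{nn_0}$ is symplectic, $|\det d_{\rho^\prime}F^{nn_0}| = 1$ and hence $\lambda_{n,\mathbf{q}} \mu_{n,\mathbf{q}} \sim 1$. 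This establishes the first two bullets.

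\textbf{Decomposition in the adapted charts.} I write
\begin{equation*}
d_{\hat{\rho}} F^{(n)} = d_{F^{nn_0}(\rho)}\kappa_{q_n} \circ d_\rho F^{nn_0} \circ (d_{\hat{\rho}}\kappa_q)^{-1}
\end{equation*}
and test it on the standard basis $(e_1,e_2)$ of $T_{\hat{\rho}}\R^2$, setting $w_j = (d_{\hat{\rho}}\kappa_q)^{-1}(e_j) \in T_\rho U$. Because $\kappa_q$ is adapted at $\rho_q$ (Lemma \ref{adapted_chart_0}) and $d(\rho,\rho_q) \leq 2\varepsilon_0$, smoothness of $\kappa_q$ gives $d(w_1, E_u(\rho_q)) + d(w_2, E_s(\rho_q)) = O(\varepsilon_0)$. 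Hölder continuity of the stable and unstable distributions on the hyperbolic set $\mathcal{T}$, combined with $d(\rho_q,\rho^\prime) = O(\varepsilon_0 + \varepsilon_2)$, then yields $d(w_1, E_u(\rho^\prime)) + d(w_2, E_s(\rho^\prime)) = O(\varepsilon_2)$, after absorbing $\varepsilon_0$ into $\varepsilon_2$. The analogous estimates hold at the target point $F^{nn_0}(\rho)$ against $\rho_{q_n}$ and $F^{nn_0}(\rho^\prime)$, which lies in $\mathcal{W}_{q_n} \subset B(\rho_{q_n}, 2\varepsilon_0)$.

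\textbf{Evolution of the two columns and the main obstacle.} For $w_1$, I adapt the proof of Lemma \ref{Lemma_linearized_dynamics} by working in adapted charts along the shadowing orbit $(F^i(\rho^\prime))_i$ rather than along $(F^i(\rho))$; this is permitted since $\rho^\prime \in \mathcal{T}$ and the only input used is the uniform proximity of the two orbits. The conclusion is that $\|d_\rho F^{nn_0}(w_1)\| \sim J^u_\mathbf{q}$ and the direction of the image is within angular distance $C(J^u_\mathbf{q})^{-2}\varepsilon_2$ of $E_u(F^{nn_0}(\rho^\prime))$. Pushing forward by $d\kappa_{q_n}$ (which rectifies $W_u(\rho_{q_n})$ to the horizontal axis and agrees with the true unstable direction up to angle $O(\varepsilon_2)$) yields
\begin{equation*}
d_{\hat{\rho}} F^{(n)}(e_1) = \lambda_{n,\mathbf{q}}\bigl(1 + O(\varepsilon_2)\bigr) e_1 + O(\varepsilon_2 J^u_\mathbf{q}) e_2.
\end{equation*}
For $w_2$, decompose it relative to $E_u(\rho^\prime) \oplus E_s(\rho^\prime)$ as a stable component of size $\sim 1$ plus an unstable component of size $O(\varepsilon_2)$. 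The stable part contracts to size $\sim \mu_{n,\mathbf{q}}$, but the tiny unstable component is \emph{amplified} by $\lambda_{n,\mathbf{q}}$, producing a contribution of size $O(\varepsilon_2 J^u_\mathbf{q})$ in the unstable direction. After $d\kappa_{q_n}$,
\begin{equation*}
d_{\hat{\rho}} F^{(n)}(e_2) = O(\varepsilon_2 J^u_\mathbf{q}) e_1 + \mu_{n,\mathbf{q}}\bigl(1 + O(\varepsilon_2)\bigr) e_2.
\end{equation*}
The hard part is precisely this amplification step: the relevant off-diagonal term is dominated by the exponentially amplified unstable component of $w_2$, which forces the loss $\varepsilon_2 J^u_\mathbf{q}$ in the statement. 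Combining both columns gives $\|d_{\hat{\rho}}F^{(n)} - \Diag(\lambda_{n,\mathbf{q}},\mu_{n,\mathbf{q}})\| \leq C\varepsilon_2 J^u_\mathbf{q}$, which is the claim after renaming $C\varepsilon_2$ as $\varepsilon_2$ and choosing $\varepsilon_0$ small accordingly.
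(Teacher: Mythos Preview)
Your proof is correct and reaches the same estimate, but the route differs from the paper's. The paper proceeds in two clean steps: first, a telescoping identity
\[
d_\rho F^{nn_0} - d_{\rho'} F^{nn_0} = \sum_{k=0}^{n-1} d_{F^{n_0(k+1)}(\rho')} F^{n_0(n-k-1)} \circ \bigl(d_{F^{n_0k}(\rho)}F^{n_0} - d_{F^{n_0k}(\rho')}F^{n_0}\bigr) \circ d_\rho F^{n_0k},
\]
combined with the bound $d(F^i(\rho),F^i(\rho')) \leq C\varepsilon_2\, \theta^{\min(i,n-i)}$ from Lemma~\ref{classical_hyperbolic}(6), yields $\|d_\rho F^{nn_0} - d_{\rho'}F^{nn_0}\| \leq C\varepsilon_2 J^u_\mathbf{q}$ in one shot; second, since $\rho' \in \mathcal{T}$, the splitting $E_u(\rho')\oplus E_s(\rho')$ is \emph{exactly} invariant under $d_{\rho'}F^{nn_0}$, so $d_{\kappa_q(\rho')}F^{(n)}$ becomes exactly diagonal after conjugating by change-of-basis matrices $P_0, P_n = I_2 + O(\varepsilon_2)$, and expanding $(I+O(\varepsilon_2))\,D\,(I+O(\varepsilon_2)) = D + O(\|D\|\varepsilon_2)$ finishes. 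Your approach instead propagates each basis vector $w_1, w_2$ directly by $d_\rho F^{nn_0}$ using inclination-lemma style estimates in the adapted charts along the shadowing orbit; this makes the amplification mechanism for the off-diagonal entries very explicit, but it effectively hides the telescoping comparison inside your adaptation of Lemma~\ref{Lemma_linearized_dynamics} (where the same decay $d(F^i(\rho),F^i(\rho'))\leq C\varepsilon_2\theta^{\min(i,n-i)}$ is what makes the product of near-diagonal linearizations converge). The paper's two-step decomposition is algebraically tidier and avoids reopening the proof of the inclination lemma; your route is more geometric and also yields a slightly different but equally admissible choice of $\lambda_{n,\mathbf{q}}, \mu_{n,\mathbf{q}}$.
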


\begin{proof}
We note $\rho_i =F^{in_0} \left( \rho\right) = \kappa_{q_i}^{-1} \circ F^{(i)} (\hat{\rho})$. Recall also that $F^{(i)} = \kappa_{q_i} \circ F^{in_0} \circ \kappa_{q_0}^{-1}$.

\paragraph{Step 1 : Reduction to $\rho \in\mathcal{T}$. } 
By definition of $\mathcal{W}_{q_i}$, for $i \in \{0, \dots, n\}$, we have $d(\rho_{q_i}, \rho_i) \leq 2 \varepsilon_0$. Hence, 

$$d(F^{n_0}(\rho_{q_i}), \rho_{i+1}) \leq d( F^{n_0}(\rho_{q_i}), F^{n_0(i+1)}(\rho) ) + d (F^{n_0(i+1)}(\rho), \rho_{i+1} ) \leq C \varepsilon_0$$
for a constant $C$ only depending on $F$. That is to say, $(\rho_0, \dots, \rho_{n})$ is a $C\varepsilon_0$ pseudo orbit for $F^{n_0}$. Assume that $\delta_0 >0$ is a small fixed parameter. In virtue of the Shadowing Lemma (\cite{KH} , Section 18.1), if $\varepsilon_0$ is sufficiently small, $(\rho_0, \dots, \rho_{n})$ is $\delta_0$ shadowed by an orbit of $F^{n_0}$ i.e. there exists $\rho^\prime \in \mathcal{T}$ such that for $i \in \{0, \dots, n \}$, $d(\rho_i, F^{i n_0}(\rho^\prime) ) \leq \delta_0$. Consequently, 
$d(F^{i n_0} (\rho) , F^{in_0}(\rho^\prime)) \leq \delta_0 + C\varepsilon_0$. For convenience, set $\varepsilon_2 = \delta_0 + C\varepsilon_0$ and note that $\varepsilon_2$ can be arbitrarily small depending on $\varepsilon_0$. By Lemma \ref{Local_hyperbolic_2}, for $1 \leq i \leq n$, 
$$ ||d_{\rho} F^{i n_0} || \leq C J^u_{i n_0}(\rho^\prime) \quad ; \quad  C^{-1}J^u_{q_0 \dots q_{i-1}} \leq  J^u_{i n_0}(\rho^\prime) \leq C J^u_{q_0 \dots q_{i-1}} $$
Hence, using the relation 
$$ d_{\rho } F^{nn_0} - d_{\rho^\prime} F^{nn_0} =\sum_{k=0}^{n-1} d_{F^{n_0(k+1)}(\rho^\prime)} F^{n_0(n-k-1)} \circ \left( d_{F^{n_0 k} (\rho)} F^{n_0} - d_{F^{n_0 k}(\rho^\prime)} F^{n_0}\right) \circ d_{\rho}F^{n_0 k} $$
we find that

\begin{align*}
\left| \left|d_{\rho } F^{nn_0} - d_{\rho^\prime} F^{nn_0}   \right| \right| &\leq C \sum_{k=0}^{n-1} J^u_{n_0(n-k-1)}\left( F^{n_0(k+1)}(\rho^\prime) \right) \left| \left|d_{F^{n_0 k}(\rho)} F^{n_0} - d_{F^{n_0k }(\rho^\prime)}F^{n_0} \right| \right| J^u_{n_0 k}(\rho^\prime)\\
&\leq C  \sum_{k=0}^{n-1}  d\left( F^{n_0 k}(\rho) ,  F^{n_0 k} (\rho^\prime)\right)J^u_{n n_0}(\rho^\prime)\\
& \leq C J^u_{\mathbf{q}} \sum_{k=0}^{n-1}\theta^{\min(k,n-k )} \varepsilon_0 \\
& \leq  C J^u_{\mathbf{q}}  \varepsilon_0
\end{align*}
where we use the Lemma \ref{classical_hyperbolic} in the third equality and the last one follows from $\sum_{k=0}^{n-1}\theta^{\min(k,n-k )} \leq 2 \sum_{k=0, \lceil n/2 \rceil} \theta^k \leq 2 \sum_{k=0}^\infty \theta^k < +\infty$. 
It is not hard to deduce from this that 
$$|| d_{\hat{\rho}} F^{(n)}   - d_{\kappa_q(\rho^\prime)} F^{(n)} || \leq C J^u_{\mathbf{q}}  \varepsilon_0 $$
Hence, it is enough to prove the Lemma for $d_{\kappa_q(\rho^\prime)}F^{(n)}$. 

\paragraph{Step 2 : The case $ \rho \in \mathcal{T}$. } We assume that $\rho \in \mathcal{T}$.  The spaces $E_u(\rho), E_s(\rho), E_u(F^{nn_0}(\rho))$ and $E_s(F^{nn_0}(\rho))$ are well-defined.  For $q \in \mathcal{A}$ and $\bullet = s,u$, the maps $\zeta \in \mathcal{W}_q \cap \mathcal{T} \mapsto d_{\zeta} \kappa_q(E_\bullet(\zeta))$ are Lipschitz. Since $d_{\rho}\kappa_q (E_u(\rho_q) ) = \R\partial_y$, $d_{\rho}\kappa_q (E_s(\rho_q) ) = \R \partial_\eta$ and $d(\rho_{q_0}, \rho) \leq C\varepsilon_2$, $d(\rho_{q_n}, F^{nn_0}(\rho)) \leq C\varepsilon_2$, we can fix unit vectors 
$$e_0^u \in d_{\rho} \kappa_{q_0}\left( E_u(\rho )\right) \;, \;  e_0^s \in d_{\rho} \kappa_{q_0}\left( E_s(\rho \right)) $$ 
$$  e_n^u \in d_{F^{nn_0}(\rho)} \kappa_{q_n}\left( E_u(F^{nn_0}(\rho) )\right) \; \; e_n^s \in d_{F^{nn_0}(\rho)} \kappa_{q_n}\left( E_s(F^{nn_0}(\rho)) \right) $$ such that 
$e_0^u, e_n^u = \partial_y + O(\varepsilon_2)$ and $e_0^s, e_n^s = \partial_\eta+ O(\varepsilon_2)$. If we note $P_0$ (resp. $P_n$) the change-of-basis matrix from the natural basis of $\R^2$ to $(e_0^u,e_0^s)$ (resp. $(e_n^u,e_n^s)$), then $P_0,P_n = I_2 + O(\varepsilon_2)$ (with global constants in $O$ not depending on $n$). 
Moreover, since $d_{\hat{\rho}} F^{(n)} (e_0^u) \in \R e_n^u$ and $d_{\hat{\rho}} F^{(n)} (e_0^s) \in \R e_n^s$, the matrix $P_n^{-1} d_{\hat{\rho}} F^{(n)} P_0$ is diagonal. Let's write it $$\left(
\begin{matrix} \lambda_{n,\mathbf{q}} & 0 \\
0 & \mu_{n,\mathbf{q}} 
\end{matrix} \right) $$
$\lambda_{n,\mathbf{q}} $ (resp. $\mu_{n,\mathbf{q}} $) is nothing but \emph{an} unstable (resp. stable) Jacobian for $\rho$, and hence $\lambda_{n,\mathbf{q}}  \sim J^u_{\mathbf{q}}$. Since $\det d_{\rho} F^{nn_0} = 1$, $\lambda_{n,\mathbf{q}}  \mu_{n,\mathbf{q}}  = \det(P_0)^{-1} \det(P_n) = 1 + O(\varepsilon_2)$. Finally, 
$$ P_n^{-1} d_{\hat{\rho}} F^{(n)} P_0  = (I_2 + O(\varepsilon_2)) d_{\hat{\rho}} F^{(n)}(I_2 + O(\varepsilon_2) )= d_{\hat{\rho}} F^{(n)} + O \left( || d_{\hat{\rho}} F^{(n)}|| \varepsilon_2 \right) = d_{\hat{\rho}} F^{(n)} + O\left(\varepsilon_2 J^u_{\mathbf{q}}\right)  $$
This concludes the proof. 
\end{proof}

As a consequence of this lemma, in the standard position/momentum coordinates, we can write  

\begin{equation}\label{ecriture_d_rho_F_n}
d_{\hat{\rho}} F^{(n)} = \left( \begin{matrix}
a_n & b_n \\
c_n & d_n
\end{matrix}\right) \; ; \; a_n \sim J^u_{\mathbf{q}} \;; \; b_n, c_n, d_n = O \left( \varepsilon_2 J^u_{\mathbf{q}} \right) 
\end{equation} 
Here, $a_n,b_n,c_n,d_n$ depend on $\rho$, but we won't make this dependence precise since $\rho$ is fixed until the end of the section. 
Since we want to understand the action of $\mathcal{M}\left( d_{\hat{\rho}} F^{(n)}\right)$ on excited coherent states, we also introduce  
\begin{equation}\label{definition_alpha_beta}
\gamma_n = (c_n + id_n)(a_n + i b_n)^{-1}\; ; \; \beta_n = \re(\gamma_n) \; ; \; \alpha_n = \im(\gamma_n)^{-1} = |a_n+ i b_n|^2
\end{equation}
We've got the basic estimates 
\begin{equation}\label{equation_alpha_n}
\alpha_n \sim \left(J^u_{\mathbf{q}}\right)^2 ; \quad \beta_n= O(\varepsilon_2)
\end{equation}
Now assume that $P \in \C[X]$ and decompose $P$ into the basis of the renormalized hermite polynomials $(h_n)$ : $P = \sum_{k=0}^{\deg P}a_k(P) h_k$. By Proposition \ref{Prop_meta_on_excited}, 
$$ \mathcal{M}\left( d_{\hat{\rho}} F^{(n)} \right)  \Lambda_h \left( h_k \Psi_0\right)(x) = (\alpha_n \pi h)^{-1/4} \left(\frac{a_n-ib_n}{a_n+ib_n} \right)^{k/2} h_k\left( \frac{x}{(\alpha_n h)^{1/2}}\right) e^{i \gamma_n \frac{x^2}{2h}} = c_{n,k} \Lambda_{\alpha_n h} \left( h_k \Psi_0 \right)(x) e^{i \beta_n \frac{x^2}{2h} }$$ 
with $|c_{n,k}|=1$. As a consequence, there exist linear maps
$ \Phi_n : \C[X] \to \C[X]$ such that for all $n \in \N$ and $P \in \C[X]$, 
 \begin{itemize}
 \item $\deg \Phi_n (P) = \deg P$ for all $P \in \C[X]$ ; 
 \item $N_\infty(\Phi_n(P)) \leq K_{\deg P} N_\infty(P)$ where $K_{\deg P}$ depends only on $\deg P$; 
 \item  and the following relation holds 
 \begin{equation}\label{equation_transformation_M}
 \mathcal{M}\left( d_{\hat{\rho}} F^{(n)} \right)  \Lambda_h \left( P \Psi_0\right)=  \Lambda_{ \alpha_n h} \left( \Phi_n (P) \Psi_0 \right)  e^{i \beta_n \frac{x^2}{2h}}
\end{equation}  
 
 \end{itemize}
 
 \begin{rem}
 We can interpret this state as a (highly-oscillating) Lagrangian state associated with the Lagrangian manifold $\{ (x, \beta_n x) \}$, with amplitude $a(x) =  \Lambda_{ \alpha_n h} \left( \Phi_n (P) \Psi_0 \right) (x)$. Since $\alpha_n \sim \left(J^u_{\mathbf{q}}\right)^2$, $\alpha_n \sim h^{-\alpha}$ for some $\alpha \geq 2\lambda_{\min} \vartheta_\varepsilon$, the amplitude $a$ oscillates on a scale $h^{1/2-\alpha/2}$. Compared with the initial state $\varphi_0$, localized in position in an interval of size $h^{1/2}$, this expression shows a stretching in position. Moreover, this scale is larger than the scale $h^\delta$ on which the symbol $g$ oscillates. 
 \end{rem}

\subsubsection{ Asymptotic expansion for the exponential. }
We now aim at understanding the state $A_{q_n} e^{-tG} B_{q_n}^\prime u$ where $u$ is of the form 
$$ u(x) =T(\hat{\rho}_n) \left(  \Lambda_{\alpha_n h} f \right) (x) e^{ i \beta_n \frac{x^2}{2h}} $$
where $f = P\Psi$ for some $P \in \C[X]$.
We first claim that 

 $$A_{q_n} e^{-tG} B_{q_n}^\prime= A_{q_n} B_{q_n}^{\prime} e^{-t B_{q_n} G B_{q_n}^\prime} + \hinf$$

\begin{proof}
Set $A(t) = A_{q_n} e^{-tG} B_{q_n}^\prime e^{t B_{q_n} G B_{q_n}^\prime}$. At $t=0$, $A(0) = A_{q_n} B_{q_n}^\prime$. We differentiate: 
$$ \dot{A} (t) = A_{q_n} e^{-tG} \left[ B_{q_n}^\prime  B_{q_n} G B_{q_n}^\prime - G B_{q_n}^\prime \right] e^{t B_{q_n} G B_{q_n}^\prime}$$
The operator $A_{q_n} e^{-tG}$ is bounded on $L^2$ and has its semiclassical wavefront set included in $\supp \chi_{q_n}$. In particular, $A_{q_n} e^{-tG}\left(  B_{q_n}^\prime  B_{q_n}- \Id \right)  = \hinf$ (uniformly for $t$ in a bounded interval). This shows that $A^\prime(t) = \hinf$. We conclude that $A(t) = A_{q_n} B_{q_n}^\prime + \hinf$. 
\end{proof}
Hence we aim at understanding the action of $e^{-t B_{q_n} G B_{q_n}^\prime}$. 
We make use of Lemma \ref{Lemma_G_FIO} and we write for all $N \in \N$, 
$$G_{q_n} \coloneqq B_{q_n} G B_{q_n}^\prime = \op \left( g \circ \kappa_{q_n}^{-1} \right) + \sum_{j=1}^{N-1} h^{j(1- 2 \delta)} \op(g_{j,q_n}) + R_N$$
with  $||R_N|| \leq C_N h^{N(1-2 \delta)}$.  Let's write $g_{0,q_n} = g \circ \kappa_{q_n}^{-1} $. 
Similarly, we have $$A_{q_n} B_{q_n}^\prime e^{-tG_{q_n} } T(\hat{\rho}_n) = A_{q_n} B_{q_n}^\prime T(\hat{\rho}_n) e^{- t T(\hat{\rho}_n)^* G_{q_n} T(\hat{\rho}_n)  }$$  
and we recall that  $T(\hat{\rho}_n)^* \op(a) T(\hat{\rho}_n)= \op( a ( \cdot + \hat{\rho}_n) ) $ for any $a \in \mathcal{S}^\prime$. 
Let's note $h_j (\hat{\zeta}) = g_{j,q_n}(\hat{\rho}_n + \hat{\zeta})$, so that 
$$ A \coloneqq T(\hat{\rho}_n)^* G_{q_n} T(\hat{\rho}_n) = \sum_{j=0}^\infty h^{j(1-2\delta)} \op(h_j) + O_N( h^{N(1-2 \delta)}) $$
Recall that in virtue of Lemma  \ref{Lemma_G_FIO}, $h_0 \in \log(1/h) S_\delta$ and $h_j \in S_\delta$ for $j \geq 1$.

Finally, we need to understand the action of $e^{-t A}$ on states  $u(x) = \Lambda_{\alpha_n h} f (x) e^{ i \beta_n \frac{x^2}{2h}} $. 
We want to apply the formalism of Appendix \ref{Appendix_exp} with $H = L^2(\R)$ and $A$. The class of elements which will interest us is defined as follows : we say that a $h$-dependent family of states $u = u_h \in L^2(\R)$ belongs to the class $\mathcal{C}$ if $u$ has the form : 
$$ u(x) = a(x) e^{ i \beta_n \frac{x^2}{2h}} $$ where $a=a_h \in \cinf(\R)$ satisfies : for all $p \in \N$, there exists $C_p$ such that 
\begin{equation}\label{classe_condition}
 |a^{(p)}(x)| \leq C_p h^{- \delta p}  (\alpha_n h)^{-1/4}\left( 1 + \frac{x^2}{\alpha_n h} \right)^{-1} 
\end{equation}
This class depends on $h$ (and $n$, which himself depends on $h$). 
For such a state $u$, we define the natural semi-norms on $\mathcal{C}$ : 
\begin{equation}\label{Def_q_j}
 q_p(u) = \sup_{ k \leq p}\sup_{x \in \R} \left( |a^{(k)}(x)|  h^{ \delta k}  (\alpha_n h)^{1/4} \left( 1 + \frac{x^2}{\alpha_n h} \right) \right)
\end{equation} 
In particular, one has $||u|| \leq C q_0(u)$.

\begin{rem}
  In fact, the introduction of the semi-norms $q_j$ with the factor $(1+\frac{x^2}{\alpha_n h})^{-1}$ is purely technical : it allows to work in a symbol class depending on this order function (see the proof of Lemma \ref{Lemme_stationnary_phase_S_delta} in the appendix \ref{appendix_lemma_stationnary_phase_S_delta}). In the end, we will simply need to estimate the semi-norm $q_0$ of each term of the expansion of an evolved state $e^{-tA} u$, but this will require to control (a finite number of) semi-norms $q_j$ of the initial state $u$. This reason has motivated the introduction of the $q_j$'s. 
  We will mainly consider states $u$ with exponential decay and what is important is that $\partial^k \Psi_0 \leq C_{k,p} (1+x^2)^{-p/2}$ for all $k,p \in \N$. 
  \end{rem}
  
The following lemma ensures that the states we work with are indeed in $\mathcal{C}$, as soon as $h^{2\delta} \ll \alpha_n h$. Recall that $\alpha_n \geq Ch^{-\alpha_{\min}}$ where $\alpha_{\min}= 2\lambda_{\min} \vartheta_\varepsilon$. Then, it suffices to require $$ \varepsilon=1/2 - \delta \leq \alpha_{\min}/2.$$ This is clearly not a problem since we want to work with $\delta=1/2- \varepsilon$ very close to $1/2$ and we assume that this is true, that is, we assume that $$\varepsilon \leq \alpha_{\min}/2.$$
\begin{lem}
Assume that $u(x) = \Lambda_{\alpha_n h} (P \Psi_0) e^{ i \beta_n \frac{x^2}{2h}}$ where $P \in \C[X]$ has degree $d$. Then $u \in \mathcal{C}$ and for all $j \in \N$, there exists constants $C_{d,j}$ depending only on $d$ and $j$ such that 
$q_j(u) \leq C_{d,j} N_\infty(P)$
\end{lem}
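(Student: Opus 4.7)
\smallskip
The plan is a direct change-of-variable computation: introduce the rescaled variable $y = (\alpha_n h)^{-1/2} x$, so that
\[
a(x) = (\alpha_n h)^{-1/4} (P\Psi_0)(y),
\]
and then differentiate $p$ times. By the chain rule each derivative produces a factor $(\alpha_n h)^{-1/2}$, so
\[
a^{(p)}(x) = (\alpha_n h)^{-1/4-p/2} (P\Psi_0)^{(p)}(y).
\]
Writing $(P\Psi_0)^{(p)}(y) = Q_p(y)\Psi_0(y)$ where $Q_p$ is a polynomial of degree $\leq d+p$ whose coefficients depend linearly on those of $P$, one gets $N_\infty(Q_p) \leq C'_{d,p} N_\infty(P)$ by a straightforward induction (the derivative and multiplication-by-$x$ operators act boundedly on polynomials of bounded degree, with explicit universal constants).

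Next, use the Gaussian decay of $\Psi_0$: for any $r \in \mathbb{N}$ one has $|y|^r \Psi_0(y) \leq C_r$, and in particular $|Q_p(y)\Psi_0(y)| \leq C''_{d,p} N_\infty(P) \langle y\rangle^{-2}$. Since $\langle y\rangle^{-2} = (1 + x^2/(\alpha_n h))^{-1}$, this gives
\[
|a^{(p)}(x)| \leq C_{d,p} N_\infty(P) (\alpha_n h)^{-1/4} (\alpha_n h)^{-p/2} \Bigl(1 + \tfrac{x^2}{\alpha_n h}\Bigr)^{-1}.
\]

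To conclude, I need to absorb $(\alpha_n h)^{-p/2}$ into $h^{-\delta p}$. Recall $\alpha_n \sim (J^u_{\mathbf{q}})^2$ and $J^u_{\mathbf{q}} \geq e^{nn_0 \lambda_{\min}}$, so $\alpha_n \geq C h^{-\alpha_{\min}}$ with $\alpha_{\min} = 2\lambda_{\min}\vartheta_\varepsilon$. By the standing assumption $\varepsilon \leq \alpha_{\min}/2$, we have $2\varepsilon \leq \alpha_{\min}$, hence for $h$ small enough $\alpha_n h \geq C h^{1 - \alpha_{\min}} \geq C h^{1 - 2\varepsilon} = C h^{2\delta}$. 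Therefore $(\alpha_n h)^{-p/2} \leq C h^{-\delta p}$, which yields the estimate \eqref{classe_condition} defining membership in $\mathcal{C}$, together with the bound $q_j(u) \leq C_{d,j} N_\infty(P)$ on the seminorms in \eqref{Def_q_j}.

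This is entirely routine; no real obstacle arises. The only point requiring attention is the interplay between the squeezing scale $(\alpha_n h)^{1/2}$ and the escape-function regularity scale $h^{\delta}$, which is guaranteed by the numerological choice $\varepsilon \leq \alpha_{\min}/2$ made just above the lemma.
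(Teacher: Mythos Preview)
Your proof is correct and follows essentially the same route as the paper's: rescale via $y=(\alpha_n h)^{-1/2}x$, write $(P\Psi_0)^{(p)}=Q_p\Psi_0$ with $\deg Q_p\le d+p$ and $N_\infty(Q_p)\le C_{d,p}N_\infty(P)$, use the Gaussian decay to absorb the weight $(1+x^2/(\alpha_n h))$, and finally invoke $\alpha_n h\gg h^{2\delta}$ (equivalently the standing assumption $\varepsilon\le\alpha_{\min}/2$) to convert $(\alpha_n h)^{-p/2}$ into $h^{-\delta p}$. The paper's argument is identical up to presentation.
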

\begin{proof}
\begin{align*}
 \left|\left( \Lambda_{\alpha_n h} (P \Psi_0)\right) ^{(j)}(x)  \right| &=  \left| (\alpha_n h)^{-j/2} \Lambda_{\alpha_n h} ( (P \Psi_0)^{(j)} )(x)  \right| \\
 & \leq h^{-\delta j} ( \pi h \alpha_n)^{-1/4} \left| (P_j  \Psi_0)( (\alpha_n h)^{-1/2} x)\right|
\end{align*} 
Here, we use that $\alpha_n h\gg h^{2 \delta}$. 
and $P_j$ is a polynomial which depends linearly on $P$, with $\deg P_j = \deg P +j $ and $N_\infty(P_j) \leq C_{d,j} N_\infty(P)$. Hence, we have 
\begin{align*}
q_k(u) &\leq \sup_{j \leq k} \sup_{x \in \R} \left| (P_j  \Psi_0)( (\alpha_n h)^{-1/2} x)\right| \left( 1 + \frac{x^2}{\alpha_n h} \right) \\
&\leq \sup_{j \leq k} \sup_{x \in \R} \left| (P_j  \Psi_0)( x)\right| \left( 1 + x^2\right)\\
&\leq \sup_{ j \leq k} C_{d,j} N_\infty(P_j) \leq C_{d,k} N_\infty(P) 
\end{align*}
\end{proof}

To apply the formalism of Appendix \ref{Appendix_exp}, we will require the following lemma. This a more or less direct application of the stationary phase theorem in the quadratic case.  We write its proof in appendix  \ref{appendix_lemma_stationnary_phase_S_delta}. This lemma explains how to compute $\op(m) u$ for $u \in \mathcal{C}$ and $m \in S_\delta$. 

\begin{lem}\label{Lemme_stationnary_phase_S_delta}
There exists $M>0$ such that the following holds. Assume that $m \in S_\delta$ or $m =h_0$. Then, for all $k \in \N$, there exists $A_k(m) : \mathcal{C} \to \mathcal{C}$ such that for $u \in \mathcal{C}$, written under the form $u(x)=a(x)e^{i \beta_n \frac{x^2} {2h}}$, we have
\begin{itemize}
\item $A_0(m) u(x) = m(x,\beta_n x) u(x)$; 
\item For $k \geq 1$, $A_k(m)$ is of the form 
\begin{equation}\label{eq_form_A_k} A_k (m)u (x)=  \sum_{l \leq 2k} c_l(x) \partial_x^{l}a(x) e^{ i \beta_n \frac{x^2}{2h}} 
\end{equation}
where $|c_l^{(p)}(x)| \leq C_{l,k,p} h^{(l-p) \delta}$.
\item For all $(j,k)\in \N^2 \setminus \{ (0,0) \}$, there exists $c_{j,k}>0$ such that for all $u  \in \mathcal{C}$, $q_j(A_k(m) u) \leq c_{j,k} q_{2k+j}(u)$; 
\item For all $N \in \N^*$ and for all $j \in \N$, there exists $C_{j,N}>0$ such that 
$$ q_j \left( \op(m) u - \sum_{k=0}^{N-1} h^{k(1-2 \delta)} A_k(m) u \right) \leq C_{j,N} q_{j+2N+M} (u) h^{N(1-2 \delta)}$$
\end{itemize}
\end{lem}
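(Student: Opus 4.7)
The plan is to realize $\op(m)u$ as a two-dimensional oscillatory integral and perform a stationary phase expansion whose effective small parameter is $h^{1-2\delta}$. Writing $u(y)=a(y)e^{i\beta_n y^2/(2h)}$ and using the Weyl kernel of $\op(m)$, one obtains
\[
\op(m)u(x) = \frac{1}{2\pi h}\int_{\R^2} e^{i\phi(y,\xi;x)/h}\, m\!\left(\tfrac{x+y}{2},\xi\right) a(y)\,dy\,d\xi,
\]
with $\phi(y,\xi;x)=(x-y)\xi+\tfrac{\beta_n}{2}y^2$. This phase has a unique non-degenerate critical point $(y_c,\xi_c)=(x,\beta_n x)$ with Hessian $H=\bigl(\begin{smallmatrix}\beta_n & -1\\ -1 & 0\end{smallmatrix}\bigr)$, determinant $-1$, signature zero, and value $\beta_n x^2/2$; this produces the common factor $e^{i\beta_n x^2/(2h)}$ appearing in all terms. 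Since $m\in S_\delta$ loses a factor $h^{-\delta}$ per derivative and, by definition of $\mathcal{C}$, so does $a$, each order of the stationary phase expansion (which gains a factor $h$) actually contributes at order $h^{1-2\delta}$, exactly the asymptotic scale claimed in the lemma.

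The formal part of the argument applies the stationary phase operator $L:=\tfrac{1}{2i}\langle H^{-1}D,D\rangle=i D_y D_\xi+\tfrac{i\beta_n}{2}D_\xi^2$ to the amplitude $m((x+y)/2,\xi)\,a(y)$ and evaluates at $(y,\xi)=(x,\beta_n x)$, which yields
\[
\op(m)u(x) = e^{i\beta_n x^2/(2h)}\sum_{k=0}^{N-1}\frac{h^k}{k!}\,L^k(m\cdot a)\big|_{(y,\xi)=(x,\beta_n x)} + R_N(x).
\]
Using $\partial_y m((x+y)/2,\xi)=\tfrac12\partial_1 m$ and Leibniz, I will group terms according to the number $l\le 2k$ of $y$-derivatives landing on $a$,
\[
L^k(ma)\big|_{(y,\xi)=(x,\beta_n x)} = \sum_{l=0}^{2k}\gamma_l(x)\,\partial^l a(x),
\]
where $\gamma_l(x)$ is a finite linear combination of derivatives of $m$ at $(x,\beta_n x)$ of total order $2k-l$. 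Setting $A_k(m)u(x):=\bigl(h^{2k\delta}/k!\bigr)\sum_{l\le 2k}\gamma_l(x)\,\partial^l a(x)\,e^{i\beta_n x^2/(2h)}$ realizes $A_k(m)u$ in the form (\ref{eq_form_A_k}) with $c_l(x)=h^{2k\delta}\gamma_l(x)/k!$; the $S_\delta$ bound $|\partial^\gamma m|\le C_\gamma h^{-\delta|\gamma|}$ then yields $|c_l^{(p)}(x)|\le C_{l,k,p}h^{(l-p)\delta}$ as required. The case $m=h_0\in \log(h^{-1})S_\delta$ is identical up to a logarithmic factor absorbed into the constants. The semi-norm bound $q_j(A_k(m)u)\le c_{j,k}q_{j+2k}(u)$ follows by differentiating the explicit formula for $A_k(m)u$ up to order $j$: every derivative either lands on $c_l$ (producing a favorable extra $h^\delta$) or raises by one the order of the derivative on $a$; the decay factor $(1+x^2/(\alpha_n h))^{-1}$ is preserved throughout because the operations are local in $x$.

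The remaining ingredient is the remainder estimate $q_j(R_N)\leq C_{j,N}q_{j+2N+M}(u)h^{N(1-2\delta)}$, which is a quantitative symbolic stationary phase. The amplitude $m((x+y)/2,\xi)a(y)$ lies in a symbol class with respect to the rescaled variables $((y-x)/h^{1/2-\delta},(\xi-\beta_n x)/h^{1/2-\delta})$, in which the effective Planck constant is $h^{1-2\delta}$; the standard stationary phase remainder (e.g.\ \cite{ZW}, Theorem 3.17) then produces an error of size $h^{N(1-2\delta)}$ after losing a universal number $M$ of derivatives of the amplitude. The main obstacle I anticipate is propagating the decay weight $(1+x^2/(\alpha_n h))^{-1}$ through the stationary phase estimate uniformly in $n$ and $h$: since $\alpha_n\sim (J_\mathbf{q}^u)^2$ itself depends on $h$, one must treat this weight as an $n$-dependent order function in the $x$-variable, verify that it is compatible with the Moyal-type expansion in the integration variables $(y,\xi)$ (it is, since the weight does not move under the $(y,\xi)$ integration), and check that all constants depend only on a finite number of semi-norms of $m$ in $S_\delta$ and on $q_{j+2N+M}(u)$. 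Once this uniformity is secured the lemma follows directly.
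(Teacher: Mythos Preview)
Your approach is essentially identical to the paper's: reduce $\op(m)u$ to a quadratic-phase oscillatory integral centered at $(y,\xi)=(x,\beta_n x)$, apply stationary phase with operator $\tfrac{1}{2i}\langle Q_n^{-1}D,D\rangle$ (your $L$), and read off the $A_k$'s and their $S_\delta$-type bounds. The one technical step you flag as an obstacle but do not carry out is exactly what the paper supplies: the amplitude $m\bigl(\tfrac{x+y}{2},\xi\bigr)a(y)$ is neither Schwartz nor compactly supported in $(y,\xi)$, so a direct appeal to a stationary phase theorem is not quite legitimate; the paper inserts a cutoff $\chi(y,\xi)$ supported in $B(0,1)$, handles the near part by quadratic stationary phase, and treats the far part by repeated integration by parts, using at each stage the elementary inequality $\sup_{|y|\le 1}\bigl(1+(x+y)^2/(\alpha_n h)\bigr)^{-1}\le C\bigl(1+x^2/(\alpha_n h)\bigr)^{-1}$ (and its submultiplicative variant for the far region) to propagate the $q_j$-weight. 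Once you add this near/far splitting, your argument becomes the paper's proof verbatim.
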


\begin{rem}
We need to distinguish the cases $m=h_0$ and $m \in S_\delta$ because $h_0$ is not in $S_\delta$ (recall that we only have $h_0 =O( \log(1/h))$). However, $h_0$ satisfies $|\partial^\alpha h_0| \leq C_\alpha h^{- |\alpha| \delta} $ as soon as $|\alpha| \geq 1$. This explains why we restrict on $(j,k) \neq (0,0)$ in the third item but in the case $m \in S_\delta$, the expression given in the first item shows that it also holds for $(j,k)=(0,0)$. 
\end{rem}

Gathering the terms of same order in the expansions of each $\op(h_k)$ given by Lemma \ref{Lemme_stationnary_phase_S_delta}, we can build the family of operators  $$A_k : \mathcal{C} \to \mathcal{C} \; ; \; A_k = \sum_{j+l=k}A_j(h_l).$$ 
Each $A_k$ has the same form as (\ref{eq_form_A_k}) and they satisfy, for all $u \in \mathcal{C}$, 
\begin{itemize}
\item $A_0 u(x) = h_0(x,\beta_n x) u(x)$. 
\item For all $(j,k) \in \N^2 \setminus \{ (0,0) \}$, there exists $c_{j,k}>0$ such that for all $u  \in \mathcal{C}$, $q_j(A_k u) \leq c_{j,k} q_{2k+j}(u)$; 
\item For all $N \in \N^*$ and for all $j \in \N$, there exists $C_{j,N}>0$ such that 
$$ q_j \left( A u - \sum_{k=0}^{N-1} h^{k(1-2 \delta)} A_k u \right) \leq C_{j,N} q_{j+2N+M} (u) h^{N(1-2 \delta)} $$
\end{itemize}

We now use the formulas and notations of Appendix \ref{Appendix_exp} to show : 

\begin{prop}\label{Prop_expansion_S_delta}
Assume that $P \in \C[X]$ is of degree $d$ and consider the state $u= \Lambda_{\alpha_n h} (P \Psi_0)e^{\frac{i \beta_n x^2}{2h}}$. Then, $t$ being fixed, there exists a family of functions $(f_k)$ and $K_1 >0$ such that,
 \begin{itemize}
 \item $v_0(x) = u(x)$ ; 
\item For all $N \in \N^*$, there exists $C_{N,d}$ such that 
$$ \left| \left| A_{q_n} \left( e^{-tG} B^\prime_{q_n}T(\hat{\rho}_n) u - \sum_{k=0}^{N-1} h^{k(1-2 \delta)} B_{q_n}^\prime T(\hat{\rho}_n) u_k \right) \right| \right|  \leq C_{N,d} h^{N(1-2 \delta)} h^{-tK_1} N_\infty(P)$$ 
where 
$$ u_k (x) =  \exp\left( -t h_0(x,\beta_n x)) \right) v_k(x)  \; ; \; v_k(x) = f_k(x) \left(\Lambda_{\alpha_n h} \Psi_0\right)(x) e^{\frac{i \beta_n x^2}{2 h} } $$
\item For all $k \in \N$, there exists $c_{k,d}>0$ such that for all $x \in \R$, $$|f_k(x)| \leq c_{k,d} \left( 1 + \frac{x^2}{\alpha_n h }\right)^{k/2} N_\infty(P)$$ 
\end{itemize}
\begin{rem}
In particular, these last estimates imply that $v_k \in \mathcal{C}$. 
\end{rem}

\end{prop}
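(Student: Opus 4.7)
The plan is to apply the abstract formalism of Appendix~\ref{Appendix_exp} to the conjugated operator $A = T(\hat{\rho}_n)^* G_{q_n} T(\hat{\rho}_n)$ acting on the class $\mathcal{C}$, with the small parameter $\tilde{h} = h^{1-2\delta}$, the initial state $u$, and the sequence of operators $(A_k)$ constructed right after Lemma~\ref{Lemme_stationnary_phase_S_delta}. The discussion preceding the proposition already reduces $A_{q_n} e^{-tG} B_{q_n}^\prime T(\hat{\rho}_n) u$ to $A_{q_n} B_{q_n}^\prime T(\hat{\rho}_n) e^{-tA} u$ modulo $\hinf$, so it suffices to produce the expansion for $e^{-tA} u$ in $\mathcal{C}$ and then recover the $L^2$ estimate via $\|A_{q_n}\|, \|B_{q_n}^\prime\| = O(1)$ combined with the elementary bound $\|w\|_{L^2} \leq C q_0(w)$ valid for every $w \in \mathcal{C}$.

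First I identify $A_0$ as the multiplication operator $h_0(x, \beta_n x)$, using the first item of Lemma~\ref{Lemme_stationnary_phase_S_delta} applied to $h_0$. Consequently $e^{-sA_0}$ is the multiplication by $\exp(-s h_0(x,\beta_n x))$, which preserves $\mathcal{C}$ up to a multiplicative factor $h^{-s K_1}$ coming from $\|h_0\|_\infty \leq K_1 \log(1/h)$. The recursive formulas from the appendix then give $v_0(t) = u$ and
\begin{equation*}
v_k(t) = \int_0^t e^{(s-t)A_0} \sum_{\substack{j_1 + j_2 = k \\ j_2 \geq 1}} A_{j_2}\bigl(e^{(t-s)A_0} v_{j_1}(s)\bigr)\, ds .
\end{equation*}
I claim by induction on $k$ that each $v_k(t)$ has the announced form $f_k(x)\, \Lambda_{\alpha_n h}\Psi_0(x)\, e^{i\beta_n x^2/(2h)}$. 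The base case is immediate since $u = P\bigl((\alpha_n h)^{-1/2} x\bigr) \Lambda_{\alpha_n h}\Psi_0 \cdot e^{i\beta_n x^2/(2h)}$. For the inductive step, the explicit form of $A_{j_2}$ provided by Lemma~\ref{Lemme_stationnary_phase_S_delta}, together with the fact that $e^{sA_0}$ is a pure multiplication whose derivatives stay in the class $S_\delta$ (by the properties of $g$), shows that $e^{(s-t)A_0} A_{j_2} e^{(t-s)A_0}$ maps the ansatz into itself, producing only a new scalar factor in front of the scaled Gaussian.

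The key quantitative point is the polynomial growth bound on $f_k$. Each application of the operator $A_{j_2}(h_l)$, by its explicit representation $\sum_{p \leq 2j_2} c_p(x) \partial_x^{p}(\,\cdot\,)$, contributes at most $2j_2$ derivatives distributed on the product $f_{k-j_2} \cdot \Lambda_{\alpha_n h}\Psi_0$. Each derivative hitting $\Lambda_{\alpha_n h}\Psi_0$ produces a factor $(\alpha_n h)^{-1/2} \cdot H(x/(\alpha_n h)^{1/2})$ for a Hermite-type polynomial $H$, i.e.\ polynomial growth of one extra degree in the rescaled variable $x/(\alpha_n h)^{1/2}$; the conjugation by $e^{\pm s A_0}$ only brings bounded multiplicative factors, since derivatives of $h_0(x,\beta_n x)$ are controlled in $S_\delta$ and compensate for the scale mismatch between $h^\delta$ and $(\alpha_n h)^{1/2}$ (here we use $\alpha_n h \gg h^{2\delta}$, which holds thanks to $\varepsilon \leq \alpha_{\min}/2$). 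Summing the contributions of all indices $(j_1,j_2)$ with $j_1 + j_2 = k$ raises the polynomial degree by at most an amount of order $k$, yielding the bound $|f_k(x)| \leq c_{k,d}(1 + x^2/(\alpha_n h))^{k/2} N_\infty(P)$ in the rescaled variable.

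For the remainder, I use the Gronwall-type estimate of Appendix~\ref{Appendix_exp}: the residual $\tilde r_{j,N}(t)$ is controlled in the $q_0$ semi-norm by $\tilde{h}^{N-j} q_{2(N-j) + M}(v_j(t))$ thanks to the last item of Lemma~\ref{Lemme_stationnary_phase_S_delta}, and the semi-norm bounds above give $q_{2(N-j)+M}(v_j(t)) \leq C_{N,d,t}$. Integrating $R_{N-1}^\prime(t) = A R_{N-1}(t) + \sum_{j} \tilde h^j \tilde r_{j,N}(t)$ and using $\|A\|_{\mathcal{C}\to \mathcal{C}} \leq K_1 \log(1/h)$ produces a growth factor $e^{tK_1 \log(1/h)} = h^{-t K_1}$, which yields the claimed bound $C_{N,d} h^{N(1-2\delta)} h^{-tK_1} N_\infty(P)$. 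The main obstacle is the careful bookkeeping of the two scales at play, namely $h^\delta$ on which $g$ oscillates and $(\alpha_n h)^{1/2}$ on which the wavepacket is spread; but the very definition of the class $\mathcal{C}$ absorbs the scaled Gaussian into its weight $(1 + x^2/(\alpha_n h))^{-1}$, so that Lemma~\ref{Lemme_stationnary_phase_S_delta} can be iterated cleanly through the induction.
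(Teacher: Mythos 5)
Your proposal follows the paper's proof essentially verbatim: apply the formalism of Appendix~\ref{Appendix_exp} with $\tilde h = h^{1-2\delta}$, identify $A_0$ with the multiplication by $h_0(x,\beta_n x)$, prove by induction via Lemma~\ref{Lemme_stationnary_phase_S_delta} that each $v_k$ keeps the form $f_k\,\Lambda_{\alpha_n h}\Psi_0\,e^{i\beta_n x^2/(2h)}$ with $f_k$ growing polynomially in the rescaled variable $x/(\alpha_n h)^{1/2}$, and close with a Gronwall argument producing the $h^{-tK_1}$ loss. The only slip is your transcription of the Duhamel recursion --- it should read $v_k(t)=\sum_{l<k}\int_0^t A_{k-l}(s)v_l(s)\,ds$ with $A_j(s)=e^{-sA_0}A_j e^{sA_0}$ as in (\ref{formula_v_k}), not a conjugation by $e^{\pm(t-s)A_0}$ --- but since all your bounds on the conjugated operators are uniform in the time parameter, this does not affect the subsequent estimates.
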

\begin{proof}
We use the notations and formulas of Appendix \ref{Appendix_exp}, with parameter $\tilde{h}=h^{1-2\delta}$. 
We define a family $(v_k(t))$ by the iterative formula (\ref{formula_v_k}). 
The operator $A_0$ is nothing but the multiplication by $$a_0(x) = h_0(x,\beta_n x)$$ and hence, $e^{s A_0}$ is the multiplication by $\exp\left( s a_0 \right) $. 

Let us note$A_k(s)=e^{-s A_0} A_k e^{s A_0} $ and let us show that $A_k(s)u(x)$ has the same form as (\ref{eq_form_A_k}), with the functions $c_l(x)$ replaced by functions $\tilde{c}_l(s,x)$. We have 
\begin{align*}
e^{-s a_0} c_l(x) \partial_x^l \left( e^{sa_0} a(x) \right) &= c_l(x) \sum_{m=0}^l {l \choose m }  a_m(s,x) a^{(l-m)}(x) \\
&=\sum_{m=0}^l \tilde{c}_{l,m}(s,x) \partial_x^m a(x)
\end{align*}
where $a_m(s,x) =  e^{-sa_0} \partial_x^m (e^{sa_0})$ is a sum of terms of the form
$$  s^i \prod_{j=1}^i    a_0^{(k_j)} \text{ with } (k_1, \dots, k_i) \in (\N^*)^i \text{ and } k_1 + \dots + k_i = m$$ and $\tilde{c}_{l,m}(s,x) = {m \choose l}c_{l}(x) a_{l-m}(s,x)$. It is not hard to see that $|\partial^p_x a_m (s,x) | \leq C_{m,p}  (1+|s|)^m h^{-\delta (m+p)}$ so that we have 
\begin{align*}
\left| \partial_x^p \tilde{c}_{l,m}(x)  \right|=&{m \choose l} \left| \sum_{p_1 + p_2=p} { p \choose p_1 } c_l^{(p_1)}(x) a_{l-m}^{(p_2)}(s,x) \right| \\
&\leq C_{p,l,m} \sup_{p_1 + p_2 = p} h^{ \delta(l-p_1) \delta} h^{ - (l-m+p_2)\delta} \leq C_{p,l,m} h^{(m-p)\delta} 
\end{align*}
which shows that the term in front of $\partial_x^m$ has the correct behavior to be of the form (\ref{eq_form_A_k}) and we can set $\tilde{c}_m(s,x) = \sum_{l \leq 2k}  \tilde{c}_{l,m}(s,x)$ so that 
$$A_k(s)u(x) = \sum_{ m \leq 2k } \tilde{c}_m(s,x) \partial_x^m \left(  u e^{- i \beta_n x^2/2h} \right)  e^{i \beta_n x^2/2h} $$
 Let us now analyze the action of $A_k(s)$ on states of the form $c(x) \Lambda_{\alpha_n h} ( \Psi_0)(x)  e^{ i \beta_n \frac{x^2}{2h}} $.   We claim that we can write 
 \begin{equation}\label{eq_form_A_k_s}
 A_k(s) \left( c(x) \Lambda_{\alpha_n h} ( \Psi_0)(x)  e^{ i \beta_n \frac{x^2}{2h}} \right) = d_{k}(s,x) \Lambda_{\alpha_n h} ( \Psi_0)(x)  e^{ i \beta_n \frac{x^2}{2h}} 
 \end{equation}
where 
\begin{equation}|\partial_x^p d_{k}(s,x)| \leq C_{k,p} h^{- p \delta} (1+|s|)^k \sup_{y \in \R, m \leq 2k +p} |c^{(m)}(y)| \left(1 + \frac{x^2}{\alpha_n h} \right)^k .
\end{equation}
To see that, let us write 
\begin{align*}
 \tilde{c}_{m}(s,x) \partial_x^m \left( c(x) \Lambda_{\alpha_n h} ( \Psi_0)   \right)& = \tilde{c}_{m}(s,x)  \sum_{l=0}^m {m \choose l }c^{(m-l)}(x)      \left( \Lambda_{\alpha_n h} ( \Psi_0) \right)^{(l)}(x)) \\
 &=  \underbrace{\left(  \sum_{l=0}^m {m \choose l } \tilde{c}_{m}(s,x)  c^{(m-l)}(x)  \frac{Q_l( (\alpha_n h)^{-1/2} x) }{(\alpha_n h)^{l/2}  }  \right) }_{C_m(s,x)} \Lambda_{\alpha_n h} ( \Psi_0) (x) 
\end{align*}
where $Q_l \in \R[X]$ are some polynomials of degree $l$. We hence have, 
\begin{align*}
\left| \partial_x^{(p)} C_{m} (s,x)\right| &= \left|  \partial_x^p \left( \sum_{m_1+m_2=m} {m \choose m_1 } \tilde{c}_{m}(s,x)  c^{(m_1)}(x)  \frac{Q_{m_2}( (\alpha_n h)^{-1/2} x) }{(\alpha_n h)^{m_2/2}  } \right) \right| \\
&\leq C_{m,p} \sup_{\substack{m_1 + m_2 = m \\ p_1+ p_2 + p_3=p}} | \partial_x^{p_1} \tilde{c}_{m}(s,x) |  |c^{(m_1+p_2)}(x)| (\alpha_n h)^{-m_2 /2 -p_3/2} |Q_{m_2}^{(p_3)}((\alpha_n h)^{-1/2} x)|(1+|s|)^m \\
&\leq C_{m,p} \sup_{\substack{m_1 + m_2 = m \\ p_1+ p_2 + p_3=p}} h^{(m-p_1)\delta} |c^{(m_1+p_2)}(x)| (\alpha_n h)^{-m_2 /2 -p_3/2} (1+|s|)^m \left( 1 + \frac{x^2}{\alpha_n h} \right)^{(m_2-p_3)/2}\\
&\leq C_{m,p}  \sup_{y \in \R, l \leq m +p} |c^{(l)}(y)|   \sup_{\substack{m_1 + m_2 = m \\ p_1+ p_2 + p_3=p}}h^{(l-p_1)\delta}  h^{-(m_2 +p_3)\delta} (1+|s|)^m \left( 1 + \frac{x^2}{\alpha_n h} \right)^{m/2}\\
&\leq  C_{m,p}  \sup_{y \in \R, l \leq m +p} |c^{(l)}(y)| h^{-\delta p } (1+|s|)^m  \left( 1 + \frac{x^2}{\alpha_n h} \right)^{m/2}
\end{align*}
and the claim is proved, with $d_k(s,x) = \sum_{m \leq 2k} C_m(s,x)$.

We now analyze precisely the iteration formula (\ref{formula_v_k}) in Appendix \ref{Appendix_exp}. We use the notations of this appendix f(in particular, for the remainders $\tilde{r}_{j,N}$ and $R_N$). Let $K_0>0$ be such that $|h_0| \leq K_0 \log 1/h$ so that $|e^{t a_0}| \leq h^{-tK_0}$. For $j \geq 0$, we have 
$$ q_j(e^{ta_0}u ) \leq h^{-|t|K_0} c_{0,j} q_j(u) $$
This is obvious for $j=0$. For $j \geq 1$, it comes from the fact that the derivatives of $h_0$ satisfy $|\partial^\alpha h_0| \leq C_\alpha h^{- |\alpha| \delta } $ for $\alpha \neq 0$ and the definition of $q_j$ in (\ref{Def_q_j}). 

\subparagraph{Leading term. } For our leading term in the expansion we want in Proposition \ref{Prop_expansion_S_delta}, we simply have 
$$ u_0(t) = e^{ t a_0} u$$ 
As a consequence, 
$$ q_j\left( \tilde{r}_{0,N}(t) \right)  \leq C_{j,N} \tilde{h}^N q_{j+2N+M} \left( e^{ta_0} u \right) \leq C_{j,N} \tilde{h}^N  c_{0,j+2N+M} h^{-|t| K_0} q_{j+2N+M} (u) \leq C_{d,j,N} \tilde{h}^N h^{-|t| K_0} N_\infty(P)
$$

\subparagraph{Iteration.} 
By induction, using the formulas (\ref{formula_v_k}) and (\ref{eq_form_A_k_s}), we see that if the initial state is $u=\Lambda_{\alpha_n h} ( P\Psi_0)  e^{ i \beta_n \frac{x^2}{2h}}$ then 
$$v_k(t) = f_{k}(t,x) \Lambda_{\alpha_n h} ( \Psi_0)(x)  e^{ i \beta_n \frac{x^2}{2h}}$$ where $|\partial_x^p f_{k}(s,x)| \leq C_{k,p} h^{- p \delta} (1+|s|)^k \left( 1 + \frac{x^2}{\alpha_n h} \right)^{k/2} N_\infty(P)$. When $p=0$, it gives the required estimate for $|f_k(x)|$ in Proposition \ref{Prop_expansion_S_delta}. It follows that
\begin{align*}
 q_j(e^{tA_0} v_k(t) )  \leq h^{-|t| M_0} c_{0,j} q_j(v_k(t) ) 
\leq  h^{-|t| M_0} c_{0,j} (1+ |t|)^k  h^{-|t| M_0} \tilde{C}_{d,j,k} (1+ |t|)^k N_\infty(P)
\end{align*}
Moreover, we can estimate $$q_j\left( \tilde{r}_{k,N}(t) \right) \leq  C_{j,N-k } \tilde{h}^{N-k} q_{j+2(N-k)+M} \left( e^{tA_0} v_k(t) \right) \leq  C_{d,N,j,k}\tilde{h}^{N-k} h^{-|t| M_0} (1+ |t|)^k N_\infty(P) $$

\subparagraph{Conclusion} We find that for $j \in \N$, 
\begin{align*}
q_j\left(\sum_{k=0}^{N-1} \tilde{h}^k \tilde{r}_{k,N}(t) \right) &\leq \tilde{h}^N h^{-|t|M_0} \sum_{k=0}^{N-1} C_{d,N,j,k}  (1+ |t|)^k N_\infty(P)\\
& \leq C_{d,j,N} \tilde{h}^N  h^{-|t| M_0}  (1+ |t|)^{N-1} N_\infty(P) 
\end{align*}
Integrating (\ref{formule_reste_exact}), and recalling that $||\cdot || \leq C q_0$ in $\mathcal{C}$, we have 
$$||R_{N-1}(t) || \leq \int_0^{|t|} ||A|| ||R_{N-1}(s) || ds + C_{d,0,N} \tilde{h}^N  h^{-|t| M_0}  (1+ t^2)^{N-1} N_\infty(P) $$
By  a version of Gronwall's lemma, we can find a constant $C_{N,d}$ such that
$$ ||R_{N-1}(t) || \leq C_{N,d} \tilde{h}^N e^{|t| \max( K_0 |\log h|, ||A||) } (1+ t^2)^N N_\infty(P)$$
Since, $||A|| = O(\log h)$, there exists $K_1>0$  such that 
$e^{|t| \max( K_0 |\log h|, ||A||) } \leq h^{-|t|K_1}$ and it concludes the proof of Proposition \ref{Prop_expansion_S_delta}. 
\end{proof}

Combining Proposition \ref{Proposition_expansions_before_last_exp}, (\ref{equation_transformation_M}) and Proposition \ref{Prop_expansion_S_delta}, we deduce the following expansion :

\begin{cor}\label{Cor_expansion}
\begin{multline}
A_{q_n} e^{-tG}  \mathfrak{M}^{n-1} M^{n_0} A_q B_q^\prime \varphi_{\hat{\rho}} = A_{q_n} \sum_{2j+k+l+2m<2N} h^{j+k/2+ l\varepsilon + m\varepsilon } u_n^{(j,k,l,m)} \\
+ O \left(h^{-K - t(K_0+ K_1)} h^{2N \varepsilon}  \left( \log h \right)^{N} \right)
\end{multline}
where $$T(\hat{\rho}_n)^*u_n^{(j,k,l,m)}(x) =  e^{ tg(\rho)-t h_0(x,\beta_nx)  } f_n^{(j,k,l,m)}(x) \Lambda_{\alpha_n h} (\Psi_0)(x) e^{\frac{i \beta_n x^2}{2 h} }$$
where we have, for all $x \in \R$,  $$\left| f_n^{(j,k,l,m)} (x) \right| \leq C_{j,k,l,m} n^{2j+k} \Pi_{\alpha,n}(\rho) \left( J^u_{\mathbf{q}}\right)^{3k} h^{-k \varepsilon} \left( 1 + \frac{x^2}{\alpha_n h} \right)^{m/2} $$ 
Concerning the leading term, $f_n^{(0,0,0,0)}$ is constant equal to $\pi_{\alpha,n}(\rho)$. 
\end{cor}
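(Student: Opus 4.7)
}
The plan is to chain the three expansions developed in the section and then collect terms of the same total order. First, I would apply Proposition \ref{Proposition_expansions_before_last_exp} to expand $M_{q_n,q_{n-1}} \dots M_{q_1,q_0} E_t \varphi_{\hat{\rho}}$ as a sum of terms indexed by $(j,k,l)$ with $2j+k+l<2N$, each of the form
$$ u_n^{(j,k,l)} = e^{t g(\rho)} T(\hat{\rho}_n) \mathcal{M}(d_{\hat{\rho}} F^{(n)}) \Lambda_h(P_n^{(j,k,l)} \Psi_0), $$
with polynomial degree bounded by $3k+2l$ and $N_\infty(P_n^{(j,k,l)}) \leq C_{j,k,l,\varepsilon} n^{2j+k} \Pi_{\alpha,n}(\rho) (J^u_{\mathbf{q}})^{3k} h^{-k\varepsilon}$, plus a remainder of size $h^{-(K+K_0 t)} h^{2N\varepsilon}$. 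Then, using the description of $d_{\hat{\rho}}F^{(n)}$ given by Lemma \ref{Lemma_desc_d_rho_F_n} and formula (\ref{equation_transformation_M}), each term $\mathcal{M}(d_{\hat{\rho}} F^{(n)}) \Lambda_h(P_n^{(j,k,l)} \Psi_0)$ becomes $\Lambda_{\alpha_n h}(\Phi_n(P_n^{(j,k,l)}) \Psi_0) e^{i \beta_n x^2/2h}$, which now lies in the class $\mathcal{C}$ introduced for the final evolution step. The $N_\infty$-norm is preserved up to a degree-dependent constant $K_{\deg P}$.

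Next I would handle the action of the final operator $A_{q_n} e^{-tG} B_{q_n}^\prime$. Using the observation that $A_{q_n} e^{-tG} B_{q_n}^\prime = A_{q_n} B_{q_n}^\prime e^{-t B_{q_n} G B_{q_n}^\prime} + O(h^\infty)$ (proved in the paragraph preceding Proposition \ref{Prop_expansion_S_delta}), I would apply Proposition \ref{Prop_expansion_S_delta} to each $T(\hat{\rho}_n) \Lambda_{\alpha_n h}(\Phi_n(P_n^{(j,k,l)}) \Psi_0) e^{i\beta_n x^2/2h}$. This produces a further expansion in powers of $h^{1-2\delta} = h^{2\varepsilon}$ indexed by an integer $l'$ (which in the statement I would relabel so as to avoid clashing with the $l$ already used; call it the new variable appearing at order $h^{l\varepsilon}$ in the corollary), with each term of the form
$$ T(\hat{\rho}_n) \exp(-t h_0(x, \beta_n x)) f^{(j,k,l,m)}_n(x) \Lambda_{\alpha_n h}(\Psi_0)(x) e^{i \beta_n x^2/2h}, $$
where $|f^{(j,k,l,m)}_n(x)| \leq c_{k,d} (1 + x^2/(\alpha_n h))^{m/2} N_\infty(\Phi_n(P_n^{(j,k,l)}))$ according to the pointwise bound on $f_k$ in Proposition \ref{Prop_expansion_S_delta}. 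The extra index $m$ encodes this polynomial growth in $x/\sqrt{\alpha_n h}$; when gathering the overall prefactor, one multiplies the existing $N_\infty$ bound on $P_n^{(j,k,l)}$ by $c_{k,d}$, producing the stated estimate on $f_n^{(j,k,l,m)}$.

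Collecting all terms of total order $h^{j+k/2+l\varepsilon+m\varepsilon}$ with $2j+k+l+2m<2N$ and cutting the expansions at this threshold yields the desired asymptotic. For the leading term $(j,k,l,m)=(0,0,0,0)$: Proposition \ref{Proposition_expansions_before_last_exp} gives $P^{(0,0,0)}_n = \pi_{\alpha,n}(\rho)$ (a constant), $\Phi_n$ maps constants to constants of the same norm (up to $K_0$), and Proposition \ref{Prop_expansion_S_delta} gives $f_0 = 1$ at leading order, so $f_n^{(0,0,0,0)} = \pi_{\alpha,n}(\rho)$ as claimed. The remainder is bounded by combining the remainder of Proposition \ref{Proposition_expansions_before_last_exp} (contributing $h^{-K-K_0 t} h^{2N\varepsilon}$), the bounded action of $A_{q_n} B_{q_n}^\prime$, and the bound $C_{N,d} h^{N(1-2\delta)} h^{-K_1 t} N_\infty(P)$ from Proposition \ref{Prop_expansion_S_delta}; the $(\log h)^N$ factor arises from absorbing the polynomial growth in $n = n(h) \sim C \log(1/h)$ present in the $N_\infty$ bounds.

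The main obstacle is bookkeeping rather than new analysis: one must carefully track the four indices $(j,k,l,m)$, verify that the three separate asymptotic scales ($h$, $h^{1/2}$, and $h^{2\varepsilon}$) can be recombined consistently, and ensure the degree growth ($\deg P_n^{(j,k,l)} \leq 3k+2l$ and each application of $\Phi_n$ or Proposition \ref{Prop_expansion_S_delta} adds a controlled amount) does not invalidate the uniform semi-norm estimates on $\mathcal{C}$ needed in the remainder bounds. The constraint $\vartheta_\varepsilon < \frac{1-4\varepsilon}{6\lambda_{\max}(1+\varepsilon)^2}$ (built into the choice of $n(h)$) is precisely what keeps $(J^u_\mathbf{q})^{3k} h^{-k\varepsilon}$ from defeating the $h^{k/2}$ gain at each order, so the expansion remains genuinely asymptotic.
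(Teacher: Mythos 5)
Your proposal follows essentially the same route as the paper's proof: expand via Proposition \ref{Proposition_expansions_before_last_exp}, convert each term with formula (\ref{equation_transformation_M}), apply Proposition \ref{Prop_expansion_S_delta} to generate the fourth index, and combine the three remainder sources (with the $(\log h)^N$ factor absorbed from the $n^{2j+k}$ growth). The only quibble is the momentary mislabeling of the new index from Proposition \ref{Prop_expansion_S_delta} as the one carrying $h^{l\varepsilon}$ (it is the $m$ index, while $l$ comes from the $E_t$ expansion), but you correct this in the subsequent sentences, so the argument stands.
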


\begin{proof}
In the expansion of Proposition \ref{Proposition_expansions_before_last_exp},  we transform the states $u_n^{(j,k,l)}$ using formula (\ref{equation_transformation_M}). Finally we use Proposition \ref{Prop_expansion_S_delta} on each such state.   For $u_n^{(j,k,l)}$, we keep the  $N_{j,k,l}$ first terms of the expansion, where $N_{j,k,l} = N - j - \lceil (k+l)/2 \rceil$. It gives a remainder term 
$r_n^{(j,k,l)}$ satisfying 
\begin{align*}
||r_n^{(j,k,l)} ||_{L^2} &\leq C_{N,j,k,l} h^{j + k/2+ l\varepsilon } h^{2\varepsilon  N_{j,k,l} } N_\infty\left( \Phi_n \left( P_n^{(j,k,l)}\right) \right) h^{-t(K_0 + K_1)} \\
&\leq C_{N,j,k,l} h^{j + k/2+ l\varepsilon } h^{2\varepsilon N_{j,k,l}} h^{-k \varepsilon} n^{2j+k} \Pi_{\alpha,n}(\rho) \left( J^u_{\mathbf{q}}\right)^{3k} h^{-t(K_0 + K_1)} \\
& \leq  C_{N,j,k,l} h^{j + k/2+ l\varepsilon } h^{2\varepsilon N_{j,k,l} } n^{2j+k} h^{-k \varepsilon} h^{-k \frac{1-4 \varepsilon}{2(1+\varepsilon)}} h^{-K-t(K_0 + K_1)} \\ 
&\leq C_{N,j,k,l } \left(\log 1/h\right)^{N}  h^{2 \delta j}  h^{2\varepsilon  N} h^{k \left(1/2 -2 \varepsilon-  \frac{1-4 \varepsilon}{2(1+\varepsilon)}\right)} h^{-K -t(K_0 + K_1)} 
\end{align*}
But we have $\frac{1}{2} - 2 \varepsilon - \frac{1-4 \varepsilon}{2(1+\varepsilon)} \geq 0$ (assuming that $\varepsilon \leq 1/4$, which is not a problem since we work with $\varepsilon$ small). Hence, $|| r_n^{(j,k,l)}|| \leq C_{N,j,k,l } \left(\log 1/h\right)^{N} h^{-K -t(K_0 + K_1)} $. As a consequence, gathering all the remainders $r_n^{(j,k,l)}$ together and adding them to $e^{-tG} B_{q_n}^\prime R_n^{(N)}$, we obtain a remainder term controlled by 
$ C_N  h^{-K - t(K_0+K_1)} h^{2N\varepsilon} \left(\log 1/h\right)^{N}$ as expected. 
\end{proof}

\subsection{Crucial estimates for the terms of the expansion.}

In the expansion of Corollary \ref{Cor_expansion}, the leading term is given by 
$$u_n^{ 0} \coloneqq  T^*(\hat{\rho_n}) u_n^{(0,0,0,0)} = \exp\left( tg(\rho)-t h_0 (x,\beta_n x)) \right) \Lambda_{\alpha_n h} (\Psi_0)(x) e^{\frac{i \beta_n x^2}{2 h} }\pi_{\alpha,n}(\rho) $$
As a consequence of the Corollary \ref{Cor_expansion}, the other terms have the form
$$T^*(\hat{\rho_n}) = u_n^{(j,k,l,m)}(x) = f_n^{(j,k,l,m)}(x) \frac{u_n^{0}(x)}{\pi_{\alpha,n}(\rho)}$$
with $$\left|f_n^{(j,k,l,m)}(x) \right| \leq C_{j,k,l,m} n^{2j + k} \left(J^u_{q_0 \dots q_n} \right)^{3k} \Pi_{\alpha,n}(\rho) h^{-k \varepsilon} \left( 1 + \frac{x^2}{\alpha_n h} \right)^{m/2}$$
so that, denoting 
$$v_n = \frac{u^0_n}{\pi_{\alpha,n}}=\exp\left( tg(\rho)-t h_0 (x,\beta_n x) \right) \Lambda_{\alpha_n h} (\Psi_0)(x) e^{\frac{i \beta_n x^2}{2 h} }$$ we have 
\begin{equation}\label{other_terms_control}
\frac{ \left| \left| h^{j+k/2+ l\varepsilon + m\varepsilon} u_n^{(j,k,l,m)} \right| \right|_{L^2}}{\Pi_{\alpha,n}(\rho)  \times \left| \left| \left( 1 + \frac{x^2}{\alpha_n h} \right)^{m/2}v_n \right| \right|_{L^2}} \leq C_{j,k,l,m} h^{j+k/2+ l\varepsilon + m\varepsilon} n^{2j + k} \left(J^u_{q_0 \dots q_n} \right)^{3k} h^{-k \varepsilon} 
\end{equation}
Recalling that $\left(J^u_{q_0 \dots q_n} \right)^{3k} h^{-k \varepsilon} \leq C_k e^{3kn \lambda_{\max}(1+\varepsilon)}h^{- k \varepsilon} \leq C_k h^{- k \frac{1-4\varepsilon}{2(1+ \varepsilon)}- \varepsilon} \ll C_k h^{-k/2}$, we see that the right hand side in the above inequality tends to $0$ when $h \to 0$. As a consequence, it is enough to control 
$$\left| \left| \left( 1 + \frac{x^2}{\alpha_n h} \right)^{m/2}v_n \right| \right|_{L^2}$$ This is what we do in the rest of this subsection.

\subsubsection{Reduction to a compact interval. }
Note that since $\WF(A_q)$ is compact, there exists $\chi \in \cinfc(\R)$ such that 
$$ A_q B_q^\prime =  A_{q} B_{q}^\prime\chi(x) + \hinf$$
and it is possible to choose a single $\chi$ for all the $A_q$.
Indeed, recall that $\WF(A_q) = \supp(\chi_q) \Subset \mathcal{W}_q \subset B(\rho_q, 2 \varepsilon_0)$ and that $\kappa_q$ is well-defined in a neighborhood of $\rho_q$ of fixed size $\varepsilon_1$ bigger than $ \varepsilon_0$. There exists a  $\Xi_q \in \Psi_0(\R^2)$ such that 
$$ A_q B_q^\prime \Xi_q = A_q B_q^\prime + \hinf \quad ; \quad \WF(\Xi_q) \Subset \kappa_q (\mathcal{W}_q)$$ 
In particular, $\text{diam } ( \WF(\Xi_q))= O(\varepsilon_0)$. 
Hence, it is enough to fix $\chi \in \cinfc(\R)$ such that  $\chi = 1$ in a neighborhood of $\pi_x ( \WF(\Xi_q) ) $ for all $q \in \mathcal{A}$ and such that $ \supp \chi \subset [-C\varepsilon_0, C\varepsilon_0]$ for some large constant $C$ independent of $\varepsilon_0$. 
As a consequence, we focus on $\chi v_n$.

We set 

\begin{equation}\boxed{ \zeta_n(x) = \kappa_{q_n}^{-1} (\hat{\rho}_n + (x, \beta_n x)) \in \mathcal{W}_{q_n}}
\end{equation}
It describes a curve, preimage by $\kappa_{q_n}$ of the line $\hat{\rho}_n + (x, \beta_n x)$. To ensure that $\zeta_n(x)$ is well defined, $\hat{\rho}_n + (x,\beta_n x)$ has to be at distance at most $\varepsilon_1 $ of $\kappa_{q_n}(\rho_{q_n})=0$. We claim that we may choose $\varepsilon_0$ small enough so that
\begin{equation}\label{Property_chi}
x \in \supp (\chi) \implies \zeta_n(x) \text{ is well defined } 
\end{equation} 
Indeed, $x \in \supp \chi \implies |x| \leq C \varepsilon_0 $, so that $d(\hat{\rho}_n,\kappa_{q_n}(\rho_{q_n})) =O( \varepsilon_0)$ and we choose $\varepsilon_0$, ensuring the good definition of $\zeta_n(x)$.

\subsubsection{Control of the norm of $v_n$.}
Our goal is to control the norm of $\chi v_n$, which allows to control the leading term. In fact, as already explained, to control the higher order terms, it is also necessary to control the norm of $\chi \left(1+ \frac{x^2}{\alpha_n h}\right)^{m/2} v_n$. Let us note $\widetilde{\Psi}_m (x) =\pi^{-1/4} (1+ x^2)^{m/2} e^{-x^2/2}$. 

 \begin{align*}
\left| \left|\left( 1 + \frac{x^2}{\alpha_n h} \right)^{m/2} \chi v_n \right| \right|_{L^2}^2 &=  \int_\R |\chi(x)|^2  e^{ 2tg(\rho)-2t g(\zeta_n(x))} |\Lambda_{\alpha_n h} (\widetilde{\Psi}_m)(x)|^2 dx \\
 &=\int_\R \left| \chi(  (\alpha_n h)^{1/2} x)  \right|^2  e^{ 2tg(\rho)-2t g(\zeta_n((\alpha_n h)^{1/2} x))} (1+x^2)^m e^{-x^2}  dx 
\end{align*}

We have 

\begin{lem}\label{Lemma_restriction_close_to_0}
$$\mathds{1}_{ |x| \geq  \alpha_n^{1/2} h^{\delta_0} }   \left(1+ \frac{x^2}{\alpha_n h}\right)^{m/2} \chi(x)  v_n(x) = \hinf_{L^2}$$
The constants in $\hinf$ depend on $m$ and $\varepsilon$, but neither on $\rho$ nor $n$ as soon as $n \sim \vartheta_\varepsilon \log 1/h$. 
\end{lem}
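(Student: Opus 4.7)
The plan is to substitute the explicit formula
\begin{equation*}
v_n(x) = \exp\bigl(t g(\rho) - t h_0(x,\beta_n x)\bigr)\, (\pi \alpha_n h)^{-1/4} e^{-\frac{x^2}{2\alpha_n h}} e^{\frac{i \beta_n x^2}{2h}},
\end{equation*}
so that $|v_n(x)|^2 = (\pi \alpha_n h)^{-1/2} e^{2t(g(\rho) - g(\zeta_n(x)))} e^{-\frac{x^2}{\alpha_n h}}$, and then simply observe that on the forbidden region the Gaussian factor $e^{-x^2/(\alpha_n h)}$ dominates everything. The only $h$-dependent prefactor to fight is the exponential weight coming from the escape function: since $g \in \log(1/h)\, S_\delta$, one has $\|g\|_{L^\infty} \leq K_0 \log(1/h)$ for some $K_0 > 0$, hence $e^{2t|g(\rho) - g(\zeta_n(x))|} \leq h^{-4 K_0 t}$ uniformly in $\rho$, $n$ and $x$ on $\supp \chi$.

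Next I would compute the $L^2$ norm squared and change variables to $y = x/(\alpha_n h)^{1/2}$:
\begin{equation*}
\Bigl\| \mathds{1}_{|x|\geq \alpha_n^{1/2} h^{\delta_0}}\Bigl(1+\tfrac{x^2}{\alpha_n h}\Bigr)^{m/2} \chi\, v_n \Bigr\|_{L^2}^2 \leq h^{-4 K_0 t} \int_{|y| \geq h^{\delta_0 - 1/2}} \pi^{-1/2}(1+y^2)^m e^{-y^2}\, dy,
\end{equation*}
using that $|\chi| \leq 1$ and that the Jacobian $(\alpha_n h)^{1/2}$ cancels the normalization constant. Since $\delta_0 = 1/2 - \varepsilon/2$, the integration domain is $|y| \geq h^{-\varepsilon/2}$, and on this set $y^2 \geq h^{-\varepsilon}$.

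The key estimate is then the elementary bound
\begin{equation*}
\int_{|y| \geq h^{-\varepsilon/2}} (1+y^2)^m e^{-y^2}\, dy \leq e^{-\tfrac{1}{2} h^{-\varepsilon}} \int_{\R} (1+y^2)^m e^{-y^2/2}\, dy = O\!\left(e^{-\tfrac{1}{2} h^{-\varepsilon}}\right),
\end{equation*}
which decays faster than any power of $h$. For fixed $t$ and any $L > 0$, we have $h^{-4 K_0 t} e^{-\frac{1}{2}h^{-\varepsilon}} \leq h^L$ for $h$ small enough, depending only on $L$, $t$, $\varepsilon$ and $m$ (the integral over $\R$ depends only on $m$). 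This gives the required $\hinf$ bound with constants independent of $\rho$ and of $n$, as long as $t$ is fixed.

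I do not expect any real obstacle here: the content of the lemma is that the localization scale $\alpha_n^{1/2} h^{\delta_0}$ of the truncation is larger, by a factor $h^{-\varepsilon/2}$, than the natural position width $(\alpha_n h)^{1/2}$ of the squeezed Gaussian $\Lambda_{\alpha_n h} \Psi_0$, so outside the truncation zone we are in the Gaussian tail at distance $\geq h^{-\varepsilon/2}$ standard deviations, which beats any polynomial in $h$. The only mild point is to check that the $h$-dependent prefactor $e^{2t(g(\rho)-g(\zeta_n(x)))}$ from the escape-function conjugation grows only polynomially in $h^{-1}$, which follows from the uniform $L^\infty$ bound $|g| \leq K_0 \log(1/h)$ recalled after~(\ref{escape_function_2}).
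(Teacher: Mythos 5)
Your proof is correct and follows essentially the same route as the paper: bound the escape-function weight by $h^{-C}$ using $|g|=O(\log 1/h)$, rescale $y=x/(\alpha_n h)^{1/2}$ so the excluded region becomes $|y|\geq h^{\delta_0-1/2}=h^{-\varepsilon/2}$, and beat the polynomial prefactor with the Gaussian tail $\int_{|y|>\lambda}(1+y^2)^m e^{-y^2}\,dy=O_m(\lambda^{-\infty})$. No gaps.
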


\begin{proof}
 Since $g = O(\log h)$, we have 
$$e^{2tg(\rho)-2t g (\zeta_n((\alpha_n h)^{1/2} x))} \leq \exp( O( \log(h))) = O (h^{- C})$$ for some $C$ depending on $t$ and $g$. We also have $|\chi| = O(1)$. Hence, after a change of variable $x=\alpha_n^{1/2} h^{1/2} y $, it suffices to estimate 
$$ \int_\R \mathds{1}_{  [  -\alpha_n^{1/2} h^{\delta_0}, \alpha_n^{1/2} h^{\delta_0}]} (\alpha_n^{1/2}h^{1/2} y)  (1+y^2)^m e^{-y^2} dy = \int_{ |y| \geq h^{\delta_0 - 1/2}} (1+y^2)^m e^{-y^2} dy$$
Since $\delta_0 <1/2$, we conclude by using the standard estimate
$$\int_{ |y| > \lambda} (1+y^2)^m e^{-y^2} dy = O_m (\lambda^{-\infty} )$$ 
\end{proof}

A very important consequence of this lemma is that we only need to focus on $\zeta_n(x)$ where $|x| \leq \alpha_n^{1/2} h^{\delta_0}$. In particular, 
$$ |x| \leq \alpha_n^{1/2} h^{\delta_0} \implies  d(\zeta_n(x) , \rho_n) \leq  C \alpha_n^{1/2} h^{\delta_0}  $$ 
Recall that $\alpha_n^{1/2} \leq Ce^{ n \lambda_{\max} (1+ \varepsilon)} \leq C h^{ - \frac{1-4 \varepsilon}{6(1+\varepsilon)}}$ and recall that $\delta_0$ is such that 

\begin{equation}
\delta_0-  \frac{1-4 \varepsilon}{6(1+\varepsilon)} \geq \frac{1}{3}
\end{equation}
ensuring that $\alpha_n^{1/2} h^{\delta_0} \leq C h^{1/3}$. It will turn out to be important.

\subsubsection{Control outside an $h^\delta$-neighborhood of $\mathcal{T}$}

We first treat the case where $\rho$ lies outside an $h^\delta$-neighborhood of $\mathcal{T}$ (in fact, we will be slightly less precise in the unstable direction, see the Lemmata below). The following estimate strongly relies on the structure of the escape function $g$. The escape property of $g$ has been used in \cite{NSZ14} to damp the symbol of the Fourier integral operator $M(h)$ and they prove that the norm of $M_t(h)$ outside an $h^\delta$-neighborhood of $\mathcal{T}$ can be smaller than any arbitrary $\varepsilon$ as soon as $t$ is well chosen. Here, we want to obtain strong polynomial decay of the form $h^L$ for some $L=L(t)$ as large as we want if $t$ is sufficiently large. This will be possible since we propagate on logarithmic times $n(h)$. 

We are interested in controlling the term 
$$d(x) \coloneqq \exp\left(  t g(\rho) - t g (\zeta_n(x)) \right)  $$ 
which controls the norm of $\chi v_n$. Indeed, since $||\widetilde{\Psi}_m||_\infty < + \infty$, we have 
\begin{align*}
 \left| \left|\left( 1 + \frac{x ^2}{\alpha_n h} \right)^{m/2} \chi v_n\right| \right|_{L^2}^2& \leq  \int_{|x| \leq \alpha_n^{1/2}h^{\delta_0}} d(x)^2 |\Lambda_{\alpha_n h} \widetilde{\Psi}_m(x)|^2 dx + \hinf \\
  &\leq C_m  (\alpha_n h)^{-1/2} \int_{|x| \leq \alpha_n^{1/2}h^{\delta_0}} d(x)^2 dx + \hinf \\
\end{align*}

In virtue of the construction of $g$ in (\ref{escape_function}), we have 
\begin{equation}
d(x) =\left( R_-(x)R_+(x)\right)^t \,, \, R_-(x) \coloneqq \frac{Mh^{2\delta} + \hat{\varphi}_-(\rho)}{Mh^{2\delta} + \hat{\varphi}_-(\zeta_n(x))} \, , \,R_+(x) \coloneqq \frac{Mh^{2\delta} + \hat{\varphi}_+(\zeta_n(x))}{Mh^{2\delta} + \hat{\varphi}_+(\rho)}
\end{equation}
(These terms depend on $\rho$ and $h$, but we voluntarily omit this dependence to alleviate the notations). Recall also that $\rho_n = F^{nn_0}(\rho) = \kappa_{q_n}^{-1}(\hat{\rho}_n) = \zeta_n(0)$. 

\begin{prop}\textbf{Estimates for $R_-$.} There exists a global constant $C>0$ (i.e. depending only $F$ and $\varepsilon$ trough the choice of the partition of unity, but independent of $\rho$, $h$ and $\mathbf{q}$) such that for all $x \in [- \alpha_n^{1/2} h^{\delta_0}, \alpha_n^{1/2} h^{\delta_0}]$, we have 
\begin{itemize}
\item If $d(\rho, \mathcal{T}_-) \geq h^\delta$, $ R_-(x) \leq C \left(J^u_\mathbf{q}\right)^{-2} $ ; 
\item If $d(\rho, \mathcal{T}_-) \leq h^\delta$, $R_-(x) \leq C$. 
\end{itemize}
\end{prop}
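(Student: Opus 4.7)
The proof splits naturally into the two cases according to the size of $d(\rho,\mathcal{T}_-)$, with a common preliminary step that estimates $d(\zeta_n(x),\mathcal{T}_-)$. First I would use the definition $\zeta_n(x)=\kappa_{q_n}^{-1}(\hat\rho_n+(x,\beta_n x))$ together with the fact that $\kappa_{q_n}$ is bi-Lipschitz with uniform constant to get $d(\zeta_n(x),\rho_n)\le C\,|x|\le C\alpha_n^{1/2}h^{\delta_0}$. Since $\alpha_n^{1/2}\sim J^u_{\mathbf q}$ (by \eqref{equation_alpha_n} and Lemma \ref{Lemma_desc_d_rho_F_n}), this yields $d(\zeta_n(x),\rho_n)\le C\,J^u_{\mathbf q}h^{\delta_0}$.

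Second, I would invoke the hyperbolic expansion estimate from Lemma \ref{Lemma_unstable_Jacobian}, which gives
$d(F^{nn_0-1}(\rho),\mathcal{T}_-)\ge c\,J^u_{\mathbf q}\,d(\rho,\mathcal{T}_-)$,
and, by one extra Lipschitz step of $F$, $d(\rho_n,\mathcal{T}_-)\ge c'\,J^u_{\mathbf q}\,d(\rho,\mathcal{T}_-)$. Combining with the previous estimate via the triangle inequality,
\[
d(\zeta_n(x),\mathcal{T}_-)\ \ge\ c'\,J^u_{\mathbf q}\,d(\rho,\mathcal{T}_-)-C\,J^u_{\mathbf q}h^{\delta_0}.
\]
This is the only slightly delicate point: I need the perturbation term $J^u_{\mathbf q}h^{\delta_0}$ not to eat the main term $J^u_{\mathbf q}d(\rho,\mathcal{T}_-)$ in Case 1, which is exactly ensured by the choice $\delta_0>\delta$.

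Now the two cases follow mechanically from $\hat\varphi_-(\rho)\sim d(\rho,\mathcal{T}_-)^2+h^{2\delta}$ (with $\epsilon=h^{2\delta}$). In Case 1, $d(\rho,\mathcal{T}_-)\ge h^\delta$, so $\hat\varphi_-(\rho)\sim d(\rho,\mathcal{T}_-)^2$, and the previous inequality together with $h^{\delta_0}\ll h^\delta$ gives $d(\zeta_n(x),\mathcal{T}_-)\ge c''J^u_{\mathbf q}d(\rho,\mathcal{T}_-)$, hence $\hat\varphi_-(\zeta_n(x))\gtrsim (J^u_{\mathbf q})^2d(\rho,\mathcal{T}_-)^2$. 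Therefore
\[
R_-(x)\ \le\ \frac{C\,d(\rho,\mathcal{T}_-)^2}{c\,(J^u_{\mathbf q})^2d(\rho,\mathcal{T}_-)^2}\ \le\ \frac{C}{(J^u_{\mathbf q})^2}.
\]
In Case 2, $d(\rho,\mathcal{T}_-)\le h^\delta$, so the numerator $Mh^{2\delta}+\hat\varphi_-(\rho)\lesssim h^{2\delta}$, while the denominator is bounded below by $Mh^{2\delta}\gtrsim h^{2\delta}$ (using simply $\hat\varphi_-\ge 0$), giving $R_-(x)\le C$.

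The only genuine subtlety is the book-keeping in the triangle inequality in Case 1, where one has to carefully use the strict inequality $\delta_0>\delta$ and the uniformity of the constants in Lemma \ref{Lemma_unstable_Jacobian} and in the diameter estimate on $\zeta_n$. Everything else is a direct quantitative reading of the escape function construction \eqref{escape_function}.
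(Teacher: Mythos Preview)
Your proof is correct and follows essentially the same route as the paper's: both use the hyperbolic expansion estimate from Lemma~\ref{Lemma_unstable_Jacobian} to get $d(\rho_n,\mathcal{T}_-)\gtrsim J^u_{\mathbf q}\,d(\rho,\mathcal{T}_-)$, combine it via the triangle inequality with $d(\zeta_n(x),\rho_n)\le C\alpha_n^{1/2}h^{\delta_0}$ (using $\delta_0>\delta$ to absorb the perturbation in Case~1), and then read off the bound from $\hat\varphi_-(\zeta)\sim d(\zeta,\mathcal{T}_-)^2+h^{2\delta}$; Case~2 is handled identically by bounding the numerator by $Ch^{2\delta}$ and the denominator below by $Mh^{2\delta}$.
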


\begin{proof}
We pick $x \in [- \alpha_n^{1/2} h^{\delta_0}, \alpha_n^{1/2} h^{\delta_0}]$. \begin{itemize}
\item
We assume that $d(\rho, \mathcal{T}_-) \geq h^\delta$.  
By Lemma \ref{Lemma_unstable_Jacobian} and (\ref{equation_alpha_n}), $d( \rho_n, \mathcal{T}_-) \geq C^{-1} J^u_{\mathbf{q}} d(\rho, \mathcal{T}_-) \geq C^{-1} \alpha_n^{1/2} h^\delta$. Then, we have 
$$d(\zeta_n(x), \mathcal{T}_-) \geq d(\rho_n, \mathcal{T}_-) - d (\zeta_n(x), \rho_n ) \geq C^{-1} \alpha_n^{1/2} d(\rho, \mathcal{T}_-)- C \alpha_n^{1/2} h^{\delta_0}  \geq \tilde{C}^{-1} \alpha_n^{1/2} d(\rho,\mathcal{T}_-) $$
since $\delta < \delta_0$ and $h^{\delta} \leq d(\rho, \mathcal{T}_-)$. 
Recall that $\hat{\varphi}_-(\zeta) \sim h^{2\delta} + d(\zeta, \mathcal{T}_-)^2 $. Hence, 
$$R_-(x) \leq \frac{C_1 d(\rho, \mathcal{T}_-)^2}{C_2 \alpha_n d(\rho, \mathcal{T}_-)^2} \leq C \alpha_n^{-1}  \leq C \left(J^u_\mathbf{q} \right)^{-2}$$
\item The second point is much easier (and in fact very crude at this stage) : if $d(\rho, \mathcal{T}_-) \leq h^\delta$, the numerator $Mh^{2\delta} + \hat{\varphi}_-(\rho)$ is smaller that $Ch^{2\delta}$. Concerning the denominator, we simply use the fact that $\hat{\varphi}_- \geq 0$ to bound it from below by $Mh^{2\delta}$, and we deduce that $R_-(x) \leq C$. 
\end{itemize} 
\end{proof}

We now come to the case of $R_+$. Before, we need to understand more precisely the Lagrangian space $\{ (x,\beta_n x) \}$. We expect it to be a good first order approximation of an unstable manifold. This is the content of the following lemma :

\begin{lem}\label{Lemma_local_frame}
There exists a global constant $C>0$ such that the following holds : there exists $\zeta_n^{\mathbf{\star}} \in \mathcal{T}$ such that for all $x \in[ - \alpha_n^{1/2} h^{\delta_0}, \alpha_n^{1/2}h^{\delta_0} ]$, we have 
$$d(\zeta_n(x), \mathcal{T}_+) \leq d(\zeta_n(x), W_u(\zeta_n^{\mathbf{\star}} )) \leq  C \left(h^{1/3} + \left(J^u_\mathbf{q}\right)^{-1} \right) d(\rho, \mathcal{T}_+) + Ch^{\delta_0}$$
\end{lem}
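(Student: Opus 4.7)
The first inequality is immediate once $\zeta_n^\star$ is chosen in $\mathcal{T}$, since $W_u(\zeta_n^\star) \subset \mathcal{T}_+$ by definition of the outgoing tail. The substance is the second inequality, which I will prove in three movements: (i) a choice of $\zeta_n^\star$ together with a contraction estimate placing $\hat{\rho}_n$ close to $W_u(\zeta_n^\star)$; (ii) a tangent-matching step comparing the line $\hat{\rho}_n + (x, \beta_n x)$ with the tangent to $\kappa_{q_n}(W_u(\zeta_n^\star))$; and (iii) a quadratic Taylor error on the unstable leaf.

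For (i), I will use the shadowing argument preceding Lemma \ref{help_def_jacobian_2_bis} to obtain $\rho' \in \mathcal{T}$ with $d(F^l(\rho), F^l(\rho')) \leq \varepsilon_2$ for $l = 0, \dots, nn_0$; then, via the local product structure of $\mathcal{T}$, slide $\rho'$ along its local stable leaf to produce $\zeta_0 \in \mathcal{T}$ with $d(\rho, W_u(\zeta_0)) \leq 2 d(\rho, \mathcal{T}_+)$ whose orbit still $\varepsilon_1$-shadows that of $\rho$ (points on a common stable leaf have forward orbits staying exponentially close). Setting $\zeta_n^\star := F^{nn_0}(\zeta_0) \in \mathcal{T}$, Lemma \ref{Local_hyperbolic_2} (second inequality) yields
$$ d(\rho_n, W_u(\zeta_n^\star)) \leq C \bigl(J^u_{nn_0}(\zeta_0)\bigr)^{-1} d(\rho, W_u(\zeta_0)) \leq C (J^u_\mathbf{q})^{-1} d(\rho, \mathcal{T}_+). $$

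In the adapted chart $\kappa_{q_n}$, the leaf $\kappa_{q_n}(W_u(\zeta_n^\star))$ is a $C^\infty$ graph $\{(y, g^\star(y))\}$ with uniformly bounded $C^k$-norms (Lemma \ref{adapted_chart_0}), and since $d(\zeta_n^\star, \rho_{q_n}) = O(\varepsilon_0)$ the slope $g^\star{}'$ near the base point is $O(\varepsilon_0)$. Writing $\sigma^\star = g^\star{}'(y_\perp)$ at the foot of the perpendicular from $\hat{\rho}_n$ onto the graph, a direct geometric decomposition (up to the chart's Lipschitz constant) gives
$$ d(\zeta_n(x), W_u(\zeta_n^\star)) \leq d(\hat{\rho}_n, W_u(\zeta_n^\star)) + |\beta_n - \sigma^\star|\,|x| + \tfrac{1}{2} \|g^\star{}''\|_\infty |x|^2 . $$
For the slope mismatch, the explicit form (\ref{ecriture_d_rho_F_n}) together with the symplectic identity $a_n d_n - b_n c_n = 1$ yields $\beta_n - c_n/a_n = b_n/\bigl(a_n(a_n^2+b_n^2)\bigr) = O(\varepsilon_2 (J^u_\mathbf{q})^{-2})$, and the slope $c_n/a_n$ is precisely the direction in the chart $\kappa_{q_n}$ of the image under $d_{\hat{\rho}}F^{(n)}$ of the horizontal vector $\partial_y$, i.e.\ of the tangent vector $v_0 = d\kappa_q^{-1}(\partial_y) \in T_\rho U$. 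Because $v_0$ is $O(\varepsilon_0)$-transverse to the stable direction at $\rho$, Lemma \ref{Lemma_linearized_dynamics} shows that its forward image at $\rho_n$ makes an angle $O((J^u_\mathbf{q})^{-2})$ with $E_u$ transported along the shadow orbit of $\zeta_0$; Lipschitzness of $E_u$ over $\mathcal{T}$ combined with the displacement estimate of movement (i) then converts this into $|\beta_n - \sigma^\star| \leq C\bigl((J^u_\mathbf{q})^{-2} + (J^u_\mathbf{q})^{-1} d(\rho, \mathcal{T}_+)\bigr)$.

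Plugging everything into the decomposition and using the standing bound $|x| \leq \alpha_n^{1/2} h^{\delta_0} \leq C J^u_\mathbf{q}\, h^{\delta_0} \leq C h^{1/3}$ (so that $|x|^2 \leq C h^{2/3} \leq C h^{\delta_0}$ for $\varepsilon$ small), the three contributions are respectively $C (J^u_\mathbf{q})^{-1} d(\rho, \mathcal{T}_+)$, $C\bigl(h^{1/3} d(\rho, \mathcal{T}_+) + (J^u_\mathbf{q})^{-1} h^{\delta_0}\bigr)$ and $C h^{\delta_0}$, which sum to the claimed bound. I expect the hard part to be the tangent-matching step: Lemma \ref{Lemma_linearized_dynamics} must be applied at the off-trapped point $\rho$ and the resulting alignment at $\rho_n$ transferred to the genuine unstable leaf $W_u(\zeta_n^\star)$ with quantitative control matching the RHS of the lemma, which requires careful bookkeeping of fixed $\varepsilon_0$-losses against the $(J^u_\mathbf{q})^{-1}$ gains supplied by the shadowing construction and Lemma \ref{Local_hyperbolic_2}.
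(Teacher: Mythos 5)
Your three-movement architecture is the same as the paper's (base-point displacement by the contraction of Lemma \ref{Local_hyperbolic_2}, direction matching via the inclination lemma, quadratic Taylor error on the leaf), and movements (i) and (iii) are sound; the computation $\beta_n - c_n/a_n = b_n/(a_n \alpha_n) = O(\varepsilon_2 (J^u_\mathbf{q})^{-2})$ via the symplectic identity is a clean variant of the paper's choice to work with the vector $\mathbf{v}_n' = \alpha_n^{-1/2}(a_n,b_n)$ instead of the horizontal. The gap is in movement (ii). Lemma \ref{Lemma_linearized_dynamics} is stated only for a base point $\rho'$ lying \emph{on} $W_u(\zeta)$ for some $\zeta \in \mathcal{T}$; you invoke it for the differential $d_{\hat\rho}F^{(n)}$ taken at $\rho$ itself, which is off the unstable leaf by $d(\rho,\mathcal{T}_+)$, and you then assert that the transfer to the genuine leaf costs only $C\bigl((J^u_\mathbf{q})^{-2} + (J^u_\mathbf{q})^{-1} d(\rho,\mathcal{T}_+)\bigr)$. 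Neither step is justified as written. The available estimate for moving the base point of the differential (the analogue of Step 1 of Lemma \ref{Lemma_desc_d_rho_F_n}) is $\|d_{\hat\rho}F^{(n)} - d_{\hat\zeta}F^{(n)}\| \leq C J^u_\mathbf{q}\, d(\rho,\check\zeta_0)$ with $d(\rho,\check\zeta_0) \geq d(\rho,\mathcal{T}_+)$, which after normalizing by $\alpha_n^{1/2} \sim J^u_\mathbf{q}$ perturbs the direction by $O\bigl(d(\rho,\mathcal{T}_+)\bigr) + O\bigl(\varepsilon_0 (J^u_\mathbf{q})^{-1}\bigr)$ --- with no extra factor $(J^u_\mathbf{q})^{-1}$ on the first term. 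This is not a cosmetic point: that $O(d(\rho,\mathcal{T}_+))$ direction mismatch, multiplied by $|x| \leq \alpha_n^{1/2} h^{\delta_0} \leq C h^{1/3}$, is precisely the origin of the term $C h^{1/3} d(\rho,\mathcal{T}_+)$ in the statement of the lemma. Your sharper claim could conceivably be recovered by a step-by-step Gronwall argument along the orbit exploiting that $d(F^{k}(\rho),\mathcal{T}_+)$ itself contracts, but that amounts to reproving the inclination lemma for off-leaf base points, and you do not carry it out.

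The repair is exactly the paper's ordering: first pull $\check\zeta_n$ (the foot of the perpendicular from $\rho_n$) back to $\check\zeta_0 = F^{-nn_0}(\check\zeta_n) \in W_u(\zeta_O)$, replace $d_{\hat\rho}F^{(n)}$ by $d_{\hat\zeta}F^{(n)}$ at the cost above, and only then apply Lemma \ref{Lemma_linearized_dynamics} at the on-leaf point $\check\zeta_0$ to align the resulting vector with $T_{\check\zeta_n}W_u(\zeta_n^\star)$ up to $C(J^u_\mathbf{q})^{-2}$. With the corrected (weaker) direction mismatch $C\bigl(d(\rho,\mathcal{T}_+) + \varepsilon_0(J^u_\mathbf{q})^{-1} + (J^u_\mathbf{q})^{-2}\bigr)$, your own decomposition and numerology still produce the right-hand side of the lemma, so the statement is unaffected; but as written your intermediate bound is unsupported and the lemma you cite does not apply where you apply it.
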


\begin{rem} 
The different terms which compose the error above appear at different places in the proof. One of this term is due to the fact that it is a first order approximation of an unstable manifold : we need to control the error term in this approximation. It turns out that as soon as $|x| \leq \alpha_n^{1/2} h^{\delta_0} = O(h^{1/3})$, this error is $O(|x|^2) = O(h^{2/3})$. Depending on $\rho$ (and $\mathbf{q}$), the main term of the error can differ. As we will see, when $d(\rho, \mathcal{T}) \geq h^{\delta_1}$, the term $h^{\delta_0}$ is negligible. 
\end{rem}

\begin{proof}
\textbf{Step 1 : $\rho_n$ is close to a reference unstable manifold $W_u(\zeta_n^{\mathbf{\star}})$. } (See Figure \ref{Figure_Step_1})
\begin{figure}
\centering
\includegraphics[width=15cm]{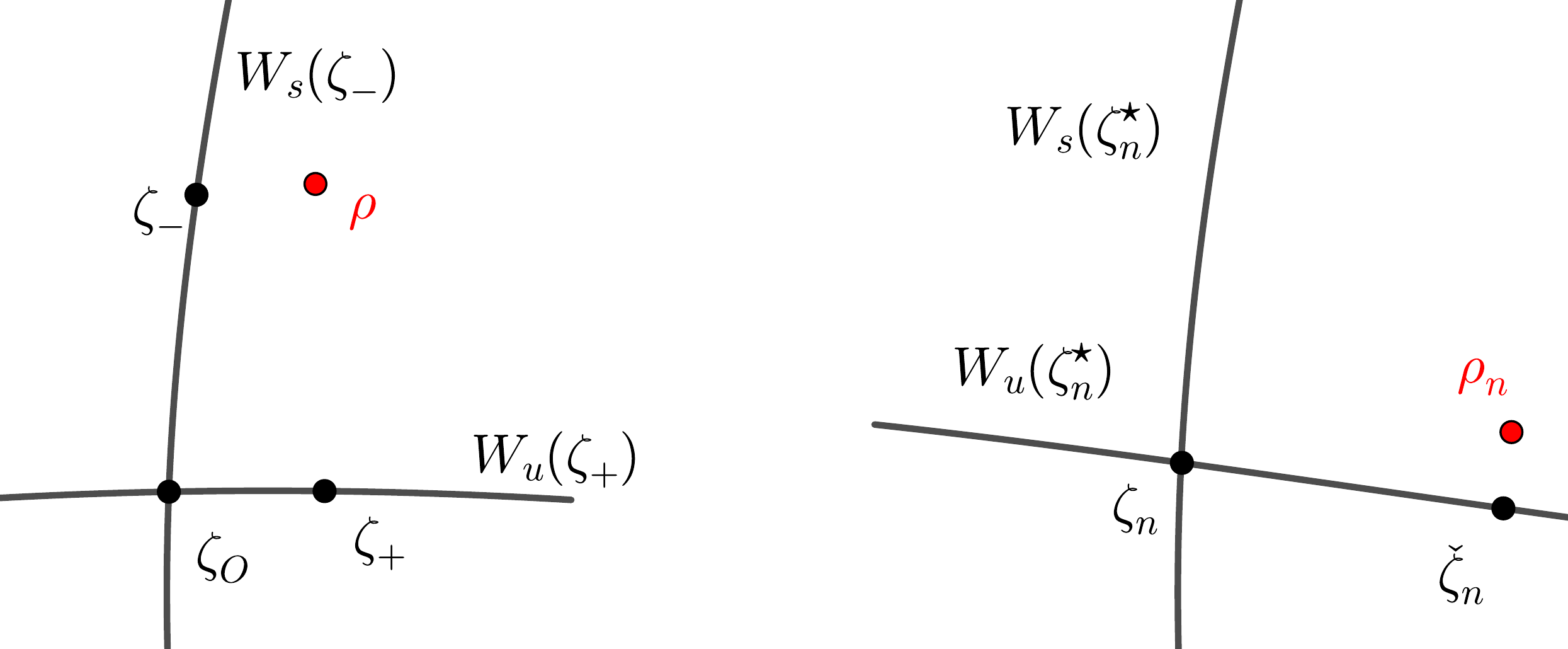}
\caption{The points introduced in Step 1 of the proof of Lemma \ref{Lemma_local_frame}.}
\label{Figure_Step_1}
\end{figure}

 In this first step, we want to show that $\rho_n$ is close to an unstable manifold $W_u(\zeta_n^{\mathbf{\star}})$. 
As in the proof of Lemma \ref{Lemma_unstable_Jacobian}, we consider a point $\zeta_+ \in \mathcal{T}$ such that $d(\rho, \mathcal{T}_+) = d(\rho, W_u(\zeta_+))$ and such that for all $0 \leq i \leq n$, $d(F^{in_0}(\zeta_+), F^{in_0}(\rho)) \leq \varepsilon_2$ for some small $\varepsilon_2$ depending on $\varepsilon_0$. Note also that, by the third point of Lemma \ref{Lemma_unstable_Jacobian}, $d(\rho, \mathcal{T}_-) \leq C_2 \left( J^u_\mathbf{q} \right)^{-1} \varepsilon_0$. Let's fix a point $\zeta_-$ such that $d(\rho, \mathcal{T}_-) = d(\rho, W_s(\zeta_-) )$ and let's consider $\zeta_O$ the unique point in $W_u(\zeta_+) \cap W_s(\zeta_-)$. Then, we still have $d(\rho, \mathcal{T}_+) = d(\rho, W_u(\zeta_O)) $ and we also have 
$$ d(\rho, \zeta_O)^2 \sim  d(\rho, \mathcal{T}_-)^2 + d(\rho, \mathcal{T}_+)^2 \leq C \left( \left(J^u_\mathbf{q}\right)^{-1} \varepsilon_0 \right)^2+ d(\rho, \mathcal{T}_+)^2$$
For $0 \leq i \leq n$, set $\zeta_i^{\mathbf{\star}}= F^{i n_0} (\zeta_O)$. 
We have 
$$ d(\rho_n, W_u(\zeta_n^{\mathbf{\star}}) ) \leq C \left( J^u_\mathbf{q} \right)^{-1} d(\rho, W_u(\zeta_O)) \leq  C \left( J^u_\mathbf{q} \right)^{-1}  d(\rho, \mathcal{T}_+) $$
and $$d(\rho_n, \zeta_n^{\mathbf{\star}})^2 \leq C\left(J^u_\mathbf{q}\right)^2 d(\rho, \mathcal{T}_-)^2 + C\left(J^u_\mathbf{q}\right)^{-2} d(\rho, \mathcal{T}_+)^2 \leq C \varepsilon_0$$
Let us fix $\check{\zeta}_n\in W_u(\zeta_n^{\mathbf{\star}})$ such that 
$$ d(\rho_n, W_u(\zeta_n^{\mathbf{\star}}) )  = d(\rho_n, \check{\zeta}_n )$$

\textbf{Step 2 : The curve $\zeta_n(x)$ is close to the (unstable) tangent space $E_u(\check{\zeta}_n)$. }

\emph{Step 2-a : First approximation. } (See Figure \ref{Fig_graph}). 

We now want to show that the curve is a rather good approximation of the tangent space of $W_u(\zeta_n^{\mathbf{\star}})$ at $\check{\zeta}_n$. 
To do so, we make the following observation (recall the notations of (\ref{ecriture_d_rho_F_n}) and the definition of $\beta_n$ in (\ref{definition_alpha_beta})). 
$$\mathbf{v}_n \coloneqq \left( \begin{matrix}
1 \\ \beta_n
\end{matrix} \right)  = \alpha_n^{-1/2} d_{\hat{\rho}} F^{(n)} (\mathbf{v}^\prime_n) \; ; \; \mathbf{v}_n^\prime = \alpha_n^{-1/2} \left( \begin{matrix}
a_n \\ b_n 
\end{matrix} \right) $$
and note that $||\mathbf{v}_n^\prime|| =1$ (since $\alpha_n^2 = a_n^2 + b_n^2)$.
We compare this vector $\mathbf{v}_n$ to $\mathbf{w}_n \coloneqq \alpha_n^{-1/2} d_{\hat{\zeta}} F^{(n)} (\mathbf{v}_n^\prime)$ where $\hat{\zeta} = \kappa_{q_0}(\check{\zeta}_0)$ with $\check{\zeta}_0=  F^{-nn_0} ( \check{\zeta}_n ) $. Arguing as in the proof of Lemma \ref{Lemma_desc_d_rho_F_n}, we can show that 
$$ || d_{\hat{\rho}} F^{(n)} - d_{\hat{\zeta}} F^{(n)} || \leq C J^u_{\mathbf{q}} d(\rho, \check{\zeta}_0) $$
By the triangular inequality, $d(\rho,\check{\zeta}_0)  \leq d(\rho,  \zeta_O) + d(\zeta_O, \check{\zeta}_0 ) $ where the first term is controlled by $C \left( \left(J^u_\mathbf{q}\right)^{-1}\varepsilon_0 + d(\rho, \mathcal{T}_+)\right) $. For the second term, we use the fact that $\check{\zeta}_0 \in W_u(\zeta_O)$ and $d(\check{\zeta}_n, F^{nn_0} (\zeta_O)) \leq C \varepsilon_0$, this gives $d(\zeta_O, \check{\zeta}_0) \leq C\left(J^u_\mathbf{q}\right)^{-1} \varepsilon_0$. 
As a consequence, we find that $$||d_{\hat{\rho}} F^{(n)} - d_{\hat{\zeta}} F^{(n)} || \leq C J^u_{\mathbf{q}} d(\rho, \mathcal{T}_+) + C \varepsilon_0$$  
Finally, recalling that $\alpha_n^{1/2} \sim J^u_{\mathbf{q}}$ - so that $\mathbf{v}_n$ and $\mathbf{w}_n$ are close from being normalized -, we get that $||\mathbf{v}_n - \mathbf{w}_n || \leq  Cd(\rho, \mathcal{T}_+) + C \varepsilon_0 \left(J^u_\mathbf{q}\right)^{-1}$.\\
Let's now define $\widetilde{\zeta}_n(x)$ by 
$$ \kappa_{q_n} \left( \widetilde{\zeta}_n(x) \right)= \kappa_{q_n} \left( \check{\zeta}_n \right) + x \mathbf{w}_n  $$
We have (recall that $\zeta_n(x) = \kappa_{q_n}^{-1} (\hat{\rho}_n + x \mathbf{v}_n)$)
\begin{align*}
 d( \widetilde{\zeta}_n(x) , \zeta_n(x)) )& \leq  C d( \kappa_{q_n} \left( \check{\zeta}_n\right) + x \mathbf{w}_n , \hat{\rho}_n + x \mathbf{v}_n)\\
 & \leq C  d( \check{\zeta}_n, \rho_n) + |x| ||\mathbf{w}_n - \mathbf{v}_n||  \\
 &\leq C \left( J^u_\mathbf{q} \right)^{-1} d(\rho, \mathcal{T}_+) + |x| \left(C d(\rho, \mathcal{T}_+) +  \varepsilon_0 \left(J^u_\mathbf{q}\right)^{-1} \right)  \\
 & \leq  C \left( J^u_\mathbf{q} \right)^{-1} d(\rho, \mathcal{T}_+) + C J^u_\mathbf{q} h^{\delta_0} \left( d(\rho, \mathcal{T}_+) + C \varepsilon_0 \left(J^u_\mathbf{q}\right)^{-1} \right) \text{ if } |x| \leq \alpha_n^{1/2} h^{\delta_0} 
\\
&\leq  C \Big( \left( J^u_\mathbf{q} \right)^{-1} + h^{1/3} \Big) d(\rho, \mathcal{T}_+ ) + Ch^{\delta_0}
\end{align*}
where we use the fact that $J^u_\mathbf{q} h^{\delta_0} \leq Ch^{1/3}$. 
We will now control the distance of $\widetilde{\zeta}_n(x)$ to $\mathcal{T}_+$. 

\vspace{0.3cm}

\emph{Step 2-b : Comparison with the tangent space. }(See Figure \ref{Fig_graph}).

\begin{figure}
\includegraphics[scale=0.5]{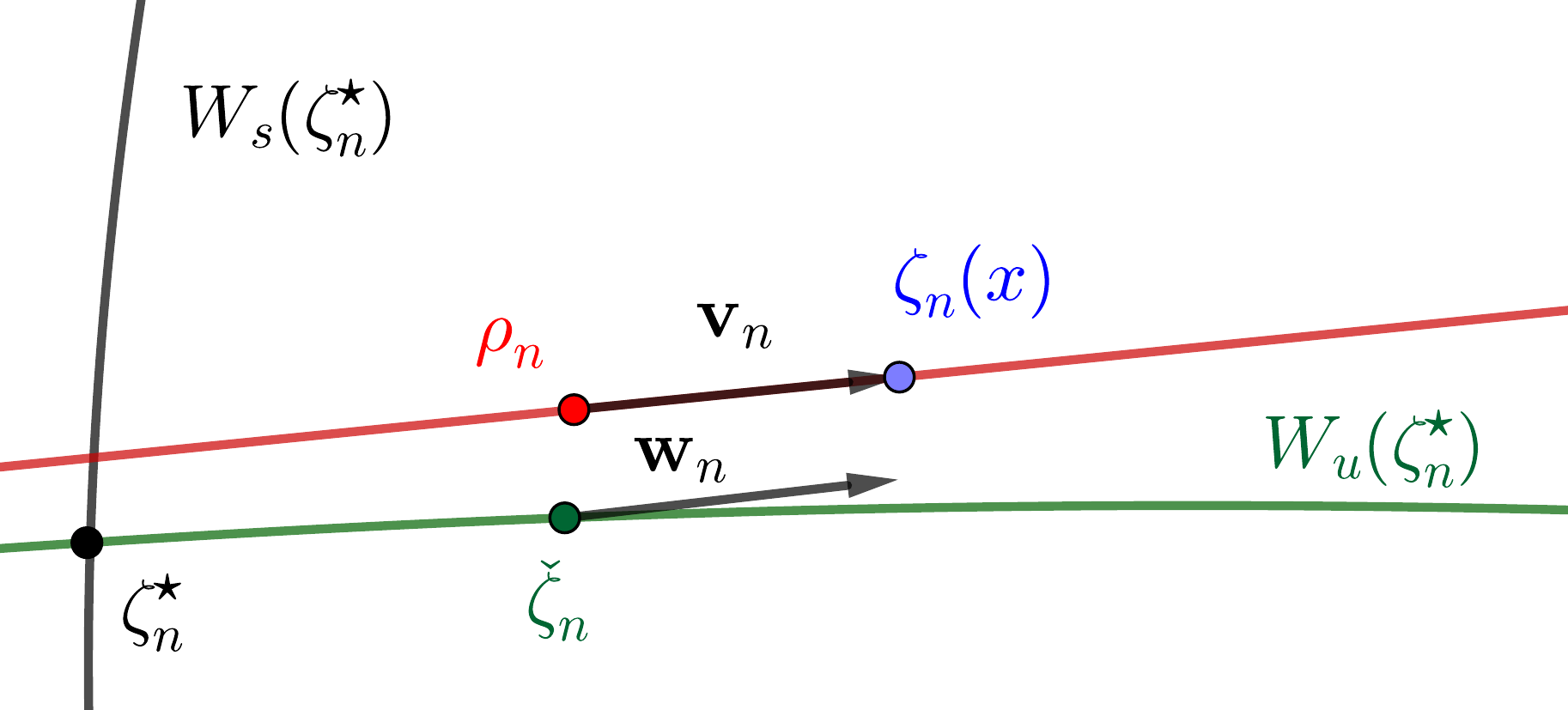}
\caption{The curve $\zeta_n(x)$ (in red) passing through $\rho_n$ is close to an unstable manifold $W_u(\zeta_n^{\mathbf{\star}})$ (in green). $W_u(\zeta_n^{\mathbf{\star}})$ is well approximated, near $\check{\zeta}_n$, by its tangent space at $\check{\zeta}_n$, spanned by a vector close to $\mathbf{w}_n$.}
\label{Fig_graph}
\end{figure}

In this step, we want to show that  $\mathbf{w}_n$ is close to a vector spanning $d\kappa_{q_n} (T_{\check{\zeta}_n} W_u(\zeta_n^{\mathbf{\star}} ))$. To do so, we use Lemma \ref{Lemma_linearized_dynamics}. If $\varepsilon_0$ is small enough (depending on the parameter $\varepsilon_1$ appearing in Lemma \ref{Lemma_linearized_dynamics}), we can ensure that the vector $\mathbf{v}_n$ is suffienctly close to $\R \times \{0\}$ and hence,  $\left( d_{\check{\zeta}_0}\kappa_0 \right) ^{-1} \mathbf{v}_n^\prime $ is sufficiently close to $T_{\check{\zeta}_0} W_u (\zeta_O)$,  so that we can apply this lemma with initial vector $\left( d_{\check{\zeta}_0}\kappa_0 \right) ^{-1} \mathbf{v}_n^\prime $. To alleviate the notations, let's note  $\mathcal{L}= \kappa_{q_n} (W_u(\zeta_n^{\mathbf{\star}}))$, $ m =\kappa_{q_n}(\check{\zeta}_n)$. By applying Lemma \ref{Lemma_linearized_dynamics} and sending the result in the chart $\kappa_{q_n}$, we obtain that  
$$ d\left( \frac{\mathbf{w}_n}{||\mathbf{w}_n||},T_{m} \mathcal{L}  \right) \leq C\left( J_\mathbf{q}^u\right)^{-2}  $$
since $||\mathbf{w}_n|| \leq C$, the same is true for $\mathbf{w}_n$. Let's pick $\mathbf{w}_n^\prime \in T_m \mathcal{L}$ such that $||\mathbf{w}_n - \mathbf{w}_n^\prime|| \leq  C\left( J_\mathbf{q}^u\right)^{-2} $.\\
We now define $Z_n(x)$ by the relation 
$$ \kappa_{q_n} \left(  Z_n(x)\right) = \kappa_{q_n} \left(  \widetilde{\zeta}_n(x)\right) + \mathbf{w}_n^\prime x$$
If $|x| \leq \alpha_n^{1/2} h^{\delta_0}$, it is clear that 
$$ d(Z_n(x) , \widetilde{\zeta}_n(x) ) \leq |x| ||\mathbf{w}_n - \mathbf{w}_n^\prime|| \leq  C \left( J^u_\mathbf{q} \right)^{-2} \alpha_n^{1/2} h^{\delta_0} \leq C \alpha_n^{-1/2} h^{1/2} \ll h^{\delta_0}$$
Gathering the steps 2-a and 2-b, we see that 
$$ d(\zeta_n(x), \mathcal{T}_+) \leq d( Z_n(x), \mathcal{T}_+) +  C \Big( \left( J^u_\mathbf{q} \right)^{-1} + h^{1/3} \Big) d(\rho, \mathcal{T}_+ ) + Ch^{\delta_0}$$

\textbf{Step 3 : The tangent space is a good approximation.}
The only remaining point is to control $d( Z_n(x), \mathcal{T}_+)$. We observe that $\mathbf{w}_n^\prime \in T_m \mathcal{L}$. Hence, by standard results of differential geometry, 
$ d(m + x \mathbf{w}_n^\prime, \mathcal{L}) \leq C x^2$ where $C$ depends on $||\mathbf{w}_n^\prime||$ and on the curvature of $\mathcal{L}$ - which can be controlled independently of the base point $\widetilde{\zeta}_n$ of this unstable manifold.  As a consequence, if $|x| \leq \alpha_n^{1/2}h^{\delta_0} \leq h^{1/3}$, 
$ d(m + x \mathbf{w}_n^\prime, \mathcal{L})  \leq C h^{2/3} \ll Ch^{\delta_0}$. This shows that 
$d( Z_n(x), \mathcal{T}_+) \leq C  d(m + x \mathbf{w}_n^\prime, \mathcal{L}) \leq C h^{\delta_0}$ and concludes the proof of the lemma. 
\end{proof}

This Lagrangian being well understood, we can now come to the estimates for $R_+$ : 
\begin{prop}\textbf{Estimates for $R_+$.}\label{Prop_estimates_R_+}
There exists a global constant $C>0$ such that for all $x \in [- \alpha_n^{1/2} h^{\delta_0}, \alpha_n^{1/2} h^{\delta_0}]$, we have 
\begin{itemize}
\item If $d(\rho, \mathcal{T}_+) \geq h^{\delta_1}$,  $R_+(x) \leq Ch^{2 \varepsilon}$ ; 
\item If $d(\rho, \mathcal{T}_+) \leq h^{\delta_1}$,  $R_+(x) \leq C$
\end{itemize}
 (for some constant $C>0$).
\end{prop}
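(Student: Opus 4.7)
The plan is a direct estimate of the quotient defining $R_+(x)$, using Lemma \ref{Lemma_local_frame} to transfer information about $\rho$ to $\zeta_n(x)$, and then separating the two regimes dictated by the hypothesis. First I would use the equivalence $\hat{\varphi}_+(\zeta) \sim d(\zeta, \mathcal{T}_+)^2 + h^{2\delta}$ coming from the construction of the escape function in \eqref{escape_function} to rewrite
\begin{equation*}
R_+(x) \leq C\,\frac{h^{2\delta} + d(\zeta_n(x), \mathcal{T}_+)^2}{h^{2\delta} + d(\rho, \mathcal{T}_+)^2},
\end{equation*}
for some global constant $C>0$, and then square the inequality in Lemma \ref{Lemma_local_frame} (using $(a+b)^2 \leq 2(a^2+b^2)$) to obtain
\begin{equation*}
d(\zeta_n(x), \mathcal{T}_+)^2 \leq C\bigl(h^{2/3} + (J^u_{\mathbf{q}})^{-2}\bigr)\, d(\rho, \mathcal{T}_+)^2 + C\, h^{2\delta_0},
\end{equation*}
uniformly for $|x| \leq \alpha_n^{1/2} h^{\delta_0}$.

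In Case 1, when $d(\rho, \mathcal{T}_+) \geq h^{\delta_1}$, the denominator is bounded below by $d(\rho, \mathcal{T}_+)^2 \geq h^{2\delta_1}$, so splitting the numerator into the three pieces $h^{2\delta}$, $(h^{2/3} + (J^u_{\mathbf{q}})^{-2}) d(\rho, \mathcal{T}_+)^2$, and $h^{2\delta_0}$ gives the three quotients $h^{2(\delta - \delta_1)}$, $h^{2/3} + (J^u_{\mathbf{q}})^{-2}$, and $h^{2(\delta_0 - \delta_1)}$. By the choice \eqref{exponent_delta} of $\delta_0, \delta_1$, each is a positive power of $h$, and the final bound $C h^{2\varepsilon}$ (up to reducing $\varepsilon$) follows. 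In Case 2, when $d(\rho, \mathcal{T}_+) \leq h^{\delta_1}$, I bound the denominator below by $c h^{2\delta}$ and substitute $d(\rho,\mathcal{T}_+)^2 \leq h^{2\delta_1}$ in the numerator estimate; each of the three resulting pieces $h^{2\delta}$, $(h^{2/3} + (J^u_{\mathbf{q}})^{-2}) h^{2\delta_1}$, and $h^{2\delta_0}$ is $O(h^{2\delta})$, which yields $R_+(x) \leq C$.

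The only genuinely delicate step is the control of the factor $(J^u_{\mathbf{q}})^{-2}$, which is not manifestly a power of $h$. The point is that by the definition \eqref{def_jacobian_2_bis} and the uniform hyperbolic lower bound $J^u_n(\rho^\prime) \geq e^{n \lambda_{\min}(1-o(1))}$ on $\mathcal{T}$, combined with $n n_0 \sim \vartheta_\varepsilon \log(1/h)$, we have $(J^u_{\mathbf{q}})^{-2} \leq C h^{2 \lambda_{\min}\vartheta_\varepsilon(1- o(1))}$, with an exponent that tends to $\lambda_{\min}/(3\lambda_{\max}) > 0$ as $\varepsilon \to 0$. For $\varepsilon$ small enough this exponent exceeds $2\varepsilon$, so the $(J^u_{\mathbf{q}})^{-2}$ contribution is absorbed in the required bound $Ch^{2\varepsilon}$ in Case 1 and in the $O(1)$ bound in Case 2. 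Apart from this bookkeeping, the proof is a direct computation.
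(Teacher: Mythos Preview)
Your proof is correct and follows essentially the same strategy as the paper: both use Lemma~\ref{Lemma_local_frame} to bound $d(\zeta_n(x),\mathcal{T}_+)$, the equivalence $\hat{\varphi}_+(\zeta)\sim h^{2\delta}+d(\zeta,\mathcal{T}_+)^2$, and the hyperbolic lower bound $J^u_{\mathbf{q}}\geq C_\varepsilon h^{-\vartheta_\varepsilon\lambda_{\min}(1-\varepsilon)}$. The only difference is organizational: the paper introduces an auxiliary exponent $\beta<\min(1/3,\vartheta_\varepsilon\lambda_{\min}(1-\varepsilon))$ and splits Case~1 into two sub-cases according to whether $d(\rho,\mathcal{T}_+)$ lies below or above $h^{\delta-\beta}$, whereas you avoid this split by bounding each of the three numerator pieces against $d(\rho,\mathcal{T}_+)^2$ directly --- a slightly more streamlined bookkeeping that yields the same bounds.
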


\begin{proof}
Recall that $\delta_1 = \delta-\varepsilon$. 
We pick $x \in [- \alpha_n^{1/2} h^{\delta_0}, \alpha_n^{1/2} h^{\delta_0}]$. 
Here, we will use the inequality $d(\rho_n, \mathcal{T}_+) \leq C\left(J^u_{\mathbf{q} } \right)^{-1} d(\rho, \mathcal{T}_+)$ and the result of the previous lemma, namely, 
$$d(\zeta_n(x), \mathcal{T}_+) \leq  C \left(h^{1/3} + \left(J^u_\mathbf{q}\right)^{-1} \right) d(\rho, \mathcal{T}_+) + Ch^{\delta_0}$$
Recall that $J^u_\mathbf{q} \geq C_\varepsilon e^{ n(h) \lambda_{\min} (1- \varepsilon)} \geq C_\varepsilon h^{-\vartheta_\varepsilon  \lambda_{\min} (1- \varepsilon)}$. \\
We choose some $0<\beta < \min(1/3, \vartheta_\varepsilon  \lambda_{\min} (1- \varepsilon)) $, which ensures that 
$$d(\zeta_n(x), \mathcal{T}_+) \leq  C h^{\beta} d(\rho, \mathcal{T}_+) + Ch^{\delta_0}$$
Note that since we work with $\varepsilon$ small, it is harmless to assume that $\varepsilon <  \beta$. 
We treat the two points separately : 
\begin{itemize}
\item For this first point, we distinguish two cases : 
\paragraph{First case : $h^{\delta_1} \leq d(\rho, \mathcal{T}_+)  \leq h^{\delta - \beta}$}
In this context, one has $d(\zeta_n(x), \mathcal{T}_+) \leq C h^{\beta} h^{\delta-\beta}+ Ch^{\delta_0} \leq Ch^\delta$. As a consequence, $\hat{\varphi}_+(\zeta_n(x) ) \le Ch^{2\delta}$. We also have $\hat{\varphi}_+(\rho) \geq C^{-1} ( h^{2 \delta} + h^{2 \delta_1} ) \geq C^{-1} h^{2\delta_1}$ which gives
$$ R_+(x) \leq \frac{(M + C)h^{2 \delta}}{Mh^{2 \delta} + C^{-1} h^{2\delta_1}} \leq C h^{2 (\delta- \delta_1)}  =Ch^{2 \varepsilon}$$ 
\paragraph{Second case: $ d(\rho, \mathcal{T}_+)  \geq h^{\delta - \beta} $}. In this context, we have 
$d(\rho, \mathcal{T}_+)^2 \gg h^{2\delta}$ so that we can bound the denominator $Mh^{2\delta} + \hat{\varphi}_+(\rho)$ from below by $C^{-1} d(\rho, \mathcal{T}_+)^2$. Concerning the numerator, we have 
$$d(\zeta_n(x), \mathcal{T}_+) \leq C h^\beta d(\rho, \mathcal{T}_+) + Ch^{\delta_0} \leq  C h^\beta d(\rho, \mathcal{T}_+)$$
since $ h^\beta d(\rho, \mathcal{T}_+) \geq h^\beta h^{\delta- \beta} \geq h^\delta \gg h^{\delta_0}$. 
We deduce also that $\hat{\varphi}_+(\zeta_n(x) ) \leq C  h^{2\beta} d(\rho, \mathcal{T}_+)^2$. As a consequence, 
$$R_+(x) \leq \frac{Ch^{2 \beta} d(\rho, \mathcal{T}_+)^2}{C^{-1}   d(\rho, \mathcal{T}_+)^2} \leq Ch^{2\beta} \ll h^{2 \varepsilon}$$
\item We now assume that $d(\rho, \mathcal{T}_+) \leq h^{\delta_1}$. As in the first case above, we can bound the numerator by $Ch^{2 \delta}$. Concerning the denominator, we simply use the fact that $\hat{\varphi}_+ \geq 0$ to bound it from below by $Mh^{2\delta}$, and this gives, as expected
$$ R_+(x) \leq C$$ 
\end{itemize}

\end{proof}

Let's recap these two estimates and their implications concerning $d(x)$ (and recall that by definition, $\beta > \varepsilon$ and $J^u_\mathbf{q} \geq C^{-1} h^{-\beta}$) 
\begin{align*}
d(\rho, \mathcal{T}_-) \geq h^{\delta} \implies d(x) \leq (Ch^{2 \beta})^t \ll h^{2t \varepsilon} \, , \,  \forall x \in [- \alpha_n^{1/2} h^{\delta_0}, \alpha_n^{1/2} h^{\delta_0}] \\
d(\rho, \mathcal{T}_-) \leq h^{\delta} \text{ and } d(\rho, \mathcal{T}_+) \geq h^{\delta_1}  \implies d(x) \leq \left( Ch^{2 \varepsilon} \right)^t  \, , \,  \forall x \in [- \alpha_n^{1/2} h^{\delta_0}, \alpha_n^{1/2} h^{\delta_0}]
\end{align*}

As a consequence, the $L^2$ norm of $\chi v_n$ is very small when $\rho$ lies outside the neighborhood of $\mathcal{T}$ defined before Proposition \ref{Prop_Key} :   

\begin{equation}
\mathcal{T}_{\delta,\delta_1} = \left\{ \rho \, , \, d(\rho, \mathcal{T}_-) \leq h^{\delta}, d(\rho, \mathcal{T}_+) \leq h^{\delta_1} \right\}
\end{equation}

Indeed, we obviously have 
\begin{prop}\label{Prop_crucial_estimates_0}
For all $L>0$, there exists $t=t(\varepsilon,L)$ such that the following holds. 
Assume that $\rho \not \in \mathcal{T}_{\delta,\delta_1} $. Then, 
$$\int_{ |x| \leq \alpha_n^{1/2} h^{\delta_0}} d(x)^2 dx \leq  C h^L$$
\end{prop}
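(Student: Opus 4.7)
The proof is essentially a direct reaping of the pointwise bounds on $d(x)$ that were just established in Propositions for $R_-$ and $R_+$, combined with the fact that the integration interval is short enough to be dominated by any polynomial decay that uniformly large $t$ can produce.

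The plan is to split the hypothesis $\rho \notin \mathcal{T}_{\delta,\delta_1}$ into the two complementary cases captured by the recap preceding the statement: either $d(\rho,\mathcal{T}_-) > h^\delta$, in which case the estimate on $R_-$ gives $R_-(x) \leq C(J_\mathbf{q}^u)^{-2}$ uniformly on $|x| \leq \alpha_n^{1/2} h^{\delta_0}$ (which is even smaller than $Ch^{2\beta}$ by the lower bound $J_\mathbf{q}^u \geq C^{-1} h^{-\beta}$), so that $d(x) = (R_-(x)R_+(x))^t \leq (Ch^{2\beta})^t \leq (Ch^{2\varepsilon})^t$ since $\beta > \varepsilon$; or else $d(\rho,\mathcal{T}_-) \leq h^\delta$ but $d(\rho,\mathcal{T}_+) > h^{\delta_1}$, in which case $R_-(x) \leq C$ and the estimate on $R_+$ gives $R_+(x) \leq Ch^{2\varepsilon}$, so once more $d(x) \leq (Ch^{2\varepsilon})^t$ uniformly on the interval. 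In both cases I obtain a single uniform pointwise bound
\[
d(x)^2 \leq (Ch^{2\varepsilon})^{2t}, \qquad |x| \leq \alpha_n^{1/2} h^{\delta_0}.
\]

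Next I would simply integrate. The integration domain has length $2\alpha_n^{1/2} h^{\delta_0}$, and by the numerological constraint $\delta_0 - \frac{1-4\varepsilon}{6(1+\varepsilon)} \geq \frac{1}{3}$ recorded just after Lemma \ref{Lemma_restriction_close_to_0}, this length is bounded by $C h^{1/3}$ independently of $\rho$ and $\mathbf{q}$. Thus
\[
\int_{|x| \leq \alpha_n^{1/2} h^{\delta_0}} d(x)^2\, dx \;\leq\; C h^{1/3} (Ch^{2\varepsilon})^{2t}.
\]
For $h$ small enough (depending on the constant $C$ and on $\varepsilon$) we have $Ch^{2\varepsilon} \leq h^{\varepsilon}$, so the right-hand side is $\leq C h^{1/3 + 2\varepsilon t}$. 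Choosing $t = t(\varepsilon,L) \geq (L - 1/3)/(2\varepsilon)$ makes the exponent at least $L$, which is the desired bound.

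There is no real obstacle here: all the analytic work has already been carried out in the preceding two propositions on $R_-$ and $R_+$ and in the geometric Lemma \ref{Lemma_local_frame}; the only thing one needs is that the uniform pointwise damping $(Ch^{2\varepsilon})^t$ is strong enough (when $t$ is taken large, independently of $h$) to beat the $O(h^{1/3})$ length of the window. The only points deserving a line of care are (i) checking that the constant $C$ in $(Ch^{2\varepsilon})^{2t}$ can be absorbed by an arbitrarily small loss in the exponent for $h$ small, and (ii) verifying that the threshold $h_0$ and the constant can be chosen independently of $\rho$, $\mathbf{q}$ and $n=n(h)$, which follows because all constants in the bounds on $R_\pm$ and in $\alpha_n^{1/2} h^{\delta_0} \leq C h^{1/3}$ are global.
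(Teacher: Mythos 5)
Your proof is correct and takes exactly the route the paper does: the paper essentially just asserts the proposition from the recap of the $R_\pm$ estimates, and you have filled in the (short) integration and choice of $t$. The only step left implicit in your case $d(\rho,\mathcal{T}_-)>h^{\delta}$ is the uniform bound $R_+(x)\leq C$, which indeed holds in both alternatives of Proposition \ref{Prop_estimates_R_+}, so the factorization $d(x)=(R_-(x)R_+(x))^t\leq (Ch^{2\beta})^t$ is justified.
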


\subsubsection{ Crucial estimates in $\mathcal{T}_{\delta,\delta_1}$}
We now turn to the crucial estimate which helps to control the $L^2$ norm of $\chi v_n$ when $\rho \in \mathcal{T}_{\delta,\delta_1} $. 

\begin{prop}\label{Prop_crucial_estimates}
Assume that $\rho \in \mathcal{T}_{\delta,\delta_1} $. Then, 
$$\int_{ |x| \leq \alpha_n^{1/2} h^{\delta_0}} d(x)^2 dx \leq  C \left( J^u_\mathbf{q} \right)^{d_H + \varepsilon} h^{ (\delta_0- \delta)(d_H + \varepsilon)} h^\delta$$
\end{prop}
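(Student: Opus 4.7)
The plan is to split the integration interval $I = [-\alpha_n^{1/2} h^{\delta_0}, \alpha_n^{1/2} h^{\delta_0}]$ according to the distance of $\zeta_n(x)$ to $\mathcal{T}_-$. Set
\[
X_-(\rho) = \{ x \in I : d(\zeta_n(x), \mathcal{T}_-) \leq h^\delta \}, \qquad X_+(\rho) = I \setminus X_-(\rho),
\]
and dyadically decompose $X_+(\rho) = \bigsqcup_{k \geq 1} X_k(\rho)$, with $X_k(\rho) = \{x \in I : 2^{k-1} h^\delta \leq d(\zeta_n(x), \mathcal{T}_-) < 2^k h^\delta\}$. For uniformity, write $X_0(\rho) = X_-(\rho)$.

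On each $X_k(\rho)$, $k \geq 0$, Proposition \ref{Prop_estimates_R_+} combined with the hypothesis $d(\rho, \mathcal{T}_+) \leq h^{\delta_1}$ yields $R_+(x) \leq C$ uniformly, while $d(\rho, \mathcal{T}_-) \leq h^\delta$ forces the numerator of $R_-(x)$ to be $O(h^{2\delta})$. Since the denominator of $R_-(x)$ is bounded below by $C^{-1} 4^k h^{2\delta}$ for $k \geq 1$, one gets $R_-(x) \leq C \, 4^{-k+1}$ on $X_k(\rho)$ for $k \geq 1$, and $R_-(x) \leq C$ on $X_0(\rho)$. Consequently $d(x)^2 \leq C \cdot 2^{-4tk}$ on $X_k(\rho)$, and
\[
\int_I d(x)^2 \, dx \leq C \sum_{k \geq 0} 2^{-4tk} \, |X_k(\rho)|.
\]

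The key step is bounding $|X_k(\rho)|$ through Lemmata \ref{Lemma_local_frame} and \ref{Lemma_number_of_intervals}. By Lemma \ref{Lemma_local_frame} the curve $x \mapsto \zeta_n(x)$ stays within distance $O(h^{\delta_0})$ of a fixed local unstable manifold $W_u(\zeta_n^{\mathbf{\star}})$; since $\delta_0 > \delta$, this transverse error is absorbed into the $2^k h^\delta$ thickness. Locally near $\mathcal{T}$ one has $\mathcal{T}_- \cap W_u(\zeta_n^{\mathbf{\star}}) = \mathcal{T} \cap W_u(\zeta_n^{\mathbf{\star}})$, so $X_k(\rho)$ is contained in the preimage under $\zeta_n$ of a $C 2^k h^\delta$-neighborhood of a piece of $\mathcal{T} \cap W_u(\zeta_n^{\mathbf{\star}})$ of arclength $\sim \alpha_n^{1/2} h^{\delta_0} \sim J^u_\mathbf{q} h^{\delta_0}$. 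Since $\beta_n = O(\varepsilon_2)$ and $\kappa_{q_n}$ has bounded derivatives, $\zeta_n$ is a bi-Lipschitz parametrization of its image, so working in the adapted chart $\kappa_{q_n}$ and applying Lemma \ref{Lemma_number_of_intervals} with $l = J^u_\mathbf{q} h^{\delta_0}$ and covering scale $2^k h^\delta$ yields
\[
|X_k(\rho)| \leq C (J^u_\mathbf{q})^{d_H + \varepsilon} h^{(\delta_0 - \delta)(d_H + \varepsilon)} h^\delta \cdot 2^{k(1 - d_H - \varepsilon)}.
\]
Choosing $t$ large enough so that $4t > 1 - d_H - \varepsilon$ (consistent with the freedom on $t$ at this stage, mirroring the choice made in Proposition \ref{Prop_crucial_estimates_0}), the dyadic series converges and delivers the claimed estimate, the $k=0$ term being dominant.

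The main obstacle is the transfer of the abstract box-counting estimate for $\mathcal{T} \cap W_u(\zeta_n^{\mathbf{\star}})$ to the parametrized curve $\zeta_n$: the $O(h^{\delta_0})$ transverse error from Lemma \ref{Lemma_local_frame} must genuinely be absorbed into the $2^k h^\delta$ thickness of the cover, and one needs the bi-Lipschitz constant of $\zeta_n$ to be controlled uniformly in $n$, $\mathbf{q}$ and $\rho$ so that the resulting estimate is global. Both points are granted by the inequalities $\delta_0 > \delta$ and $\beta_n = O(\varepsilon_2)$ built into the setup.
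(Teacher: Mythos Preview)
Your approach is correct and is essentially a repackaging of the paper's own proof. Both arguments rest on the same two ingredients: the pointwise bound $d(x)\le C\bigl(1+(d(\zeta_n(x),\mathcal T_-)/h^\delta)^2\bigr)^{-t}$ coming from the structure of the escape function, and the box-counting Lemma~\ref{Lemma_number_of_intervals} applied to the piece of $\mathcal T\cap W_u(\zeta_n^\star)$ of length $\sim J^u_\mathbf{q}h^{\delta_0}$. The only organizational difference is that you slice $I$ dyadically by the value of $d(\zeta_n(x),\mathcal T_-)$ and apply Lemma~\ref{Lemma_number_of_intervals} at each scale $2^kh^\delta$, whereas the paper covers $X(\mathcal T)$ once at scale $h^\delta$, forms the Voronoi partition $\{J_i\}$ around the cover points $x_i$, and integrates the explicit decay on each $J_i$. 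Both routes give the same bound; your condition $4t>1-d_H-\varepsilon$ is automatic once $t\ge 1$, which the paper also assumes.

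The one place where you are too brief is exactly what you flag as ``the main obstacle'': the assertion that $x\in X_k(\rho)$ forces $x$ to lie within $C\,2^kh^\delta$ of some point of $X(\mathcal T)$. This is not a formal consequence of $\delta_0>\delta$ and $\beta_n=O(\varepsilon_2)$ alone; it requires using the local product structure in the chart $\kappa_{q_n}$ to pass from proximity to a stable leaf $W_s(\rho_-)$ to proximity, \emph{in the $x$-coordinate}, to the intersection point $W_s(\rho_-)\cap W_u(\zeta_n^\star)\in\mathcal T$, together with a check that this intersection point still falls in the enlarged window $[-2\alpha_n^{1/2}h^{\delta_0},2\alpha_n^{1/2}h^{\delta_0}]$. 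The paper carries this out explicitly as its claim~(\ref{claim_mass}) (Step~2 of the proof), writing both $W_u(\zeta_n^\star)$ and $W_s(\rho_-)$ as graphs with small slopes and chasing the inequalities; your sketch would become a complete proof once you insert that computation.
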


\begin{proof}

\textbf{Step 0 : A simple estimates for $d(x)$. }
First recall from Proposition \ref{Prop_estimates_R_+}, $\rho \in \mathcal{T}_{\delta,\delta_1}  \implies d(x) \leq C R_-(x)^t$. 
Moreover, the numerator in $R_-(x)$ is bounded by $Ch^{2\delta}$ and since $\hat{\varphi}_-(\zeta_n(x)) \geq Ch^{2\delta} + Cd(\zeta_n(x), \mathcal{T}_-)^2$, we find that 
$$ d(x) \leq C \left( 1 +\left(  \frac{d(\zeta_n(x), \mathcal{T}_-)}{h^\delta} \right)^2 \right)^{-t}$$

\textbf{Step 1 : The mass is supported in an $h^\delta$-neighborhood of $\mathcal{T}$. }
We use Lemma \ref{Lemma_local_frame} which asserts that there exists $\zeta_n^{\mathbf{\star}}$ such that \begin{equation}\label{ll_h_delta}
d(\zeta_n(x) , W_u(\zeta_n^{\mathbf{\star}}) ) \leq Ch^\beta d(\rho, \mathcal{T}_+) + Ch^{\delta_0} \ll h^\delta
\end{equation}
with $\beta$ defined in the proof of Proposition \ref{Prop_estimates_R_+}. 
  Recall that in the chart $\kappa_{q_n}$, $\kappa_{q_n}(\zeta_n(x) ) = \hat{\rho}_n +(x, \beta_n x)$. Moreover, if $\varepsilon_0$ is small enough, we may assume that $\kappa_{q_n} (W_u(\zeta_n^{\mathbf{\star}}) ) $ can be written as the graph of a function : 
$$\kappa_{q_n} (W_u(\zeta_n^{\mathbf{\star}}) )=  \{ (x, G_u(x)) , x \in I_u \}$$ where $I_u$ is a small interval of size $ \sim \varepsilon_0$ and $G_u$ a smooth function with bounded $C^\infty$ norms (with bounds depending only on $F$ and the charts). Since $d(\rho_n,W_u(\zeta_n^{\mathbf{\star}})  ) \ll h^\delta$,  up to translating, we may assume that $\hat{\rho}_n = (0 , \xi_n)$ and $|G_u(0) - \xi_n| \ll h^\delta$. In particular, if $h$ is small enough, we may assume that $[ -\alpha_n^{1/2} h^{\delta_0}, \alpha_n^{1/2} h^{\delta_0} ] \subset I_u$. Finally, if $\varepsilon_0$ is small enough, we can also assume that $|G_u^\prime (x)| \leq 1/4$ if $|x| \leq 2 \alpha_n^{1/2} h^{\delta_0} \ll 1$ (recall that the chart $\kappa_q$ is centered at a point $\rho_q$ such that $\kappa_q(E_u(\rho_q)) =\R \times \{ 0 \}$).  We now set 

$$ X(\mathcal{T} ) = \{ x \in [ -2\alpha_n^{1/2} h^{\delta_0}, 2\alpha_n^{1/2} h^{\delta_0} ], \kappa_{q_n}^{-1}(x,G_u(x)) \in \mathcal{T} \} $$ 
Let's cover $X(\mathcal{T} ) $ by $N$ intervals of size $2h^\delta$, centered at points $x_1, \dots, x_N \in X(\mathcal{T} )$. In virtue of Lemma \ref{Lemma_number_of_intervals}, we can choose $N$ such that 
$$ N \leq C \left( J_\mathbf{q}^u h^{\delta_0 - \delta} \right)^{d_H + \varepsilon}$$ 

Each interval around $x_i$ of size $O(h^\delta)$ supports a mass of order $O(h^\delta)$. Our aim in the following lines is to show that the weight of the integral supported at distance larger than $h^\delta$ of the $x_i$ is also $O(h^\delta)$, so that we will be able to estimate the whole integral by $N h^\delta$, which would conclude the proof. 
Let us consider $x \in [ -\alpha_n^{1/2} h^{\delta_0}, \alpha_n^{1/2} h^{\delta_0} ]$ and assume that for all $1 \leq i \leq N$, $|x - x_i | \geq 2 h^\delta$. Let us choose $i$ such that $|x-x_{i} | = \min_{1 \leq k \leq N } |x-x_k|$. We claim that there exists $\nu >0$, uniform with respect to $\rho$, $h$ and $ x \in [ -\alpha_n^{1/2} h^{\delta_0}, \alpha_n^{1/2} h^{\delta_0} ]$ such that 
\begin{equation}\label{claim_mass}
 d(\zeta_n(x), \mathcal{T}_-) \geq \nu |x-x_{i}| 
\end{equation}
Let's admit it for a while. For $i \in \{1, \dots, N \}$, let's note $$J_i = \{ x \in  [-\alpha_n^{1/2} h^{\delta_0},  \alpha_n^{1/2} h^{\delta_0}], |x-x_i| = \min_{1 \leq k \leq N } |x-x_k| \} $$
These intervals form a partition of $[-\alpha_n^{1/2} h^{\delta_0},  \alpha_n^{1/2} h^{\delta_0}]$. 
\begin{align*} 
\int_{ |x| \leq \alpha_n^{1/2} h^{\delta_0}} d(x)^2 dx &\leq \int_{ |x| \leq \alpha_n^{1/2} h^{\delta_0}}   C \left( 1 +\left(  \frac{d(\zeta_n(x), \mathcal{T}_-)}{h^\delta} \right)^2 \right)^{-2t}dx \\ 
&\leq C \sum_{i=1}^N \int_{J_i} \left( 1 +\left(  \frac{d(\zeta_n(x), \mathcal{T}_-)}{h^\delta} \right)^2 \right)^{-2t}dx \\
&\leq C \sum_{i=1}^N \left(  \int_{|x_i - x| \leq 2h^{\delta} } 1 dx +  \int_{|x_i - x| > 2h^{\delta} } \left( 1 +\left(\frac{\nu |x-x_i|}{h^\delta} \right)^2 \right)^{-2t}dx \right) \\
&\leq C \sum_{i=1}^N \left(4 h^\delta +\int_{|y| >2} \left( \frac{1}{1 + (\nu y)^2}\right)^{2t} h^\delta dy  \right) \\
&\leq C N h^\delta
\end{align*}
Here, $t$ is large enough and in particular, we may ensure that $t \geq 1$ so that the integral converges. 

\textbf{Step 2 : Proof of the claim (\ref{claim_mass}).} 
We argue by contradiction and assume that  $d(\zeta_n(x), \mathcal{T}_-) \leq \nu |x-x_{i}| $ for some sufficiently small $\nu$ (with conditions specified below). 
Since $\mathcal{T}_-$ is made of local stable leaves near $\mathcal{T}$ (and $\zeta_n(x)$ lies in a small neighborhood of $\mathcal{T}$), we may chose $\rho_- \in \mathcal{T}$ such that $d(\zeta_n(x), \mathcal{T}_-) = d(\zeta_n(x), W_s(\rho_-) )$. Let's still note $\rho_- \in \mathcal{T}$ the unique point of $W_u(\zeta_n^{\mathbf{\star}}) \cap W_s(\rho_-) $ and let's write $\kappa_{q_n}(\rho_-) = (x_-, G_u(x_-) ) $. Again if $\varepsilon_0$ is small enough, all the stable leaves in $\kappa_{q_n}$ can be written as graphs in the vertical variable : let us write 
$$\kappa_{q_n}( W_s( \rho_-) ) = \{ (H_s(\xi), \xi) , \xi \in I_s \}$$ where $I_s$ is a small interval of size $O( \varepsilon_0)$ and $H_s$ a smooth function with $C^\infty$ norms bounded by constants only depending on the dynamics and the chart.  Up to translating, we may assume that $(H_s(0),0)= \kappa_{q_n}(\rho_-) = (x_-,0)$. As for $G_u$, if $\varepsilon_0$ is small enough, we can assume that $|H_s^\prime(\xi)| \leq 1$ for all $\xi \in I_s$. Finally, let us note $\rho_{\min} = \kappa_{q_n}^{-1}(H_s(\xi_{\min}), \xi_{\min})$ a point in $W_s(\rho_-)$ such that 
$d(\zeta_n(x), \rho_{\min}) = d(\zeta_n(x), W_s(\rho_-))$ (see Figure \ref{Fig_graph_2}).
\begin{figure}
\includegraphics[scale=0.5]{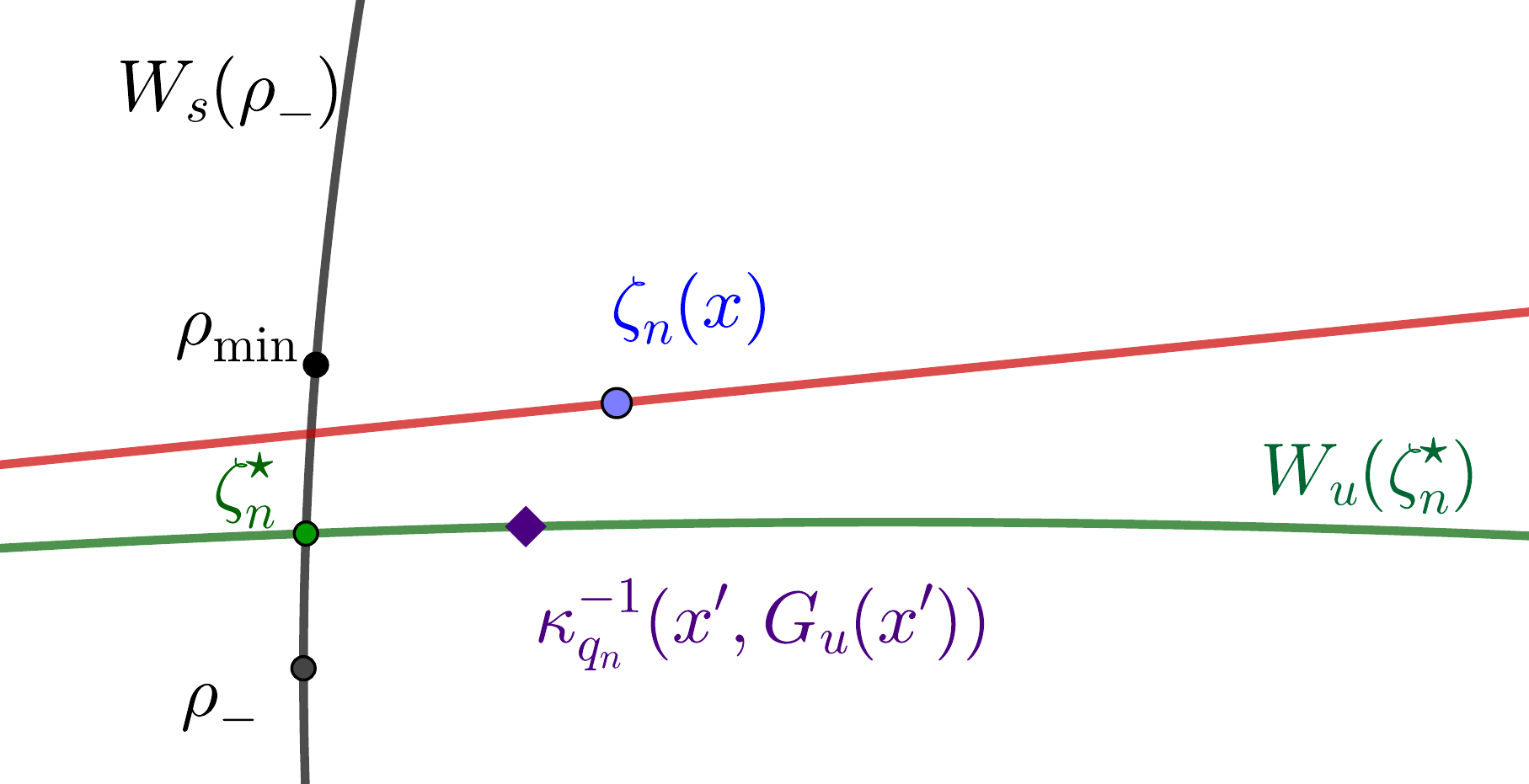}
\caption{The points appearing in the proof of the claim \ref{claim_mass}. The curve $\zeta_n(x)$ is in red. The image of the important point $(x^\prime, G_u(x^\prime))$ is a purple diamond. }
\label{Fig_graph_2}
\end{figure}

Since by (\ref{ll_h_delta}), $d(\zeta_n(x), W_u(\zeta_n^{\mathbf{\star}})) \ll h^\delta$, we can find $x^\prime \in I_u$ such that $||(x,\beta_n x) - (x^\prime, G_u(x^\prime) ) || \ll h^\delta$. This inequalitiy implies
\begin{align*}
|x-x^\prime| \ll h^\delta  \, ,  \\
|\beta_n x - G_u(x^\prime)  | \ll h^\delta  \, , \\
|G_u(x) - G_u(x^\prime) | \ll h^\delta  \, , \\
|G_u(x) - \beta_n x | \ll h^\delta.
\end{align*}
Since by assumption, $|x-x_i| \geq 2 h^\delta$, when $h$ is small enough, the inequality 
$|G_u(x) - \beta_n x | \leq \nu |x-x_i| $ holds. 
We have $$|| (H_s(\xi_{\min}), \xi_{\min})  - (x, \beta_n x) || \leq C d(\zeta_n(x), \rho_{\min})  \leq C d(\zeta_n(x), \mathcal{T}_-) \leq C \nu |x- x_i|$$
From this we deduce that 
\begin{align*}
|\xi_{\min}| &\leq |\xi_{\min} - G_u(x) | + |G_u(x) | =  |\xi_{\min} - G_u(x) | + |G_u(x) -G_u(x_-)|\\
&\leq |\xi_{\min} - \beta_n x | + |\beta_n x - G_u(x) | + \frac{1}{4} |x-x_-| \\
&\leq || (H(\xi_{\min}), \xi_{\min})  -(x, \beta_n x) || + \nu|x-x_i| + \frac{1}{4} |x-x_-| \\
&\leq C \nu |x-x_i| + \frac{1}{4} |x-x_-|
\end{align*}
Finally, we find that, 
\begin{align*}
|x_- - x| &\leq |x_- - H_s(\xi_{\min}) | + |H_s(\xi_{\min}) - x |\\
&\leq |H(0) - H(\xi_{\min} )| + || (H(\xi_{\min}) , \xi_{\min})- (x,\beta_nx ) ||   \\
&\leq   |\xi_{\min}| + C \nu |x-x_i| \text{  (recall that } |H^\prime| \leq 1) \\
& \leq \frac{1}{4}|x-x_-| + C \nu |x- x_i|
\end{align*}
From this, we deduce that 
\begin{equation}\label{equation_proof_claim}
|x-x_-| \leq \frac{4}{3} C \nu |x- x_i|
\end{equation}
A first consequence of this inequality is that if $\nu$ is small enough so that $\frac{4\nu C}{3} \leq \frac{1}{4}$, we have 
$$|x_-| \leq |x| + \frac{1}{4} |x-x_i| \leq \frac{5}{4}|x| + |x_i| \leq \frac{5}{4}\alpha_n^{1/2} h^{\delta_0} +\frac{2}{4} \alpha_n^{1/2} h^{\delta_0}  \leq 2 \alpha_n^{1/2} h^{\delta_0}$$

Since $\kappa_{q_n}^{-1}(x_-, G_u(x_-) )= \rho_- \in \mathcal{T}$, we deduce that $x_- \in X(\mathcal{T})$. In particular, there exists $j \in \{1 , \dots, N \}$ such that $|x_- - x_j| \leq h^\delta$. But then, we would have 
$$|x_i - x| \leq |x_j - x| \leq |x_- - x_j| + |x_- - x| \leq h^\delta + \frac{1}{4} |x-x_i| \leq \frac{1}{2} |x-x_i| + \frac{1}{4} |x-x_i|  < |x-x_i|$$
(recall that $|x-x_i| \geq 2 h^\delta$) . This gives the required contradiction and concludes the proof of the claim (\ref{claim_mass}).

\end{proof}

\subsection{End of the proof. } 
We can use Lemma \ref{Lemma_restriction_close_to_0}, Proposition \ref{Prop_crucial_estimates_0} and Proposition \ref{Prop_crucial_estimates} to conclude the proof of Proposition \ref{Prop_Key}. 
Indeed, since $||\widetilde{\Psi}_m||_\infty < + \infty$, we have 
\begin{align*}
 \left| \left|\left( 1 + \frac{x ^2}{\alpha_n h} \right)^{m/2} \chi v_n\right| \right|_{L^2}^2& \leq  \int_{|x| \leq \alpha_n^{1/2}h^{\delta_0}} d(x)^2 |\Lambda_{\alpha_n h} \widetilde{\Psi}_m(x)|^2 dx + \hinf \\
  &\leq C_m  (\alpha_n h)^{-1/2} \int_{|x| \leq \alpha_n^{1/2}h^{\delta_0}} d(x)^2 dx + \hinf \\
\end{align*}
It gives a bound $C_m h^{L}$ when $\rho \not \in \mathcal{T}_{\delta,\delta_1}$ (with $L$ as large as necessary by choosing $t$ large enough) and when $\rho \in \mathcal{T}_{\delta,\delta_1}$, we find that 
$$\left| \left|\left( 1 + \frac{x ^2}{\alpha_n h} \right)^{m/2} \chi v_n \right| \right|_{L^2}^2   \leq  C_m  \left( J^u_{\mathbf{q}}\right)^{d_H-1 + \varepsilon} h^{( \delta_0 - \delta)(d_H + \varepsilon) + \delta -1/2  } $$
When $m=0$, it gives a control of the leading term, since we have 
$$||u_n^0||_{L^2}^2 \leq  \Pi_{\alpha,n}(\rho)^2 ||\chi v_n||_{L^2}^2 $$
and since $\Pi_{\alpha,n}(\rho)^2 = O(h^{-L_2})$ for some $L_2>0$, so that for $\rho \not \in \mathcal{T}_{\delta,\delta_1}$ we can have $||u_n^0||_{L^2}^2 = O(h^{L})$ for any $L$ by choosing $t$ large enough.

 It controls the first term of the expansion given by Corollary \ref{Cor_expansion}. We recall that the number of terms in this expansion is controlled by a integer $N \in \N$. 
For the other terms in the expansion given by Corollary \ref{Cor_expansion}, as already explained with (\ref{other_terms_control}), they all have their $L^2$ norms controlled by some $$\varepsilon(h)  \Pi_{\alpha,n}(\rho) \left| \left|\left( 1 + \frac{x^2}{\alpha_n h} \right)^{m/2} \chi v_n\right| \right|_{L^2}$$ with $m \leq N$ and $\varepsilon(h) \to 0$ when $h \to 0$. Finally, we can choose $N= N(\varepsilon)$ such that the remainder has an $L^2$ norm $O(h^{2L})$. This concludes the proof of Proposition \ref{Prop_Key}, and eventually of Theorem \ref{Theorem_main}.

\section{Proof of the fractal Weyl upper bounds in obstacle scattering and scattering by a potential}
\subsection{Proof of Theorem \ref{THM_new}}
 \label{subsection_billiard_map}
Let us show how Theorem \ref{Theorem_main} implies Theorem \ref{THM_new}. Suppose that the obstacles $\mathcal{O}_j$  are strictly convex, have smooth boundary and satisfy Ikawa condition of no-eclipse. We will use the results of \cite{NSZ14} to apply Theorem \ref{Theorem_main} to the case where $F$ is the billiard map. To be precise,  let us introduce the following notations. 

For $j \in \{1, \dots, J \}$, let $B^* \partial \mathcal{O}_j$ be the co-ball bundle of $ \partial \mathcal{O}_j$, $S^*_{ \partial \mathcal{O}_j} $ be the restriction of $S^* \Omega$ to $ \partial \mathcal{O}_j$, $\pi_j : S^*_{ \partial \mathcal{O}_j} \to B^* \partial \mathcal{O}_j$ the natural projection and $\nu_j(x)$ be the outward normal vector at $x \in  \partial \mathcal{O}_j$ (see Figure \ref{figure_billiard_map}).

\begin{figure}
\begin{subfigure}{0.4 \textwidth}
\centering
\includegraphics[scale=0.28]{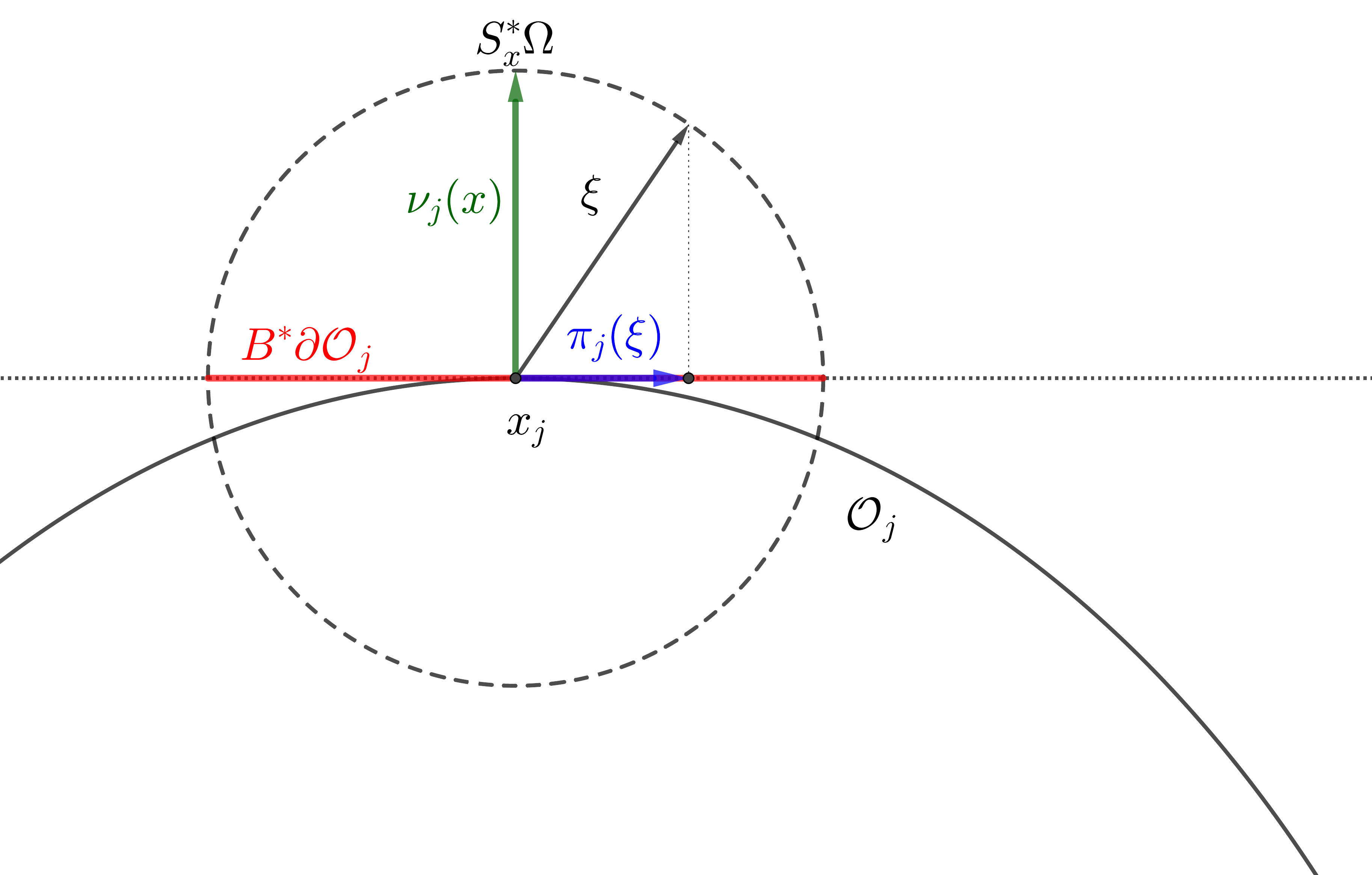}
\caption{The notations used to define the billiard map and the shadow map. }
\end{subfigure} \hspace{0.3cm}\vline  \hspace{0.3cm}
\begin{subfigure}{0.4 \textwidth} 
\centering
\includegraphics[scale=0.33]{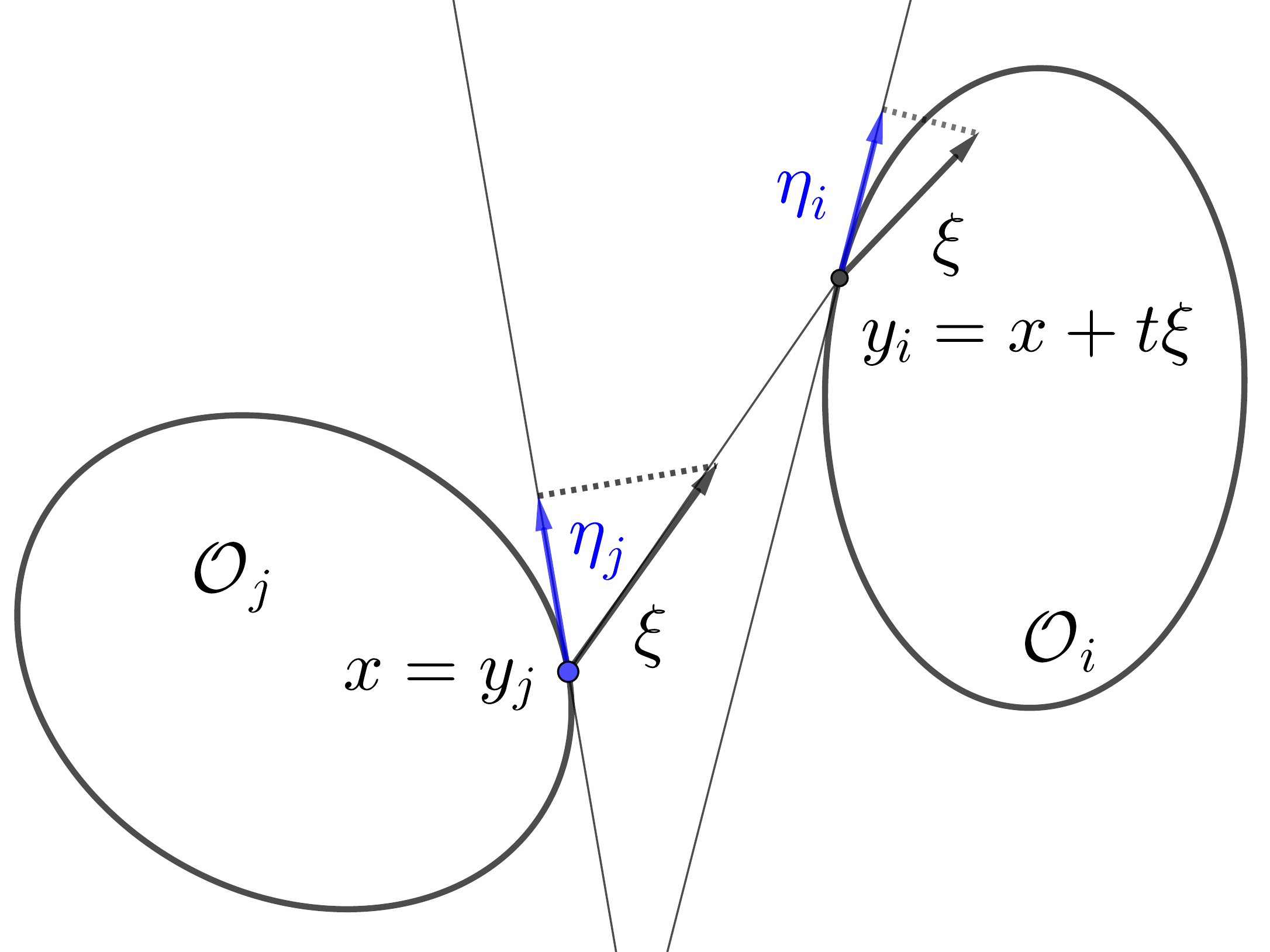}
\caption{The billiard map. $B^+_{ij}(y_j,\eta_j) = (y_i,\eta_i)$.  }
\end{subfigure}
\caption{Definition of the billiard map}
\label{figure_billiard_map}
\end{figure}
$\mathcal{B}$ is then the union of the maps $\mathcal{B}_{ij}$ corresponding to the reflection on two obstacles : for $(\rho_i, \rho_j) \in B^* \partial \mathcal{O}_i \times B^* \partial \mathcal{O}_j$ (with  $\rho_i=(y_i,\eta_i),  \rho_j=(y_j,\eta_j)$). 
 
 \begin{align*}
   &\rho_i= \mathcal{B}_{ij}(\rho_j) \iff  
   \exists t >0 \, , \,\xi \in \mathbb{S}^1\, , \, x \in \partial \mathcal{O}_j  \\
&\pi_j(x, \xi) = \rho_j \, , \, \pi_i(x + t \xi, \xi) = \rho_i \, , \, \nu_j(x) \cdot \xi >0 \, ,\, \nu_i(x+t \xi) \cdot \xi  <0 .
 \end{align*}
It is a standard fact in the study of chaotic billiards (see for instance \cite{Che}) that the billiard map is hyperbolic due to the strict convexity assumption. Ikawa's condition ensures that the restriction of the dynamical system to the trapped set has a symbolic representation (\cite{Mo91}). It is possible to restrict the study to a neighborhood of the trapped set. Since $\pi_y \left( \mathcal{T} \cap B^*\partial\mathcal{O}_j \right) \neq \partial \mathcal{O}_j$, it is possible to work with an interval $Y_j \subset \partial\mathcal{O}_j$ instead of the whole boundary. Moreover, it is known that $\mathcal{T}$ is compact and totally disconnected, so that the relation $\mathcal{B}$ satisfies the assumption of Theorem \ref{Theorem_main}. 

In \cite{NSZ14}, the author have shown that there exists a family $M_h(z)$ of open hyperbolic quantum maps associated with $\mathcal{B}$, depending holomorphically on $z \in \Omega(h) = ]-R, R[ + i ]- C\log 1/h, R[$ for some fixed $R >1$ and $C>1$, and such that for $h$ small enough and for $z \in \Omega(h)$, $\frac{1}{h} + z$ is a resonance if and only if  $ \det( 1- M_h(z) ) = 0$, and the multiplicity of the resonance coincides with the multiplicity of $z$ as a zero of $\det(1-M_h(z))$. 
The construction of this operator relies on the study of the operators $M_0(z) :  \cinf(\partial\mathcal{O} ) \to \cinf(\partial \mathcal{O})$ defined as follows : 
for $1 \leq j \leq J$, let $H_j(z) : \cinf(\partial\mathcal{O}_j ) \to \cinf (\R^2 \setminus \mathcal{O}_j ) $ be the resolvent of the problem 
$$ \left\{ \begin{array}{l}
(-h^2 \Delta - (1 + hz)^2) (H_j(z) v )  = 0 \\
H_j(z) v \text{ is outgoing} \\
H_j(z) v =v \text{ on } \partial \mathcal{O}_j
\end{array}\right. .$$
Let $\gamma_j$ be the restriction of a smooth function $u \in \cinf(\R^2)$ to $\cinf(\partial \mathcal{O}_j )$ and define $M_0(z)$ by  : 

$$M_0(z) =  \left\{ \begin{array}{l}
0 \text{ if } i=j \\
- \gamma_i H_j(z) \text{ otherwise.} 
\end{array}\right. $$
Using the analysis of Gérard (\cite{Ge88}, Appendix II) and restricting the study near the trapped set by the use of escape functions, the author transform $M_0$ into a Fourier integral operator associated with the billiard map (see Section 6 in \cite{NSZ14}). Moreover, by analyzing the formula given in \cite{Ge88}, Appendix II) we see that the amplitude of $M_h(z)$ is related, via the solutions of the eikonal equation, to the distance between two collisions. In particular, near the trapped set, it is given by 

\begin{equation}\label{amplitude_obstacle}
\alpha_h(z)(\rho) = \exp \left(-  t_{ret}(\rho) \im z \right)+O\left( h^{1-}S_{0^+} \right). 
\end{equation}
For $\rho \in \mathcal{T}$, $t_{ret}(\rho)$ is described as follows : assume that $\rho = (x,\xi)$ and $(y,\eta) =\mathcal{B}(x, \xi)$, then $t_{ret}(\rho) = |x-y|$. $t$ continues smoothly in a neighborhood of $\mathcal{T}$ and is called a \emph{return time function}. 

We can apply Theorem \ref{Theorem_main} to this family of open quantum maps and we find that, for any fixed $\varepsilon>0$ and for $r \gg 1$ (with $h=r^{-1}$, recalling that the resonances are given by $1/h + z$ where $z$ is a pole of $\det(1-M_h(z))$), the number $N(r,\gamma)$ of resonances, counted with multiplicity, in $[r,r+1]  - i[0, \gamma]$ satisfies
$$N(r,\gamma) \leq m_M \big \{  |\re z | <2,  \im z \geq - \gamma \} \big) \leq C_{\varepsilon, \gamma} r^{d_H - p(\gamma + \varepsilon)_+ +  \varepsilon}$$
Here, $p(\beta)$ is given by 
$$p(\beta) =  -\frac{1}{6\lambda_{\max}} P (2 \beta t_{ret} - \varphi_u). $$
Using the continuity of the pressure, we can choose $\varepsilon^\prime >0$ to ensure that $$P(2( \gamma + \varepsilon^\prime) t - \varphi_u) \geq P(2 \gamma t_{ret} - \varphi_u) + \varepsilon/2$$ and we may assume that $\varepsilon^\prime \leq \varepsilon/2$. Applying the above formula with $\varepsilon^\prime$ , we find that 
$$N(r,\gamma) \leq C_{\varepsilon, \gamma} r^{d_H - \sigma(\gamma) + \varepsilon}$$
with 
\begin{equation}\label{value_of_sigma}
\sigma(\gamma) =  \max\left(0,- \frac{1}{6 \lambda_{\max}} P (-\varphi_u + 2 \gamma t_{ret}) \right).
\end{equation}
To check that $\sigma$ satisfies the properties listed in Theorem \ref{Theorem_main}, we invoke the theory of Axiom A flows (\cite{BoRu}) : the map $s \mapsto  P (-\varphi_u +s t)$ is strictly increasing and has a unique root given by $\gamma_{cl}$. In particular, we deduce that $\sigma(\gamma) > 0 $ for $\gamma < \gamma_{cl}/2$ and $\sigma(\gamma) = 0$ for $\gamma \geq \gamma_{cl}/2$, as expected. Finally, since the bound $N(r,\gamma)=O(r^{d_H})$ holds for any $\gamma$, we can change $\sigma(\gamma) - \varepsilon$ into $(\sigma(\gamma)-\varepsilon)_+ = \max( \sigma(\gamma) - \varepsilon,0)$. This concludes the proof of Theorem \ref{THM_new}.

\subsection{Proof of Theorem \ref{Theorem_potential_scattering}}\label{subsection_potential_scattering}
Let us show how Theorem \ref{Theorem_main} implies Theorem \ref{Theorem_potential_scattering}. The ideas are the same as for the case of obstacle scattering and rely on the reduction performed in \cite{NSZ11}. 

We consider $V \in \cinfc(\R^2)$, $E_0>0$ and the semiclassical pseudodifferential operator $P_h = -h^2 \Delta + V - E_0$.
We note $p(x,\xi) = \xi^2 + V - E_0$ and we assume that
$$ dp \neq 0 \text{ on } p^{-1}(0).$$ 
Let's note $H_p$ the Hamiltonian vector field associated with $p$ and $\Phi_t = \exp(tH_p)$ the corresponding Hamiltonian flow. Let's note $K_0$ the trapped set of $\Phi_t$ at energy $0$ and we assume that $\Phi_t$ is hyperbolic on $K_0$ and $K_0$ is topologically one dimensional.  
More generally, we could work with more general Schrödinger operators in manifolds with Euclidean ends. We refer the reader to \cite{NSZ11} (Section 2.1) for more general assumptions. 

To apply Theorem \ref{Theorem_main}, we use the results of \cite{NSZ11} (Theorem 1 and 2). Under the assumptions above, there exists a smooth Poincaré hypersurface $\Sigma$ for the flow $\Phi_t$ on the energy shell $p^{-1}(0)$ near $K_0$. $\Sigma$ is made of several disjoint pieces $\Sigma_j$, $1 \leq j \leq J$. The reduced trapped set is now $\mathcal{T} \coloneqq K_0 \cap \Sigma$, and if we write $2d_H+1$ for the dimension of $K_0$, $\mathcal{T}$ has dimension 
$$ \dim \mathcal{T} = \dim K_0 - 1 = 2d_H . $$
The assumption that $\Sigma$ is a smooth Poincaré hypersurface ensures that there exists $\varepsilon_{\min}>0$ such that the map 
$$ (\rho, t) \in \Sigma \times ]-\varepsilon_{\min}, \varepsilon_{\min}[ \mapsto \Phi_t(\rho)$$ 
is a smooth diffeomorphism onto its image. We note $t_{ret}$ the return time function on $\Sigma$ : for $\rho \in \Sigma$, 
$$t_{ret}(\rho) = \inf \{ t >\varepsilon_{\min} , \Phi_t (\rho) \in \Sigma \} \in [\varepsilon_{\min}, + \infty]$$ 
$t_{ret} < + \infty$ in a neighborhood $U \subset \Sigma$ of $\mathcal{T}$. We then define the Poincaré return map $F$, which is an open hyperbolic map defined on an open subset of $\Sigma$ : 
$$ F: \rho \in \Sigma \mapsto \Phi_{t_{ret} }(\rho) \in \Sigma$$
In \cite{NSZ11}, the authors construct a family of finite-dimensional matrices $(\mathbf{M}(z;h))$ for $z \in \Omega(h) = ]-R,R[ + i ]-C \log 1/h , R[$ (with $R$ fixed but large) such that for $h$ small enough and for all $z \in \Omega(h)$, 
$$ \det(I - \mathbf{M}(z;h) ) = 0 \iff hz \text{ is a resonace of } P_h$$
The matrices $\mathbf{M}(z;h)$ satisfy uniformly for $z \in \Omega(h)$ and for $h$ small enough, 

\begin{equation}\label{def_matric_M}
 \mathbf{M}(z;h) = \Pi_h M(z;h) \Pi_h + O(h^L)
\end{equation}
where $L>0$ can be chosen as large as necessary , $\Pi_h$ is a finite rank projector and $M(z;h)$ is a family of open hyperbolic quantum maps associated with $F$ (in the sense of Definition \ref{def_FIO}). The amplitude of $M(z;h)$ satisfies 
$$ \alpha_h(z)(\rho) = \exp( - t_{ret}(\rho) \im z )   +O\left( h^{1-}S_{0^+} \right). $$
By their construction, $M(z;h)$ and $\Pi_h$ satisfy, for some $L>0$ as large as necessary, uniformly for $z \in \Omega(h)$ and for $h$ small enough, 
\begin{equation}\label{equation_M_Pi}
 \Pi_h M(z;h) \Pi_h = M(z;h) + O(h^L)
\end{equation}

We can apply Theorem \ref{Theorem_main} to the family $M(z;h)$ of open quantum maps and we find that, for any fixed $\varepsilon>0$ and $K>0$ (with $K < R)$ and for $h \ll 1$, the number $N_M(R, \gamma ;h)$ of zeros of $\det (\Id - M(z;h) ) $ in $\{ |\re z | < R , \im z \in [-\gamma,0]\}$ satisfies 
$$N_M(R,\gamma; h) \leq 
 C_{R,\varepsilon, \gamma} h^{-d_H + p(\gamma + \varepsilon)_+ - \varepsilon}$$
Here, $p(\beta)$ is given by 
$$p(\beta) = -\frac{1}{6\lambda_{\max}} P (2 \beta t_{ret} - \varphi_u) $$
where $\varphi_u$ is the unstable Jacobian associated with $F$. Here, it can also be obtained by differentiating the flow $\Phi_t$. In fact, by inspecting the proof of Theorem \ref{Theorem_main} and by using (\ref{def_matric_M}) and (\ref{equation_M_Pi}), we see that the same conclusion holds for $\mathbf{M}$ instead of $M$. Indeed, in the formula (\ref{key_equation}) in Proposition \ref{Prop_Key_2}, one can replace $M(z;h)$ by $\mathbf{M}(z;h)$ since $M(z;h)^{N(h)} = \mathbf{M}(z;h)^{N(h)} + O(|\log h | h^L)$ as soon as $N(h) = O(\log h)$. We now conclude as for the case of obstacle scattering in \ref{subsection_billiard_map} and find that 
$$ N (R,\gamma; h) \leq C_{R,\gamma,\varepsilon} h^{-d_H + \sigma(\gamma) - \varepsilon}$$
where 
\begin{equation}
\sigma(\gamma) =\max\left( 0,  - \frac{1}{6 \lambda_{\max}} P (-\varphi_u + 2 \gamma t_{ret})\right) 
\end{equation}

\appendix
\section{}

\subsection{Proofs of the missing Lemmas involving stationary phase expansions} \label{appendix_lemma_stationnary_phase}
In this appendix, we give the missing proofs of Lemmas \ref{Lemma_1_stationnary_phase}, \ref{Lemma_2_stationnary_phase} and \ref{Lemme_stationnary_phase_S_delta}. It relies on different uses of stationary phase theorems.

\subsubsection{Proof of of Lemma \ref{Lemma_1_stationnary_phase}}
To alleviate the notations, let's note $q(x,\eta) = \langle D^2 \psi(x_1, \xi_0)(x,\eta) , (x,\eta) \rangle$ and write it $q(x,\eta) = u x^2 + 2 v x \eta + w\eta^2$. The metaplectic operator $\mathcal{M}(d_{\rho_0} F)$ admits the kernel 
$$ k(x,y) \coloneqq \frac{|v|^{1/2}}{2\pi h} \int_{\R} e^{\frac{i}{h}\left( \frac{1}{2} q(x,\eta) - y \eta\right) } d\eta $$
and $\overline{k}(y,x)$ is the kernel of $\mathcal{M}(d_{\rho_0}F)^*$. 
We also note $$\mathcal{M}_bu(x) = 
\frac{1}{2\pi h} \int_{\R^2} e^{\frac{i}{h} \left(  \frac{1}{2} q(x,\eta)  - y\eta\right)  } \tilde{b}(x,\eta ) u(y) dy d\eta$$
We have 
\begin{align*}
 \left( \mathcal{M}(d_{\rho_0}F)^* \mathcal{M}_b u \right)(x) &= \frac{|v|}{(2\pi h)^2 } \int_{\R^4} e^{\frac{i}{h} \left( - \frac{1}{2} q(y,\eta) + x \eta+  \frac{1}{2} q(y,\xi)  - z \xi\right)  } \tilde{b}(y,\xi ) u(z) dy d\eta dz d\xi \\
 &= \frac{1}{2\pi h} \int_{\R^2}  u(z) e^{\frac{i}{h}(x-z) \xi} \underbrace{\left(\frac{|v|}{2 \pi h } \int_{\R^2}  e^{\frac{i}{h} \left(\frac{1}{2} q(y,\xi)- \frac{1}{2}q(y,\eta) + x (\eta - \xi)   \right)  } \tilde{b}(y,\xi) dy d\eta\right)}_{\check{b}(x,\xi)} dz d\xi \\
 &= \text{Op}_h^{R} (\check{b})u(x)\\
 &= \op(b) u (x) 
\end{align*}
where $\text{Op}_h^{R} $ denotes the right quantization, and by \cite{ZW} (Theorem 4.13), $b(x,\xi) = e^{-\frac{ih}{2} \langle D_x, D_\xi \rangle } \check{b}(x,\xi)$. 
Let's analyze $\check{b}$: 
\begin{align*}
\check{b}(x,\xi) &=  \frac{|v|}{2 \pi h } \int_{\R^2}  e^{\frac{i}{h} \left( \frac{1}{2} q(y,\xi)- \frac{1}{2} q(y,\eta+ \xi) + x \eta   \right)  } \tilde{b}(y,\xi) dy d\eta  \\
&=  \frac{|v|}{2 \pi h } \int_{\R^2}  e^{\frac{i}{h} \left( \frac{1}{2}w \eta^2 - w \eta \xi - vy\eta + x \eta   \right)  } \tilde{b}(y,\xi) dy d\eta  \\
&=  \frac{1}{2 \pi h } \int_{\R^2}  e^{\frac{i}{2h} w \eta^2}  e^{\frac{i}{h}(x- y)  \eta  } \tilde{b}(v^{-1}(y - w \xi),\xi) dy d\eta \text{ (\text{change of variable }} vy + w \xi \mapsto y) \\
&= e^{ \frac{ih}{2} w D_x^2 } \tilde{b}(v^{-1}(x-w\xi) , \xi)\\
\end{align*}
In particular, if $w=0$, we directly find that $\check{b}(x,\xi) = \tilde{b}(v^{-1}x, \xi)$. Otherwise, it is represented by the formula (\cite{ZW}, Theorem 4.8): 
$$\check{b} (x,\xi) = \frac{e^{i \frac{\pi}{4} \frac{w}{|w|}}}{2\pi h |w|} \int_\R e^{-\frac{ih}{2w} y^2} \tilde{b}(v^{-1}(y + x -w\xi), \xi ) dy$$
As a consequence, we see that $b$ is obtained from $\tilde{b}$ by composing 3 actions : the one of $e^{-\frac{ih}{2} \langle D_x, D_\xi \rangle }$, the change of variable $ (x,\xi) \mapsto (v^{-1}(x-w\xi) , \xi)$ and $e^{ \frac{ih}{2} w D_x^2 } $. The second one is obviously continuous from $S(\langle \rho \rangle^{3N})$ to $S(\langle \rho \rangle^{3N})$. 
We can now use \cite{ZW} Theorem 4.17 (or more precisely, the estimates given in the proof) : both the action of $e^{ \frac{ih}{2} w D_x^2 } $ and $e^{-\frac{ih}{2} \langle D_x, D_\xi \rangle }$ are continuous from $S(\langle\rho\rangle^{3N})$ to $S(\langle\rho\rangle^{3N})$, and more precisely, there exists a universal integer $M$ and universal constants $C_\alpha$ such that, for every $\alpha \in \N^2$, $(x,\xi) \in T^*\R$, 
$$ |\partial^\alpha (L \tilde{b}) (x,\xi)| \leq C_\alpha \sup_{|\beta| \leq |\alpha| + M} || \langle \rho \rangle^{-3N}\partial^\beta \tilde{b} ||  \langle \rho \rangle^{3N}$$
with $L$ being either  $e^{ \frac{ih}{2} w D_x^2 } $ or $e^{-\frac{ih}{2} \langle D_x, D_\xi \rangle }$. The same holds for the change of variable. 
This gives the required estimates for the symbol $b$ and concludes the proof of the Lemma. 

\begin{flushright}
\qed
\end{flushright}

\subsubsection{Proof of Lemma \ref{Lemma_2_stationnary_phase}} 

Fix $s \in [0,1]$ and recall that, with the notation $q$ introduced above$$
\tilde{R}_s u (x) =  \frac{1}{2\pi} \int_{\R^2} e^{i \left(  \frac{1}{2} q(x,\eta)+  sh^{1/2} r_3^\psi(x,\eta;h) - y\eta\right)  } b_N(x,\eta) u(y) dy d\eta$$
Let's introduce 
$$ R_s = \Lambda_h \tilde{R}_s \Lambda_h^*$$ and observe that the Schwartz kernel of $R_s$ is given by 
$$k_s(x,y) = \frac{1}{2\pi h} \int_\R  e^{\frac{i}{h} \left(  \frac{1}{2} q(x,\eta)+  s\rho_3^\psi(x,\eta;h) - y\eta\right)  } \tilde{b}_N(x,\eta) u(y)  d\eta$$
where 
$$\rho_3^\psi (x,\eta) = h^{3/2} r_3^\psi(h^{-1/2} x, h^{-1/2}\eta) = \psi( x_1 + x , \xi_0 + \eta) - \psi(x_1, \xi_0) - x \partial_x \psi(x_1, \xi_0)  -\eta \partial_\eta (x_1,\xi_0) - \frac{1}{2}q(x,\eta) $$
and $\tilde{b}_N(x,\eta) = b_N(h^{-1/2} x, h^{-1/2} \eta)$ which lies in $S_{0^+}(\langle \rho \rangle^{3N})$.
Let's note 
$\psi_s (x,\eta) = \frac{1}{2}q(x,\eta) + s \rho_3 ^\psi $ and remark that 
$$ \partial_{x \eta}^2 \psi_s = (1-s) \partial_{x \eta}^2 \psi(x_1, \xi_0) + s \partial_{ x \eta}^2 \psi(x_1 + x, \xi_0 + \eta)$$ 
Since $\partial_{ x \eta}\psi $ does not vanish on $\Omega_x \times \Omega_\eta$, it has constant sign and hence, $\partial_{x \eta}^2 \psi_s(x,\eta) \neq 0$ on $\Omega_x \times \Omega_\eta$. 
We now analyze the kernel $K_s$ of $R_s^* R_s$ and find that this kernel is 
\begin{align*}
K_s(x,y) &= \int_{\R} \overline{k}_s(z,x) k_s(z,y)dz\\
& = \int_{\R^3} \exp \left( \frac{i}{h}(\psi_s(z,\eta) - \psi_s(z,\xi) - y\eta + x\xi) \right)  \overline{ \tilde{b}_N(z,\xi)} \tilde{b}_N(z,\eta) d\eta d\xi d z  \\
& = \int_{\R} d\xi e^{ \frac{i}{h} (x-y)\xi}  \underbrace{\int_{\R^2}    \exp \left( \frac{i}{h}(\psi_s(z,\eta) - \psi_s(z,\xi) - y(\eta - \xi)) \right)  \overline{ \tilde{b}_N(z,\xi)} \tilde{b}_N(z,\eta) d\eta d z   }_{B_s(y,\xi)} 
\end{align*}
which is the kernel of $\text{Op}_h^R(B_s)$. To analyze $B_s$, we want to apply a stationary phase theorem and we need to know the stationary points in the variable $(z, \eta)$,  of the phase
$$\Phi_s(z,\eta, y ,\xi)= \psi_s(z,\eta) - \psi_s(z,\xi) - y(\eta - \xi)$$
We have 
$$ \partial_z \Phi_s (z,\eta, y, \xi) = \partial_x \psi_s(z,\eta) - \partial_x \psi_s(z, \eta) = \partial_{x \eta}^2 \psi_s(z, \eta(z,\xi, \eta) ) (\eta- \xi) $$ 
for some $\eta(z, \xi, \eta) \in [\eta, \xi]$. 
Hence,  since $\partial_{x \eta}^2 \psi$ does not vanish, $$\partial_z \Phi_s (z,\eta, y, \xi)=0 \iff \xi = \eta$$ We also have 
$$ \partial_\eta \Phi_s (z,\eta, y , \xi) = \partial_\eta \psi_s(z,\eta) - y$$ so that the equation $\partial_\eta \Phi_s (z,\xi, y , \xi)=0 $ has at most one solution, using again the fact that $\partial_{x \eta}^2 \psi_s$ does not vanish. When there is no stationary point, a non stationary phase argument gives that $|B_s(y,\xi)| \leq  \hinf \langle \rho \rangle^{6N}$. If there is a stationary point, it is given by a smooth function $z_s(y,\xi)$ locally around $(y,\xi)$ and a stationary phase argument shows that $|B_s(y,\xi)| \leq C_M  \langle \rho \rangle^{6N}$ where $C_M$ depends on the first $M$ semi-norms (for some universal integer $M$) of $\tilde{b}_N$. We can treat the derivatives of $B_s$ by differentiating under the integral and integration by part to obtain the same estimates for $\partial^\alpha B_s$, involving derivatives of $\tilde{b}_N$ up to order $|\alpha| +M$. This shows that $B_s \in S(\langle \rho \rangle^{3N})$. We conclude the proof by passing from $\text{Op}_h^R$ to $\op$ as in the proof of Lemma \ref{Lemma_1_stationnary_phase} and we come back to $h=1$ by standard scaling arguments.  
\begin{flushright}
\qed
\end{flushright}

\subsubsection{Proof of Lemma \ref{Lemme_stationnary_phase_S_delta}}\label{appendix_lemma_stationnary_phase_S_delta}

Let's write $u(x) = a(x) e^{ i \beta_n \frac{x^2}{2h}} $ with $a$ satisfying (\ref{classe_condition}).  
\begin{align*}
\op(m) u (x) &= \frac{1}{2\pi h} \int_{\R^2} m\left(\frac{x+y}{2}, \xi \right) e^{\frac{i}{h}(x-y)\xi}   a(y) e^{ i \beta_n \frac{y^2}{2h} } dy d\xi \\
&= \frac{1}{2\pi h} \int_{\R^2} m\left(x+ \frac{y}{2}, \beta_n x + \xi \right)  a(x +y ) e^{-\frac{i}{h}y(\xi + \beta_n x)}  e^{ i \beta_n \frac{(x+y)^2}{2h} } dy d\xi   \\
&=  e^{i \beta_n \frac{x^2}{2h}} \underbrace{\frac{1} {2\pi h} \int_{\R^2} m\left(x+ \frac{y}{2}, \beta_n x + \xi \right)  a(x +y ) e^{\frac{i}{2h} \left( \beta_n y^2- 2y\xi\right)}dy d\xi}_{B(x)}  \\
\end{align*}
To analyze $B(x)$ ,we invoke the stationary phase theorem in the quadratic case (see \cite{ZW}, Theorem 3.13) with the non singular symmetric matrix 
$Q_n = \left( \begin{matrix}
 \beta_n & -1 \\
 -1 & 0 
\end{matrix}\right) $ and we follow the proof of \cite{ZW}, Theorem 4.17. We fix a cut-off function $\chi \in \cinfc(\R^2)$ with $\supp \chi \subset B(0,1)$ and $\chi = 1$ in a neighborhood of $0$. We write (with $\chi_1 = \chi, \chi_2 = 1 - \chi$)
$$B(x)= B_1(x) + B_2(x) \quad ; \quad B_i (x)= \frac{1} {2\pi h} \int_{\R^2} \chi_i(y,\xi) m\left(x+ \frac{y}{2}, \beta_n x + \xi \right)  a(x +y ) e^{\frac{i}{2h} \left( \beta_n y^2- 2y\xi\right)}dy d\xi $$ 
We also set $v_i(x)=B_i(x) e^{i \beta_n \frac{x^2}{2h}}$.
By the stationary phase expansion, we can expand $B_1$ : 
for every $N \in \N$, 
$$ B_1(x) = \sum_{k=0}^{N-1} \frac{h^k}{k!} \left(\frac{(Q_n^{-1}D, D)}{2i} \right)^k c(x,0,0) + R_N(x) $$
$$  c(x,y,\xi)  =\chi(y,\xi) m\left(x+ \frac{y}{2}, \beta_n x + \xi \right)  a(x +y )  \quad ; \quad D = \left(\begin{matrix}
D_y \\
D_\xi
\end{matrix} \right)$$
$$ R_N(x) =O\left(h^N \sup_{y,\xi} \sup_{ k + l\leq 2N +2} |\partial_y^{k} \partial_\xi^{l} c(x,y,\xi)| \right)$$
We observe that : 
\begin{itemize}
\item The first term of the expansion of $B_1$ is given by $m(x, \beta_n x) a(x)$ ; 
\item $B_1$ is smooth since we can derive under the integral and obtain the same kind of expansion; 
\item The $k$-th term, that is $ c_k(x) = \frac{1}{k!} \left(\frac{(Q_n^{-1}D, D)}{2i} \right)^k c(x,0,0) $ is a sum of terms of the form $c_\alpha \partial^{\alpha} m(x, \beta_n x) a^{(l)}(x)$ with $ \alpha \in \N^2$, $l \in \N$, $|\alpha| + l \leq 2k$ and $c_\alpha \in \R$. The coefficients $c_\alpha$ of this sum depend on $Q_n$. Since $\beta_n = O(\varepsilon_0)$, these coefficients are bounded uniformly in $n$.  As a consequence, there exists $c_{k,p}= c_{k,p}(m)$ such that for $p \in \N$ with $k+p >0$, 
$$ q_p(c_k) \leq c_{k,p} h^{-2 k \delta} q_{2k+p}(u)$$ 
Hence we set $A_k u(x) = h^{2 k\delta } c_k(x)^{i \beta_n \frac{x^2}{2h}}$, which has the required form in virtue of the expression of $c_k(x)$. 
\end{itemize}
Concerning the remainder term, we have 
$$R_N(x) \leq C_N(m) h^N h^{-(2 N+2) \delta} \sup_{|y| \leq 1}  \left( 1 + \frac{(x+y)^2}{\alpha_n h} \right)^{-2}$$
It is not hard to see that $$
 \sup_{|y| \leq 1}  \left( 1 + \frac{(x+y)^2}{\alpha_n h} \right)^{-2} \leq C  \left( 1 + \frac{x^2}{\alpha_n h} \right)^{-2}$$
 We choose $M >0$ such that $M(1-2\delta) - 2 \delta>0$, so that $R_{N+M}(x) \leq C_N(m) h^{N(1-2\delta)} \sup_{|y| \leq 1}  \left( 1 + \frac{(x+y)^2}{\alpha_n h} \right)^{-2}$. By writing,  
$B_1 (x) = \sum_{k=0}^{N-1} c_k(x) + \sum_{k=N}^{N+M-1}c_k(x) + R_{N+M}(x)$
, we see that $$ q_0\left( v_1- \sum_{k=0}^{N-1} h^k A_k u \right) \leq C_N h^{N(1-2\delta)} q_{2N+M}(u)$$ 
By differentiating under the integral, we can show similarly that 
$$ q_j\left( v_1- \sum_{k=0}^{N-1} A_k u \right) \leq C_N h^{N(1-2\delta)} q_{j+2N+M}(u)$$
It remains to analyze $B_2$. Since there is no stationary point in the integral defining $B_2$, we do repeated integration by part using the differential operator $L(y,\xi) = \frac{(Q_n(y,\xi),D)}{|Q_n(y,\xi)|^2} $ which satisfies $L\left( e^{ \frac{i}{h}(Q_n(y,\xi), (y,\xi) )} \right) = e^{ \frac{i}{h}(Q_n(y,\xi), (y,\xi) )}$. Set $c_2(x,y,\xi)= (1- \chi(y,\xi) ) m(x+y/2, \beta_n + y/2) a(x+y)$. Since $|Q(y,\xi)| \geq c  (y^2 + \xi^2)^{1/2}$ on $\supp (1- \chi)$, we observe that for $M \in \N$. 
\begin{align*}
(L^*)^{2M} c_2(x,y,\xi) &\leq C_M (1 + y^2+\xi^2)^{-M/2} h^{2(1-\delta) M} q_{2M}(u) (\alpha_n h)^{1/4} \left( 1 + (x+y)^2/\alpha_n h \right)^{-2} \\ &\leq C_M (1 + y^2+\xi^2)^{-M/2} h^{2(1-\delta) M}q_{2M}(u)  (\alpha_n h)^{1/4} \left( 1 + x^2/\alpha_n h \right)^{-2}  \left( 1 + y^2/\alpha_n h \right)^{-2}\\
&\leq C_M (1 + y^2+\xi^2)^{-M/2} h^{2(1-\delta) M}q_{2M}(u) (\alpha_n h)^{1/4} \left( 1 + x^2/\alpha_n h \right)^{-2}  
\end{align*} 
Integrating over $\R^2$, we find that $|B_2(x)| \leq C_M h^{2(1- \delta )M} q_{2M}(u)(\alpha_n h)^{1/4} \left( 1 + x^2/\alpha_n h \right)^{-2}  $. In particular, with $M=N$, $q_0(v_2) \leq C_N h^{N(1-2\delta)} q_{2N}(u)$. Similarly, we can show that $q_j(v_2) \leq C_{j,N} h^{N(1-2\delta)} q_{j+2N}(u)$.
Since $\op(m) u = v_1 + v_2$, this concludes the proof of the Lemma \ref{Lemme_stationnary_phase_S_delta}. 
\begin{flushright}
\qed
\end{flushright}

\subsection{Formulas for approximation of exponential}\label{Appendix_exp}
We consider 
\begin{itemize}
\item a Hilbert space $H$ ($H = L^2(\R)$ for applications in this article) ; 
\item a bounded operator $A : H \to H$ ; 
 \item a parameter $h$; 
 \item a "class" $\mathcal{C}$ of elements of $H$, that is a subspace of $H$. 
\end{itemize}
We assume that for each $j \in \N$, there exists $A_j : \mathcal{C} \to \mathcal{C}$ such that, in some sense to be specified in applications, 
$A u \sim \sum_{j =0}^\infty h^j A_j u $.  More precisely, we assume that for all $N \in \N$ and all $u \in \mathcal{C}$, we can write 
$$ Au = \sum_{j=0}^{N-1} h^j A_j u + h^N R_N(u)$$
We are interested in understanding the action of the operator $e^{tA}$ on elements of $\mathcal{C}$. 
Recall that if $u_0 \in H$, $ t \mapsto e^{tA}u_0$ is the solution of the Cauchy problem 
$$\left\{ \begin{array}{l}
\frac{d}{dt} u(t) = A u(t) \\
u(0) = u_0 
\end{array}\right. $$

Moreover, we assume that $A_0$ extends to a bounded operator on $H$, so that $e^{tA_0}$ is a well-defined operator and we assume also that $e^{tA_0}(\mathcal{C}) \subset \mathcal{C}$ for all $t \in \R$. 
We introduce in this appendix formulas and notations to give an approximation of $e^{tA}u$. Of course, the interesting mathematical work lies in controlling the following terms and the accuracy of the expansion, which is done in applications.  Let us fix an integer $N \in \N$ and an initial state $u \in \mathcal{C}$. 

\subparagraph{Leading term. } For our leading term, we simply state 
$ u_0(t) = e^{tA_0} u$. Then, we set $R_0(t) = e^{tA} u - e^{tA_0} u$. 
We have $
\dot{R}_0(t) = A e^{tA} u - A_0 e^{tA_0} u$. Hence, we have
\begin{equation}\label{app_exp_reste_0}
\dot{R}_0(t) = A R_0(t) + \sum_{j=1}^{N-1}h^j A_j e^{tA_0} u  + \tilde{r}_{0,N}(t) \quad  ; \quad  \tilde{r}_{0,N}(t) = h^N R_N(u_0(t) ) 
\end{equation}

\subparagraph{First correction. } When $N = 1$, we stop. Otherwise, we can correct this first approximation by a term of order $h$. Of course, it is possible to write down directly a general formula for every $j$, but it seems to the author that the case $j=1$ helps to understand the general case. Let's try the Ansatz $u_1(t) = e^{tA_0} v_1(t)$ and set 
$$R_1(t) = e^{tA} u - e^{tA_0} \left( u + h v_1(t) \right)$$ 
Then we have, 
\begin{align*}
\dot{R}_1(t)  &= \dot{R}_0(t) - h e^{tA_0} \left( A_0 v_1(t) + v_1^\prime(t) \right) \\
&= A R_0(t) + \sum_{j=1}^{N-1} h^j A_j e^{tA_0} u  + \tilde{r}_{0,N}(t) - h A e^{tA_0} v_1(t) + h (A - A_0) e^{tA_0} v_1(t) - h e^{tA_0} v_1^\prime(t)\\
&=AR_1(t)+\sum_{j=1}^{N-1} h^j A_j e^{tA_0} u  + \tilde{r}_{0,N}(t) + h (A - A_0) e^{tA_0} v_1(t) - h e^{tA_0} v_1^\prime(t)
\end{align*}

To cancel the term of order $h$ in the sum, we set 
\begin{equation}
v_1(t) = \int_0^t e^{-s A_0} A_1 e^{s A_0} u ds
\end{equation}
To proceed with our expansion, we need to assume that $v_1(t) \in \mathcal{C}$ for all $t \in \R$. This will be the case in the applications, with precise control on $v_1(t)$. 
\subparagraph{Higher order terms.} 
For convenience, let's note $A_j(s) = e^{-s A_0} A_j e^{s A_0} $. We can construct by induction a family of functions $v_k(t)$ by setting $v_0(t) = u$ and for $1 \leq k \le N-1$, 
\begin{equation}\label{formula_v_k}
v_k(t) =  \sum_{l=0}^{k-1} \int_0^t A_{k-l}(s)  v_{l}(s) ds 
\end{equation}
For these formulas to hold, we assume this construction ensure that $v_k(t) \in \mathcal{C}$ for all $t \in \R$. It will be easily satisfied in applications. 
We also set 
$$\tilde{r}_{k,N}(t) = (A-A_0)e^{tA_0} v_k(t) - \sum_{j=1}^{N-k-1} h^j A_j e^{tA_0} v_k(t) $$
and $$R_k(t) = e^{tA}u - e^{tA_0} \sum_{l=0}^k h^l v_l(t) $$
$\tilde{r}_{k,N}(t) $ has to be seen as a term of order $h^{N-k}$. 
These formulas ensure that 
$$\dot{R}_k(t) = AR_k(t) + \sum_{j=k+1}^{N-1} h^j \left( \sum_{l=0}^k A_{k-l}e^{tA_0} v_l(t) \right) + \sum_{j=0}^k h^j \tilde{r}_{j,N}(t) $$
In particular, when $k=N-1$, 
\begin{equation}\label{formule_reste_exact}
\dot{R}_{N-1}(t) = AR_{N-1}(t) + \sum_{j=0}^{N-1} h^j \tilde{r}_{j,N}
\end{equation}

\newpage
\bibliographystyle{alpha}
\bibliography{biblio_these}

\end{document}